\documentclass[11pt,reqno]{amsart}
\pdfoutput=1
\theoremstyle{plain}
\newtheorem{lma}{Lemma}

\newtheorem{teorema}{Theorem}
\newtheorem*{teorema*}{Theorem}
\theoremstyle{definition}
\newtheorem{definicion}{Definition}
\newtheorem{rmk}{Remark}
\usepackage{float}
\usepackage{amsmath,amssymb}
\usepackage{amsthm}
\usepackage{verbatim}
\usepackage{enumitem}
\usepackage[most]{tcolorbox}
\usepackage{tikz}
\usepackage{mathtools}
\usepackage[T1]{fontenc}
\usepackage[utf8]{inputenc}
\usepackage{microtype} 
\usepackage[margin=1in]{geometry}
\usepackage{hyperref}
\usepackage{dsfont}
\newcommand{\ind}{\mathds{1}}
\usetikzlibrary{arrows.meta,calc,decorations.pathreplacing}
\usepackage[skip=6pt plus2pt, indent=0pt]{parskip}
\numberwithin{equation}{section}
\allowdisplaybreaks
\newsavebox{\pbbox}
\newenvironment{parambracket}
  {\par\addvspace{2.5ex}\refstepcounter{equation}%
   \begin{lrbox}{\pbbox}%
   \begin{minipage}{\dimexpr\linewidth-6.5em\relax}\setlength{\parindent}{0pt}}
  {\end{minipage}\end{lrbox}%
   \noindent\hspace*{1.5em}%
   \begin{tikzpicture}[line width=0.5pt]
     \node[inner sep=0pt] (pb) {\usebox{\pbbox}};
     \draw[decorate,decoration={brace,amplitude=7pt,mirror}]
       ([xshift=-0.7em]pb.north west) -- ([xshift=-0.7em]pb.south west);
     \node[anchor=west,inner sep=0pt] at ([xshift=1em]pb.east) {$(\theequation)$};
   \end{tikzpicture}\par\addvspace{2.5ex}}
\tikzset{lbl/.style={font=\fontsize{14}{9.6}\selectfont}}
\newtcolorbox{mybox}[1]{
    colback=gray!10,
    colframe=black,
    fonttitle=\bfseries,
    title={#1}
}
\definecolor{vecverde}{RGB}{30,140,60}
\definecolor{vecrojo}{RGB}{206,88,66}
\tikzset{
  vec/.style={line width=0.75pt, -{Stealth[length=2.6mm,width=1.9mm]}},
}
\title[Finite-time blow-up for generalized SQG]{Blow-up at finite time for the generalized SQG equations in the Sobolev well-posedness regime}
\author{Diego C\'ordoba}
\address{Instituto de Ciencias Matem\'aticas CSIC-UAM-UCM-UC3M, Spain}
\email{dcg@icmat.es}

\author{\'Oscar Dom\'inguez}
\address{Departamento de Matem\'aticas, CUNEF Universidad, Madrid, Spain}
\email{oscar.dominguez@cunef.edu}

\author{Jos\'e Lucas-Manch\'on}
\address{Instituto de Ciencias Matem\'aticas CSIC-UAM-UCM-UC3M, Spain}
\email{jose.lucas@icmat.es}

\author{Luis Mart\'inez-Zoroa}
\address{Departamento de Matem\'aticas, CUNEF Universidad, Madrid, Spain}
\email{luis.martinezzoroa@cunef.edu}
\begin{document}
\begin{abstract}
We prove finite-time singularity formation for the forced generalized surface quasi-geostrophic equation in the singular velocity regime $\gamma\in(0,1)$, where $\gamma=0$ corresponds to SQG. For every such $\gamma$, we construct a smooth, compactly supported initial datum and a time-dependent force $F$ for which the corresponding solution $\theta$ is classical on $[0,1)$ with finite energy for all times, but loses Sobolev regularity at $t=1$. More precisely, there exists
\[
\kappa_0>2+\gamma+\frac{\gamma^2(1-\gamma)}{25(4+\gamma)},
\]
such that the force satisfies $F\in L^1([0,1];H^\kappa(\mathbb{R}^2))$ for every $\kappa\in[2+\gamma,\kappa_0]$, whereas
\[
\lim_{T\nearrow1}\int_0^T\|\theta(\cdot,t)\|_{H^\kappa}\,dt=\infty
\]
for every exponent in the same interval. At the same time, the solution remains uniformly bounded in $H^{\kappa_1}$ throughout its lifespan for any \[\kappa_1\in\left[0,2+\gamma-\frac{\gamma(1-\gamma)}{2(4+\gamma)}\right].\]
Then, the singularity occurs within the Sobolev well-posedness regime and cannot be attributed to insufficient regularity of the force or the initial conditions. To the best of our knowledge, this is the first finite-time blow-up result for classical finite-energy solutions of the generalized SQG equations in a well-posedness regime.
\end{abstract}
\maketitle
\tableofcontents
\section{Introduction}

We consider classical solutions of the forced generalized surface
quasi-geostrophic equation on $\mathbb{R}^2$,
\begin{equation}\label{ecuacion}
\left\{
\begin{aligned}
&\partial_t\theta + v^{\gamma}(\theta)\cdot\nabla\theta = F,
   && (x,t)\in\mathbb{R}^2\times[0,\infty),\\[2pt]
&v^{\gamma}(\theta)=\nabla^{\perp}\psi_{\gamma},
   \qquad \psi_{\gamma}=-\Lambda^{-1+\gamma}\theta,\\[2pt]
&\theta(\cdot,0)=\theta_0,
\end{aligned}
\right.
\end{equation}
where $\Lambda=(-\Delta)^{1/2}$ and
$\gamma\in(-1,1)$. Equivalently,  the velocity can be
written as
\begin{equation}\label{velocidad}
v^{\gamma}(\theta)(x,t)=c_{\gamma}\,\text{p.v.}\!\int_{\mathbb{R}^2}
   \frac{(x-y)^{\perp}}{|x-y|^{3+\gamma}}\,\theta(y,t)\,dy,
\end{equation}
for a certain nonzero normalization constant $c_{\gamma}$. The velocity is divergence
free, and the system \eqref{ecuacion} is an active scalar transport equation with
a given external force $F$.

This parametrization places the two-dimensional Euler equation in vorticity
form at $\gamma=-1$ and the inviscid SQG equation at $\gamma=0$. For
$\gamma<0$, the velocity is smoother than the transported scalar; at
$\gamma=0$, it has the same regularity; and for $\gamma>0$, which is the range
considered in this paper, the velocity is $\gamma$ derivatives more singular
than $\theta$.

The SQG equation was introduced as a model for the evolution of surface
potential temperature in rapidly rotating geophysical flows~\cite{HPGS}.
Constantin, Majda and Tabak~\cite{CMT} emphasized its close analogy with the
three-dimensional incompressible Euler equations and initiated the analytical
and numerical study of possible singularity formation (see also \cite{Const}). The generalized
family~\eqref{ecuacion} provides a natural scale of active scalar equations in
which the degree of singularity of the Biot--Savart law can be varied; see,
among others, \cite{CCW,CCCGW}.

For the unforced equation, local well-posedness in Sobolev spaces is known for
\begin{equation}\label{eq:sobolev-data}
\theta_0\in H^{\kappa}(\mathbb{R}^2),\qquad \kappa>2+\gamma,
\end{equation}
with the analogous theory for a time-dependent force
$F\in L^1_{\mathrm{loc}}H^{\kappa}$ (check Subsection \ref{locale}). The first local well-posedness result for $\gamma\in(0,1) $ was proven in~\cite{CCCGW} for initial data in $H^4$. In ~\cite{HuKukavicaZiane} this result is extended for positive $\gamma$ down to the critical Sobolev exponent. In the more regular regime $\gamma\in(-1,0]$, similar results
were obtained in~\cite{Inci} and~\cite{YuZhang}. The threshold reflects the
increasing singularity of the velocity as $\gamma$ grows. Below it, several
forms of norm inflation and instant loss of regularity are known. For SQG, these
were established in~\cite{CMZa,JK}; for the generalized family, including the
regime $\gamma>0$, see~\cite{CLMMZ,CJO,CMZb,CMZO}. These results show that the
Sobolev threshold in~\eqref{eq:sobolev-data} is a genuine dividing line. They
do not, however, produce a finite-time singularity from data lying above that
threshold.

Whether classical solutions of the SQG and gSQG equations on $\mathbb{R}^2$, in a
local well-posedness regime, develop a singularity in finite time is a major open problem.
A number of constructions and numerical experiments have revealed mechanisms
that may lead to singular behavior. Castro and Córdoba~\cite{CC} constructed
singular SQG solutions with infinite energy (see also \cite{HQSW} for a recent similar
construction in the case of $\gamma>0$), while García and
Gómez-Serrano~\cite{GGS} constructed nonradial self-similar spirals for
gSQG, again outside the finite-energy class (see also \cite{Abe}). Bronzi, Guimarães and
Mondaini~\cite{BGM} analyzed locally self-similar blow-up scenarios for gSQG and,
under suitable growth assumptions on the profile and its gradient, ruled out
broad classes of such profiles or characterized their possible asymptotic
behavior. 

A line of work closely related to blow-up studies concerns growth of high
Sobolev or Hölder norms. Rigorous
results for classical finite-energy SQG solutions exhibiting long-time growth
are due to Kiselev--Nazarov and He--Kiselev. Kiselev and Nazarov \cite{KiselevNazarov} constructed
arbitrarily small $H^{\kappa}$ initial data whose $H^{\kappa}$ norm exceeds any
prescribed size at a later time. He and Kiselev \cite{HeKiselev}
constructed solutions that either develop a finite-time singularity or, if
globally smooth, exhibit at least exponential growth of $C^{1,\alpha}$ norms for
$t\to\infty$. This last result also applies to the generalized SQG equations obtaining at least exponential growth of certain Hölder norms. 

Rigorous finite-time singularities have been obtained for a family of
weak solutions in settings whose geometry or solution class differs
essentially from the classical whole-plane problem. In particular,  Kiselev, Ryzhik, Yao and
Zlatoš~\cite{KRYZ}, and later Gancedo and Patel~\cite{GP}, proved singularity
formation for generalized SQG patches on the half-plane. Zlatoš subsequently
obtained finite-time blow-up from smooth data for the gSQG equation itself
on the half-plane~\cite{Z}; the boundary is a crucial part of that mechanism.
Miao, Tan, Xue and Xue \cite{MiaoTanXueXue} then treated a half-plane model with a general Fourier
multiplier $m(\Lambda)$ and proved finite-time singularity formation under an
Osgood-type condition and mild additional hypotheses. More
recently, Jeon and Zlatoš established singularity formation for touching
gSQG patches on the whole plane, see \cite{JZ}. None of these results gives
finite-time blow-up for a classical, finite-energy solution on $\mathbb{R}^2$ whose
initial datum lies in the standard Sobolev well-posedness regime.

The purpose of this paper is to provide such a construction for the forced
equation. For every $\gamma\in(0,1)$, we construct a smooth, compactly
supported initial datum and a force that remains integrable in time in Sobolev
spaces strictly above the local well-posedness threshold, but for which the
corresponding classical solution becomes singular in finite time. To the best
of our knowledge, this is the first finite-time blow-up construction for a
classical solution of a generalized SQG equation on $\mathbb{R}^2$ with finite energy
and with both the initial datum and the force in the Sobolev well-posedness
regime.

An important feature of the result is that the singularity is not inserted
through a force that is itself singular at the level of local well-posedness.
Indeed, the force belongs to $L^1([0,1];H^{\kappa})$ for a nonempty interval of
exponents $\kappa>2+\gamma$. Moreover, since $v^{\gamma}$ is divergence free,
the standard $L^2$ estimate gives
\begin{equation}\label{eq:energy}
\sup_{0\le t<1}\|\theta(t)\|_{L^2}
   \le \|\theta_0\|_{L^2}+\|F\|_{L^1([0,1];L^2)}<\infty.
\end{equation}
Thus the solution has finite energy throughout its lifespan, even though its
higher Sobolev norms become non-integrable at the singular time.

\subsection{Main result}

Our main theorem reads as follows.

\begin{teorema}\label{teorema}
Let $\gamma\in(0,1)$. There exists
\begin{equation}\label{eq:kappa0}
\kappa_0>2+\gamma+\frac{\gamma^2(1-\gamma)}{25(4+\gamma)},
\end{equation}
a scalar
\[
\theta:\mathbb{R}^2\times[0,1)\longrightarrow\mathbb{R},
\]
and a force
\[
F:\mathbb{R}^2\times[0,1)\longrightarrow\mathbb{R},
\]
with the following properties:
\begin{enumerate}
\item For every $T\in(0,1)$,
\[
\theta,\,F\in C^{\infty}\!\big([0,T];C^{\infty}_c(\mathbb{R}^2)\big),
\]
and $\theta$ is a classical solution of the forced $\gamma$-gSQG equation~\eqref{ecuacion}  on
$[0,1)$; namely,
\[
\partial_t\theta(x,t)+v^{\gamma}(\theta)(x,t)\cdot\nabla\theta(x,t)=F(x,t)
\]
for every $(x,t)\in\mathbb{R}^2\times[0,1)$.

\item The force remains in the Sobolev well-posedness class up to and including
the singular time:
\[
F\in L^1\!\big([0,1];H^{\kappa}(\mathbb{R}^2)\big)
\quad\text{for every} \quad \kappa\in[2+\gamma,\kappa_0].
\]

\item \label{puntodivergencia}For every $\kappa\in[2+\gamma,\kappa_0]$,
\[
\lim_{T\nearrow 1}\int_{0}^{T}\|\theta(\cdot,t)\|_{H^{\kappa}}\,dt=\infty.
\]
\item Moreover, for any $\kappa\in\left[0,2+\gamma-\frac{\gamma(1-\gamma)}{2(4+\gamma)}\right]$,
\begin{align*}
    \sup_{0\leq t< 1} \|\theta(\cdot,t)\|_{H^\kappa}<\infty.
\end{align*}
\end{enumerate}
\end{teorema}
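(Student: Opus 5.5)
The construction is a gluing of infinitely many small-scale "building blocks." On successive time intervals $[t_{n},t_{n+1})\subset[0,1)$ with $t_n\nearrow 1$, each block is activated at a frequency $N_n\to\infty$ and is responsible for a definite fraction of the $H^\kappa$ growth. Because the scheme is forced, we may prescribe $\theta$ directly and define
\[
F:=\partial_t\theta+v^\gamma(\theta)\cdot\nabla\theta .
\]
Part (1) is then automatic as long as $\theta$ is smooth and compactly supported on each $[0,T]$, $T<1$. All the work goes into choosing $\theta$ so that $F$ stays small in $H^\kappa$ while $\theta$ does not.

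I will take $\theta=\sum_n \theta_n$. Each $\theta_n$ is a large-scale radial (or otherwise stationary) profile $\phi_n$ of amplitude $A_n$, plus a perturbation $\theta_n^{\rm p}=a_n\,\chi(x)\cos(N_n x_1)$ that is switched on smoothly at $t_n$.

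\textbf{Step 1: the stationary part.} A radial $\phi_n$ is a steady solution: $v^\gamma(\phi_n)\cdot\nabla\phi_n=0$. Hence it contributes to $F$ only through the time cutoffs, and those cost $\|\phi_n\|_{H^\kappa}$ divided by the length of the interval.

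\textbf{Step 2: the growth mechanism.} The perturbation is transported by the velocity $v^\gamma(\phi_n)$, which is a differential rotation with shear $\sim A_n$. I prescribe $\theta_n^{\rm p}$ as the exact linear evolution under this shear. This makes the frequency grow linearly in time, $N(t)\sim N_n(1+A_n t)$, while the $L^2$ norm is preserved. In $H^\kappa$ this gives growth by a factor $(A_n\tau_n)^\kappa$ on an interval of length $\tau_n=t_{n+1}-t_n$.

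The force then picks up only the error terms. The first is the self-interaction $v^\gamma(\theta_n^{\rm p})\cdot\nabla\theta_n^{\rm p}$. For a single Fourier mode this vanishes to leading order, since $v^\gamma$ is orthogonal to the wavevector; the remainder comes from the cutoff $\chi$ and is lower order by a factor $N^{-1}$. The second is the back-reaction $v^\gamma(\theta_n^{\rm p})\cdot\nabla\phi_n$. Its size is roughly $a_nN^{\gamma}\|\nabla\phi_n\|$, which gains nothing in $\kappa$ derivatives relative to $\theta$, and controlling it is exactly where the gap $\kappa>2+\gamma$ enters. The third is the set of cross terms between different blocks, which I make negligible by placing the supports far apart (or at widely separated scales) and using the decay $|x|^{-2-\gamma}$ of the kernel in \eqref{velocidad}.

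\textbf{Step 3: choosing the parameters.} I take $a_n,A_n,N_n,\tau_n$ as powers of a rapidly growing base $\lambda_n=2^{2^n}$, with $\sum\tau_n=1$. Three conditions must hold:
\begin{enumerate}
\item[(i)] $\int_{t_n}^{t_{n+1}}\|\theta^{\rm p}_n\|_{H^\kappa}\,dt\gtrsim 1$ for every $\kappa\ge 2+\gamma$, which gives part (3);
\item[(ii)] $\sum_n \|F_n\|_{L^1H^{\kappa_0}}<\infty$, which gives part (2);
\item[(iii)] $\sup_t \|\theta_n\|_{H^{\kappa_1}}\lesssim \lambda_n^{-\epsilon}$ for $\kappa_1$ up to $2+\gamma-\frac{\gamma(1-\gamma)}{2(4+\gamma)}$, which gives part (4).
\end{enumerate}
Each condition is a linear inequality in the exponents. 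I expect feasibility to hold only on a thin window above $2+\gamma$, and the specific constants $\frac{\gamma^2(1-\gamma)}{25(4+\gamma)}$ and $\frac{\gamma(1-\gamma)}{2(4+\gamma)}$ should emerge from optimizing this linear program. Since the $H^\kappa$ norms at fixed time are interpolations, it is enough to check (i)–(iii) at the endpoints.

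\textbf{Main obstacle.} The hardest step is the rigorous $H^{\kappa}$ estimate of the nonlinear error in Step 2 at noninteger $\kappa$. The velocity is $\gamma$ derivatives more singular than $\theta$, so $v^\gamma(\theta^{\rm p}_n)\cdot\nabla\phi_n$ and the commutator between the cutoff $\chi$ and $\Lambda^{-1+\gamma}$ have to be computed almost explicitly. My plan is a stationary-phase or symbol expansion of $\Lambda^{-1+\gamma}\bigl(\chi e^{iN\xi(t)\cdot x}\bigr)$, with remainder bounds of order $N^{\gamma-2}$, combined with the sheared coordinates in which the linear evolution is exact. If the back-reaction term is too large, I will try two modifications: replace the radial base by a hyperbolic, strain-generated stagnation point, which gives exponential frequency growth and shorter intervals, and correct $\phi_n$ at second order so that the leading back-reaction is absorbed into the stationary part.
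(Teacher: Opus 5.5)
\textbf{The gap: the shear has to be paid for by the force.} In your scheme nothing lets the growth of one block feed the next, and this is fatal. The shear $A_n$ acting on $\theta^{\mathrm{p}}_n$ comes from the stationary profile $\phi_n$. That profile enters $\theta$ only through a time cutoff, so $F$ contains $h_n'\phi_n$ and
\[
\int_0^1\|F\|_{H^{\kappa_0}}\,dt\gtrsim\sum_n\|\phi_n\|_{H^{\kappa_0}}
\]
(barring cancellations you have not planned). For every $\kappa>2+\gamma$,
\[
\|\nabla v^\gamma(\phi)\|_{L^\infty}\le\int|\xi|^{1+\gamma}|\hat\phi(\xi)|\,d\xi\lesssim_\kappa\|\phi\|_{H^\kappa},
\]
so $A_n\lesssim\|\phi_n\|_{H^{\kappa_0}}$, and part (2) forces $\sum_nA_n<\infty$. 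The shear available on $[0,1)$ is then uniformly bounded. Since $\tau_n\to0$, your amplification factor $(1+A_n\tau_n)^\kappa$ tends to $1$. Hence $\int_{t_n}^{t_{n+1}}\|\theta^{\mathrm{p}}_n\|_{H^\kappa}\,dt$ is bounded by a multiple of the insertion cost $\|\theta^{\mathrm{p}}_n(t_n)\|_{H^\kappa}$, which is itself part of $F$ and therefore summable. So (i) and (ii) cannot hold together for any exponents in Step 3; no choice of parameters repairs this. Your hyperbolic fallback hits the same bound, because an inserted strain of rate $A$ also costs $\gtrsim A$ in $H^{\kappa_0}$. Separating the blocks in space only guarantees that no block profits from another's growth.

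\textbf{The missing idea: a self-sustaining cascade.} This is the heart of the paper. The amplified layer is itself the next shear source, so the shear grows at no cost in force. All layers are concentric at the origin with nested supports. The layer $\theta_n$ is inserted isotropic, with $\|\theta_n(t_n)\|_{H^\kappa}$ a negative power of $N_n$ for $\kappa<2+\gamma+\delta$, and with negligible shear of its own. It is placed where all previous layers are exactly constant, so $\nabla\Theta_{n-1}=0$ on its support. That removes your back-reaction term except in the far field, where $|v^\gamma(\theta_n)|\lesssim|x|^{-2-\gamma}$. The layer is then transported by $\overline{v}^\gamma(\Theta_{n-1})$, essentially a pure shear of size $N_{n-1}^{\varepsilon}$, for a time of order $N_{n-1}^{-\mu}$. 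Its long frequency grows to $s_n\approx N_n$. Being now strongly anisotropic, it generates a linear shear $\tilde a^{(2,1)}_n\approx N_n^{\varepsilon}$, which drives layer $n+1$; this is exactly how $N_n$ is chosen. The force then only pays for insertion, the cubic Taylor remainder, the far-field back-reaction, and a residual self-interaction.

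Your treatment of that last term is also too optimistic. After shearing, the orthogonality cancellation gains only a factor $N_n^{\varepsilon-2\frac{\varepsilon+\delta}{1+\gamma+\delta}}$ in amplitude. The uncorrected $\|v^\gamma(\theta_n)\cdot\nabla\theta_n\|_{H^\kappa}$ is therefore not small (the paper bounds it below by $N_n^{O(\varepsilon)}$). This is why the paper adds a finite hierarchy of $I_\gamma$ corrections that stay in the same class of anisotropic building blocks.
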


Since the strict inequality in~\eqref{eq:kappa0} places $\kappa_0$ above
$2+\gamma$, Theorem~\ref{teorema} yields blow-up simultaneously for a
nontrivial interval of Sobolev exponents in the well-posedness regime. Notice
also that $\theta_0=\theta(\cdot,0)\in C^{\infty}_c(\mathbb{R}^2)$. The divergence in
the item \ref{puntodivergencia} shows that the classical solution cannot be continued through $t=1$
within any of the corresponding Sobolev classes, while~\eqref{eq:energy} rules
out loss of finite energy as the source of the breakdown.

    
\subsubsection{Local well-posedness with forcing} \label{locale}  The local theory for the unforced equation extends directly to external forces in
$$
F\in L^1_{\mathrm{loc}}\big([0,\infty);H^\kappa(\mathbb R^2)\big).
$$

More precisely, if $\kappa>2+\gamma$, $\theta_0\in H^\kappa(\mathbb R^2)$, and
$F\in L^1_{\mathrm{loc}}([0,\infty);H^\kappa(\mathbb R^2))$, then there exists
$T>0$ and a unique solution to \eqref{ecuacion} with 
$$
\theta\in C\big([0,T];H^\kappa(\mathbb R^2)\big).
$$
Indeed, for smooth solutions, the commutator estimate used in
\cite{HuKukavicaZiane}, together with the standard $L^2$ energy estimate, gives
\begin{align*}
\frac12\frac{d}{dt}\|\theta(t)\|_{L^2}^2
&\leq \|\theta(t)\|_{L^2}\|F(t)\|_{L^2},\\
\frac12\frac{d}{dt}\|\Lambda^\kappa\theta(t)\|_{L^2}^2
&\leq C_{\gamma,\kappa}\|\theta(t)\|_{H^\kappa}^3
+\|\Lambda^\kappa\theta(t)\|_{L^2}
\|\Lambda^\kappa F(t)\|_{L^2}.
\end{align*}
Consequently, setting
$$
Y(t):=\left(
\|\theta(t)\|_{L^2}^2+
\|\Lambda^\kappa\theta(t)\|_{L^2}^2
\right)^{1/2}
\simeq \|\theta(t)\|_{H^\kappa},
$$
we obtain, for almost every $t$,
$$
Y'(t)\leq C_{\gamma,\kappa}Y(t)^2+\|F(t)\|_{H^\kappa}.
$$
If
$$
A_T:=Y(0)+\|F\|_{L^1([0,T];H^\kappa)}
$$
and $T>0$ is small enough such that $C_{\gamma,\kappa}TA_T<1$, the nonlinear Grönwall inequality yields
$$
\sup_{0\leq t\leq T}Y(t)
\leq \frac{A_T}{1-C_{\gamma,\kappa}TA_T}.
$$
The standard approximation and the corresponding difference estimates then give existence, uniqueness, and continuous dependence in $H^\kappa$. Accordingly, 
$L^1_{\mathrm{loc}}H^\kappa$ is the natural forcing class for the local
$H^\kappa$ theory.

\subsubsection{Layered structure of the solution} The proof of Theorem \ref{teorema} relies on a multiscale construction in which the solution is expressed as an infinite superposition of localized layers, activated at times $t_n \nearrow 1$ and concentrated near the origin at progressively finer spatial scales. The linear part of the velocity generated by the previously activated layers transports and shears each newly introduced layer, thereby increasing its anisotropy and its high order Sobolev norms. Once deformed, that layer contributes a stronger linear velocity, which drives the same mechanism at the next, finer scale. At each stage, we also introduce a finite hierarchy of correction terms that successively cancel the leading self-interaction errors of the new layer, so that the residual force is summable in the required Sobolev norms. Iterating this corrected scheme produces a cascade of anisotropy and Sobolev growth that accumulates at $t=1$, making $\|\theta(\cdot,t)\|_{H^\kappa}$ non-integrable in time for the range of exponents stated in the main theorem, while the force remains integrable in the corresponding Sobolev well-posedness spaces.

   \subsection{Comparison with other finite time singularities for 2D incompressible flows}
   
   Multilayer methods have very recently been applied to some other 2D models for incompressible flows, including IPM \cite{CMZc}, the Boussinesq equations \cite{CLSMZa} and the inhomogeneous Euler equations \cite{CLSMZb}. However, gSQG presents a very unique set of difficulties that makes the problem especially challenging. In this subsection we will discuss some of these difficulties.

    First, and maybe the most important difference between gSQG and the models mentioned previously is the lack of a clear and strong growth mechanism. For IPM, perturbation of a linear density $-Ax_{2}$ can  grow exponentially since 
   $$u_{IPM}(\rho_{pert})\cdot\nabla (-Ax_{2})\approx A\rho_{pert}$$
    for $\rho_{pert}$ with the right properties. This is a very robust growth mechanism, which is also present when adding a perturbation to a solution with negative derivative in the $x_2$ direction at some point. Furthermore, the growth is (exponentially) fast  and all the relevant norms of the perturbation grow (both subcritical and supercritical). 
    For Boussinesq, the presence of the term $\partial_{x_2}\rho$ in the equation for the time derivative also gives a very clear mechanism to get growth of the vorticity. In fact, one can again consider perturbations of linear densities and obtain exponential growth. Finally, for inhomogeneous Euler, one can use the fact that, under certain conditions, Boussinesq is a good approximation of inhomogeneous Euler, and thus the growth mechanism from Boussinesq can be exploited.

    If one tries to use a similar idea for gSQG perturbing some carefully chosen background and  hoping to obtain some robust growth, we immediately run into some essential obstructions. If we ignore the self interactions of the perturbation, we would obtain the evolution equation
    $$\partial_{t}\theta_{pert}+u(\theta_{pert})\cdot\nabla \theta_{bg}+u(\theta_{bg})\cdot\nabla \theta_{pert}=0,$$

    where $\theta_{bg}$ is the background solution we are perturbing and $\theta_{pert}$ is the perturbation we want to grow. The term $u(\theta_{bg})\cdot\nabla \theta_{pert}$ is a transport term, and it therefore cannot make any $L^{p}$  norm grow, so we would like to use $u(\theta_{pert})\cdot\nabla \theta_{bg}$ for our growth. When considering concentrated perturbations, the main order of this term will be
    $$u(\theta_{pert})\cdot\nabla \theta_{bg}(x=0,t)=a(t)u_1(\theta_{pert})+b(t)u_2(\theta_{pert})$$
    where $a(t)$ and $b(t)$ come from the derivatives of the background at the origin.
    However, due to the odd parity of the velocity operator, these terms are orthogonal in $H^s$ to $\theta_{pert}$, and therefore cannot produce growth in any $H^s$ norm. This is especially restrictive in the gSQG case since (as we will discuss later), for the equations we consider, we only have local well-posedness in $H^s$, and not in other spaces like $C^{k,\alpha}$ where $u(\theta_{pert})$ could give growth.

    This makes it very hard to obtain the term $u(\theta_{pert})\cdot\nabla \theta_{bg}$ for growth, and leaves us with the transport term. This already  creates a new challenge since, as we mentioned before, the transport term will not produce any growth in the $L^p$ norms, and therefore it is a more subtle growth mechanism. One can, however, still hope for a growth mechanism that is fast, ideally exponentially fast, like the ones we described earlier.

    The obvious way to attempt this would be as follows: We consider a background that generates a hyperbolic velocity at the origin, add a perturbation at the origin, and the hyperbolic flow will deform  the perturbation exponentially fast. The perturbation grows quickly due to this exponential deformation, and after some time it generates its own strong hyperbolic flow. 
    Here we encounter another problem: If we want the perturbation to create a strong hyperbolic flow, we need
    $\partial_{1}u_{1}(\theta_{pert})$ to be big, which means $\partial_{x_{1}}\partial_{x_{2}}(-\Delta)^{\frac{-1+\gamma}{2}}\theta_{pert}$ must be big. However, the hyperbolic flow actually keeps $\partial_{x_{1}}\partial_{x_{2}}$ constant (since compression in $x_{1}$ gets exactly compensated by the decompression in $x_{2}$). This leaves $(-\Delta)^{\frac{-1+\gamma}{2}}$ as the only part that could grow. But, in general, the deformation will send our perturbation to higher frequencies, and therefore $(-\Delta)^{\frac{-1+\gamma}{2}}$ will make us smaller, not bigger, if we are considering $\gamma<1$, which is the case of interest.

    More precisely, one can for example consider sinusoidal perturbations $\sin(ax_1)\sin(bx_{2})$, and observe that $\partial_{x_1}u_{1}(\theta_{pert})$ can only become big after we deform the perturbation with a hyperbolic flow if it was already outside of the well-posedness regime before any deformation happened.

    Therefore, using an exponential deformation scenario seems also out of reach. This leaves us with one last option: Considering growth via linear transport. In particular, we can consider a background with $\partial_{x_2}u_{1}(\theta_{bg})$ big, and use it to deform a perturbation so that $\partial_{x_2}u_1(\theta_{pert})$ becomes big. In this case, we do not run into the same problem as before, since in this case we deal with $\partial_{x_2}\partial_{x_{2}}(-\Delta)^{\frac{-1+\gamma}{2}}$, which can grow as we make the $x_{2}$ derivative bigger as long as $\gamma>-1$. As we will see, this is in fact too optimistic, and we will only obtain blow-up for $\gamma>0$, since, at $\gamma=0$ the growth becomes too slow to give finite time blow-up.
    Furthermore, when $\gamma$ approaches one, as the velocity operator becomes more and more singular, the number of corrections required in our construction grows, in such a way that it is convenient to construct a method that allow us to introduce an arbitrary finite number of corrections, making it very technically challenging to prove the result for the whole range $\gamma\in(0,1)$.

    Finally, another important issue with gSQG (for $\gamma>0$) is the fact that we only have well-posedness in $H^{s}$, and thus we cannot directly work in $L^{\infty}$ based spaces such as $C^{k, \alpha}$. Note that it is not a matter of the well-posedness not being proved yet in $C^{k,\alpha}$: We know that the equations are ill-posed in those spaces (see \cite{CMZb}). To understand why this is restrictive, note that, in order for our solution to blow-up, we need  $\|\theta\|_{C^{1,\gamma}}$ norm to blow up. If consider solutions in $C^{1,\gamma+\varepsilon}$, this would basically mean lose $\varepsilon$ derivatives, without having to worry about things like the support of our forcing or the solution. However, when working in Sobolev spaces, the support of our solutions and forcing suddenly becomes relevant: $f(x)\sin(Nx)/N^{1+\gamma+\varepsilon}$ for example has the right size if we focus on the $L^{\infty}$ spaces (that is,  it is of order one in $C^{1,\gamma+\varepsilon}$) but it is comfortably supercritical when looking at Sobolev spaces, since it it is also of order one in $H^{1+\gamma+\varepsilon}$. This is not a purely technical issue, since many of the properties of our building blocks depend heavily on the size of their support. To put more emphasis on this point, when considering for example the blow-up for Boussinesq \cite{CLSMZa} and inhomogeneous Euler \cite{CLSMZb}, the solutions are well inside the well-posedness regime when considering $L^{\infty}$ based spaces, but none of them live in the well-posedness regime when considering Sobolev spaces.

\subsection{Organization of the paper}
Section \ref{Sketch} described the mechanism heuristically; the rest of the paper makes it rigorous.
One of the main features that we exploit to make the construction work is the fact that the building blocks
\[
  \mathcal{C}\phi\big(r\,\textbf{n}(\alpha)\cdot x\big)\phi\big(s\,\textbf{n}(\beta)\cdot x\big)
\]
form a class that is stable under everything the approximated equations do to them. Similar thing happens with the corrections of each layer. 

Sections \ref{SectionSolution}, \ref{SF}, \ref{SectionResult} verify
this stability and exploit it, as follows.

In Section \ref{SectionSolution} we introduce the class itself (Definition \ref{sod}: \emph{sequences of derivatives},
parametrized by an amplitude $\mathcal{C}$, two principal directions $(\alpha,\beta)$ and two
principal frequencies $(r,s)$) and we show that it is preserved by the two operations that generate the corrections of our
solution: taking a self-interaction $v^{\gamma}(f)\cdot\nabla f$ (Lemma \ref{nuevaperturbacion}) and solving a linear transport equation along the flow of the linearized velocity field generated by the previous layers forced with the self-interaction term (Lemma \ref{nuevaperturbacion2}).
Section \ref{sec:velocitylayers} computes the velocity generated by such a function (Lemma \ref{lemavelocidad}): its Taylor expansion
at the origin consists of a linear part, with three explicit coefficients
$a^{(1,1)},a^{(1,2)},a^{(2,1)}$, plus a cubic remainder with coefficients $b^{(j,k)}$. Section \ref{S3.2} shows that transporting a building block by a
linear velocity field keeps it in the class and reduces its evolution to a closed system of
four ODEs for $(r,\alpha)$ and $(s,\beta)$ (Lemma \ref{dinamica}), and then extracts the quantitative
behavior  of those ODEs in the regime of interest (Lemma \ref{lemaedos}): the new layer is essentially
subject to a pure shear of intensity $N_n^{\varepsilon}$, so that its long frequency grows
linearly in time while one of its principal directions rotates towards the principals directions of the previous layer.
Section \ref{construccion} assembles the induction (Lemma \ref{Lemma6}): assuming the description of the first $n$
layers, we construct the $(n+1)$-th, fix the frequency $N_{n+1}$, and recover the same
description one step further.

Sections \ref{SF} and \ref{SectionResult} are then estimates rather than construction: Section \ref{SF} bounds in $H^{\kappa}$
each of the four error terms left by the previous scheme, and Section \ref{SectionResult} checks that the four
bounds are summable and that the principal parts of the layers produce a finite time blow-up of the $H^{\kappa}$ norm.

\subsection{Notation}
\begin{itemize}
    \item Given two positive quantities $A$ and $B$, we write $A\lesssim B$ when $A\leq CB$ for some constant $C>0$ that is independent of all important quantities. To stress that $C$ depends on a certain constant $c > 0$, we use the notation $A \lesssim_c B$. If $A\lesssim B$ and $B\lesssim A$, then we write $A\simeq B$. 
    
    On the other hand, by $A \approx B$ we mean that $A$ is approximately close to $B$ (but not necessarily $A \simeq B$). Since the symbol $\approx$ lacks a rigorous mathematical definition, we will only use it in heuristics. 

    \item We write $a^\perp:=(-a_2,a_1)$ for $a= (a_1, a_2) \in \mathbb{R}^2$ and $\nabla^\perp:=(-\partial_2,\partial_1)$.
    \item Let $s\in (-1,0)$. The fractional Laplacian of a compactly supported smooth function $f:\mathbb{R}^2\rightarrow \mathbb{R}$  is defined by
    \begin{align}
    \label{laplacianofraccionario}
        (-\Delta)^{s}f(x):=\frac{4^s \Gamma (1+s)}{\pi \Gamma (-s)}\int_{\mathbb{R}^2}\frac{f(x-y)}{|y|^{2+2s}} \, dy, \qquad \forall x\in \mathbb{R}^2.
    \end{align}
    \item We denote the unit vector in the direction $\alpha\in \mathbb{R}$ as $\textbf{n}(\alpha):=(\cos \alpha,\sin \alpha)$.
    \item The (counterclockwise) rotation of angle $\alpha\in \mathbb{R}$ is defined as 
    \begin{align*}
        R_\alpha x:=\left(\begin{matrix}
            \cos \alpha & -\sin \alpha\\
            \sin \alpha & \cos \alpha
        \end{matrix}\right)x, \qquad \forall x\in \mathbb{R}^2.
    \end{align*}
    \item Given a function $f:\mathbb{R}^2\rightarrow \mathbb{R}$, its $\alpha$-rotated function is
    \begin{align*}
        R_\alpha f(x):=f(R_{-\alpha}x), \qquad \forall x \in \mathbb{R}^2.
    \end{align*}
    \item Let $\kappa_1,\kappa_2\in \mathbb{N}_0$. We define the differential operator $
        D^{(\kappa_1,\kappa_2)}:=\frac{\partial^{\kappa_1+\kappa_2}}{\partial x_1^{\kappa_1}\partial x_2^{\kappa_2}}.$
    \item The indicator function of the set $A\subset \mathbb{R}^2$ is denoted by $\ind_A$.  
    \item The open ball of radius $R>0$ centered at $x\in \mathbb{R}^2$ is denoted by $B_x(R)$.
\end{itemize}
  \section{Sketch of the proof}\label{Sketch}
   In this section, we outline the key ideas involved in the proof of Theorem \ref{teorema}. 
   
    Take $\phi:\mathbb{R}\rightarrow\mathbb{R}$ an even, compactly supported, smooth bump around the origin. Let $\varepsilon,\delta>0$ be two small parameters to be specified later, subject to $\gamma\varepsilon>>\delta$. Then, consider the stationary function
   \begin{align}\label{P16.8.2}
      \theta_0(x, t)=N_0^{-1-\gamma+\varepsilon}\phi(N_0x_1).
   \end{align}
   Discarding  multiplicative constants for simplicity and Taylor expanding $y_1 \mapsto \phi'(N_0 (x_1-y_1))$ around $y_1=0$, we obtain
   \begin{align*}
       v^\gamma(\theta_0)(x, t)&=-\int_{\mathbb{R}^2}\frac{\nabla^\perp \theta_0(x-y)}{|y|^{1+\gamma}} \, dy =-N_0^{-\gamma+\varepsilon}\int_{\mathbb{R}^2}\left(\begin{matrix}
           0\\
           \phi'(N_0(x_1-y_1))
       \end{matrix}\right) \, \frac{dy}{|y|^{1+\gamma}} \\
       &=-N_0^{1-\gamma+\varepsilon}x_1\int_{\mathbb{R}^2}\left(\begin{matrix}
           0\\
           \phi''(N_0y_1)
       \end{matrix}\right)\frac{dy}{|y|^{1+\gamma}}+O_{N_0}(|x|^3)\\
       &=\left(\begin{matrix}
           0\\
           N_0^\varepsilon x_1\int_{\mathbb{R}^2}\frac{(-\phi''(y_1))}{|y|^{1+\gamma}} \, dy
       \end{matrix}\right)+O_{N_0}(|x|^3).
   \end{align*}
  Here,  the zeroth and second order terms of the Taylor expansion vanish because of the parity of $\phi$. Hence, the first order of the Taylor expansion around the origin of $v^\gamma(\theta_0)$ (say, $\overline{v}^\gamma (\theta_0)$) is of the form
   \begin{align}
   \label{velocidadbarra}
       \overline{v}^\gamma (\theta_0)(x, t)=\left(
\begin{matrix}
    0\\ N_0^\varepsilon x_1
\end{matrix}
       \right),
   \end{align}
   where we have omitted the multiplicative constant depending on $\phi$ (it is easy to see that $\int_{\mathbb{R}^2}\frac{(-\phi''(y_1))}{|y|^{1+\gamma}} \, dy > 0$, apply integration by parts together with the fact that $\phi$ is non-increasing on $(0, \infty)$).

   
   Notice that the function $\theta_0$ introduced in \eqref{P16.8.2} does not have finite energy. To overcome this, let us redefine $\theta_0$ as 
   \begin{align}\label{P16.8.1}
       \theta_0(x, t)=N_0^{-1-\gamma+\varepsilon}\phi(N_0x_1)\phi(x_2).
   \end{align}
   Clearly, this function has finite energy. Furthermore, applying the same reasoning as before with $N_0 \gg 1$, one can check  that the main linear term of the Taylor expansion of $v^\gamma(\theta_0)$ with \eqref{P16.8.1} around the origin is still of type \eqref{velocidadbarra}. 

   Now, consider that the approximate velocity $\overline{v}^\gamma(\theta_0)$  transports a new layer $\theta_1$,  which is initially defined by 
   \begin{align}
   \label{condicioninicial1}
       \theta_1(x,t_1)=N_1^{-1-\gamma+\varepsilon}\phi\Big(N_1^{1-\frac{\varepsilon+\delta}{1+\gamma+\delta}}x_1\Big)\phi\Big(N_1^{1-\frac{\varepsilon+\delta}{1+\gamma+\delta}}x_2\Big),
   \end{align}
   where the initial time $t_1$ and $N_1$ will be adequately chosen.  
The choice of the exponent $1-\frac{\varepsilon+\delta}{1+\gamma+\delta}$ in the frequency of $\theta_1$ is not arbitrary and it is  justified by the fact that, by homogeneity,   $\|\theta_1(\cdot,t_1)\|_{H^{2+\gamma+\delta}}\approx 1$ (at least if $N_1 \gg 1$). In other words, the new layer $\theta_1$ becomes critical within the Sobolev regularity $2+\gamma+\delta$, while its Sobolev norms with exponents strictly smaller than $2+\gamma+\delta$ will tend to zero as $N_1 \to \infty$. Since the forcing term is responsible for generating the successive layers, controlling the forcing requires controlling the Sobolev size of each newly generated layer. The previous reasoning shows that $2+\gamma+\delta$ naturally appears as the threshold Sobolev exponent for which the construction can be expected to remain controlled.
   
  As mentioned above, assume that $\theta_1(x,t)$ solves 
   \begin{align}
   \label{pdeaprox}
       \partial_t\theta_1+\overline{v}^\gamma(\theta_0)\cdot \nabla \theta_1=0, \qquad \forall t\in [t_{1},1],
   \end{align}
   with the initial condition \eqref{condicioninicial1}. The unique solution to this transport equation is 
   \begin{align}
   \label{soluciontheta1}
       \theta_1(x,t)=N_1^{-1-\gamma+\varepsilon}\phi\Big(N_1^{1-\frac{\varepsilon+\delta}{1+\gamma+\delta}}x_1\Big)\phi\Big(N_1^{1-\frac{\varepsilon+\delta}{1+\gamma+\delta}}\left(x_2-N_0^\varepsilon (t-t_1)x_1\right)\Big).
   \end{align}
   One can check that, for $t_2>t_1$ sufficiently large, the supports of the layers $\theta_0$ and $\theta_1$ evolve as depicted in Figure \ref{fig:capa1} (in particular, we note that the size of the  support of $\theta_1$ is preserved in time since $\nabla \cdot \overline{v}^\gamma(\theta_0) = 0$). 
\hspace{2cm}
\begin{figure}[h]
\centering
\begin{tikzpicture}[scale=0.6, transform shape, line width=0.6pt]
  \draw (0,0) rectangle (7,15.44);
  \draw (2.40,6.62) rectangle (4.60,8.82);   
  \node[lbl, anchor=north west] at (0.15,15.28) {$\text{supp }\theta_0(\cdot,t_1)$};
  \node[lbl, anchor=south]      at (3.5,8.9)    {$\text{supp }\theta_1(\cdot,t_1)$};
\end{tikzpicture}
\hspace{3cm}
\begin{tikzpicture}[scale=0.6, transform shape, line width=0.6pt]
  \draw (0,0) rectangle (7,15.44);
  \draw (4.60,12.40) -- (4.60,11.08) -- (2.40,3.04) -- (2.40,4.36) -- cycle;
  \node[lbl, anchor=north west] at (0.15,15.28) {$\text{supp } \theta_0(\cdot,t_2)$};
  \node[lbl, anchor=east]       at (3.75,10.5)  {$\text{supp }\theta_1(\cdot,t_2)$};
\end{tikzpicture}
 \caption{Evolution of the supports of $\theta_0$ and $\theta_1$.}
  \label{fig:capa1}
\end{figure}

   If we estimate the norm $\|\theta_1(\cdot,t)\|_{H^{\kappa}}$ for $\kappa \geq 0$, namely, 
   \begin{align}
       \|\theta_1(\cdot,t)\|_{H^\kappa}&\approx N_1^{-1-\gamma+\varepsilon} \Big(N_1^{1-\frac{\varepsilon+\delta}{1+\gamma+\delta}}(1+N_0^\varepsilon(t-t_1))\Big)^{\kappa} |\text{supp } \theta_1(\cdot,t_1)|^{\frac{1}{2}} \nonumber \\
       &=N_1^{-1-\gamma+\varepsilon+(\kappa-1)(1-\frac{\varepsilon+\delta}{1+\gamma+\delta})}(1+ N_0^{ \varepsilon}(t-t_1))^\kappa. \label{P16.8.5}
   \end{align}
we observe that if $\kappa<2+\gamma+\delta$ then initially (that is, $t=t_1$),
\begin{align*}
    \|\theta_1(\cdot,t_1)\|_{H^{\kappa}}\approx N_1^{-1/C},
\end{align*}
that is,  we introduce a perturbation whose $H^{\kappa}$ norm with  $\kappa<2+\gamma+\delta$ may be sufficiently small. Note that the choice of $N_1$ in \eqref{condicioninicial1} is available, so   if we choose 
\begin{align}\label{P16.8.3}
    N_1\approx N_0^{(1+\gamma+\delta)\frac{\varepsilon}{\varepsilon+\delta}}
\end{align} 
and consider the critical case $t-t_1\approx1$, then
\begin{align*}
\|\theta_1(\cdot,t)\|_{H^{2+\gamma}}\approx N_1^{\varepsilon-\frac{1+\gamma}{1+\gamma+\delta}(\varepsilon+\delta)}N_0^{(2+\gamma)\varepsilon}\geq  N_0^{(2+\gamma)\varepsilon-(1+\gamma)\delta}\geq N_0^\varepsilon.
\end{align*}
As a byproduct, we have found a growth  mechanism in the well-posedness regime $H^{\kappa}$ with $\kappa\in (2+\gamma,2+\gamma+\delta)$. 

To obtain a blow-up in finite time, we would like to accumulate this growth. For this aim, we need to be able to iterate the previous process in a systematic way. In this regard, a basic observation is  that the main feature of $\theta_0$ that allowed us to obtain a velocity of the form \eqref{velocidadbarra} relies on anisotropy: the existence of a direction in which the derivative of $\theta_0$ is very large compared with the derivative in the perpendicular one. But notice that this property also holds  for $\theta_1$ (see \eqref{soluciontheta1}). In fact, if one considers the derivative of $\theta_1$ along the direction 
\begin{align*}
    \textbf{n}=\frac{1}{\sqrt{1+N_0^{2\varepsilon}(t-t_1)^2}} \, (N_0^\varepsilon(t-t_1),-1)
\end{align*}
will be much bigger than the corresponding one along $\textbf{n}^\perp$, so that the most important terms of the first order of the velocity (encoded in $\overline{v}^\gamma(\theta_1)$) verify  (see \eqref{P16.8.3})
\begin{align*}
    \overline{v}^\gamma(\theta_1)(x,t)\cdot \textbf{n}^\perp \approx N_1^{-1-\gamma+\varepsilon}\Big( N_1^{1-\frac{\varepsilon+\delta}{1+\gamma+\delta}}N_0^\varepsilon \Big)^{1+\gamma} \textbf{n}\cdot x=N_1^{\varepsilon}\textbf{n}\cdot x
\end{align*}
in the critical case $t-t_1 \approx 1$. This gives the same critical  behavior of $\overline{v}^\gamma(\theta_1)$ as the corresponding one exhibited by $\overline{v}^\gamma(\theta_0)$, see \eqref{velocidadbarra}. 

Then it is natural to expect  that at time $t_2$ (very close to $1$), a new perturbation $\theta_2$ could appear in the same fashion as $\theta_1(x,t_1)$ but with higher frequencies (that is, $N_2$ instead of $N_1$ in \eqref{condicioninicial1}) and rotated by a small angle (of order $N_0^{-\varepsilon}$) in such a way that two of the sides of its support are parallel to the long sides of the support of $\theta_1(\cdot,t_2)$, as shown in Figure \ref{fig:capa2}. Looking at the zoomed image in Figure \ref{fig:capa2}, one can see that we recover a scenario similar to the one observed before for $\theta_0(\cdot, t_1)$ and $\theta_1(\cdot, t_1)$ but now involving $\theta_1(\cdot, t_2)$ and $\theta_2(\cdot, t_2)$. Therefore we anticipate that the support of $\theta_2$ will elongate as the one exhibited by $\theta_1$.

\begin{figure}[h]
    \centering
\hspace{4cm}
\begin{tikzpicture}[scale=0.6, transform shape, line width=0.6pt]
  \draw (0,0) rectangle (7,15.44);

  \draw (4.60,12.40) -- (4.60,11.08) -- (2.40,3.04) -- (2.40,4.36) -- cycle;

  \draw (3.45,7.81) -- (3.59,7.77) -- (3.55,7.63) -- (3.41,7.67) -- cycle;

  \draw[blue,line width=0.48pt,rotate around={-15.3:(3.5,7.72)}]
        (3.5,7.72) ellipse (0.5 and 0.85);

  \draw[blue,line width=0.48pt,-{Stealth}] (4.15,7.50) to[out=-20,in=200] (9.20,7.90);

  \draw[blue,line width=0.48pt] (11.5,7.72) ellipse (2.2 and 3.6);

  \begin{scope}
    \clip (11.5,7.72) ellipse (2.2 and 3.6);
    \draw (11.54,12.41) -- (9.16,3.73);
    \draw (13.84,11.71) -- (11.46,3.03);
  \end{scope}

  \draw (11.22,8.21) -- (11.99,8.00) -- (11.78,7.23) -- (11.01,7.44) -- cycle;

  \node[lbl, anchor=north west] at (0.15,15.28) {$\text{supp } \theta_0(\cdot,t_2)$};
  \node[lbl, anchor=east]       at (3.75,10.5)  {$\text{supp }\theta_1(\cdot ,t_2)$};
  \node[lbl, anchor=south, fill=white, inner sep=1pt]
        at (11.77,8.55) {$\text{supp }\theta_2(\cdot,t_2)$};
\end{tikzpicture}  
    \caption{Activation of the layer $\theta_2$ at time $t_2$.}
    \label{fig:capa2}
\end{figure}

This accumulation of anisotropy around the origin generates at the same time bigger velocities (which help us to iterate the process) and bigger Sobolev norms (which will blow up at time $t=1$). In this way, we  arrive at the sum of layers 
\begin{align}
    \theta(x,t)=\sum_{n=0}^\infty h_n(t) \theta_n(x,t)
\end{align}
as a natural candidate to be a solution. Here,  $h_n$ are smooth non-decreasing functions used to introduce the layers at time $t_n$, more precisely, 
\begin{align}\label{P16.8.4}
\left\{
\begin{aligned}
    h_n(t)&=0,\qquad  \forall t\in [0,t_n),\\
    h_n(t)&=1, \qquad  \text{soon after }  t_n,
    \end{aligned} 
\right.
\end{align}
and where $t_n \nearrow 1$ with $t_0=0$.

Next, let us see how  this mechanism can be reconciled  with solving the forced generalized SQG equation with a regular forcing term. Although we need to iterate this mechanism an infinite number of times, it is enough to consider two layers to see how we are going to proceed. Define
\begin{align*}
    \theta(x,t)=\theta_0(x)+h_1(t)\theta_1(x,t), \qquad \forall (x,t)\in \mathbb{R}^2\times[0,1],
\end{align*}
where $h_1$ a smooth non-decreasing function such that
\begin{align*}
\left\{
\begin{aligned}
    h_1(t)&=0, \qquad \forall t\in [0,t_1], \\
    h_1(t)&=1, \qquad \forall t\in [t_2,1],
    \end{aligned}
\right.
\end{align*}
with $h_1$ to be chosen in $(t_1,t_2)$; see \eqref{P16.8.4}. Consider $F$ such that 
\begin{align*}
    \partial_t \theta(x,t)+v^\gamma(\theta)(x,t)\cdot \nabla \theta(x,t)=F(x,t).
\end{align*}
Then, if we want $\theta_1$ to solve \eqref{pdeaprox},  one has that
\begin{align*}
&\partial_t(\theta_0+h_1\theta_1)+v^\gamma(\theta_0+h_1\theta_1)\cdot \nabla (\theta_0+h_1\theta_1)\\
    &=h_1'\theta_1 +h_1\partial_t\theta_1 + v^\gamma(\theta_0)\cdot \nabla \theta_0 + h_1 v^\gamma(\theta_0)\cdot \nabla \theta_1 +h_1 v^\gamma(\theta_1)\cdot \nabla\theta_0+h_1^2 v^\gamma(\theta_1)\cdot \nabla\theta_1
    \\&=h_1'\theta_1 + v^\gamma(\theta_0)\cdot \nabla \theta_0 + h_1 (v^\gamma(\theta_0)-\overline{v}^\gamma(\theta_0))\cdot \nabla \theta_1 +h_1 v^\gamma(\theta_1)\cdot \nabla\theta_0+h_1^2 v^\gamma(\theta_1)\cdot \nabla\theta_1,
\end{align*}
and then 
\begin{align*}
    F(x,t)=h_1'\theta_1 + v^\gamma(\theta_0)\cdot \nabla \theta_0 + h_1 (v^\gamma(\theta_0)-\overline{v}^\gamma(\theta_0))\cdot \nabla \theta_1 +h_1 v^\gamma(\theta_1)\cdot \nabla\theta_0+h_1^2 v^\gamma(\theta_1)\cdot \nabla\theta_1.
\end{align*}
Accordingly,  we are going to separate the forces into four types:
\begin{itemize}
    \item The activation terms of the type
    \begin{align*}
        h_1'(t)\theta_1(x,t).
    \end{align*}
    This term of the force is connected with the creation of the new layer. 
    \item The cubic transport of the new layer: 
    \begin{align*}
        (v^\gamma(\theta_0)-\overline{v}^\gamma(\theta_0))\cdot \nabla \theta_1.
    \end{align*}
    Since the new layer is highly concentrated around the origin, the transport it undergoes due to the previous layers is well approximated by its linearization. Notice that, since $\theta_0$ is even with respect to the origin, then $v^\gamma(\theta_0)$ is odd and hence the next term in the Taylor expansion is the cubic one.
    \item The transport of the previous layers:
    \begin{align*}
        v^\gamma(\theta_1) \cdot \nabla \theta_0.
    \end{align*}
 In the forcing term, we also neglect the transport that each new layer induces on all the previous ones.
     \item The self-interaction terms:
    \begin{align*}
         v^\gamma(\theta_0)\cdot \nabla \theta_0 , \hspace{5mm}  v^\gamma(\theta_1)\cdot \nabla \theta_1.
    \end{align*}
    That is, we neglect the transport that each layer induces on itself. 
\end{itemize}
Notice that we have only kept the presence of $h_1$ in the activation term, where its derivative appears. The reason behind is that $h_1$ will only be relevant in the activation terms. For the remainder three terms, it suffices to apply that $h_1 \lesssim 1$. Now, we should estimate all these forcing terms in the well-posedness space $L^1([0,1],H^\kappa)$ with $\kappa>2+\gamma$.

The activation term can easily be bounded if $h_1$ grows sufficiently fast. That is, if 
\begin{align*}
    h_1(t)=1, \quad  \forall t\in [t_1+N_0^{-10\varepsilon},1], \qquad \text{and} \qquad  \|h'_1\|_{L^\infty} \lesssim  N_0^{10\varepsilon}
\end{align*}
then $h_1'(t)=0$ for $t\in [t_1+N_0^{-10\varepsilon},1]$ and, in particular, the activation term vanishes when $\|\theta_1(\cdot,t)\|_{H^\kappa}$ is big. In other words, we have to include the layer before its Sobolev norm has grown too much. Under this assumption, we have (see \eqref{P16.8.5})
\begin{align*}
    &\int_0^1 h_1'(t)\|\theta_1(\cdot,t)\|_{H^\kappa} \, dt
    =\int_{t_1}^{t_1+N_0^{-10\varepsilon}} h_1'(t)\|\theta_1(\cdot,t)\|_{H^\kappa} \, dt \\& \hspace{1cm} \lesssim \|h_1'\|_{L^\infty} \|\theta_1(\cdot,t_1+N_0^{-10\varepsilon})\|_{H^\kappa} N_0^{-10\varepsilon}
    \approx  \|\theta_1(\cdot,t_1)\|_{H^\kappa}\simeq N_1^{-1-\gamma+\varepsilon+(\kappa-1)(1-\frac{\varepsilon+\delta}{1+\gamma+\delta})}.
\end{align*}
In particular,  the exponent of $N_1$ is negative provided that $\kappa<2+\gamma+\delta$.

The cubic transport can be estimated by taking advantage of the fact that, although the support of $\theta_1$ elongates, it will always remain close enough to the origin, where the linear part of the velocity gives a good enough approximation of the velocity itself. Then, noticing that the largest frequency is associated with   $\theta_1$ (and hence the dominating term is the one with all the derivatives acting on $\theta_1$), 
\begin{align}
    &\int_0^1  \|(v^\gamma(\theta_0)-\overline{v}^\gamma(\theta_0))\cdot \nabla \theta_1\|_{H^\kappa}\, dt \nonumber \\
    & \hspace{.75cm}\lesssim |\text{supp } \theta_1|^{\frac{1}{2}} \sup_{t\in[0,1]}\|(v^\gamma(\theta_0)-\overline{v}^\gamma(\theta_0))(\cdot,t)\|_{L^\infty (\text{supp } \theta_1)} N_1^{-1-\gamma+\varepsilon}\Big(N_1^{1-\frac{\varepsilon+\delta}{1+\gamma+\delta}}N_0^\varepsilon\Big)^{1+\kappa}  \nonumber \\
    & \hspace{.75cm}\lesssim N_1^{-1+\frac{\varepsilon+\delta}{1+\gamma+\delta}} \Big(N_0^{-1-\gamma+\varepsilon} N_0^{\gamma} \sup_{t\in [0,1]}\sup_{x\in \text{supp }\theta_1}|N_0x|^3\Big) N_1^{-1-\gamma+\varepsilon}\Big(N_1^{1-\frac{\varepsilon+\delta}{1+\gamma+\delta}}N_0^\varepsilon\Big)^{1+\kappa}  \nonumber\\
    &\hspace{.75cm} \lesssim N_1^{-1+\frac{\varepsilon+\delta}{1+\gamma+\delta}} N_0^{2+\varepsilon}\Big( N_1^{-1+\frac{\varepsilon+\delta}{1+\gamma+\delta}}N_0^\varepsilon\Big)^3 N_1^{-1-\gamma+\varepsilon}\Big(N_1^{1-\frac{\varepsilon+\delta}{1+\gamma+\delta}}N_0^\varepsilon\Big)^{1+\kappa}
     \lesssim N_1^{-4-\gamma+\frac{2}{1+\gamma}+\kappa +O(\varepsilon)},\label{17.8.2}
\end{align}
where we have also used that the area of the support of $\theta_1$ is invariant as a consequence of the incompressibility and that the term of order 3 of the velocity consists heuristically in taking out $N_0^\gamma$ (because $v^\gamma$ is an operator of order gamma) and then the order three of the argument of the function, that is, $|N_0x|^3$.

The term corresponding to the transport of the previous layers can be estimated by splitting the norm into two parts: close to the support of $\theta_1$ (say, $\frac{\text{supp } \theta_0}{10}$)  and far from it. On the one hand, on the support of $\theta_1$  we  use that $\theta_0$ is constant (due to the choice of $\phi$ that we will specify) and, on the other hand,  outside  we use the decay of the velocity $|v^\gamma(\theta_1)(x)| \lesssim \frac{\|\theta_1\|_{L^1}}{|x|^{1+\gamma}}$ for $x$ sufficiently far from $\text{supp } \theta_1$, the fact that $|\nabla \theta_0(x)| \lesssim |x| \|D^2 \theta_0\|_{L^\infty}$. Since we want to locate the Sobolev norm into two pieces, it is convenient to deal  with local norms, so let us assume momentarily that  $\kappa \in \mathbb{N}$. Then
\begin{align}
    &\int_0^1\|v^\gamma(\theta_1)\cdot \nabla \theta_0\|_{H^\kappa}\, dt =   \int_0^1\|v^\gamma(\theta_1)\cdot \nabla \theta_0\|_{H^\kappa(\frac{\text{supp }\theta_0}{10})}\, dt+  \int_0^1\|v^\gamma(\theta_1)\cdot \nabla \theta_0\|_{H^\kappa(\mathbb{R}^2\setminus \frac{\text{supp }\theta_0}{10})}\, dt \nonumber\\
    &\hspace{.75cm}\lesssim 
    \left(\int_{\mathbb{R}^2\setminus \frac{\text{supp }\theta_0}{10}} \left(\Big(N_1^{1-\frac{\varepsilon+\delta}{1+\gamma+\delta}}N_0^\varepsilon\Big)^{\kappa}\frac{\|\theta_1\|_{L^1}}{|x|^{2+\gamma}} N_0^{-1-\gamma+\varepsilon}N_0^2|x| \right)^2 dx\right)^{\frac{1}{2}}
    \lesssim N_1^{-3-\gamma+\frac{1}{1+\gamma}+\kappa+O(\varepsilon)}. \label{P17.8.1}
\end{align}
 Moreover, the previous computations may be extended to  the case $\kappa \not \in \mathbb{N}$ via standard interpolation-type arguments. As a consequence, if $\kappa\in (2+\gamma,2+\gamma+\delta)$, where $\delta$ is small enough (in terms of $\gamma$), then the exponent of $N_1$ achieved in \eqref{P17.8.1} is negative. 

We wish to repeat the above process infinitely many times. As illustrated by the example associated with the two first layers of the construction,  one can see that the infinite sum of the forces will be bounded by a convergent infinite sum and hence the total force will remain in the desired well-posedness Sobolev space during the whole time, including the time of the blow up. However, we have  not yet estimated the self-interaction term in $F$. In fact, one can show that 
\begin{align*}
    \|v^\gamma(\theta_1)\cdot \nabla\theta_1\|_{H^\kappa}\geq N_1^{O(\varepsilon)},
\end{align*}
so the proposed strategy seems to fail dramatically.  Indeed, fixing this issue has been one of the most challenging aspects of our  construction.

At this point, we have no other option but to change the candidate for a solution. That is, we need to construct a better approximation of a solution of gSQG (without force) so that the errors accumulating in the force are summable. Nevertheless, we would like to change it in such a way that the mechanism of growth and the bounds of all the other terms of the force that are not coming from self-interactions remain the same. 

Motivated by the above discussion, we redefine $\theta_1$ as follows:
\begin{align*}
    \theta_1(x,t)=f_{1,0}(x,t)+f_{1,1}(x,t),
\end{align*}
where $f_{1,0}(x,t)$ is the previous $\theta_1$, that is, $f_{1,0}(x,t)$ solves \eqref{pdeaprox} with initial conditions \eqref{condicioninicial1}. On the other hand, $f_{1,1}$ solves the Cauchy problem: 
\begin{align*}
\left\{
\begin{aligned}
   & \partial_t f_{1,1}+\overline{v}^\gamma(\theta_0)\cdot \nabla f_{1,1}=-h_1v^\gamma(f_{1,0})\cdot \nabla f_{1,0}, \qquad \forall (x,t)\in \mathbb{R}^2\times [t_1,1],\\
   & f_{1,1}(x,t_1)=0. 
   \end{aligned}
   \right.
\end{align*}
With this new strategy, the problematic self-interaction term $v^\gamma(f_{1,0})\cdot \nabla f_{1,0}$ in the force  disappears. Of course, there is a price to pay: There will appear the terms 
\begin{align}
\label{fuerzaextra}
    v^\gamma(f_{1,0})\cdot \nabla f_{1,1}+
    v^\gamma(f_{1,1})\cdot \nabla f_{1,0}+
    v^\gamma(f_{1,1})\cdot \nabla f_{1,1}.
\end{align}
First, notice that $f_{1,1}$ will retain the most important aspects that were necessary to prove all the previous computations. Indeed, 
\begin{itemize}
    \item It is even with respect to the origin: Since $f_{1,0}$ is even with respect to the origin, then $v^\gamma(f_{1,0})$ and $\nabla f_{1,0}$ are odd, and hence their scalar product is even again. Moreover, $\overline{v}^\gamma(\theta_0)$ is odd for the same reason. Hence, a solution of a transport PDE with even initial condition, even force and odd velocity remains even for all time. It will therefore generate an odd velocity.
    \item Since we start with a zero initial condition, the force is supported in $\text{supp }f_{1,0}$ and $f_{1,0}$, $f_{1,1}$ are both transported by the same velocity, then
    \begin{align*}
        \text{supp }f_{1,1}\subset 
        \text{supp }f_{1,0}. 
    \end{align*}
    Hence we can use the same estimate on the support when estimating  the force terms.
    \item It has the same anisotropy as $f_{1,0}$: the large derivative will be in the same direction as it was for $f_{1,0}$ and as large as the one of $f_{1,0}$. This is useful in the computation of the velocity $\overline{v}^\gamma(\theta_1)$ and to obtain some extra cancellations in the force terms.
\end{itemize}
But now, if we want the force to be smaller, the terms in \eqref{fuerzaextra} must be smaller than the one that we canceled. As we just said, the frequencies of $f_{1,1}$ are comparable to the ones of $f_{1,0}$, so
\begin{align*}
    \frac{\|f_{1,0}\|_{H^\kappa}}{\|f_{1,0}\|_{L^2}}\approx \frac{\|f_{1,1}\|_{H^\kappa}}{\|f_{1,1}\|_{L^2}}. 
\end{align*}
Since the supports are comparable, the gain will instead come from  the $L^\infty$ norm, or as we shall call it in the rest of the paper, the amplitude. As a result,
\begin{align*}
    \|f_{1,1}\|_{H^\kappa} \ll \|f_{1,0}\|_{H^\kappa}.
\end{align*}
 To check this, one has to look at the $L^\infty$ norm of the force term and take into account the orthogonality cancellation. That is, if we define
\begin{align*}
    f_{1,0}^{(1,0)}(x,t)=N_1^{-1-\gamma+\varepsilon}\phi'\Big(N_1^{1-\frac{\varepsilon+\delta}{1+\gamma+\delta}}x_1\Big)\phi\Big(N_1^{1-\frac{\varepsilon+\delta}{1+\gamma+\delta}}\left(x_2-N_0^\varepsilon (t-t_1)x_1\right)\Big),\\
    f_{1,0}^{(0,1)}(x,t)=N_1^{-1-\gamma+\varepsilon}\phi\Big(N_1^{1-\frac{\varepsilon+\delta}{1+\gamma+\delta}}x_1\Big)\phi'\Big(N_1^{1-\frac{\varepsilon+\delta}{1+\gamma+\delta}}\left(x_2-N_0^\varepsilon (t-t_1)x_1\right)\Big),
\end{align*}
then
\begin{align*}
    v^\gamma(f_{1,0})=  
    N_1^{1-\frac{\varepsilon+\delta}{1+\gamma+\delta}}\left[
\begin{pmatrix}
    1\\ N_0^\varepsilon(t-t_1)
\end{pmatrix}(-\Delta)^{-\frac{1-\gamma}{2}} f_{1,0}^{(0,1)}-\begin{pmatrix}
    0\\1
\end{pmatrix}(-\Delta)^{-\frac{1-\gamma}{2}} f_{1,0}^{(1,0)}
    \right],
\end{align*}
and
\begin{align*}
\nabla f_{1,0}=
    N_1^{1-\frac{\varepsilon+\delta}{1+\gamma+\delta}} \left(
\begin{pmatrix}
    -N_0^\varepsilon(t-t_1)\\1
\end{pmatrix}f_{1,0}^{(0,1)}
+\begin{pmatrix}
    1\\0
\end{pmatrix} f_{1,0}^{(1,0)}
    \right),
\end{align*}
so that, by orthogonality, the terms carrying two derivatives of $\phi$ in the large directions cancel and we are left with
\begin{align*}
     |v^\gamma(f_{1,0})\cdot \nabla f_{1,0}|&=  \left|N_1^{2\left(1-\frac{\varepsilon+\delta}{1+\gamma+\delta}\right)} \left((-\Delta)^{-\frac{1-\gamma}{2}} f_{1,0}^{(0,1)} f_{1,0}^{(1,0)}
     -(-\Delta)^{-\frac{1-\gamma}{2}} f_{1,0}^{(1,0)} f_{1,0}^{(0,1)}
\right)\right|\\
&\lesssim 
N_1^{2\left(1-\frac{\varepsilon+\delta}{1+\gamma+\delta}\right)} N_1^{2(-1-\gamma+\varepsilon)} N_1^{-1+\gamma}=N_1^{-1-\gamma+\varepsilon}N_1^{\varepsilon-2\frac{\varepsilon+\delta}{1+\gamma+\delta}},
\end{align*}
and, taking $\delta$ small enough so that $1+\gamma+\delta<2$, then 
\begin{align*}
   \varepsilon-2\frac{\varepsilon+\delta}{1+\gamma+\delta}<0 
\end{align*}
is the gain that we obtain in the exponent of the $L^\infty$ norm and hence also in the exponent of the force. It may happen that this is not enough, but we can iterate this process a finite number of times (depending only on $\gamma$) until we obtain a negative exponent in the self-interaction term of the force. 

We are now ready to present the complete infinite-layers scheme: We construct a solution $\theta$ that solves \eqref{ecuacion} with a force $F$. The solution will be a sum of layers 
\begin{align}
    \theta(x,t)=\sum_{n=0}^\infty h_n(t) \theta_n(x,t),
\end{align}
where $h_n$ are given by \eqref{P16.8.4}. 
The sequence of times is defined by
\begin{align}
    \label{tiempos}
    t_0=0,\qquad  t_n=1- N_{n-1}^{-\mu}, \qquad  \forall n\in \mathbb{N},
\end{align}
 where $\mu>0$ is a small parameter to be fixed.

Each of the layers is formed by a principal part $f_{n,0}$ and smaller corrections $f_{n,i}$ for $1\leq i\leq I_\gamma$. That is, 
\begin{align}
    \theta_n(x,t)=\sum_{i=0}^{I_\gamma} f_{n,i}(x,t), \qquad n \in \mathbb{N}, 
\end{align}
with $I_\gamma>0$ to be specified. The zeroth layer is stationary. More specifically, 
\begin{align*}
    \theta_0(x,t)=N_0^{-1-\gamma+\varepsilon}\phi(N_0x_1)\phi(x_2), \qquad h_0(t)=1.
\end{align*}
Consider 
\begin{align*}
    \Theta_n=\sum_{j=0}^n h_j \theta_j.
\end{align*}
For $n \in \mathbb{N}$, the principal part of the layers  verify
\begin{align}
\label{pdeprincipal}
\left\{
\begin{aligned}
    &\partial_t f_{n,0}+\overline{v}^\gamma \left(\Theta_{n-1}\right) \cdot \nabla f_{n,0}=0, \qquad  \forall t\in [t_n,1],\\
 &f_{n,0}(x,t_n)=N_n^{-1-\gamma+\varepsilon}\phi(r_n(t_n)\textbf{n}(\alpha_n(t_n))\cdot x)\phi(s_n(t_n)\textbf{n}(\beta_n(t_n))\cdot x). 
\end{aligned}
\right.
\end{align}
Here, $\overline{v}^\gamma \left(\Theta_{n-1}\right)$ is defined  as the linear part of the spatial Taylor expansion of $v^\gamma \left(\Theta_{n-1}\right)$.  Accordingly,  there exist functions $r_n, s_n, \alpha_n, \beta_n$ such that
\begin{align*}
    f_{n,0}(x,t)=N_n^{-1-\gamma+\varepsilon}\phi(r_n(t)\textbf{n}(\alpha_n(t))\cdot x)\phi(s_n(t)\textbf{n}(\beta_n(t))\cdot x).
\end{align*}
As we will see in Lemma \ref{dinamica}, one can prove  that \eqref{pdeprincipal} is equivalent to a system of ODEs for $r_n, s_n, \alpha_n, \beta_n$, where the coefficients of those ODEs will be given in terms of $r_j, s_j, \alpha_j, \beta_j$ for $j\leq n-1$. A precise estimation of the solutions of those ODEs is crucial to describe the dynamics of the solution. 

The functions $r_n$, $s_n$ are called principal frequencies and $\alpha_n$, $\beta_n$ are called principal directions. Notice that $\textbf{n}(\alpha_n)$ and $\textbf{n}(\beta_n)$ will be perpendicular to two of the lines of the parallelograms that confine the supports of the layers. As illustrated in Figure \ref{fig:vectors} below,  the green vector remains perpendicular to the vertical lines and does not change its size (since the vertical lines do not change their relative distance nor their directions). On the other hand, the orange vector grows (since the long sides of the quadrangle tend to approach each other) and rotates towards the green one (since the long sides tend to become vertical).

\begin{figure}[h]
\centering
\begin{tikzpicture}[scale=0.6, transform shape, line width=0.6pt]
  \filldraw[fill=white] (0,0) rectangle (7,15.44);
  \filldraw[fill=white] (2.40,6.62) rectangle (4.60,8.82);   
  \draw[vec, vecverde] (3.5,7.72) -- (5.5,7.72);
  \draw[vec, vecrojo] (3.5,7.72) -- (3.5,5.72);

  \node[lbl, anchor=north west] at (0.15,15.28) {$\text{supp }\theta_0(\cdot,t_1)$};
  \node[lbl, anchor=south]      at (3.5,8.9)    {$\text{supp }\theta_1(\cdot,t_1)$};
  \node[lbl, vecverde, anchor=south] at (5,7.90)
        {$r_1(t_1)\mathbf{n}(\alpha_1(t_1))$};
  \node[lbl, vecrojo, anchor=north] at (4.20,5.50)
        {$s_1(t_1)\mathbf{n}(\beta_1(t_1))$};
\end{tikzpicture} 
\hspace{20mm}
\begin{tikzpicture}[scale=0.6, transform shape, line width=0.6pt]
  \filldraw[fill=white] (0,0) rectangle (7,15.44);
  \filldraw[fill=white] (4.60,12.40) -- (4.60,11.08) -- (2.40,3.04) -- (2.40,4.36) -- cycle;
  \draw[vec, vecverde] (3.5,7.72) -- (5.5,7.72);
  \draw[vec, vecrojo] (3.5,7.72) -- (6.394,6.928);

  \node[lbl, anchor=north west] at (0.15,15.28) {$\text{supp }\theta_0(\cdot,t_2)$};
  \node[lbl, anchor=east]       at (3.75,10.5)  {$\text{supp }\theta_1(\cdot,t_2)$};
  \node[lbl, vecverde, anchor=south] at (5.5,7.90)
        {$r_1(t_2)\mathbf{n}(\alpha_1(t_2))$};
  \node[lbl, vecrojo, anchor=north] at (5.20,6.50)
        {$s_1(t_2)\mathbf{n}(\beta_1(t_2))$};
\end{tikzpicture}
\caption{Evolution of the vectors $r_1\textbf{n}(\alpha_1)$ and $s_1\textbf{n}(\beta_1)$.}
\label{fig:vectors}
\end{figure} 

The initial conditions for the principal frequencies and directions will be chosen as follows
 \begin{align*}
            r_{n}(t_{n})=s_n(t_n)=N_{n}^{1-\frac{\varepsilon+\delta}{1+\gamma+\delta}}, \qquad \alpha_{n}(t_{n})=\beta_{n-1}(t_{n}),  \qquad \beta_{n}(t_{n})=\beta_{n-1}(t_{n})-\frac{\pi}{2}.  
    \end{align*}
    Graphically, one can see this choice in  Figure \ref{fig:ic}.

\begin{figure}[h]
\centering
\begin{tikzpicture}[scale=0.6, transform shape, line width=0.6pt]
  \filldraw[fill=white] (0,0) rectangle (7,15.44);

  \filldraw[fill=white] (4.60,12.40) -- (4.60,11.08) -- (2.40,3.04) -- (2.40,4.36) -- cycle;

  \filldraw[fill=white] (3.45,7.81) -- (3.59,7.77) -- (3.55,7.63) -- (3.41,7.67) -- cycle;

  \draw[vec, vecverde] (3.5,7.72) -- (5.5,7.72);
  \draw[vec, vecrojo]  (3.5,7.72) -- (6.394,6.928);

  \draw[blue,line width=0.48pt,rotate around={-15.3:(3.5,7.72)}]
        (3.5,7.72) ellipse (0.5 and 0.85);

  \draw[blue,line width=0.48pt,-{Stealth}] (4.15,7.50) to[out=-20,in=200] (9.20,7.90);

  \begin{scope}
    \clip (11.5,7.72) ellipse (2.2 and 3.6);
    \fill[white] (11.54,12.41) -- (13.84,11.71) -- (11.46,3.03) -- (9.16,3.73) -- cycle;
    \draw (11.54,12.41) -- (9.16,3.73);
    \draw (13.84,11.71) -- (11.46,3.03);
  \end{scope}

  \filldraw[fill=white] (11.22,8.21) -- (11.99,8.00) -- (11.78,7.23) -- (11.01,7.44) -- cycle;

  \draw[vec, vecverde] (11.5,7.72) -- (17.721,6.018);
  \draw[vec, vecrojo]  (11.5,7.72) -- (9.798,1.499);
  \draw[vec, vecverde, dash pattern=on 1.6pt off 1.2pt] (11.5,7.72) -- (14.500,7.720);
  \draw[vec, vecrojo,  dash pattern=on 1.6pt off 1.2pt] (11.5,7.72) -- (15.841,6.533);

  \draw[blue,line width=0.48pt] (11.5,7.72) ellipse (2.2 and 3.6);

  \node[lbl, anchor=north west] at (0.15,15.28) {$\text{supp }\theta_0(\cdot,t_2)$};
  \node[lbl, anchor=east]       at (3.75,10.5)  {$\text{supp }\theta_1(\cdot,t_2)$};
  \node[lbl, anchor=south]      at (11.77,8.55) {$\text{supp }\theta_2(\cdot,t_2)$};
  \node[lbl, vecverde, anchor=south] at (5.73,7.90) {$r_1(t_2)\mathbf{n}(\alpha_1(t_2))$};
  \node[lbl, vecrojo,  anchor=north] at (5.20,6.50) {$s_1(t_2)\mathbf{n}(\beta_1(t_2))$};
  \node[lbl, vecverde, anchor=west]  at (13.8,8.2) {$r_1(t_2)\mathbf{n}(\alpha_1(t_2))$};
  \node[lbl, vecrojo,  anchor=south] at (16.4,6.7) {$s_1(t_2)\mathbf{n}(\beta_1(t_2))$};
  \node[lbl, vecverde, anchor=north] at (17.72,5.82) {$r_2(t_2)\mathbf{n}(\alpha_2(t_2))$};
  \node[lbl, vecrojo,  anchor=north] at (9.80,1.35)  {$s_2(t_2)\mathbf{n}(\beta_2(t_2))$};
\end{tikzpicture}
\caption{Initial conditions of $r_2 \textbf{n}(\alpha_2)$ and $s_2 \textbf{n}(\beta_2)$ and comparison with the saturated vectors $r_1 \textbf{n}(\alpha_1)$ and $s_1 \textbf{n}(\beta_1)$. }
\label{fig:ic}
\end{figure}

At this point, the reader may detect several things: Firstly, the length of the vectors with subindex $2$ is larger  than the corresponding  one with subindex $1$. Also, the vectors with subindex $2$ start perpendicular to each other and $r_2 \textbf{n}(\alpha_2)$ starts overlapping $s_1 \textbf{n}(\beta_1)$. As time evolves we will see how the orange vectors (that is, $s_n\textbf{n}(\beta_n)$) accumulate near the horizontal axis and grow. More specifically, at the time of saturation $s_n(t_{n+1})\approx N_n$ and the large principal frequency of each layer $N_n$ grows superexponentially in $n\nearrow\infty$.  

The orange vectors measure the approach and the verticality of the sides of the supports that are initially almost horizontal. In Figure \ref{fig:acumulado} one can see how they accumulate close to the angle $2\pi$ and how they will grow in length.

\begin{figure}[h]
    \centering
\begin{tikzpicture}[scale=0.8]
 \begin{scope}
  \clip (-1.8,-2.5) rectangle (9.3,3.6);
  \draw[black,line width=0.9pt] (-1.7,0) -- (9.2,0);
  \draw[black,line width=0.9pt] (0,-2.5) -- (0,3.4);
  \begin{scope}[vecrojo,line width=1.3pt,-{Stealth[length=1.9mm,width=1.9mm]}]
    \draw (0,0) -- ( -8.00:0.3);
    \draw (0,0) -- (-12.00:0.9);
    \draw (0,0) -- (-14.00:2.7);
    \draw (0,0) -- (-15.00:8.1);
  \end{scope}
  \draw[vecrojo,line width=1.3pt] (0,0) -- (256:3.0);
  \coordinate (L)  at (0.20,-0.08);
  \coordinate (C)  at (7.35,1.85);
  \coordinate (Oz) at (6.95,2.20);
  \draw[gray!70,line width=0.5pt] (0.083,0.354) -- (6.985,3.202);
  \draw[gray!70,line width=0.5pt] (0.317,-0.514) -- (7.715,0.498);
  \draw[gray!80,line width=0.7pt] (L) circle[radius=0.45];
  \draw[gray!80,line width=0.7pt] (C) circle[radius=1.4];
  \draw[black,line width=0.7pt] (5.90,2.20) -- (8.80,2.20);   
  \draw[black,line width=0.7pt] (6.95,0.45) -- (6.95,3.25);   
  \begin{scope}[vecrojo,line width=0.9pt,-{Stealth[length=1.4mm,width=1.4mm]}]
    \draw (Oz) -- ($(Oz)+(-10:0.55)$);
    \draw (Oz) -- ($(Oz)+(-30:1.65)$);
  \end{scope}
  \begin{scope}
    \clip (C) circle[radius=1.4];
    \draw[vecrojo,line width=0.9pt] (Oz) -- ($(Oz)+(-40:3)$);
    \draw[vecrojo,line width=0.9pt] (Oz) -- ($(Oz)+(256:3)$);
  \end{scope}
  \node[vecrojo,font=\fontsize{7}{8.4}\selectfont]
        at (5.4,-0.5) {$s_j(t_{n+1})\,\mathbf{n}\bigl(\beta_j(t_{n+1})\bigr),\ j=1,\dots,n$};
  \node[vecrojo, fill=white ,font=\fontsize{7}{8.4}\selectfont]
        at (0.4,-2.15) {$s_{n+1}(t_{n+1})\,\mathbf{n}\bigl(\beta_{n+1}(t_{n+1})\bigr)$};
 \end{scope}
\end{tikzpicture}
    \caption{Accumulation of the growing sequence of vectors $s_j \textbf{n}(\beta_j)$ close to the angle $2\pi$.}
    \label{fig:acumulado}
\end{figure}

The perturbations will satisfy, for each $n \in \mathbb{N}$ and $i= 1, \ldots, I_\gamma$, 
\begin{align*}
    \partial_t f_{n,i}+\overline{v}^\gamma \left(\Theta_{n-1}\right) \cdot \nabla f_{n,i}=-h_n\sum_{(j_1,j_2)\in\mathcal{J}_{i-1}}v^\gamma(f_{n,j_1})\cdot \nabla f_{n,j_2},
\end{align*} 
where
\begin{align*}
    \mathcal{J}_i=\{(j_1,j_2)\in \mathbb{N}_0^2: \max\{j_1,j_2\}=i\}.
\end{align*}
All this implies, for any $n\in\mathbb{N}$,
\begin{align*}
    \partial_t \Theta_n+ v^\gamma (\Theta_n) \cdot \nabla \Theta_n= \mathcal{F}_n, \qquad \forall t\in [0,t_{n+1}),
\end{align*}
where 
\begin{align*}
    \mathcal{F}_n=\sum_{j=0}^n F_j,
\end{align*}
and, for any $n\in \mathbb{N}$,
\begin{align*}
    F_n=h_n\left[(v^\gamma-\overline{v}^\gamma)\left(\sum_{j=0}^{n-1}h_j\theta_j\right)\cdot\nabla \theta_n+v^\gamma(\theta_n)\cdot \nabla\left(\sum_{j=0}^{n-1}h_j\theta_j\right)+h_n\sum_{(j_1,j_2)\in \mathcal{J}_{I_\gamma}}v^\gamma(f_{n,j_1})\cdot\nabla f_{n,j_2}\right]+h_{n}'\theta_n.
\end{align*}
The zeroth force is just defined by the self interaction term
\begin{align*}
    F_0=v^\gamma(\theta_0)\cdot\nabla \theta_0.
\end{align*}
Notice that, since $t_n\nearrow 1$, if we define (in the appropriate spaces) the limits
\begin{align}
\label{limite}
    \theta=\lim_{n\rightarrow \infty} \Theta_n,\qquad F=\lim_{n\rightarrow \infty} \mathcal{F}_n,
\end{align}
then 
\begin{align*}
    \partial_t \theta+v^\gamma(\theta)\cdot \nabla \theta=F, \qquad  \forall t \in [0,1),
\end{align*}
and we will have, as desired, a solution of the forced generalized SQG equation that blows up at time $t=1$. The most important features of this construction are:
\begin{itemize}
    \item The solution and the force are smooth in space and time before the blow up, that is, 
    \begin{align*}
        \theta ,F \in C^\infty ([0,T], C^\infty_{c}(\mathbb{R}^2)), \qquad  \forall T\in(0,1).
    \end{align*}
    This can be easily checked in the previous construction, since at any time $T < 1$ all a priori infinite sums are indeed finite sums. Then, as all the functions involved are smooth, a finite sum of them will also be smooth. In this setting, it is clear that  the limits \eqref{limite} are well-defined in a pointwise sense for any $(x,t)\in\mathbb{R}^2\times [0,1)$.
    \item The force $F$ starts and remains in a well-posedness space, more precisely, there exists $\kappa_0>2+\gamma$ such that 
    \begin{align*}
         F\in L^1([0,1],H^\kappa(\mathbb{R}^2)), \qquad  \forall \kappa\in [2+\gamma,\kappa_0].
    \end{align*}
    This is achieved by estimating the Sobolev norm of $F$ by the infinite sum of the Sobolev estimates obtained for each $F_n$. Although we have only outlined how to compute the estimates for  $F_1$, the argument is analogous for the rest of the $F_n$'s, and then we arrive at 
    \begin{align*}
       \int_0^1 \|F_n(\cdot,t)\|_{H^\kappa} \, dt\leq N_n^{-C(\gamma)},
    \end{align*}
    where $C(\gamma)>0$ depends only on $\gamma$. Clearly,  the sum of these integrals is  finite. Then, the limit \eqref{limite} makes sense in the whole time interval in $L^1_tH_x^\kappa$ as
    \begin{align*}
        \lim_{n\rightarrow \infty}\int_0^1\|F (\cdot, t)-\mathcal{F}_n(\cdot, t)\|_{H^\kappa} \, dt=0.
    \end{align*}
    \item The solution $\theta$ escapes from the well-posedness space at time $t=1$, that is,  
    \begin{align*}
         \int_0^1\|\theta(\cdot, t)\|_{H^\kappa} \, dt=\infty \qquad  \text{for} \qquad  \kappa\in [2+\gamma,\kappa_0].
    \end{align*}
    This is due to the fact that, at the saturation time $t=t_{n+1}$, $s_n(t_{n+1})\approx N_n$ and then one can show that
    \begin{align*}
        \|\theta_{n}(\cdot,t_{n+1})\|_{H^\kappa}\approx\|f_{n,0}(\cdot,t_{n+1})\|_{H^\kappa}\approx N_{n}^{-2-\gamma+\varepsilon+\frac{\varepsilon+\delta}{1+\gamma+\delta}+\kappa}. 
    \end{align*}
    Nevertheless, using this last estimate one can see that the solution remains in some Sobolev space. In particular, 
    \begin{align*}
        \sup_{0\leq t\leq 1}\|\theta(\cdot,t)\|_{H^\kappa}<\infty \qquad  \text{for all} \qquad  \kappa\in [0,2+\gamma-2\varepsilon],
    \end{align*}
    and  the convergence \eqref{limite} makes sense in the whole time interval in the $L^1_tH_x^\kappa$ sense (for $\kappa\in[0,2+\gamma-2\varepsilon]$):
     \begin{align*}
        \lim_{n\rightarrow \infty}\int_0^1\|\theta-\Theta_n\|_{H^\kappa} \, dt=0.
    \end{align*}
    \item Lastly, one can see in the above heuristics why SQG (that is, $\gamma=0$) is out of the scope of our approach. Looking at the bounds of the forces (see \eqref{17.8.2} and \eqref{P17.8.1}) one can see that we cannot make them small for $\gamma=0$ and $\kappa>2$ (which corresponds to the  well-posedness regime for SQG). The same comment also applies to $\gamma\in (-1,0)$. In other words, the cubic remainder of the transport affecting the new layers, as well as the transport of the previous layers, is no longer negligible in this regime.
\end{itemize}

        \section{The solution}\label{SectionSolution}
The whole construction takes place inside a class of functions closed under the operations we shall iterate to construct the corrections (see Lemmas \ref{nuevaperturbacion}, \ref{nuevaperturbacion2}). A member of the class is not a single function but a family $\{f^{(j,k)}\}$ indexed by the number of derivatives taken in each of two distinguished directions: $\textbf{n}(\alpha)$ and $\textbf{n}(\beta)$, the normals to the two pairs of sides of the parallelogram supporting $f^{(j,k)}$. Notice that differentiating in the direction $\textbf{n}(\alpha)$ costs a factor $r$ and differentiating in the direction $\textbf{n}(\beta)$ costs a factor $s$.

\begin{definicion} 
\label{sod}
Let $t_\ast\in [0,1)$ and let $\alpha,\beta,r,s\in C^\infty([t_\ast,1])$ with $r,s>0$ and 
\begin{align}
\label{noparalelos}
    \sin(\alpha(t)-\beta(t))\neq 0, \hspace{3mm} \forall t\in [t_\ast,1].
\end{align}
    We say that $\{f^{(j,k)}\}\subset L^\infty ([t_\ast,1],C^\infty_c(\mathbb{R}^2))$ is a sequence of derivatives of non-trivial amplitude $\mathcal{C}(t)\in [0,\infty)$, principal directions $(\alpha(t),\beta(t))\in \mathbb{R}^2$ and principal frequencies $(r(t),s(t))\in (0,\infty)^2$ if the following properties hold  for every $t\in[t_\ast,1]$:
    \begin{enumerate}[label=(P\arabic*), ref=(P\arabic*)]
        \item\label{paridad} If $j+k$ is even (odd), then $f^{(j,k)}(\cdot,t)$ is even (odd) with respect to the origin.
        \item\label{amplitud} $\|f^{(j,k)}(\cdot,t)\|_{L^\infty}\lesssim_{j,k,\gamma}\mathcal{C}(t)$,  $\forall (j,k)\in \mathbb{N}_0^2$.
        \item\label{soporte} $\text{supp} f^{(j,k)}(\cdot,t) \subset \mathcal{S}(\alpha(t),\beta (t),r(t),s(t))$, $\forall (j,k)\in \mathbb{N}_0^2$, where
        \begin{align*}
            \mathcal{S}(\alpha(t),\beta (t),r(t),s(t)):=\{x\in \mathbb{R}^2:|r(t)\textbf{n}(\alpha(t))\cdot x|\leq 1, |s(t)\textbf{n}(\beta(t))\cdot x|\leq 1 \}.
        \end{align*}
        \item\label{derivadas} $\nabla f^{(j,k)}(x,t)=f^{(j+1,k)}(x,t)r(t)\textbf{n}(\alpha(t))+f^{(j,k+1)}(x,t)s(t)\textbf{n}(\beta(t)),  \qquad \forall (j,k)\in \mathbb{N}_0^2$.
    \end{enumerate}
\end{definicion}
\begin{rmk}
    Since principal amplitudes, principal directions and frequencies depend just on time, in what follows, when there is no danger of confusion, we may omit time dependence in the notation. 
\end{rmk}
\begin{rmk}
    The $t_\ast$ in the time interval refers to the initial time of the layer. In the construction this role will be played by the sequence $t_n$ indicated in \eqref{tiempos}.
\end{rmk}
\begin{rmk}
    The terminology sequence of derivatives is justified by Property \ref{derivadas}.  
\end{rmk}

\begin{rmk}
    An important fact to keep in mind for the future construction is that, when considering several layers indexed in $n$, there will exist $C_{j,k,\gamma}>0$ depending on $j,k,\gamma$ (which will traduce essentially in a dependence on $\gamma$) such that 
    \begin{align*}
        \|f^{(j,k)}_n(\cdot,t)\|_{L^\infty}\leq C_{j,k,\gamma}\mathcal{C}_n(t),
    \end{align*}
    but the dependence on $n$ will be entirely encoded in $\mathcal{C}_n$.
    
\end{rmk}

\begin{definicion}\label{DefSD}
 We say that $f\in L^\infty ([t_\ast,1],C^\infty_c(\mathbb{R}^2))$ generates a sequence of derivatives of amplitude $\mathcal{C}$, principal directions $(\alpha,\beta)$ and principal frequencies $(r,s)$ if there exists a sequence of derivatives $\{f^{(j,k)}\}\subset L^\infty ([t_\ast,1],C^\infty_c(\mathbb{R}^2))$ with amplitude $\mathcal{C}$, principal directions $(\alpha,\beta)$ and principal frequencies $(r,s)$ such that 
     $f^{(0,0)} = f.$
\end{definicion}
\begin{rmk}
    Notice that, since $f$ is a smooth function in the spatial variables and, by \eqref{noparalelos}, $\{\textbf{n}(\alpha),\textbf{n}(\beta)\}$ is a basis of $\mathbb{R}^2$, Property \ref{derivadas} determines $f^{(j+1,k)}$ and $f^{(j,k+1)}$ from $f^{(j,k)}$. Consequently, the sequence of derivatives generated by $f$ is unique.
\end{rmk}
It is convenient to normalize a building block so that the second direction is horizontal and the second frequency is one; this reduces every computation of the velocity operator to a fixed geometry, and only the ratio $r/s$ and the angle $\alpha-\beta$ survive as parameters.
\begin{definicion}
    If $\{f^{(j,k)}\}\subset L^\infty ([t_\ast,1],C^\infty_c(\mathbb{R}^2))$ is a sequence of derivatives of amplitude $\mathcal{C}$, principal directions $(\alpha, \beta)$ and principal frequencies $(r,s)$, we define its aligned and rescaled sequence of derivatives as $\{\tilde{f}^{(j,k)}\}$, where
    \begin{align}\label{DefinitionAlignedRescaled}
        \tilde{f}^{(j,k)}(x,t)=R_{-\beta}f^{(j,k)} \left( \frac{x}{s},t\right)=f^{(j,k)} \left( \frac{R_{\beta}x}{s},t\right).
    \end{align}
\end{definicion}
\begin{rmk}
    Notice that the aligned and rescaled sequence of derivatives is indeed a sequence of derivatives of amplitude $\mathcal{C}$, principal directions $(\alpha-\beta,0)$ and principal frequencies $\left(\frac{r}{s},1\right)$. Properties \ref{paridad}, \ref{amplitud} and \ref{soporte} can be easily checked. Concerning Property \ref{derivadas}, we have
    \begin{align}
    \label{derivadasaligned}
    \begin{split}
           \nabla \tilde{f}^{(j,k)}(x,t)&= \nabla \left( f^{(j,k)}\left(\frac{R_{\beta}x}{s},t\right)\right)=\frac{1}{s}\left(\begin{matrix}
           \cos \beta & \sin \beta\\
           -\sin \beta & \cos \beta
       \end{matrix}\right) (\nabla f^{(j,k)})\left(\frac{R_\beta x}{s},t\right)\\
       &=\frac{1}{s}\left(\begin{matrix}
           \cos \beta & \sin \beta\\
           -\sin \beta & \cos \beta
       \end{matrix}\right) \left(f^{(j+1,k)}\left(\frac{R_\beta x}{s},t\right)r\textbf{n}(\alpha)+f^{(j,k+1)}\left(\frac{R_\beta x}{s},t\right)s\textbf{n}(\beta)\right)\\
       &=\tilde{f}^{(j+1,k)}(x,t) \frac{r}{s}\textbf{n}(\alpha-\beta)+\tilde{f}^{(j,k+1)}(x,t)\textbf{n}(0).
           \end{split}
    \end{align}
\end{rmk}
The next two lemmas are the closure properties announced above. Lemma
\ref{nuevaperturbacion} states that a product $v^\gamma(f_{i_1})\cdot\nabla f_{i_2}$ of two
members of the class with common principal directions and frequencies is again a member of the class,
with smaller amplitude $\mathcal{C}_{i_1}\mathcal{C}_{i_2}\,r s^{\gamma}|\sin(\alpha-\beta)|$. Lemma
\ref{nuevaperturbacion2} states that solving $\partial_t f_2+v\cdot\nabla f_2=-f_1$ with zero
initial datum along the flow of the linear velocity that transports $f_1$ again produces a member of the class,
with the same principal directions and frequencies as $f_1$ and amplitude $\int_{t_\ast}^{t}\mathcal{C}_1$.
Both lemmas together say that each correction $f_{n,i}$ (with $i=1,...,I_\gamma$), defined by \eqref{defpert} is again
a building block sharing the geometry of $f_{n,0}$ --same support, same anisotropy-- and
differing from it only in being smaller. The gain in the amplitudes, applied repeatedly, is what allows $f_{n,0}$ to be the leading term of the layer and what makes the self-interaction errors small.

\begin{lma}
\label{nuevaperturbacion}
    For any $i=0,\ldots,I$, let $\{f^{(j,k)}_i\}$ be a sequence of derivatives of amplitudes $\mathcal{C}_i$ and common principal directions $(\alpha,\beta)$ and principal frequencies $(r,s)$ such that $r,s\geq 1$. Let $\mathcal{J} \subset \{0,\ldots,I\}^2$. Consider 
    \begin{align}
    \label{nuevaperturbacion00}
        g^{(0,0)}=\sum_{(i_1,i_2)\in \mathcal{J}} v^\gamma (f_{i_1}^{(0,0)})\cdot \nabla f_{i_2}^{(0,0)}, 
    \end{align}
    and for any $(j,k)\in \mathbb{N}_0^2\setminus\{(0,0)\}$,
    \begin{align}
     \begin{split}
     \label{recursividad}
       g^{(j,k)}  =&\sum_{(i_1,i_2)\in \mathcal{J}}\mathcal{C}_{i_1}\mathcal{C}_{i_2}rs^\gamma \sin(\alpha-\beta)\left[\frac{s^{1-\gamma}}{\mathcal{C}_{i_1}\mathcal{C}_{i_2}}\sum_{l=0}^j\sum_{m=0}^k \left(\begin{matrix}
            j\\ l
       \end{matrix}\right)\left(\begin{matrix}
            k \\ m
       \end{matrix}\right)\left(
       f_{i_2}^{(l,m+1)} (-\Delta)^{-\frac{1-\gamma}{2}} f_{i_1}^{(j-l+1,k-m)}
       \right.\right.\\
       &\left.\left. -f_{i_2}^{(l+1,m)} (-\Delta)^{-\frac{1-\gamma}{2}} f_{i_1}^{(j-l,k-m+1)} \right)\right].
     \end{split}
   \end{align}
Then $\{g^{(j,k)}\}$ is a sequence of derivatives with amplitude $\mathcal{C}_{g}$ and the same principal directions $(\alpha,\beta)$ and principal frequencies $(r,s)$, where 
\begin{align}
\label{nuevaamplitud}
    \mathcal{C}_{g}(t)= \max_{(i_1,i_2)\in \mathcal{J} }\mathcal{C}_{i_1}(t)\mathcal{C}_{i_2}(t) r(t)s(t)^\gamma |\sin(\alpha(t)-\beta(t))|.
\end{align}
\end{lma}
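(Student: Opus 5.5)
The plan is to check the four properties \ref{paridad}--\ref{derivadas} of Definition \ref{sod} for $\{g^{(j,k)}\}$ directly. The first step is to show that \eqref{nuevaperturbacion00} is just \eqref{recursividad} evaluated at $(j,k)=(0,0)$, so that one closed formula covers every index. Write $I:=(-\Delta)^{-\frac{1-\gamma}{2}}$, so that $v^\gamma(f)=-\nabla^\perp I f$ up to the normalization in \eqref{ecuacion}.

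\textbf{Step 1: the $(0,0)$ formula.} Since $I$ is a Fourier multiplier, it commutes with $\nabla$. Applying \ref{derivadas} therefore gives
\[
\nabla I f_{i_1}^{(a,b)}=r\,\textbf{n}(\alpha)\,I f_{i_1}^{(a+1,b)}+s\,\textbf{n}(\beta)\,I f_{i_1}^{(a,b+1)}.
\]
The coefficients depend only on $t$, so they pass through $I$. Now expand $v^\gamma(f_{i_1}^{(0,0)})\cdot\nabla f_{i_2}^{(0,0)}=-(\nabla I f_{i_1})^\perp\cdot\nabla f_{i_2}$. The terms $rr$ and $ss$ vanish because $\textbf{n}(\alpha)^\perp\cdot\textbf{n}(\alpha)=0$. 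Only the two cross terms survive, each carrying the factor $rs\,\textbf{n}(\alpha)^\perp\cdot\textbf{n}(\beta)=\pm rs\sin(\alpha-\beta)$. The result is
\[
g^{(0,0)}=\sum_{\mathcal J} rs\sin(\alpha-\beta)\big(f_{i_2}^{(0,1)}If_{i_1}^{(1,0)}-f_{i_2}^{(1,0)}If_{i_1}^{(0,1)}\big),
\]
which is \eqref{recursividad} at $j=k=0$. The sign convention of $c_\gamma$ has to be tracked carefully here, but it does not matter for the amplitude bound.

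\textbf{Step 2: properties \ref{derivadas}, \ref{paridad} and \ref{soporte}.} For \ref{derivadas}, take the gradient of the $(j,k)$ formula. Apply Leibniz's rule and then \ref{derivadas}, for $f_{i_2}$ directly and for $I f_{i_1}$ through the commutation identity of Step 1. Collecting the $r\textbf{n}(\alpha)$ and $s\textbf{n}(\beta)$ components and using Pascal's identity $\binom{j}{l}+\binom{j}{l-1}=\binom{j+1}{l}$ (and likewise in $k$) gives exactly $g^{(j+1,k)}$ and $g^{(j,k+1)}$. This is a routine index computation. For \ref{paridad}, $I$ preserves parity. In each product the total index is $(l+m+1)+(j-l+1+k-m)=j+k+2$, which has the same parity as $j+k$, so each product has the parity of $j+k$. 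For \ref{soporte}, each summand contains a factor $f_{i_2}^{(\cdot,\cdot)}$, whose support lies in $\mathcal S(\alpha,\beta,r,s)$.

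\textbf{Step 3: the amplitude bound \ref{amplitud}. This is the main obstacle.} The required estimate is
\[
\|I f_{i_1}^{(a,b)}\|_{L^\infty}\lesssim_{a,b,\gamma}\mathcal C_{i_1}s^{\gamma-1},
\]
uniformly in $r/s$ and in the angle. Given this, each bracket in \eqref{recursividad} is $\lesssim_{j,k,\gamma}1$, and the amplitude \eqref{nuevaamplitud} follows. To prove the estimate, pass to the aligned and rescaled sequence \eqref{DefinitionAlignedRescaled}. The scaling of the Riesz potential gives $I(F(R_\beta\cdot/s))=s^{-(1-\gamma)}(IF)(R_\beta\cdot/s)$, so it suffices to bound $I\tilde f^{(a,b)}$ by $\mathcal C_{i_1}$. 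The function $\tilde f^{(a,b)}$ is bounded by $\mathcal C_{i_1}$ and supported in the strip $\{|y_1|\le 1\}$. The kernel is $|x-y|^{-1-\gamma}$ with $1+\gamma<2$, so for every $x$
\[
|I\tilde f^{(a,b)}(x)|\lesssim \mathcal C_{i_1}\int_{|y_1|\le1}\frac{dy}{|x-y|^{1+\gamma}}\lesssim\mathcal C_{i_1}\int_{-1}^{1}\frac{dy_1}{|x_1-y_1|^{\gamma}}\lesssim_\gamma \mathcal C_{i_1},
\]
where the $y_2$-integration has been done first. The bound is independent of the length of the support, which may be large, and of $r/s$ and $\sin(\alpha-\beta)$; this uniformity is the point. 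I expect the hypothesis $r,s\ge1$ to be needed only to keep these constants uniform, or not at all. Finally, the constants depend only on $(j,k,\gamma)$, through the binomial sums, as the definition requires.
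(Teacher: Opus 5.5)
Your proposal is correct and follows essentially the same route as the paper: the same cross-term computation giving \eqref{recursividad} at $(j,k)=(0,0)$, Leibniz's rule plus Pascal's identity for \ref{derivadas}, the same parity and support checks, and a strip-integration bound $|(-\Delta)^{-\frac{1-\gamma}{2}}f_{i}^{(a,b)}|\lesssim \mathcal{C}_{i}s^{\gamma-1}$ for \ref{amplitud}. The paper integrates directly over $\{|\textbf{n}(\beta)\cdot y|\le 2/s\}$ for $x\in\mathcal{S}(\alpha,\beta,r,s)$ rather than rescaling, and there are two small things to fix: the scaling identity should read $(-\Delta)^{-\frac{1-\gamma}{2}}\big(F(R_\beta\cdot/s)\big)=s^{1-\gamma}\big((-\Delta)^{-\frac{1-\gamma}{2}}F\big)(R_\beta\cdot/s)$, which is what actually produces the factor $s^{\gamma-1}$ you use, and the implicit constant in \ref{amplitud} also depends on the cardinality of $\mathcal{J}$, not only on $j,k,\gamma$.
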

\begin{rmk}
    Notice that the implicit constant in Property \ref{amplitud} for $\{g^{(j,k)}\}$ depends on $j,k,\gamma$ and on the cardinality of $\mathcal{J}$ (which in practice will depend also on $\gamma$). Yet, it does not depend on the amplitudes, principal directions or frequencies.
\end{rmk}
\begin{rmk}
    It is plain to check that the expression (\ref{recursividad}) extends (\ref{nuevaperturbacion00}). More precisely, (\ref{recursividad}) is also valid for $(j,k)=(0,0)$.
\end{rmk}
\begin{rmk}
    Notice that, in the Definition \ref{sod} we allow the amplitudes $\mathcal{C}_i$ to be zero for some times. In practice, the amplitudes of the principal layers will never vanish and the corrections will generate sequences of derivatives with amplitudes that will vanish just on the activation times $t_n$. It may seem problematic since in the expression \eqref{recursividad} we find amplitudes on the denominator. Nevertheless, notice that on those points, one can check that $g^{(j,k)}$ will also vanish: in this prove we will show that the expression between brackets in \eqref{recursividad} is always bounded, since $f_1^{l,m}$ is also bounded by $\mathcal{C}_i$, so the factor $\mathcal{C}_{i_1}\mathcal{C}_{i_2}$ makes  $g^{(j,k)}$ vanish in the points of conflict.
\end{rmk}
\begin{proof}[Proof of Lemma \ref{nuevaperturbacion}]
    We have to prove every property of Definition \ref{sod}. Property \ref{paridad} can be easily checked using that $f_i^{(j,k)}$ verifies Property \ref{paridad} for $i=0,\ldots ,I$. To be more precise, we first notice that the operator $\nabla$ and the velocity \eqref{velocidad} change the parity of a function with respect to the origin. Hence, the case $(j,k)=(0,0)$ is trivial taking into account that the product of two odd functions is even. For arbitrary $(j,k)\neq (0,0)$ such that $j+k$ is even, using that the fractional Laplacian (\ref{laplacianofraccionario}) does not change the parity, we find in (\ref{recursividad}) the product of odd functions for $l+m$ even and the product of even functions for $l+m$ odd and so, in any case, $g^{(j,k)}$ is a sum of even functions which is even. The same reasoning can be applied to conclude that $g^{(j,k)}$ is odd provided that  $j+k$ is odd.  
    
    To obtain  \ref{amplitud} with $\mathcal{C}_{g}$ satisfying (\ref{nuevaamplitud}) we first notice that, for $i\leq I$, using Properties \ref{amplitud} and \ref{soporte},
    \begin{align}
        \bigg|(-\Delta)^{-\frac{1-\gamma}{2}} f_{i}^{(j,k)}\left(x,t\right)\bigg|&=\left|c_\gamma\int_{\mathbb{R}^2} \frac{f_{i}^{(j,k)}(x-y,t)}{|y|^{1+\gamma}}dy\right| \nonumber \\
        &\lesssim_{\gamma, j, k}  \|f_{i}^{(j,k)}(\cdot,t)\|_{L^\infty} \int_{|\textbf{n}(\beta)\cdot y|\leq 2/s}  \frac{1}{|y|^{1+\gamma}}dy \lesssim_{\gamma,j, k} \mathcal{C}_is^{-1+\gamma} \label{P12.8.1}
    \end{align}
    for all $x\in \mathcal{S}(\alpha,\beta,r,s)$. 
    Using this and Property \ref{amplitud} for $f^{(j, k)}_i$ with $i=0, \ldots, I$, we are able to estimate
     \begin{align*}
        &\left|\frac{s^{1-\gamma}}{\mathcal{C}_{i_1}\mathcal{C}_{i_2}}\sum_{l=0}^j\sum_{m=0}^k \left(\begin{matrix}
            j\\ l
       \end{matrix}\right)\left(\begin{matrix}
            k \\ m
       \end{matrix}\right)\left(f_{i_2}^{(l,m+1)} (-\Delta)^{-\frac{1-\gamma}{2}} f_{i_1}^{(j-l+1,k-m)}-f_{i_2}^{(l+1,m)} (-\Delta)^{-\frac{1-\gamma}{2}} f_{i_1}^{(j-l,k-m+1)} \right)\right|
       \\& \lesssim_{j,k} \sum_{l=0}^j\sum_{m=0}^k \left(\begin{matrix}
            j\\ l
       \end{matrix}\right)\left(\begin{matrix}
            k \\ m
       \end{matrix}\right) \lesssim_{j,k} 1,
    \end{align*}
    which proves \ref{amplitud} with amplitude $\mathcal{C}_{g}$ given by  (\ref{nuevaamplitud}).
    
    Clearly Property \ref{soporte} holds because $\text{supp } f_i^{(j,k)}\subset \mathcal{S}(\alpha,\beta,r,s)$ for $i=0, \ldots, I$.
    
    Next, we prove Property \ref{derivadas}. To fix ideas, we start with the model case $(j,k)=(0,0)$. Since $g^{(0,0)}$ is a sum of functions $v^\gamma (f_{i_1}^{(0,0)}) \cdot \nabla f_{i_2}^{(0,0)}$, it is enough to prove the result for  
     \begin{align*}
        g^{(0,0)}=v^\gamma (f_{i_1}^{(0,0)}) \cdot \nabla f_{i_2}^{(0,0)}.
    \end{align*}
     Using the assumption \ref{derivadas} for $i= 0, \ldots, I$ and the definition of the velocity (\ref{velocidad}), we have that 
        \begin{align*}
            v^\gamma (f_{i}^{(0,0)}) =-(-\Delta)^{-\frac{1-\gamma}{2}} \nabla^\perp f_{i}^{(0,0)}
            =-(-\Delta)^{-\frac{1-\gamma}{2}}f_i^{(1,0)} r\textbf{n}^\perp(\alpha)-(-\Delta)^{-\frac{1-\gamma}{2}}f_i^{(0,1)} s\textbf{n}^\perp(\beta).     
        \end{align*}
    Since $\textbf{n}(\alpha)^\perp \cdot \textbf{n}(\beta)=-\sin(\alpha-\beta)$ and $\textbf{n}(\beta)^\perp \cdot \textbf{n}(\alpha)=\sin(\alpha-\beta)$, we obtain  
    \begin{align*}
    \begin{split}
       g^{(0,0)}&=v^\gamma (f_{i_1}^{(0,0)}) \cdot \nabla f_{i_2}^{(0,0)}  
        =-rs \sin (\alpha-\beta) [-f_{i_2}^{(0,1)}(-\Delta)^{-\frac{1-\gamma}{2}}f_{i_1}^{(1,0)}+f_{i_2}^{(1,0)}(-\Delta)^{-\frac{1-\gamma}{2}}f_{i_1}^{(0,1)}]\\
        &=\mathcal{C}_{i_1}\mathcal{C}_{i_2}rs^\gamma \sin(\alpha-\beta)\left[\frac{s^{1-\gamma}}{\mathcal{C}_{i_1}\mathcal{C}_{i_2}}(f_{i_2}^{(0,1)}(-\Delta)^{-\frac{1-\gamma}{2}}f_{i_1}^{(1,0)}-f_{i_2}^{(1,0)}(-\Delta)^{-\frac{1-\gamma}{2}}f_{i_1}^{(0,1)})\right].
        \end{split}
    \end{align*}
    Using also that the following identity holds, at least for smooth  functions, 
    \begin{align*}
        \nabla (-\Delta)^{-\frac{1-\gamma}{2}} f_i^{(j,k)}=  (-\Delta)^{-\frac{1-\gamma}{2}}f_i^{(j+1,k)} r\textbf{n}(\alpha)+ (-\Delta)^{-\frac{1-\gamma}{2}}f_i^{(j,k+1)} s \textbf{n}(\beta),
    \end{align*}
  the first iteration ($g^{(1,0)}$ and $g^{(0,1)}$) can be easily computed, namely,
    \begin{align*}
        \nabla g^{(0,0)}=-\mathcal{C}_{i_1}\mathcal{C}_{i_2}r s^\gamma \sin(\alpha-\beta) \left[ \frac{s^{1-\gamma}}{\mathcal{C}_{i_1}\mathcal{C}_{i_2}}(-f_{i_2}^{(1,1)}(-\Delta)^{-\frac{1-\gamma}{2}}f_{i_1}^{(1,0)}+f_{i_2}^{(2,0)}(-\Delta)^{-\frac{1-\gamma}{2}}f_{i_1}^{(0,1)}\right.\\  \left.
        -f_{i_2}^{(0,1)}(-\Delta)^{-\frac{1-\gamma}{2}}f_{i_1}^{(2,0)}+f_{i_2}^{(1,0)}(-\Delta)^{-\frac{1-\gamma}{2}}f_{i_1}^{(1,1)}) r \textbf{n}(\alpha) \right. \\ \left.
        +\frac{s^{1-\gamma}}{\mathcal{C}_{i_1}\mathcal{C}_{i_2}}(-f_{i_2}^{(0,2)}(-\Delta)^{-\frac{1-\gamma}{2}}f_{i_1}^{(1,0)}+f_{i_2}^{(1,1)}(-\Delta)^{-\frac{1-\gamma}{2}}f_{i_1}^{(0,1)}\right.\\  \left.
        -f_{i_2}^{(0,1)}(-\Delta)^{-\frac{1-\gamma}{2}}f_{i_1}^{(1,1)}+f_{i_2}^{(1,0)}(-\Delta)^{-\frac{1-\gamma}{2}}f_{i_1}^{(0,2)}) s \textbf{n}(\beta)
        \right].
    \end{align*} 
     Same ideas as employed in the case $(j,k)=(0,0)$ can in fact be extended to the general setting $(j,k) \in \mathbb{N}_0^2$. Indeed, we have
   \begin{align*}
   & \frac{\nabla  g^{(j,k)}}{\mathcal{C}_{i_1}\mathcal{C}_{i_2}rs^\gamma \sin(\alpha-\beta)}=\\
       &\hspace{5mm}=-\frac{s^{1-\gamma}}{\mathcal{C}_{i_1}\mathcal{C}_{i_2}}\sum_{l=0}^j\sum_{m=0}^k \left(\begin{matrix}
            j\\ l
       \end{matrix}\right)\left(\begin{matrix}
            k \\ m
       \end{matrix}\right) \left(f_{i_2}^{(l+2,m)} (-\Delta)^{-\frac{1-\gamma}{2}} f_{i_1}^{(j-l,k-m+1)}-f_{i_2}^{(l+1,m+1)} (-\Delta)^{-\frac{1-\gamma}{2}} f_{i_1}^{(j-l+1,k-m)}\right.\\
       &\hspace{5mm} \left. +f_{i_2}^{(l+1,m)} (-\Delta)^{-\frac{1-\gamma}{2}} f_{i_1}^{(j-l+1,k-m+1)} -f_{i_2}^{(l,m+1)} (-\Delta)^{-\frac{1-\gamma}{2}} f_{i_1}^{(j-l+2,k-m)}\right)r\textbf{n}(\alpha)\\
       &\hspace{5mm}-\frac{s^{1-\gamma}}{\mathcal{C}_{i_1}\mathcal{C}_{i_2}}\sum_{l=0}^j\sum_{m=0}^k \left(\begin{matrix}
            j\\ l
       \end{matrix}\right)\left(\begin{matrix}
            k \\ m
       \end{matrix}\right)\left(f_{i_2}^{(l+1,m+1)} (-\Delta)^{-\frac{1-\gamma}{2}} f_{i_1}^{(j-l,k-m+1)}-f_{i_2}^{(l,m+2)} (-\Delta)^{-\frac{1-\gamma}{2}} f_{i_1}^{(j-l+1,k-m)}\right.\\
       &\hspace{5mm}\left.+f_{i_2}^{(l+1,m)} (-\Delta)^{-\frac{1-\gamma}{2}} f_{i_1}^{(j-l,k-m+2)}-f_{i_2}^{(l,m+1)} (-\Delta)^{-\frac{1-\gamma}{2}} f_{i_1}^{(j-l+1,k-m+1)}  \right)s\textbf{n}(\beta)\\
       &\hspace{5mm}=-\frac{s^{1-\gamma}}{\mathcal{C}_{i_1}\mathcal{C}_{i_2}}\sum_{l=0}^{j+1}\sum_{m=0}^k \left(\begin{matrix}
            j+1\\ l
       \end{matrix}\right)\left(\begin{matrix}
            k \\ m
       \end{matrix}\right)\left(f_{i_2}^{(l+1,m)} (-\Delta)^{-\frac{1-\gamma}{2}} f_{i_1}^{(j+1-l,k-m+1)}\right.\\
       &\hspace{5mm}\left.-f_{i_2}^{(l,m+1)} (-\Delta)^{-\frac{1-\gamma}{2}} f_{i_1}^{(j+1-l+1,k-m)} \right)r\textbf{n}(\alpha)\\
       &\hspace{5mm}-\frac{s^{1-\gamma}}{\mathcal{C}_{i_1}\mathcal{C}_{i_2}}\sum_{l=0}^{j}\sum_{m=0}^{k+1} \left(\begin{matrix}
            j\\ l
       \end{matrix}\right)\left(\begin{matrix}
            k+1 \\ m
       \end{matrix}\right)\left(f_{i_2}^{(l+1,m)} (-\Delta)^{-\frac{1-\gamma}{2}} f_{i_1}^{(j-l,k+1-m+1)}\right.\\&\hspace{5mm}
       \left.-f_{i_2}^{(l,m+1)} (-\Delta)^{-\frac{1-\gamma}{2}} f_{i_1}^{(j-l+1,k+1-m)} \right)s\textbf{n}(\beta),
   \end{align*}
   where in the last step we have used a change of variables in the summation index and the property of the binomial coefficients 
   \begin{align*}
       \left(\begin{matrix}
            j\\ l
       \end{matrix}\right)+
       \left(\begin{matrix}
            j\\ l-1
       \end{matrix}\right)=
       \left(\begin{matrix}
            j+1\\ l
       \end{matrix}\right).
   \end{align*}
\end{proof}

\begin{lma}
\label{nuevaperturbacion2}
    Let $\{f_1^{(j,k)}\}\subset C^\infty ([t_\ast,1],C^\infty_c(\mathbb{R}^2))$ be a sequence of derivatives with amplitude $\mathcal{C}_1(t)$, principal directions $(\alpha(t), \beta(t))$ and principal frequencies $(r(t),s(t))$. Consider 
    \begin{align}
    \label{velocityfield}
        \overline{v}(X_t(x),t)= \partial_t X_t(x)  ,
    \end{align}
    where 
    \begin{align}
    \label{flowmap}
        \begin{split}X_t(x)&=\frac{1}{ \sin (\alpha(t)-\beta(t))} \left[\frac{s(t_\ast)}{s(t) } \left(\begin{matrix}
             \sin \alpha(t)\sin \alpha(t_\ast) & -\sin \alpha (t)\cos \alpha(t_\ast)\\
             -\cos \alpha(t) \sin \alpha (t_\ast)&  \cos \alpha( t)\cos \alpha(t_\ast)
        \end{matrix} \right)\right.\\
        &\left.+\frac{r(t_\ast)}{r(t)} \left(\begin{matrix}
             \sin \beta(t)\sin \beta(t_\ast) & -\sin \beta (t)\cos \beta(t_\ast)\\
             -\cos \beta(t) \sin \beta (t_\ast)&  \cos \beta( t)\cos \beta(t_\ast)
        \end{matrix} \right)\right]x,
        \end{split}
    \end{align}
    with initial conditions 
    \begin{align}
    \label{condicioninicialflowmap}
        r(t_\ast)=s(t_\ast), \qquad \alpha(t_\ast)=\beta(t_\ast)+\frac{\pi}{2}.
    \end{align}
    Let $\{f_2^{(j,k)}\}$ be defined by 
    \begin{align}
    \label{inttiempo}
        f_2^{(j,k)}(x,t)=-\int_{t_\ast}^t f_1^{(j,k)}(X_\tau (X_t^{-1}(x)),\tau) \, d\tau, \qquad \forall (j,k)\in \mathbb{N}_0^2.
    \end{align}
    Then $f_2^{(j,k)}$ is the unique solution to the Cauchy problem: 
    \begin{align}
    \label{cauchy}
        \begin{split}
           & \partial_t f_2^{(j,k)}+\overline{v}\cdot \nabla f_2^{(j,k)}=-f_1^{(j,k)},\\
           & f_2^{(j,k)}(x,t_\ast)=0.
        \end{split}
    \end{align}
    Moreover,  $\{f_2^{(j,k)}\}$ is a sequence of derivatives with principal directions $(\alpha(t),\beta(t))$, principal frequencies $(r(t),s(t))$ and amplitude $\mathcal{C}_2(t)$ given by
    \begin{align}
    \label{intamplitud}
        \mathcal{C}_2(t)=\int_{t_\ast}^t \mathcal{C}_1(\tau) \, d\tau.
    \end{align}
    \end{lma}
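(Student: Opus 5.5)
The proof splits into two parts. First, \eqref{inttiempo} solves \eqref{cauchy} (Duhamel along characteristics). Second, the family $\{f_2^{(j,k)}\}$ inherits Properties \ref{paridad}--\ref{derivadas}. The key structural fact is that $X_t$ is a \emph{linear} map, $X_t(x)=M(t)x$, so the flow from time $\tau$ to time $t$ is $X_t X_\tau^{-1}$. The first thing to check is that $M(t)$ maps the parallelogram geometry at time $t_\ast$ onto the one at time $t$. Concretely, I will verify that
\[
M(t)^{T}\, r(t)\textbf{n}(\alpha(t)) = r(t_\ast)\textbf{n}(\alpha(t_\ast)), \qquad M(t)^{T}\, s(t)\textbf{n}(\beta(t)) = s(t_\ast)\textbf{n}(\beta(t_\ast)).
\]
This is a direct computation with \eqref{flowmap}: each matrix there is a rank-one product $\textbf{n}(\cdot)^\perp\otimes\textbf{n}(\cdot)^\perp$ up to sign. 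It uses $\textbf{n}(\alpha)\cdot\textbf{n}(\alpha)^\perp=0$, $\textbf{n}(\beta)\cdot\textbf{n}(\alpha)^\perp=\sin(\alpha-\beta)$, and the initial conditions \eqref{condicioninicialflowmap}, which give $\textbf{n}(\alpha(t_\ast))^\perp=-\textbf{n}(\beta(t_\ast))$ and $r(t_\ast)=s(t_\ast)$. The identity says that $r\,\textbf{n}(\alpha)\cdot x$ and $s\,\textbf{n}(\beta)\cdot x$ are transported quantities along the flow. Consequently $X_\tau X_t^{-1}$ maps $\mathcal S(\alpha(t),\beta(t),r(t),s(t))$ onto $\mathcal S(\alpha(\tau),\beta(\tau),r(\tau),s(\tau))$.

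For the Cauchy problem, set $y=X_t^{-1}(x)$. Along the characteristic $t\mapsto X_t(y)$, whose velocity is $\overline v$ by \eqref{velocityfield}, we have $\frac{d}{dt}f_2^{(j,k)}(X_t(y),t)=-f_1^{(j,k)}(X_t(y),t)$, and $f_2^{(j,k)}$ vanishes at $t_\ast$ since $X_{t_\ast}=\mathrm{Id}$. Uniqueness follows from the standard method of characteristics for a smooth linear velocity. $\overline v$ is linear, being $\dot M M^{-1}x$, and $M$ is invertible because its determinant is $r(t_\ast)s(t_\ast)/(r s\,|\sin(\alpha-\beta)|)\neq0$ by \eqref{noparalelos}.

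Properties \ref{paridad}, \ref{amplitud} and \ref{soporte} follow immediately. Linearity of $X_\tau X_t^{-1}$ preserves parity. The sup bound gives $\|f_2^{(j,k)}(\cdot,t)\|_{L^\infty}\le\int_{t_\ast}^t\|f_1^{(j,k)}(\cdot,\tau)\|_{L^\infty}\,d\tau\lesssim\int_{t_\ast}^t\mathcal C_1$. The support property follows from the parallelogram mapping established above.

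The step needing care is Property \ref{derivadas}. I will differentiate under the integral with the chain rule:
\[
\nabla_x\big[f_1^{(j,k)}(A x,\tau)\big]=A^{T}(\nabla f_1^{(j,k)})(Ax,\tau), \qquad A=M(\tau)M(t)^{-1}.
\]
Substituting Property \ref{derivadas} for $f_1$ turns this into $f_1^{(j+1,k)}(Ax,\tau)\,A^{T}r(\tau)\textbf{n}(\alpha(\tau))+f_1^{(j,k+1)}(Ax,\tau)\,A^{T}s(\tau)\textbf{n}(\beta(\tau))$. By the transported-covector identity, $A^{T}r(\tau)\textbf{n}(\alpha(\tau))=M(t)^{-T}r(t_\ast)\textbf{n}(\alpha(t_\ast))=r(t)\textbf{n}(\alpha(t))$, and similarly for $s,\beta$. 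Both vectors are therefore independent of $\tau$ and come out of the integral, which yields exactly $\nabla f_2^{(j,k)}=f_2^{(j+1,k)}r(t)\textbf{n}(\alpha(t))+f_2^{(j,k+1)}s(t)\textbf{n}(\beta(t))$. So the whole lemma reduces to the covector identity, and verifying it from the explicit formula \eqref{flowmap} is the main, purely algebraic, obstacle.
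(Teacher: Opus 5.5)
Your proposal is correct and follows essentially the same route as the paper. The paper also writes $J^T(X_\tau\circ X_t^{-1})=(M(t)^{-1})^TM(\tau)^T$ and deduces $J^T(X_\tau\circ X_t^{-1})\,r(\tau)\textbf{n}(\alpha(\tau))=r(t)\textbf{n}(\alpha(t))$, together with its analogue for $(s,\beta)$, so that the covectors come out of the time integral. It obtains parity, amplitude and support from the linearity of $X_t$ and the mapping of parallelograms, and appeals to Duhamel's formula for the Cauchy problem.

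Your rank-one description $M(t)=\frac{1}{\sin(\alpha-\beta)}\big[\frac{s(t_\ast)}{s}\,\textbf{n}(\alpha)^\perp\textbf{n}(\alpha(t_\ast))^{\perp T}+\frac{r(t_\ast)}{r}\,\textbf{n}(\beta)^\perp\textbf{n}(\beta(t_\ast))^{\perp T}\big]$ simply makes explicit the algebra the paper leaves to the reader. Incidentally, only $\alpha(t_\ast)=\beta(t_\ast)+\frac{\pi}{2}$ is needed for the covector identity; $r(t_\ast)=s(t_\ast)$ plays no role there.
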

    \begin{proof}
        We start by proving that $\{f_2^{(j,k)}\}$ is a sequence of derivatives. Since $X_t$ is linear, it follows from (\ref{inttiempo}) that $f_1^{(j,k)}$ and $f_2^{(j,k)}$ have the same parity with respect to the origin. Also by (\ref{inttiempo}), it is clear that $\|f_2^{(j,k)}\|_{L^\infty}$ is bounded by the quantity given in  (\ref{intamplitud}). Then Properties \ref{paridad} and \ref{amplitud} hold. Now, let us prove Property \ref{soporte}, which holds basically because 
        \begin{align*}
            X_t(\mathcal{S}(\alpha(t_\ast),\beta(t_\ast), r(t_\ast),s(t_\ast)))=\mathcal{S}(\alpha(t),\beta(t), r(t),s(t)).
        \end{align*}
        Let $x\notin \mathcal{S}(\alpha(t),\beta(t), r(t),s(t))$. In particular,  $X_t^{-1}(x)\notin \mathcal{S}(\alpha(t_\ast),\beta(t_\ast), r(t_\ast),s(t_\ast))$. Then  $X_\tau (X_t^{-1}(x))\notin \mathcal{S}(\alpha(\tau),\beta(\tau), r(\tau),s(\tau))$ for every $\tau \in [t_\ast,1]$, and hence according to (\ref{inttiempo}), $f_2^{(j,k)}(x,t)=0$, so Property \ref{soporte} holds. Next we focus on Property  \ref{derivadas}. Let $(j,k)\in \mathbb{N}_0^2$. Differentiating under the integral sign, using the chain rule and the fact that $\{f_1^{(j,k)}\}$ verifies Property \ref{derivadas}, we infer that
        \begin{align*}
            &\nabla f^{(j,k)}_2(x,t)= -\int_{t_\ast}^t \nabla \left[f_1^{(j,k)}(X_\tau (X_t^{-1}(x)),\tau)\right] \, d\tau=-\int_{t_\ast}^t J^T(X_\tau\circ X_t^{-1})\nabla f_1^{(j,k)}(X_\tau (X_t^{-1}(x)),\tau) \, d\tau\\
            &\hspace{2mm}=-\int_{t_\ast}^t J^T(X_\tau\circ X_t^{-1}) \left(f_1^{(j+1,k)}(X_\tau (X_t^{-1}(x)),\tau)r(\tau)\textbf{n}(\alpha(\tau))+f_1^{(j,k+1)}(X_\tau (X_t^{-1}(x)),\tau)s(\tau)\textbf{n}(\beta(\tau))\right) \, d\tau\\
           &\hspace{2mm} =-\left(\int_{t_\ast}^t f_1^{(j+1,k)}(X_\tau (X_t^{-1}(x)),\tau) \, d\tau\right)r(t)\textbf{n}(\alpha(t))
            -\left(\int_{t_\ast}^t f_1^{(j,k+1)}(X_\tau (X_t^{-1}(x)),\tau) \, d\tau\right)s(t)\textbf{n}(\beta(t))\\
            &\hspace{2mm}= f_2^{(j+1,k)}(x,t)r(t)\textbf{n}(\alpha(t))+f_2^{(j,k+1)}(x,t)s(t)\textbf{n}(\beta(t)),
        \end{align*}
        where, in the penultimate step, we have also used that, if we define 
    \begin{align*}
        M(t)=&\frac{1}{ \sin (\alpha(t)-\beta(t))} \left[\frac{s(t_\ast)}{s(t) } \left(\begin{matrix}
             \sin \alpha(t)\sin \alpha(t_\ast) & -\sin \alpha (t)\cos \alpha(t_\ast)\\
             -\sin \alpha (t_\ast)\cos \alpha(t) &  \cos \alpha( t)\cos \alpha(t_\ast)
        \end{matrix} \right)\right.\\
        &\left.+\frac{r(t_\ast)}{r(t)} \left(\begin{matrix}
             \sin \beta(t)\sin \beta(t_\ast) & -\sin \beta (t)\cos \beta(t_\ast)\\
             -\sin \beta (t_\ast)\cos \beta(t) &  \cos \beta( t)\cos \beta(t_\ast)
        \end{matrix} \right)\right],
    \end{align*}
    then by \eqref{flowmap} one can prove that 
    \begin{align*}
        J^T(X_\tau\circ X_t^{-1})=\left(M(t)^{-1}\right)^T M(\tau)^T,
    \end{align*}
    and from here (using \eqref{condicioninicialflowmap}), one can deduce that
        \begin{align*}
            &J^T(X_\tau\circ X_t^{-1})r(\tau)\textbf{n}(\alpha(\tau))=r(t)\textbf{n}(\alpha(t)), \\
             &J^T(X_\tau\circ X_t^{-1})s(\tau)\textbf{n}(\beta(\tau))= s(t)\textbf{n}(\beta(t)).
        \end{align*}
    Notice that, by construction,  $X_t(x)$ is the flow map associated to the velocity field $\overline{v}$ (see (\ref{velocityfield}) and (\ref{flowmap})), so the classical Duhamel formula asserts that $f_2^{(j,k)}$ given by \eqref{inttiempo} is the unique solution to the Cauchy problem (\ref{cauchy}).
   \end{proof}

\subsection{Velocity generated by the layers}\label{sec:velocitylayers}
Now we compute the velocity that generates each layer. Notice that each layer will be a finite sum of functions that generate sequences of derivatives with common principal directions and frequencies. Hence, to compute the velocity of a layer at a linear stage (and to control the cubic reminder) we only need the following lemma.
\begin{lma}
\label{lemavelocidad}
    Let $\{f^{(j,k)}\}$ be a sequence of derivatives with amplitude $\mathcal{C}(t)$, principal directions $(\alpha(t),\beta(t))$ and principal frequencies $(r(t),s(t))$. If $x\in \mathcal{S}(\alpha(t),\beta(t),r(t),s(t))$ then
    \begin{align}
    \label{formulavelocidad}
    \begin{split}
        v^\gamma(f^{(0,0)})(x,t)=&s^{1+\gamma}  \mathcal{C}\left[ a^{(1,1)} \left(\begin{matrix}
              \cos 2\beta& \sin 2\beta \\
              \sin 2\beta  &-\cos 2 \beta
          \end{matrix}\right)+a^{(1,2)} \left(\begin{matrix}
              -\sin \beta \cos \beta&\cos^2\beta\\
              -\sin^2 \beta&\sin \beta \cos \beta 
          \end{matrix}\right)\right.\\
          &\left.
          +a^{(2,1)} \left(\begin{matrix}
              -\sin \beta \cos \beta&-\sin^2\beta\\
              \cos^2 \beta&\sin \beta \cos \beta 
          \end{matrix}\right)
          \right]x\\
           &-\textbf{n}(\alpha)^\perp \frac{r}{2 s^{1-\gamma}} \sum_{l=0}^3b^{(4-l,l)}\left(\begin{matrix}
                    3\\ l
                \end{matrix}\right)
                \left(r \textbf{n}(\alpha) \cdot x\right)^{3-l} (s\textbf{n}(\beta)\cdot x)^l \\
&- \textbf{n}(\beta)^\perp  \frac{s^\gamma}{2}\sum_{l=0}^3b^{(3-l,l+1)}\left(\begin{matrix}
                    3\\ l
                \end{matrix}\right)
                \left(r \textbf{n}(\alpha) \cdot x\right)^{3-l} (s\textbf{n}(\beta)\cdot x)^l, 
                \end{split}
    \end{align}
    where 
    \begin{align}
    \begin{split}
    \label{coeficientesvelocidad}
         a^{(1,1)}(t)&:=\frac{\sin (\alpha-\beta)}{\mathcal{C}} \left[\left(\frac{r}{s}\right)^2d^{(2,0)}\cos (\alpha-\beta)+\frac{r}{s}d^{(1,1)}\right],\\
            a^{(1,2)}(t)&:=\frac{\sin^2 (\alpha-\beta)}{\mathcal{C}}\left(\frac{r}{s}\right)^2d^{(2,0)},\\
            a^{(2,1)}(t)&:=-\frac{1}{\mathcal{C}}\left[d^{(0,2)}+2\frac{r}{s}d^{(1,1)}\cos (\alpha-\beta)+\left(\frac{r}{s}\right)^2d^{(2,0)}\cos^2(\alpha-\beta)\right],\\
            a^{(2,2)}(t)&:=-a^{(1,1)}(t),
            \end{split}
    \end{align}
    and (recall \eqref{DefinitionAlignedRescaled} for the definition of $\tilde{f}^{(j, k)}$)
    \begin{align}
        d^{(j,k)}(t)&:= \frac{2^{\gamma}\Gamma(\frac{1+\gamma}{2})}{\pi \Gamma(\frac{1-\gamma}{2})}\int_{\mathbb{R}_+\times \mathbb{R}} \frac{\tilde{f}^{(j,k)}(y,t)}{|y|^{1+\gamma}} \, dy, \label{Defdjk}\\
         b^{(j,k)}(x,t)&:= \frac{2^{\gamma-1}\Gamma(\frac{1+\gamma}{2})}{\pi \Gamma(\frac{1-\gamma}{2})} \,  \int_{\mathbb{R}_+\times \mathbb{R}} \left(\int_0^1(1-\eta)^2\frac{\tilde{f}^{(j,k)}(\eta sR_{-\beta}x+y,t)+\tilde{f}^{(j,k)}(\eta sR_{-\beta}x-y,t)}{|y|^{1+\gamma}}\, d\eta \right)\, dy \label{Defbjk}.
    \end{align}
    In addition,
    \begin{align}
    \label{cotadb}
        |d^{(j,k)}(t)|\lesssim_{\gamma, j, k} \mathcal{C}(t),\hspace{7mm}
         |b^{(j,k)}(x,t)|\lesssim_{\gamma, j, k} \mathcal{C}(t).
    \end{align}
    \end{lma}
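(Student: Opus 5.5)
Write the velocity as $v^\gamma(f^{(0,0)})=-(-\Delta)^{-\frac{1-\gamma}{2}}\nabla^\perp f^{(0,0)}$. Use Property \ref{derivadas} to express $\nabla^\perp f^{(0,0)}=r\,\textbf{n}(\alpha)^\perp f^{(1,0)}+s\,\textbf{n}(\beta)^\perp f^{(0,1)}$. This reduces the problem to Taylor expanding at the origin the two scalar functions $(-\Delta)^{-\frac{1-\gamma}{2}}f^{(1,0)}$ and $(-\Delta)^{-\frac{1-\gamma}{2}}f^{(0,1)}$, up to third order with integral remainder.

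\textbf{Step 1: reduce to the aligned and rescaled sequence.} I would first pass to $\tilde f^{(j,k)}$ from \eqref{DefinitionAlignedRescaled}. By the scaling of the kernel in \eqref{laplacianofraccionario}, $(-\Delta)^{-\frac{1-\gamma}{2}}f^{(j,k)}(x)=s^{-1+\gamma}\,(-\Delta)^{-\frac{1-\gamma}{2}}\tilde f^{(j,k)}(sR_{-\beta}x)$. This explains the prefactors $r s^{\gamma-1}$ and $s^\gamma$ in the cubic terms.

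\textbf{Step 2: Taylor expand and use parity.} Set $z=sR_{-\beta}x$ and write
\[
(-\Delta)^{-\frac{1-\gamma}{2}}\tilde f(z)=c\int_{\mathbb{R}^2}\frac{\tilde f(z-y)}{|y|^{1+\gamma}}\,dy .
\]
Symmetrizing in $y\mapsto -y$ turns this into an integral over the half plane $\mathbb{R}_+\times\mathbb{R}$ of $\tilde f(z+y)+\tilde f(z-y)$. Expand $\eta\mapsto \tilde f(\eta z\pm y)$ in $\eta$ to third order with the remainder $\frac12\int_0^1(1-\eta)^2\,\partial_\eta^3(\cdot)\,d\eta$. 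Here $f^{(1,0)}$ and $f^{(0,1)}$ are odd by \ref{paridad}, so the zeroth and second order terms cancel after symmetrization. The first-order term is a directional derivative, which \eqref{derivadasaligned} expresses through $\tilde f^{(j+1,k)}$ and $\tilde f^{(j,k+1)}$ with weights $\frac rs\textbf{n}(\alpha-\beta)$ and $\textbf{n}(0)$. These integrals are exactly the constants $d^{(2,0)},d^{(1,1)},d^{(0,2)}$. The third-order remainder, with the $\partial_\eta^3$ expanded by \eqref{derivadasaligned}, gives the binomial sums with coefficients $b^{(4-l,l)}$ and $b^{(3-l,l+1)}$. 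The multipliers come out as $(r\textbf{n}(\alpha)\cdot x)^{3-l}(s\textbf{n}(\beta)\cdot x)^l$, since $\frac rs\textbf{n}(\alpha-\beta)\cdot sR_{-\beta}x=r\textbf{n}(\alpha)\cdot x$.

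\textbf{Step 3: collect the linear part.} The linear term equals
\[
-\,r\,\textbf{n}(\alpha)^\perp\bigl(\tfrac rs d^{(2,0)}\textbf{n}(\alpha-\beta)+d^{(1,1)}\textbf{n}(0)\bigr)\cdot R_{-\beta}x
\;-\;s\,\textbf{n}(\beta)^\perp\bigl(\tfrac rs d^{(1,1)}\textbf{n}(\alpha-\beta)+d^{(0,2)}\textbf{n}(0)\bigr)\cdot R_{-\beta}x ,
\]
times $s^{\gamma}$. This is a trace-free matrix acting on $x$. I would decompose it in the basis of the three matrices in \eqref{formulavelocidad}, namely the symmetric trace-free part and the two shears along $\textbf{n}(\beta)$ and $\textbf{n}(\beta)^\perp$. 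To do this, write $\textbf{n}(\alpha)=\cos(\alpha-\beta)\textbf{n}(\beta)+\sin(\alpha-\beta)\textbf{n}(\beta)^\perp$ and match coefficients. This should produce the formulas \eqref{coeficientesvelocidad}. This bookkeeping, including the signs and the trigonometric identities, is the main obstacle. I would check it on the case $\alpha-\beta=\pi/2$, where the computation is transparent.

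\textbf{Step 4: the bounds \eqref{cotadb}.} These follow as in \eqref{P12.8.1}. The function $\tilde f^{(j,k)}$ is bounded by $\mathcal{C}$ and supported in the strip $|y_1|\le1$. For $x\in\mathcal{S}$ we have $|\eta z|_1\le1$, so the integrand is supported in $|y_1|\le2$. Since $\gamma<1$, $\int_{|y_1|\le2}|y|^{-1-\gamma}\,dy<\infty$.

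It is also worth noting that, although the strip is unbounded in $y_2$, $\tilde f$ has compact support. Hence all integrals converge absolutely and differentiation under the integral sign is justified.
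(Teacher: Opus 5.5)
Your proposal is correct and follows essentially the paper's proof. You reduce to the aligned and rescaled sequence, symmetrize the kernel over $\mathbb{R}_+\times\mathbb{R}$, and use oddness of $\tilde f^{(1,0)},\tilde f^{(0,1)}$ to kill the even Taylor terms. You then read off $d^{(j,k)}$ and $b^{(j,k)}$ and bound them via the strip $|y_1|\le 2$. Rescaling the two scalar potentials rather than the whole velocity, and decomposing in the frame $\textbf{n}(\beta),\textbf{n}(\beta)^\perp$, are cosmetic variants that do reproduce exactly \eqref{coeficientesvelocidad}.

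One correction in Step 4: finiteness of $\int_{|y_1|\le 2}|y|^{-1-\gamma}\,dy$ needs $\gamma>0$ for the tail in $y_2$, whereas $\gamma<1$ only gives local integrability. It is this strip bound, not the compact support of $\tilde f$ (whose $y_2$-extent is unbounded as $r/s\to0$), that makes the constant in \eqref{cotadb} independent of $r/s$ and $\alpha-\beta$.
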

        \begin{rmk}
            Notice that $a^{(j,k)}(t), d^{(j,k)}(t), b^{(j,k)}(x,t)$ depend also on the parameter $\gamma$. Since this will not play any role, we omit such a dependence in the notation.

            Also, we omitted the spatial dependence of $b^{(j,k)}$. In the statement, $b^{(j,k)}=b^{(j,k)}(x,t)$.
        \end{rmk}
        \begin{rmk}
    Notice that, in the Definition \ref{sod} we allow the amplitudes $\mathcal{C}$ to be zero for some times. In practice, the amplitudes of the principal layers will never vanish and the corrections will generate sequences of derivatives with amplitudes that will vanish just on the activation times $t_n$. It may seem problematic since in the expression \eqref{coeficientesvelocidad} we find amplitudes on the denominator. On those points, one can just consider that $a^{(j,k)}$ is just the limit when $\mathcal{C}$ goes to zero. This limit exists due to how $\tilde{f}^{(j,k)}$ is in the definition of $d^{(j,k)}$ and using that $\tilde{f}^{(j,k)}$ is also bounded by $\mathcal{C}(t)$. Nevertheless, is is enough to prove that $a^{(j,k)}$ is bounded, since the factor $\mathcal{C}$ in the beginning of \eqref{formulavelocidad} makes the velocity vanish in the points of conflict.
\end{rmk}
    \begin{proof}[Proof of Lemma \ref{lemavelocidad}]
        For technical reasons, it is more convenient to deal first with the velocity generated by the aligned and rescaled function $\tilde{f}^{(0, 0)}$. Using the definition of the velocity operator (\ref{velocidad}), the formula (\ref{derivadasaligned}), and taking into account that the perpendicular gradient and the fractional Laplacian commute when applied to regular enough functions, we have
        \begin{align}
        \begin{split}
        \label{formulavelocidad2}
            v^\gamma(\tilde{f}^{(0,0)})(x,t)&= -(-\Delta)^{-\frac{1-\gamma}{2}}\nabla^\perp \tilde{f}^{(0,0)} (x,t)\\
            &=-(-\Delta)^{-\frac{1-\gamma}{2}}\tilde{f}^{(1,0)}(x,t) \frac{r(t)}{s(t)} \textbf{n}(\alpha(t)-\beta(t))^\perp- (-\Delta)^{-\frac{1-\gamma}{2}}\tilde{f}^{(0,1)} (x,t)\textbf{n}(0)^\perp.
        \end{split}
        \end{align}
        Now, let us deal with $(-\Delta)^{-\frac{1-\gamma}{2}}\tilde{f}^{(1,0)}$ and $ (-\Delta)^{-\frac{1-\gamma}{2}}\tilde{f}^{(0,1)}$. Let $x\in \mathcal{S}(\alpha-\beta,0,\frac{r}{s},1)$. By virtue of  (\ref{laplacianofraccionario}) and the fact that $\tilde{f}^{1, 0}$ is odd with respect to the origin  (see Property \ref{paridad}), we can write  
        \begin{align}
           \frac{\pi \Gamma(\frac{1-\gamma}{2})}{2^{\gamma-1}\Gamma(\frac{1+\gamma}{2})}  (-\Delta)^{-\frac{1-\gamma}{2}}\tilde{f}^{(1,0)} (x, t) & = \int_{\mathbb{R}^2}\frac{\tilde{f}^{(1,0)}\left(x-y,t\right)}{|y|^{1+\gamma}} \, dy \nonumber \\
           &= \int_{\mathbb{R}_+\times \mathbb{R}}\frac{\tilde{f}^{(1,0)}\left(y+x,t\right)-\tilde{f}^{(1,0)}\left(y-x,t\right)}{|y|^{1+\gamma}} \, dy. \label{formulavelocidad2aa}
           \end{align}
    Furthermore, applying the Taylor expansion of the function $$x \mapsto \tilde{f}^{(1, 0)}(y+x, t)- \tilde{f}^{(1, 0)}(y-x, t)$$  and (\ref{derivadasaligned}), we can compute
           \begin{align}
            \int_{\mathbb{R}_+\times \mathbb{R}}\frac{\tilde{f}^{(1,0)}\left(y+x,t\right)-\tilde{f}^{(1,0)}\left(y-x,t\right)}{|y|^{1+\gamma}} \, dy & \nonumber \\
            & \hspace{-6cm}=\int_{\mathbb{R}_+\times \mathbb{R}}\left(\frac{2\nabla \tilde{f}^{(1,0)}(y,t)}{|y|^{1+\gamma}}\cdot x+ \frac{1}{2} 
            \frac{1}{|y|^{1+\gamma}}\left(\int_0^1(1-\eta)^2(x\cdot\nabla)^3\xi^{(1,0)}_y(\eta x)d\eta\right) \right)\, dy \nonumber  \\
           &\hspace{-6cm} =\int_{\mathbb{R}_+\times \mathbb{R}} \frac{2}{|y|^{1+\gamma}}\left(
\tilde{f}^{(2,0)}(y,t) \frac{r(t)}{s(t)}\textbf{n}(\alpha(t)-\beta(t))+\tilde{f}^{(1,1)}(y,t)\textbf{n}(0)
            \right)\cdot x \, dy \nonumber\\
            & \hspace{-5cm}+  \frac{1}{2}\int_{\mathbb{R}_+\times \mathbb{R}} 
            \frac{1}{|y|^{1+\gamma}}\left(\int_0^1(1-\eta)^2(x\cdot\nabla)^3\xi^{(1,0)}_y(\eta x,t)d\eta\right) \, dy, \label{formulavelocidad2aaa}
            \end{align}
            where
            \begin{align*}
                \xi^{(j,k)}_y(\eta x,t):=\tilde{f}^{(j,k)}(\eta x+y,t)+\tilde{f}^{(j,k)}(\eta x-y,t)
            \end{align*}
            for any $\eta \in [0,1]$.
            
            Moreover, elementary computations show that (see \eqref{derivadasaligned})
            \begin{align*}
                (x \cdot \nabla)^3 \xi^{(1,0)}_y(\eta x,t)=\sum_{l=0}^3\left(\begin{matrix}
                    3\\ l
                \end{matrix}\right)
                \left(\frac{r}{s} \textbf{n}(\alpha-\beta) \cdot x\right)^{3-l} (\textbf{n}(0)\cdot x)^l \xi_y^{(4-l,l)}(\eta x,t). 
            \end{align*}
            Combining this with \eqref{formulavelocidad2aa} and \eqref{formulavelocidad2aaa}, we achieve  
            \begin{align}
                &\frac{\pi \Gamma(\frac{1-\gamma}{2})}{2^{\gamma-1}\Gamma(\frac{1+\gamma}{2})}  (-\Delta)^{-\frac{1-\gamma}{2}}\tilde{f}^{(1,0)}(x,t) \nonumber \\
                 &\hspace{6mm}=2 \frac{r}{s}\textbf{n}(\alpha-\beta)\cdot x\int_{\mathbb{R}_+\times \mathbb{R}} \frac{\tilde{f}^{(2,0)}(y,t)}{|y|^{1+\gamma}} \, dy+ 2 \textbf{n}(0)\cdot x\int_{\mathbb{R}_+\times \mathbb{R}} \frac{\tilde{f}^{(1,1)}(y,t)}{|y|^{1+\gamma}} \, 
             dy \nonumber
             \\& \hspace{6mm}+\frac{1}{2}\sum_{l=0}^3\left(\begin{matrix}
                    3\\ l
                \end{matrix}\right)
                \left(\frac{r}{s} \textbf{n}(\alpha-\beta) \cdot x\right)^{3-l} (\textbf{n}(0)\cdot x)^l \int_{\mathbb{R}_+\times \mathbb{R}} \left(\int_0^1 (1-\eta)^2\frac{\xi_y^{(4-l,l)}(\eta x,t)}{|y|^{1+\gamma}}d\eta \right) dy. \label{P26.6.1}
            \end{align}
    Furthermore, reasoning similarly for $\tilde{f}^{(0, 1)}$, one can also check that
             \begin{align}
            &\frac{\pi \Gamma(\frac{1-\gamma}{2})}{2^{\gamma-1}\Gamma(\frac{1+\gamma}{2})} 
                (-\Delta)^{-\frac{1-\gamma}{2}}\tilde{f}^{(0,1)}(x,t) = \nonumber \\
                 &\hspace{6mm}=2 \frac{r}{s}\textbf{n}(\alpha-\beta)\cdot x\int_{\mathbb{R}_+\times \mathbb{R}} \frac{\tilde{f}^{(1,1)}(y,t)}{|y|^{1+\gamma}} \, dy+2 \textbf{n}(0)\cdot x\int_{\mathbb{R}_+\times \mathbb{R}} \frac{\tilde{f}^{(0,2)}(y,t)}{|y|^{1+\gamma}} \,
             dy \nonumber
             \\&\hspace{6mm}+\frac{1}{2}\sum_{l=0}^3\left(\begin{matrix}
                    3\\ l
                \end{matrix}\right)
               \left(\frac{r}{s} \textbf{n}(\alpha-\beta) \cdot x\right)^{3-l} (\textbf{n}(0)\cdot x)^l \int_{\mathbb{R}_+\times \mathbb{R}} \left(\int_0^1 (1-\eta)^2\frac{\xi_y^{(3-l,l+1)}(\eta x,t)}{|y|^{1+\gamma}} \, d\eta \right)dy.\label{P26.6.2}
            \end{align}

        
        Plugging \eqref{P26.6.1} and \eqref{P26.6.2} into (\ref{formulavelocidad2}), 
        \begin{align}
             v^\gamma(\tilde{f}^{(0,0)})(x,t)=&- 
\left(
\frac{r}{s}d^{(2,0)}\textbf{n}(\alpha-\beta)\cdot x+d^{(1,1)}\textbf{n}(0)\cdot x\right. \nonumber
             \\ &\left.+\frac{1}{2}\sum_{l=0}^3b^{(4-l,l)}\left(\frac{R_\beta x}{s},t\right)\left(\begin{matrix}
                    3\\ l
                \end{matrix}\right)
                \left(\frac{r}{s} \textbf{n}(\alpha-\beta) \cdot x\right)^{3-l} (\textbf{n}(0)\cdot x)^l 
\right)
            \frac{r}{s} \textbf{n}(\alpha-\beta)^\perp \nonumber
            \\&- \left(
\frac{r}{s}d^{(1,1)}\textbf{n}(\alpha-\beta)\cdot x+d^{(0,2)}\textbf{n}(0)\cdot x\right. \nonumber
             \\ &\left.+\frac{1}{2}\sum_{l=0}^3b^{(3-l,l+1)}\left(\frac{R_\beta x}{s},t\right)\left(\begin{matrix}
                    3\\ l
                \end{matrix}\right)
                \left(\frac{r}{s} \textbf{n}(\alpha-\beta) \cdot x\right)^{3-l} (\textbf{n}(0)\cdot x)^l 
\right)
           \textbf{n}(0)^\perp \nonumber\\
           =&-\left(\frac{r}{s}\right)^2 d^{(2,0)}\left(\begin{matrix}
               -\sin (\alpha-\beta) \cos (\alpha-\beta)& -\sin^2 (\alpha-\beta)\\
               \cos^2 (\alpha-\beta)& \sin (\alpha-\beta)\cos (\alpha-\beta)
           \end{matrix}\right)x\nonumber\\
           &-\frac{r}{s} d^{(1,1)}\left(\begin{matrix}
               -\sin (\alpha-\beta) & 0\\
               \cos(\alpha-\beta)& 0
           \end{matrix}\right)x
           -\frac{r}{s} d^{(1,1)}\left(\begin{matrix}
               0 & 0\\
               \cos(\alpha-\beta)& \sin(\alpha-\beta)
           \end{matrix}\right)x \nonumber
           \\&- d^{(0,2)}\left(\begin{matrix}
               0 & 0\\
               1& 0
           \end{matrix}\right)x \nonumber \\&
           -\frac{1}{2}\textbf{n}(\alpha-\beta)^\perp \frac{r}{s} \sum_{l=0}^3b^{(4-l,l)}\left(\frac{R_\beta x}{s},t\right)\left(\begin{matrix}
                    3\\ l
                \end{matrix}\right)
                \left(\frac{r}{s} \textbf{n}(\alpha-\beta) \cdot x\right)^{3-l} (\textbf{n}(0)\cdot x)^l \nonumber  \\&
- \frac{1}{2} \textbf{n}(0)^\perp\sum_{l=0}^3b^{(3-l,l+1)}\left(\frac{R_\beta x}{s},t\right)\left(\begin{matrix}
                    3 \\ l
                \end{matrix}\right)
                \left(\frac{r}{s} \textbf{n}(\alpha-\beta) \cdot x\right)^{3-l} (\textbf{n}(0)\cdot x)^l.  \label{P26.6.4}         
        \end{align}

        Now we wish to switch the role of $\tilde{f}^{(0, 0)}$ in the above computations to the original one $f^{(0, 0)}$. To do this, one can use  (\ref{velocidad}) and \eqref{DefinitionAlignedRescaled} together with a simple change of variables in such a way that 
        \begin{align*}
            v^\gamma(\tilde{f}^{(0,0)})(x,t)&=c_\gamma \int_{\mathbb{R}^2} \frac{(x-y)^\perp}{|x-y|^{3+\gamma}} \, \tilde{f}^{(0,0)} (y,t) \, dy\\
           & =c_\gamma \int_{\mathbb{R}^2} \frac{(x-y)^\perp}{|x-y|^{3+\gamma}} \, f^{(0,0)} \left(\frac{R_\beta y}{s},t\right) \, dy
            \\&=s^{-\gamma} c_\gamma \int_{\mathbb{R}^2} \frac{R_{-\beta}(\frac{R_\beta x}{s}-y)^\perp}{|\frac{R_\beta x}{s}-y|^{3+\gamma}} \,  f^{(0,0)} \left(y,t\right) \, dy=s^{-\gamma}R_{-\beta} v^\gamma(f^{(0,0)})\left(\frac{R_\beta x}{s},t\right)
            \end{align*}
            or in other words,
            \begin{align}\label{P26.6.3}
            v^\gamma(f^{(0,0)})(x,t)=s^\gamma R_\beta  v^\gamma(\tilde{f}^{(0,0)}) (sR_{-\beta}x,t). 
        \end{align}
       
        Putting together \eqref{P26.6.4} and \eqref{P26.6.3} (and after some elementary manipulations), we can write  
        \begin{align*}
          v^\gamma(f^{(0,0)})(x,t)=&s^{1+\gamma}  \mathcal{C}\left[ a^{(1,1)} \left(\begin{matrix}
              \cos 2\beta& \sin 2\beta \\
              \sin 2\beta  &-\cos 2 \beta
          \end{matrix}\right)+a^{(1,2)} \left(\begin{matrix}
              -\sin \beta \cos \beta&\cos^2\beta\\
              -\sin^2 \beta&\sin \beta \cos \beta 
          \end{matrix}\right)\right.\\
          &\left.
          +a^{(2,1)} \left(\begin{matrix}
              -\sin \beta \cos \beta&-\sin^2\beta\\
              \cos^2 \beta&\sin \beta \cos \beta 
          \end{matrix}\right)
          \right] x\\
           &-\textbf{n}(\alpha)^\perp \frac{r}{2 s^{1-\gamma}} \sum_{l=0}^3b^{(4-l,l)}\left(\begin{matrix}
                    3\\ l
                \end{matrix}\right)
                \left(r \textbf{n}(\alpha) \cdot x\right)^{3-l} (s\textbf{n}(\beta)\cdot x)^l \\
&- \textbf{n}(\beta)^\perp  \frac{s^\gamma}{2}\sum_{l=0}^3b^{(3-l,l+1)}\left(\begin{matrix}
                    3\\ l
                \end{matrix}\right)
                \left(r \textbf{n}(\alpha) \cdot x\right)^{3-l} (s\textbf{n}(\beta)\cdot x)^l, 
         \end{align*}
         where the coefficients $a^{(1, 1)}, a^{(1, 2)}$ and $a^{(2, 1)}$ are given by \eqref{coeficientesvelocidad} and $b^{(l,m)}=b^{(l,m)}(x,t)$ are evaluated in $x$. This shows the validity of the desired formula \eqref{formulavelocidad}. 

                    Next we turn our attention to the  quantity $d^{(j, k)}$ introduced in \eqref{Defdjk}. One can easily check that 
            \begin{align*}
             |d^{(j, k)}(t)| = \frac{2^{\gamma}\Gamma(\frac{1+\gamma}{2})}{\pi \Gamma(\frac{1-\gamma}{2})} \, \left| \int_{\mathbb{R}_+\times \mathbb{R}} \frac{\tilde{f}^{(j,k)}(y,t)}{|y|^{1+\gamma}} \, dy\right|\lesssim_{\gamma, j, k} \mathcal{C}\int_{0\leq y_1\leq 1}  \frac{dy}{|y|^{1+\gamma}} \lesssim \mathcal{C}
            \end{align*}
            since $\gamma \in (0, 1)$. This proves the first estimate in \eqref{cotadb}.

        On the other hand, for any $x\in \mathcal{S}(\alpha,\beta,r, s)$, one obtains that $sR_{-\beta}x\in \mathcal{S}(\alpha-\beta,0,\frac{r}{s}, 1)\supset \text{supp}(\tilde{f}^{(j,k)})$. Using this, the terms $b^{(j, k)}$ given by \eqref{Defbjk} are bounded by
        \begin{align}
           \left|b^{(j,k)}(x,t)\right|=&\frac{2^{\gamma-1}\Gamma(\frac{1+\gamma}{2})}{\pi \Gamma(\frac{1-\gamma}{2})} \, \left| \int_{\mathbb{R}_+\times \mathbb{R}} \left(\int_0^1(1-\eta)^2\frac{\tilde{f}^{(j,k)}(\eta sR_{-\beta} x+y,t)+\tilde{f}^{(j,k)}(\eta sR_{-\beta}x-y,t)}{|y|^{1+\gamma}}\, d\eta \right)\, dy \right| \nonumber \\
           & \lesssim_{\gamma, j, k} \mathcal{C} \int_{0 \leq y_1 \leq 2} \frac{dy}{|y|^{1+\gamma}} \lesssim \mathcal{C}, \label{P11.8.5}
          \end{align}
          where the assumption $\gamma \in (0, 1)$ was used in the last step. This leads to the second estimate of \eqref{cotadb}. 
    \end{proof}

\subsection{Dynamics of the layers}\label{S3.2}
    Let $\phi_0:\mathbb{R}\rightarrow [0, \infty)$ be an even $C^\infty$ function, non-increasing in $[0,\infty)$, with compact support in $(-1,1)$, such that 
    $$
        \phi_0 \equiv 1 \qquad \text{on} \qquad  \Big(-\frac{1}{10},\frac{1}{10} \Big).
    $$
    Now, consider $\phi=c\phi_0$,  where $c > 0$ is determined by
\begin{align}\label{P13.7.2}
   c^2 \, \frac{2^{\gamma}\Gamma(\frac{1+\gamma}{2})}{\pi \Gamma(\frac{1-\gamma}{2})} \int_{\mathbb{R}_+\times \mathbb{R}}\frac{\phi_0''(y_1)}{|y|^{1+\gamma}} \, dy= c \, \frac{2^{\gamma}\Gamma(\frac{1+\gamma}{2})}{\pi \Gamma(\frac{1-\gamma}{2})} \int_{\mathbb{R}_+\times \mathbb{R}}\frac{\phi''(y_1)}{|y|^{1+\gamma}} \, dy=-1.
\end{align}
As we explained, the dynamics of each layer are described by the linear part of the velocity generated by the previous layers. Hence, the goal of the Lemma \ref{dinamica} is to understand how a linear velocity affects to a layer, encoding the dynamics in a system of ODEs that describe the behavior   of the principal directions and frequencies.
         \begin{lma}
         \label{dinamica}
         Let
         \begin{align*}
            \overline{A}(t)=\left(\begin{matrix}
                \overline{a}^{(1,1)}&\overline{a}^{(1,2)}\\\overline{a}^{(2,1)}&\overline{a}^{(2,2)}
            \end{matrix}\right)(t), \qquad  \forall t\in[t_\ast,1], 
        \end{align*}
        where the coefficients $\overline{a}^{(i, j)} \in C^\infty$ for $i, j \in \{1, 2\}$, 
        and $\overline{v}(x, t)=\overline{A}(t)x$. Consider the Cauchy problem for the transport equation associated with $\overline{v}$:  
        \begin{align} \label{cauchyf}
        \begin{split}
            &\partial_t f (x,t) +\overline{v}(x,t)\cdot \nabla f (x,t)=0, \\
            &f(x,t_\ast)=\mathcal{C}\phi(r_\ast\textbf{n}(\alpha_\ast)\cdot x)\phi(s_\ast\textbf{n}(\beta_\ast)\cdot x),     
        \end{split}
        \end{align}
         where  $\mathcal{C} \neq 0$ and $r_\ast, s_\ast \in (0, \infty)$. Then there exists $t^\ast \in (t_\ast, 1]$ such that a unique solution $f$ to \eqref{cauchyf} exists and is given by
        \begin{align}\label{P28.6.1}
            f(x,t)= \mathcal{C} \phi (r(t)\textbf{n}(\alpha(t))\cdot x)\phi (s(t)\textbf{n}(\beta(t))\cdot x), \qquad  \forall t\in [t_\ast,t^\ast],
        \end{align}
        where $r, s, \alpha$ and $\beta$  solve the following ODEs: 
        \begin{align}
        \label{dinamicaalpha}
        \begin{split}
            r'(t)+r(t) (\overline{a}^{(1,1)}(t) \cos ^2 \alpha (t)+\overline{a}^{(2,2)}(t) \sin^2 \alpha(t)+(\overline{a}^{(1,2)}(t)+\overline{a}^{(2,1)}(t))\cos \alpha(t) \sin \alpha (t))=0,\\
           \alpha '(t)+\overline{a}^{(1,2)}(t)\cos^2 \alpha (t) -  \overline{a}^{(2,1)}(t) \sin^2\alpha (t)+ (\overline{a}^{(2,2)}(t)-\overline{a}^{(1,1)}(t)) \cos \alpha(t) \sin \alpha (t)=0,\\
            r(t_\ast)=r_\ast, \hspace{4mm} \alpha(t_\ast)=\alpha_\ast.
        \end{split}
        \end{align}
      and 
       \begin{align}
        \label{dinamicabeta}
        \begin{split}
            s'(t)+s(t) (\overline{a}^{(1,1)}(t) \cos ^2 \beta (t)+\overline{a}^{(2,2)}(t) \sin^2 \beta(t)+(\overline{a}^{(1,2)}(t)+\overline{a}^{(2,1)}(t))\cos \beta(t) \sin \beta (t))=0,\\
           \beta '(t)+\overline{a}^{(1,2)}(t)\cos^2 \beta (t) -  \overline{a}^{(2,1)}(t) \sin^2\beta (t)+ (\overline{a}^{(2,2)}(t)-\overline{a}^{(1,1)}(t)) \cos \beta(t) \sin \beta (t)=0,\\
            s(t_\ast)=s_\ast, \hspace{4mm} \beta(t_\ast)=\beta_\ast.
        \end{split}
        \end{align}
        Moreover, the time of existence $t^\ast$ can be extended as much as the time of existence of the solutions of (\ref{dinamicaalpha}) and (\ref{dinamicabeta}).
    \end{lma}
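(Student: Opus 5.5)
The plan is to use the method of characteristics together with the observation that a linear flow preserves the class of functions of the form $\phi(\ell_1\cdot x)\phi(\ell_2\cdot x)$ with linear forms $\ell_1,\ell_2$. Since $\overline{v}=\overline{A}(t)x$ is linear in $x$ with smooth coefficients, its flow $X_t$ is linear, $X_t(x)=\Phi(t)x$, where $\Phi$ solves $\Phi'=\overline{A}\Phi$, $\Phi(t_\ast)=I$. This flow exists and is invertible on all of $[t_\ast,1]$. Hence the unique smooth solution of \eqref{cauchyf} is
\begin{align*}
f(x,t)=f(\Phi(t)^{-1}x,t_\ast)=\mathcal{C}\phi\big(r_\ast\textbf{n}(\alpha_\ast)\cdot\Phi(t)^{-1}x\big)\phi\big(s_\ast\textbf{n}(\beta_\ast)\cdot\Phi(t)^{-1}x\big).
\end{align*}
Uniqueness is the standard one for transport by a Lipschitz field. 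We rewrite $r_\ast\textbf{n}(\alpha_\ast)\cdot\Phi^{-1}x=\big(\Phi^{-T}r_\ast\textbf{n}(\alpha_\ast)\big)\cdot x$. So the representation \eqref{P28.6.1} holds as soon as we write the covector $\xi(t):=\Phi(t)^{-T}r_\ast\textbf{n}(\alpha_\ast)$ in polar form, $\xi(t)=r(t)\textbf{n}(\alpha(t))$, and similarly $\eta(t):=\Phi(t)^{-T}s_\ast\textbf{n}(\beta_\ast)=s(t)\textbf{n}(\beta(t))$.

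The key computation is the ODE for $\xi$. Differentiating $\Phi^{T}\Phi^{-T}=I$ gives $(\Phi^{-T})'=-\overline{A}^T\Phi^{-T}$, so $\xi'=-\overline{A}^T\xi$, and likewise $\eta'=-\overline{A}^T\eta$. Equivalently, the level lines of $\xi\cdot x$ are transported, which one checks directly by plugging $\phi(\xi(t)\cdot x)$ into the equation. Substituting $\xi=r\textbf{n}(\alpha)$ gives
\begin{align*}
r'\textbf{n}(\alpha)+r\alpha'\textbf{n}(\alpha)^\perp=-r\,\overline{A}^T\textbf{n}(\alpha).
\end{align*}
We then take the inner product with $\textbf{n}(\alpha)$ and with $\textbf{n}(\alpha)^\perp=(-\sin\alpha,\cos\alpha)$. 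The first product yields $r'=-r\,\textbf{n}(\alpha)\cdot\overline{A}\textbf{n}(\alpha)$, which is exactly the first equation of \eqref{dinamicaalpha} after expanding the quadratic form. The second yields $\alpha'=-\textbf{n}(\alpha)\cdot\overline{A}\textbf{n}(\alpha)^\perp$. Expanding with $\overline{A}$ gives $-\overline{a}^{(1,2)}\cos^2\alpha+\overline{a}^{(2,1)}\sin^2\alpha+(\overline{a}^{(1,1)}-\overline{a}^{(2,2)})\sin\alpha\cos\alpha$, which matches the second equation. The same algebra applied to $\eta$ gives \eqref{dinamicabeta}.

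To close the argument the direction is reversed. Let $(r,\alpha)$ and $(s,\beta)$ be the solutions of \eqref{dinamicaalpha} and \eqref{dinamicabeta} on their maximal interval $[t_\ast,t^\ast]$, where local existence holds by Picard since the right-hand sides are smooth. Then $r\textbf{n}(\alpha)$ and $s\textbf{n}(\beta)$ solve the linear ODE $\zeta'=-\overline{A}^T\zeta$ with the same initial data as $\xi$ and $\eta$. By uniqueness for linear ODEs they coincide with $\xi$ and $\eta$, so \eqref{P28.6.1} holds on $[t_\ast,t^\ast]$. This also shows that the formula persists for as long as the polar ODEs can be solved, which is the final assertion.

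The only genuine subtlety is the polar-coordinate step. It requires $r,s>0$ so that $\alpha,\beta$ are smooth. Since $\xi$ solves a linear invertible ODE with $\xi(t_\ast)\neq0$, $\xi$ never vanishes. In fact the $r$-equation is linear in $r$, so $r(t)=r_\ast\exp(-\int\ldots)>0$, and the $\alpha$-equation is a Lipschitz scalar ODE that is globally solvable. Thus one expects $t^\ast=1$ in practice. The lemma's weaker formulation only needs the local statement plus the continuation remark, which is what I would write.
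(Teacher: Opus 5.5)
Your argument is correct, and its core computation is the paper's: the $2\times 2$ systems \eqref{P29.6.1}--\eqref{P29.6.2} are exactly your covector equations $\xi'=-\overline{A}^T\xi$ and $\eta'=-\overline{A}^T\eta$, written as $r'\textbf{n}(\alpha)+r\alpha'\textbf{n}(\alpha)^\perp=-r\,\overline{A}^T\textbf{n}(\alpha)$. The difference is how you reach them.

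The paper never introduces the flow. It substitutes the ansatz \eqref{P28.6.1} into the transport equation and matches the coefficients of the $x_i\phi'\phi$ and $x_i\phi\phi'$ terms. This yields those systems as a sufficient condition, which it inverts (the determinants are $r$ and $s$) and solves locally by Picard. It then concludes by uniqueness for the transport equation, with $r,s\neq 0$ coming from the exponential form of the modulus equations.

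You instead obtain the same ODE as a necessary condition from the explicit pullback $f(x,t)=f(\Phi(t)^{-1}x,t_\ast)$, and close with uniqueness for linear ODEs. This makes structurally clear why the class $\phi(\xi\cdot x)\phi(\eta\cdot x)$ is preserved. Your observation that the angle equation is decoupled and globally Lipschitz on $[t_\ast,1]$ also gives $t^\ast=1$ outright, whereas the paper only records this as a continuation clause.

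Your identity $\Phi(t)^{-T}\Phi(\tau)^T\xi(\tau)=\xi(t)$ is precisely the relation $J^T(X_\tau\circ X_t^{-1})\,r(\tau)\textbf{n}(\alpha(\tau))=r(t)\textbf{n}(\alpha(t))$ that the paper later asserts, with only a sketched verification, in the proof of Lemma \ref{nuevaperturbacion2}. So your viewpoint would also streamline that step.
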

    \begin{proof}
        Assume that $f$ is given by (\ref{P28.6.1}) and let us compute its derivatives. The time derivative is
        \begin{align*}
            \partial_t f (x,t)&=\mathcal{C} ( r'(\cos \alpha x_1+\sin \alpha x_2)
            \\&+r\alpha'(-\sin \alpha x_1+\cos \alpha  x_2)) \phi '(r(\cos \alpha x_1+\sin \alpha  x_2)) \phi (s(\cos \beta x_1+\sin \beta x_2)) \\
            &+\mathcal{C} ( s'(\cos \beta x_1+\sin \beta x_2)
            \\&+s\beta'(-\sin \beta x_1+\cos \beta  x_2)) \phi (r(\cos \alpha x_1+\sin \alpha  x_2)) \phi '(s(\cos \beta x_1+\sin \beta x_2)) \\
            &=\mathcal{C}(r' \cos \alpha - r \alpha'\sin\alpha)x_1 \phi '(r \textbf{n}(\alpha) \cdot x) \phi (s \textbf{n}(\beta) \cdot x)\\
            &+\mathcal{C}(r' \sin \alpha + r\alpha' \cos\alpha)x_2 \phi '(r \textbf{n}(\alpha) \cdot x) \phi (s \textbf{n}(\beta) \cdot x)\\
            &+\mathcal{C}(s' \cos \beta - s \beta'\sin\beta )x_1 \phi (r \textbf{n}(\alpha) \cdot x) \phi' (s \textbf{n}(\beta) \cdot x)\\
            &+\mathcal{C}(s' \sin \beta + s\beta' \cos\beta)x_2 \phi (r \textbf{n}(\alpha) \cdot x) \phi' (s \textbf{n}(\beta) \cdot x).
        \end{align*}
        On the other hand, concerning the spatial derivatives, we have
        \begin{align*}
           (\overline{a}^{(1,1)}x_1+\overline{a}^{(1,2)}x_2) \partial_{x_1}f(x,t)&= \mathcal{C}\overline{a}^{(1,1)} r \cos \alpha x_1\phi' (r \textbf{n}(\alpha) \cdot x)\phi (s \textbf{n}(\beta) \cdot x)\\
           &+\mathcal{C}\overline{a}^{(1,2)} r \cos \alpha x_2\phi' (r \textbf{n}(\alpha) \cdot x)\phi (s \textbf{n}(\beta) \cdot x)\\
            &+\mathcal{C}\overline{a}^{(1,1)} s \cos \beta x_1\phi (r \textbf{n}(\alpha) \cdot x) \phi '(s \textbf{n}(\beta) \cdot x)\\
            &+\mathcal{C}\overline{a}^{(1,2)} s \cos \beta x_2\phi (r \textbf{n}(\alpha) \cdot x) \phi '(s \textbf{n}(\beta) \cdot x),
        \end{align*}
        and similarly
        \begin{align*}
     (\overline{a}^{(2,1)}x_1+\overline{a}^{(2,2)}x_2)\partial_{x_2}f(x,t)&= \mathcal{C} \overline{a}^{(2,1)} r \sin \alpha x_1\phi' (r \textbf{n}(\alpha) \cdot x) \phi (s \textbf{n}(\beta) \cdot x)\\
            &+\mathcal{C}\overline{a}^{(2,2)} r \sin \alpha x_2\phi' (r \textbf{n}(\alpha) \cdot x) \phi (s \textbf{n}(\beta) \cdot x)\\
            &+\mathcal{C}\overline{a}^{(2,1)} s \sin \beta x_1\phi (r \textbf{n}(\alpha) \cdot x) \phi '(s \textbf{n}(\beta) \cdot x)\\
            &+\mathcal{C}\overline{a}^{(2,2)} s \sin \beta x_2\phi (r \textbf{n}(\alpha) \cdot x) \phi '(s \textbf{n}(\beta) \cdot x).
        \end{align*}
        
        Notice that, if the pair $(r, \alpha)$ satisfies the following equations
        \begin{align}\label{P29.6.1}
    \left(\begin{matrix}
           \cos \alpha & -r \sin \alpha\\
           \sin \alpha & r \cos \alpha
       \end{matrix}\right) \begin{pmatrix} r' \\ \alpha' \end{pmatrix} = - r \begin{pmatrix} 
        \overline{a}^{(1,1)} \cos \alpha + \overline{a}^{(2,1)}  \sin \alpha \\
        \overline{a}^{(1,2)} \cos \alpha+\overline{a}^{(2,2)} \sin \alpha 
       \end{pmatrix}
        \end{align}
        and, similarly for $(s, \beta)$, 
              \begin{align}\label{P29.6.2}
    \left(\begin{matrix}
           \cos \beta & -s \sin \beta\\
           \sin \beta & s \cos \beta
       \end{matrix}\right) \begin{pmatrix} s' \\ \beta' \end{pmatrix} = - s \begin{pmatrix} 
        \overline{a}^{(1,1)} \cos \beta + \overline{a}^{(2,1)}  \sin \beta \\
        \overline{a}^{(1,2)} \cos \beta+\overline{a}^{(2,2)} \sin \beta 
       \end{pmatrix}
        \end{align}
        then (\ref{cauchyf}) is verified with $f$ given by (\ref{P28.6.1}). Moreover 
        $$
\det
\begin{pmatrix}
\cos\alpha & -r\sin\alpha\\
\sin\alpha & r\cos\alpha
\end{pmatrix}
=r \qquad \text{and} \qquad \det \begin{pmatrix}
\cos\beta & -s\sin\beta\\
\sin\beta & s\cos\beta
\end{pmatrix} = s.
        $$
        Assume momentarily that $r \neq 0$ and $s \neq 0$. Then 
        the systems \eqref{P29.6.1} and \eqref{P29.6.2} can be rewritten as 
        \begin{align}\label{P29.6.3}
            \begin{pmatrix} r' \\ \alpha' \end{pmatrix} = - \begin{pmatrix}
                r \cos \alpha (\overline{a}^{(1, 1)} \cos \alpha + \overline{a}^{(2, 1)} \sin \alpha) + r \sin \alpha (\overline{a}^{(1, 2)} \cos \alpha + \overline{a}^{(2, 2)} \sin \alpha) \\
                - \sin \alpha (\overline{a}^{(1, 1)} \cos \alpha + \overline{a}^{(2, 1)} \sin \alpha) + \cos \alpha (\overline{a}^{(1, 2)} \cos \alpha + \overline{a}^{(2, 2)} \sin \alpha)
            \end{pmatrix}
        \end{align}
        and
            \begin{align*}
            \begin{pmatrix} s' \\ \beta' \end{pmatrix} = - \begin{pmatrix}
                s \cos \beta (\overline{a}^{(1, 1)} \cos \beta + \overline{a}^{(2, 1)} \sin \beta) + s \sin \beta (\overline{a}^{(1, 2)} \cos \beta + \overline{a}^{(2, 2)} \sin \beta) \\
                - \sin \beta (\overline{a}^{(1, 1)} \cos \beta + \overline{a}^{(2, 1)} \sin \beta) + \cos \beta (\overline{a}^{(1, 2)} \cos \beta + \overline{a}^{(2, 2)} \sin \beta),
            \end{pmatrix}
        \end{align*}
        respectively. Adding to these  ODEs the initial conditions 
        \begin{align*}
            r(t_\ast)=r_\ast, \qquad  \alpha(t_\ast)=\alpha_\ast,
        \end{align*}
        and
        \begin{align*}
            s(t_\ast) = s_\ast, \qquad \beta(t_\ast) = \beta_\ast, 
        \end{align*}
        we can guarantee local existence of a unique solution $(r(t), \alpha(t))$ for some time interval $t\in [t_\ast,t^\ast]$ and, in particular, \eqref{dinamicaalpha} is satisfied. We can argue similarly for $(s(t), \beta(t))$ so that \eqref{dinamicabeta} also holds. Taking into account that the Cauchy problem (\ref{cauchyf}) has a unique local solution, we conclude that $f(x,t)$ given by (\ref{P28.6.1}) solves indeed (\ref{cauchyf}). Observe that the time of existence of the solution $f$ can be extended as the minimum of the time of existence of $r,\alpha$ and $s,\beta$.

        It remains to show that $r \neq 0$. Indeed,     multiplying the first equation inherited to \eqref{P29.6.1} by $\cos \alpha(t)$ and the second one by $\sin \alpha (t)$ and then summing both equations, we arrive at
        \begin{align*}
            r'+r(\overline{a}^{(1,1)}\cos^2 \alpha +\overline{a}^{(2,2)} \sin ^2 \alpha + (\overline{a}^{(1,2)}+\overline{a}^{(2,1)})\cos \alpha  \sin \alpha)=0, 
        \end{align*}
        which coincides with the first equation in \eqref{P29.6.3}. This gives an exponential type behavior for $r(t)$. Analogously, one can check that $s \neq 0$. 
    \end{proof}

	In Lemma \ref{lemaedos} below, we provide the analytic core of the construction. Its hypotheses
	\eqref{cotascoeficientes}--\eqref{sestatico} are assumptions on how the first $n$ layers should behave: each of them generates a linear velocity which is, up to relative errors, the shear
	$\tilde{a}^{(2,1)}_j\approx N_j^{\varepsilon}$ in the direction $\textbf{n}(\beta_j)^\perp$,
	and the directions $\beta_j$ are ordered, decreasing and increasingly close to one another.
	Its conclusions \eqref{estimaalpha2}--\eqref{sestatico2} describe the layer created at time
	$t_{n+1}$: principal frequencies are initially equal and principal directions initially perpendicular (see \eqref{IC}). As time evolves, its short frequency $r_{n+1}$ barely moves, as well as the principal direction $\alpha_{n+1}$. On the other hand, the long frequency $s_{n+1}$ grows like \eqref{estimas2} and the principal direction $\beta_{n+1}$ rotates from $\beta_n-\frac{\pi}{2}$ towards $\beta_n$ according to \eqref{estimabeta2}. The complete inductive proof in Lemma \ref{Lemma6} will be strongly based on  Lemma \ref{lemaedos}.

    To close some of the estimates in Lemma \ref{lemaedos}, we make the following assumptions on  the involved  parameters $\sigma, \mu, \delta$ and $\varepsilon$.

    \begin{parambracket}
    \label{parametros1}%
    \textbf{Assumptions on the parameters $\sigma, \mu, \delta$ and $\varepsilon$.} The parameters $\sigma<\mu<\delta<\varepsilon$ are chosen so that
 \begin{align*}
     \gamma \varepsilon>5\delta, \qquad \varepsilon<1-\frac{10\sigma}{\gamma}, \qquad  100\sigma <\min\{\gamma^2(1-\gamma), \mu\gamma\}, \qquad 
     10\mu<\delta. 
 \end{align*}
 We also assume that 
 \begin{align*}
    0< \delta <\frac{(1-\gamma)\gamma}{2}, \qquad  4\sigma<\frac{\delta}{1+\gamma+\delta}. 
 \end{align*}
 In particular, 
 \begin{align*}
     \varepsilon+4\sigma<\frac{2}{1+\gamma+\delta}(\varepsilon+\delta).
 \end{align*}
    \end{parambracket}
  
\begin{rmk}\label{Remark7}
        From now on,  we will use the letter $C$ to denote  an absolute constant $C > 1$ that may vary from line to line.  In fact, this constant will arise in a finite number of occasions and it is independent of $n$, but it may depend on the parameters $\varepsilon, \sigma, \delta, \mu$ and $\gamma$.  Moreover, choosing $N_0$ large enough, it may be possible to get rid of $C>1$ but paying a price (as small as we want) in the exponents of the frequencies. For example, if we obtain that a certain quantity is less or equal than $CN_n^{-\varepsilon}$, where  $C$ does not depend on $\{N_n\}$, then  one also has that 
            $CN_n^{-\varepsilon}\leq CN_0^{-\sigma}N_n^{-\varepsilon+\sigma}\leq N_n^{-\varepsilon+\sigma}$, 
        provided that $N_0$ is sufficiently large.
    \end{rmk}

\begin{lma}
\label{lemaedos}
Let $\{N_n\}_{n \in \mathbb{N}_0}$ be such that
\begin{align}
\label{des}
    N_{n-1}^{1+\frac{\gamma}{2}}\leq N_n\leq N_{n-1}^{1+\frac{3\gamma}{2}},\qquad  \forall n\in \mathbb{N}.
\end{align}
    Let 
    \begin{equation}\label{P14.7.2}
    r_0=1, \qquad \alpha_0=\frac{\pi}{2}, \qquad s_0=N_0, \qquad \beta_0=2\pi,
    \end{equation}
    and for $k=1,\ldots, n$  let $r_k, \alpha_k, s_k, \beta_k$ be  functions defined on the interval $[t_k,1]$, with $t_k$ determined by (\ref{tiempos}), such that  $r_k, \alpha_k$ verify the system of ODEs given by (\ref{dinamicaalpha}) and $s_k, \beta_k$ verify the system of ODEs given by (\ref{dinamicabeta}) with corresponding coefficients 
     \begin{align}
     \label{defcoeficientes}
     \begin{split}
            \overline{a}^{(1,1)}_k
            &= \sum_{j=0}^{k-1}  \bigg(\tilde{a}_j^{(1,1)}\cos 2\beta_j-\frac{\tilde{a}_j^{(1,2)}+\tilde{a}_j^{(2,1)}}{2}\sin 2\beta_j\bigg),\\
            \overline{a}^{(1,2)}_k&=\sum_{j=0}^{k-1}  \left(\tilde{a}_j^{(1,1)}\sin 2\beta_j+\tilde{a}_j^{(1,2)}\cos^2 \beta_j-\tilde{a}_j^{(2,1)}\sin^2\beta_j \right),\\
            \overline{a}^{(2,1)}_k&=\sum_{j=0}^{k-1}  \left(\tilde{a}_j^{(1,1)}\sin 2\beta_j-\tilde{a}_j^{(1,2)}\sin^2 \beta_j+\tilde{a}_j^{(2,1)}\cos^2\beta_j \right),\\
            \overline{a}^{(2,2)}_k&=- \overline{a}^{(1,1)}_k,
     \end{split}
        \end{align}
    where 
    \begin{align}
    \label{cotascoeficientes}
    \begin{split}
       |\tilde{a}_j^{(1,1)}(t)|\leq N_{j}^{\varepsilon-2\frac{\varepsilon+\delta}{1+\gamma+\delta}+\sigma}, \hspace{5mm}  |\tilde{a}_j^{(1,2)}(t)|\leq  N_{j}^{\varepsilon-4\frac{\varepsilon+\delta}{1+\gamma+\delta}+\sigma},\hspace{5mm}  
       |\tilde{a}_j^{(2,1)}(t)-N_j^{\varepsilon}|\leq  N_j^{\varepsilon-\sigma} ,
    \end{split}
    \end{align}
    for all $t\geq t_{j+1}$ and $j=0,\ldots, n$. 
    Assume also that for $k=1,\ldots, n$ and $t\geq t_{n+1}$, 
    \begin{align}
    \label{betaanterior}
          |\beta_{k}(t)-\beta_{k-1}(t)|&\leq N_{k-1}^{-\varepsilon+\mu}+N_{k-1}^{-\varepsilon+\mu-\sigma}, \hspace{5mm} 2\pi > \beta_1(t)>\cdots >\beta_n(t),\\
        \label{alphaanterior}
        |\alpha_{k}(t)-\beta_{k-1}(t)|&\leq  N_{k-1}^{\varepsilon-4\frac{\varepsilon+\delta}{1+\gamma+\delta}+\sigma},\\
        \label{sestatico}
        0\leq\frac{s'_n(t)}{s_n(t)}&\leq N_{n-1}^{\mu+\sigma}.
\end{align}
    Let $r_{n+1}, \alpha_{n+1}$ solve (\ref{dinamicaalpha}) and $s_{n+1}, \beta_{n+1}$ solve (\ref{dinamicabeta}) with coefficients $\overline{a}^{(1,1)}_{n+1},\overline{a}^{(1,2)}_{n+1},\overline{a}^{(2,1)}_{n+1},\overline{a}^{(2,2)}_{n+1}$ given by (\ref{defcoeficientes}). Assume the initial conditions
    \begin{align}\label{IC}
            r_{n+1}(t_{n+1})=s_{n+1}(t_{n+1})=N_{n+1}^{1-\frac{\varepsilon+\delta}{1+\gamma+\delta}}, \qquad \alpha_{n+1}(t_{n+1})= \beta_{n+1}(t_{n+1}) + \frac{\pi}{2} =\beta_n(t_{n+1}).  
    \end{align}
    Then, for every $t\in [t_{n+1},1]$,
    \begin{align}
    \label{estimaalpha2}
        |\alpha_{n+1}(t)-\beta_{n}(t)|&\leq  N_n^{\varepsilon-4\frac{\varepsilon+\delta}{1+\gamma+\delta}},\\
        \label{estimar2}
        \left|
        \frac{r_{n+1}(t)}{r_{n+1}(t_{n+1})}-1\right|&\leq N_n^{-\frac{\gamma \mu}{10}},\\
        \label{estimabeta2}
       \left| \beta_{n+1}(t)-\beta_{n}(t)+\arctan \bigg(\frac{1}{\int_{t_{n+1}}^t\mathcal{A}_n(\tau) \, d\tau}\bigg)\right|&\leq N_n^{-\varepsilon+\mu-\frac{\gamma \mu}{20}},\\
       \label{estimas2}
       \left|\frac{s_{n+1}(t)}{s_{n+1}(t_{n+1})}-\sqrt{1+\left(\int_{t_{n+1}}^t\mathcal{A}_n(\tau) \, d\tau\right)^2}\right|&\leq N_{n}^{-\frac{\gamma\mu}{40}}\sqrt{1+\left(\int_{t_{n+1}}^t\mathcal{A}_n(\tau) \, d\tau\right)^2},
    \end{align}
    where
    \begin{align}
    \label{acal}
        \mathcal{A}_n(t):=\sum_{j=0}^n (\tilde{a}_j^{(2,1)}+\tilde{a}_j^{(1,2)})(t). 
    \end{align}
    Moreover, 
    \begin{align}
    \label{sestatico2}
      0\leq  \frac{s'_{n+1}(t)}{s_{n+1}(t)}\leq N_n^{\mu+\sigma}, \qquad \forall t\in [t_{n+2},1].
    \end{align}
    \end{lma}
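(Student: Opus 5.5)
The plan is to move into a frame rotating with $\beta_n(t)$. In that frame the linear field $\overline{A}_{n+1}=(\overline{a}^{(i,j)}_{n+1})$ of \eqref{defcoeficientes} should be, up to small errors, a pure shear of intensity $\mathcal{A}_n$ along $\textbf{n}(\beta_n)^\perp$. The explicit solution of the ODEs \eqref{dinamicaalpha}--\eqref{dinamicabeta} for such a shear produces exactly the arctan and square-root profiles in \eqref{estimabeta2}--\eqref{estimas2}. The remaining work is a perturbative (Grönwall-type) control of the errors on the short interval $[t_{n+1},1]$, whose length is $N_n^{-\mu}$ by \eqref{tiempos}.

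\textbf{Step 1 (structure of $\overline{A}_{n+1}$).} For a single $j$, the bracket in \eqref{defcoeficientes} is the matrix of Lemma \ref{lemavelocidad}. The $\tilde a_j^{(2,1)}$-part equals $\tilde a_j^{(2,1)}\,\textbf{n}(\beta_j)^\perp\otimes\textbf{n}(\beta_j)$, and the $\tilde a_j^{(1,2)}$-part is the analogous shear with the roles of $\textbf{n}(\beta_j)$ and $\textbf{n}(\beta_j)^\perp$ exchanged. I will replace every $\beta_j$ by $\beta_n$. The resulting error is at most
\[
\sum_{j\le n}N_j^{\varepsilon}|\beta_j-\beta_n|\lesssim \sum_{j\le n-1} N_j^{\varepsilon}N_j^{-\varepsilon+\mu}\lesssim N_{n-1}^{\mu}.
\]
This uses \eqref{betaanterior} telescopically and the superexponential growth \eqref{des}. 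The terms $\tilde a^{(1,1)}_j$ and $\tilde a^{(1,2)}_j$ are small by \eqref{cotascoeficientes}. Hence $\overline{A}_{n+1}=\mathcal{A}_n\,\textbf{n}(\beta_n)^\perp\otimes\textbf{n}(\beta_n)+E(t)$, and on $[t_{n+1},1]$ we have $\int|E|\lesssim N_{n-1}^{\mu}N_n^{-\mu}+\dots\le N_n^{-\gamma\mu/(2+\gamma)+}$ by \eqref{des}. I will also need $|\beta_n'|$. From \eqref{dinamicabeta} for $\beta_n$, with the previous shear, $\beta_n'$ is of size $\mathcal{A}_{n-1}\sin^2(\beta_n-\beta_{n-1})\lesssim N_{n-1}^{\varepsilon}N_{n-1}^{-2\varepsilon+2\mu}$, which is small. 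This is consistent with \eqref{sestatico}.

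\textbf{Step 2 (model dynamics).} Set $\hat\alpha=\alpha_{n+1}-\beta_n$ and $\hat\beta=\beta_{n+1}-\beta_n$, with $\hat\alpha(t_{n+1})=0$ and $\hat\beta(t_{n+1})=-\pi/2$ by \eqref{IC}. For the pure shear, the equations reduce to
\[
\hat\beta'=\mathcal{A}_n\sin^2\hat\beta,\qquad \frac{s'}{s}=-\mathcal{A}_n\sin\hat\beta\cos\hat\beta ,
\]
and the same for $(r,\hat\alpha)$. Then $\hat\alpha\equiv0$ is a fixed point with $r'=0$. For $\hat\beta$ one has $\cot\hat\beta=-\int_{t_{n+1}}^t\mathcal{A}_n$ and $s/s(t_{n+1})=\sqrt{1+(\int\mathcal{A}_n)^2}$. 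Note $\mathcal{A}_n>0$ and $\int\mathcal{A}_n\lesssim N_n^{\varepsilon-\mu}$. For the full system:
\begin{itemize}
\item For $(r,\hat\alpha)$: $\hat\alpha$ satisfies $\hat\alpha'=\mathcal{A}_n\sin^2\hat\alpha+O(|E|+|\beta_n'|)$. A bootstrap keeps $|\hat\alpha|\le N_n^{\varepsilon-4\frac{\varepsilon+\delta}{1+\gamma+\delta}}$, because $\mathcal{A}_n\hat\alpha^2\cdot N_n^{-\mu}$ is negligible and the forcing is dominated by the $\tilde a^{(1,2)}$ term. Then $\log r$ changes by at most $\int(|\mathcal{A}_n\hat\alpha|+|E|)\le N_n^{-\gamma\mu/10}$, which gives \eqref{estimaalpha2}--\eqref{estimar2}.
\item For $\hat\beta$: write $u=\cot\hat\beta$. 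Then $u'=-\mathcal{A}_n+\mathcal{E}(t)$, where $\mathcal{E}$ collects the error coefficients multiplied by at most $(1+u^2)$.
\item Once $(u,\hat\beta)$ is controlled, integrate $\log s$ to get \eqref{estimas2}.
\end{itemize}

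\textbf{Main obstacle.} The hard part is the $\hat\beta$ estimate \eqref{estimabeta2}. Near the end $|\hat\beta|\sim N_n^{-\varepsilon+\mu}$ is tiny, while the required accuracy is the slightly smaller $N_n^{-\varepsilon+\mu-\gamma\mu/20}$. So the naive factor $u^2\sim N_n^{2(\varepsilon-\mu)}$ in $\mathcal{E}$ is fatal. To handle it, I will expand the error matrix $E$ around $\hat\beta=0$. A misaligned shear $N_j^{\varepsilon}\,\textbf{n}(\beta_j)^\perp\otimes \textbf{n}(\beta_j)$ contributes to $\hat\beta'$ a term $N_j^{\varepsilon}\sin^2(\hat\beta+\beta_n-\beta_j)$. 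This is bounded by $N_j^{\varepsilon}(\hat\beta^2+|\beta_n-\beta_j|^2)$, with no term linear in the misalignment alone. In the $u$ variable this becomes $O(N_j^{\varepsilon}(1+u^2|\beta_n-\beta_j|^2))$, and $u^2|\beta_n-\beta_j|^2\lesssim N_n^{2(\varepsilon-\mu)}N_j^{-2\varepsilon+2\mu}$ is small for $j<n$ thanks to \eqref{des}. The $j=n$ term is exact. The $\tilde a^{(1,2)}$ contributions are $O(N^{\varepsilon-4\frac{\varepsilon+\delta}{1+\gamma+\delta}}u^2)$, and $\varepsilon<\ldots$ from \eqref{parametros1} makes them integrable to a small quantity. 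The same applies to $\beta_n'$ and $\tilde a^{(1,1)}$, where $\gamma\varepsilon>5\delta$ is used. This should give $|u+\int\mathcal{A}_n|\le N_n^{-c\gamma\mu}(1+\int\mathcal{A}_n)$, and inverting the arctan yields \eqref{estimabeta2}.

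Finally, for \eqref{sestatico2}: $s'/s=-\mathcal{A}_n\sin\hat\beta\cos\hat\beta+O(|E|)$ with $\hat\beta\in(-\pi/2,0)$. This is nonnegative, and by the formula for $u$ it is at most $\mathcal{A}_n/\int_{t_{n+1}}^t\mathcal{A}_n\lesssim (t-t_{n+1})^{-1}$. For $t\ge t_{n+2}$ we have $t-t_{n+1}\gtrsim N_{n-1}^{-\mu}$, so the bound is $\le N_n^{\mu+\sigma}$ by Remark \ref{Remark7}.
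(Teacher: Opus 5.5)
There is a genuine gap. It sits at the step you flag as the main obstacle, and it also breaks your $\hat\alpha$ bootstrap.

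You compare $\beta_{n+1}$ with $\beta_n$ in two separate pieces. First, you treat $\beta_n'$ as an independently bounded small quantity. Second, you bound each earlier shear's action on $\beta_{n+1}$ in absolute value, $\tilde a_j^{(2,1)}\sin^2(\hat\beta+d_j)\lesssim N_j^{\varepsilon}(\hat\beta^2+d_j^2)$ with $d_j=\beta_n-\beta_j$. This leaves, in the $\hat\beta$-equation, a forcing that does not vanish at $\hat\beta=0$, of size $\sum_{j<n}N_j^{\varepsilon}d_j^2$.

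That forcing is not negligible at the relevant scale. The true $\beta_n'$ is genuinely of order $N_0^{-\varepsilon+2\mu}$, coming from the $j=0$ term. Your estimate $N_{n-1}^{-\varepsilon+2\mu}$ drops the terms $j<n-1$, but even that size is too large. The damping $\mathcal{A}_n\sin 2\mathcal{B}\approx -2/(t-t_{n+1})$ does not absorb a constant forcing. So your bounds allow an error of order $N_{n-1}^{-\varepsilon+2\mu}N_n^{-\mu}$ in $\hat\beta$. By \eqref{des} this is much larger than the precision $N_n^{-\varepsilon+\mu-\frac{\gamma\mu}{20}}$ required in \eqref{estimabeta2}.

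In the $u=\cot\hat\beta$ picture this is your term $N_j^{\varepsilon}u^2d_j^2$. Your claim that $u^2d_j^2$ is small is false: near $t=1$ one has $u\,d_j\approx (N_n/N_j)^{\varepsilon-\mu}\gg 1$. In fact $\int N_j^{\varepsilon}d_j^2u^2\,dt\approx (N_n/N_j)^{\varepsilon-2\mu}\int\mathcal{A}_n$, which exceeds $\int\mathcal{A}_n$ itself.

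The same defect breaks your Step 2 for $\hat\alpha$. A forcing $O(|E|+|\beta_n'|)$ with $|E|\approx N_{n-1}^{\mu}$ only gives $|\hat\alpha|\lesssim N_{n-1}^{\mu}N_n^{-\mu}$. That is far from the bound $N_n^{\varepsilon-4\frac{\varepsilon+\delta}{1+\gamma+\delta}}$ in \eqref{estimaalpha2}. Your assertion that the forcing is dominated by $\tilde a_n^{(1,2)}$ does not follow from your decomposition. Your treatment of $\log r$ and $\log s$ via $\int|E|\lesssim N_{n-1}^{\mu}N_n^{-\mu}$ is fine, once the angles are controlled.

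The missing idea is the following. $\beta_n$ itself solves the direction ODE \eqref{dinamicabeta} driven by $\overline{A}_n$. Meanwhile $\alpha_{n+1}$ and $\beta_{n+1}$ are driven by $\overline{A}_{n+1}$, which differs from $\overline{A}_n$ only by the layer-$n$ term. You must subtract the two ODEs rather than estimate $\beta_n'$.

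The right-hand side has the form $a+b\cos 2\beta+c\sin 2\beta$. So for each $j<n$ the paired contributions combine as $\sin^2(\hat\beta+d_j)-\sin^2 d_j=\sin\hat\beta\,\sin(\hat\beta+2d_j)$, and similarly for the $(1,1)$ and $(1,2)$ parts. Hence everything coming from earlier layers carries a factor $\sin\hat\beta$. After absorbing the aligned part into $\mathcal{A}_n\sin^2\hat\beta$, you obtain \eqref{edobeta}. The remainder there is $\sin\hat\beta\,\tilde g_1$ with $|\tilde g_1|\lesssim\sum_{j<n}N_j^{\varepsilon}|d_j|\lesssim N_{n-1}^{\mu}$. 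The only forcing that does not vanish at $\hat\beta=0$ is $-\tilde a_n^{(1,2)}$. The equation \eqref{edoalpha} for $\hat\alpha$ has the same structure.

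With this structure your $u$-variable route does close. The error in $u'$ becomes $O(N_{n-1}^{\mu}\sqrt{1+u^2})+O(|\tilde a_n^{(1,2)}|(1+u^2))+O(|\tilde a_n^{(1,1)}||u|)$. After integration this is a relative error $\lesssim N_{n-1}^{\mu}N_n^{-\mu}$. The paper's route also closes: write $\hat\beta=\mathcal{B}+\mathcal{E}$, apply Gr\"onwall with the damping factor $(1+\mathcal{I}(t)^2)/(1+\mathcal{I}(\tau)^2)$ up to $\mathcal{I}=1$, and use a trapping argument afterwards.

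One minor slip: $t_{n+2}-t_{n+1}\approx N_n^{-\mu}$, not $N_{n-1}^{-\mu}$. The resulting bound $\lesssim N_n^{\mu}$ still gives \eqref{sestatico2}.
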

    \begin{proof}
    We need to control $\alpha_{n+1}, \beta_{n+1}, r_{n+1}$ and $ s_{n+1}$. Accordingly, we shall divide the proof in four steps, one for each quantity.
    \begin{enumerate}
        \item \textbf{Control of $\alpha_{n+1}$}. 
        We start by computing the ODE associated to $\alpha_{n+1}-\beta_n$. To get this, we use (\ref{dinamicaalpha}),  (\ref{dinamicabeta}) and \eqref{defcoeficientes}, 
        \begin{align*}
           &-( \alpha_{n+1}-\beta_n)'=\\&(\overline{a}_{n+1}^{(1,2)}-\overline{a}_{n}^{(1,2)}) \cos^2\alpha_{n+1}-(\overline{a}_{n+1}^{(2,1)}-\overline{a}_{n}^{(2,1)}) \sin^2\alpha_{n+1}-2(\overline{a}_{n+1}^{(1,1)}-\overline{a}_{n}^{(1,1)})\cos \alpha_{n+1}\sin\alpha_{n+1}\\
          &+\overline{a}_{n}^{(1,2)}(\cos^2 \alpha_{n+1}-\cos^2 \beta_n)- \overline{a}_{n}^{(2,1)}(\sin^2 \alpha_{n+1}-\sin^2 \beta_n)
           -2\overline{a}_{n}^{(1,1)}(\cos \alpha_{n+1}\sin\alpha_{n+1}-\cos \beta_{n}\sin\beta_{n})\\&=(\overline{a}_{n+1}^{(1,2)}-\overline{a}_{n}^{(1,2)}) \cos^2\alpha_{n+1}-(\overline{a}_{n+1}^{(2,1)}-\overline{a}_{n}^{(2,1)}) \sin^2\alpha_{n+1}-2(\overline{a}_{n+1}^{(1,1)}-\overline{a}_{n}^{(1,1)})\cos \alpha_{n+1}\sin\alpha_{n+1}\\
         &- (\overline{a}_{n}^{(2,1)}+\overline{a}_{n}^{(1,2)})\sin( \alpha_{n+1}+\beta_n)\sin( \alpha_{n+1}-\beta_n)
           -2\overline{a}_{n}^{(1,1)}\cos (\alpha_{n+1}+\beta_n)\sin(\alpha_{n+1}-\beta_n)\\
           &=(\overline{a}_{n+1}^{(1,2)}-\overline{a}_{n}^{(1,2)}) \cos^2\alpha_{n+1}-(\overline{a}_{n+1}^{(2,1)}-\overline{a}_{n}^{(2,1)}) \sin^2\alpha_{n+1}-2(\overline{a}_{n+1}^{(1,1)}-\overline{a}_{n}^{(1,1)})\cos \alpha_{n+1}\sin\alpha_{n+1}\\&
         -\sin( \alpha_{n+1}+\beta_n)\sin( \alpha_{n+1}-\beta_n) \sum_{j=0}^{n-1}  (2\tilde{a}_j^{(1,1)} \sin 2\beta_j+(\tilde{a}^{(1,2)}_j+\tilde{a}^{(2,1)}_j)\cos 2\beta_j)\\
         &-\cos (\alpha_{n+1}+\beta_n)\sin (\alpha_{n+1}-\beta_n) \sum_{j=0}^{n-1}  (2\tilde{a}_j^{(1,1)} \cos 2\beta_j-(\tilde{a}^{(1,2)}_j+\tilde{a}^{(2,1)}_j)\sin 2\beta_j)\\
        & =(\overline{a}_{n+1}^{(1,2)}-\overline{a}_{n}^{(1,2)}) \cos^2\alpha_{n+1}-(\overline{a}_{n+1}^{(2,1)}-\overline{a}_{n}^{(2,1)}) \sin^2\alpha_{n+1}-(\overline{a}_{n+1}^{(1,1)}-\overline{a}_{n}^{(1,1)})\sin 2\alpha_{n+1}\\
         &-\sin( \alpha_{n+1}-\beta_n) \sum_{j=0}^{n-1}  \left(2\tilde{a}_j^{(1,1)} \cos (\alpha_{n+1}+\beta_n-2\beta_j)+(\tilde{a}_j^{(1,2)}+\tilde{a}_j^{(2,1)})\sin (\alpha_{n+1}+\beta_n-2\beta_j)
         \right)\\
         &=\tilde{a}_n^{(1,2)}(\cos \alpha_{n+1}\cos \beta_n+\sin \alpha_{n+1}\sin\beta_n)^2\\
         &-\tilde{a}_n^{(2,1)}(\cos \alpha_{n+1}\sin \beta_n-\sin\alpha_{n+1}\cos \beta_n)^2-\tilde{a}_n^{(1,1)}\sin 2(\alpha_{n+1}-\beta_n)\\
          & -\sin( \alpha_{n+1}-\beta_n) \sum_{j=0}^{n-1}  \left(2\tilde{a}_j^{(1,1)} \cos (\alpha_{n+1}+\beta_n-2\beta_j)+(\tilde{a}_j^{(1,2)}+\tilde{a}_j^{(2,1)})\sin (\alpha_{n+1}+\beta_n-2\beta_j)
         \right)\\
         &=\tilde{a}_n^{(1,2)}\cos^2 (\alpha_{n+1}-\beta_n)-\tilde{a}_n^{(2,1)}\sin^2(\alpha_{n+1}- \beta_n)-\tilde{a}_n^{(1,1)}\sin 2(\alpha_{n+1}-\beta_n)\\
          & -\sin( \alpha_{n+1}-\beta_n) \sum_{j=0}^{n-1}  \left(2\tilde{a}_j^{(1,1)} \cos (\alpha_{n+1}+\beta_n-2\beta_j)+(\tilde{a}_j^{(1,2)}+\tilde{a}_j^{(2,1)})\sin (\alpha_{n+1}+\beta_n-2\beta_j)
         \right). 
        \end{align*}
    Hence, 
    \begin{align}
    \label{edoalpha}
    \begin{split}
        &(\alpha_{n+1}-\beta_n)'=\tilde{a}_n^{(2,1)}\sin^2(\alpha_{n+1}- \beta_n)-\tilde{a}_n^{(1,2)}\cos^2 (\alpha_{n+1}-\beta_n)+\tilde{a}_n^{(1,1)}\sin 2(\alpha_{n+1}-\beta_n)\\
           &+\sin( \alpha_{n+1}-\beta_n) \sum_{j=0}^{n-1}  \left(2\tilde{a}_j^{(1,1)} \cos (\alpha_{n+1}+\beta_n-2\beta_j)+(\tilde{a}_j^{(1,2)}+\tilde{a}_j^{(2,1)})\sin (\alpha_{n+1}+\beta_n-2\beta_j)
         \right)
         ,\\
         &(\alpha_{n+1}-\beta_n)(t_{n+1})=0,
         \end{split}
         \end{align}
         where we have also used \eqref{IC}.

   Since $(\alpha_{n+1}-\beta_n)(t_{n+1})=0$, by continuity,  there exists  $t_\ast>0$ such that
    \begin{align}\label{P30.6.1}
        |(\alpha_{n+1}-\beta_n)(t)|\leq  N_n^{\varepsilon-4\frac{\varepsilon+\delta}{1+\gamma+\delta}+\sigma}\log N_n
    \end{align}
    for all $t\in [t_{n+1},t_{n+1}+t_\ast]$. Then we are in a position to estimate $(\alpha_{n+1}-\beta_n)'$ on the time interval $[t_{n+1},t_{n+1}+t_\ast]$. To be more precise, it follows from \eqref{edoalpha} and \eqref{P30.6.1} that
    \begin{align*}
      &|(\alpha_{n+1}-\beta_n)'| =   \left|\tilde{a}_n^{(2,1)}\sin^2(\alpha_{n+1}- \beta_n)-\tilde{a}_n^{(1,2)}\cos^2 (\alpha_{n+1}-\beta_n)+\tilde{a}_n^{(1,1)}\sin 2(\alpha_{n+1}-\beta_n)\right.
         \nonumber  \\& \left.+\sin( \alpha_{n+1}-\beta_n) \sum_{j=0}^{n-1}  \left(2\tilde{a}_j^{(1,1)} \cos (\alpha_{n+1}+\beta_n-2\beta_j)+(\tilde{a}_j^{(1,2)}+\tilde{a}_j^{(2,1)})\sin (\alpha_{n+1}+\beta_n-2\beta_j)
         \right)
         \right| \nonumber\\
        &\leq |\tilde{a}_n^{(2,1)}| (N_n^{\varepsilon-4\frac{\varepsilon+\delta}{1+\gamma+\delta}+\sigma}\log N_n)^2+|\tilde{a}_n^{(1,2)}|+2|\tilde{a}_n^{(1,1)}|N_n^{\varepsilon-4\frac{\varepsilon+\delta}{1+\gamma+\delta}+\sigma}\log N_n \nonumber
        \\
       & +N_n^{\varepsilon-4\frac{\varepsilon+\delta}{1+\gamma+\delta}+\sigma}\log N_n\sum_{j=0}^{n-1}\left(      2|\tilde{a}_j^{(1,1)}|+|\tilde{a}_j^{(1,2)}+\tilde{a}_j^{(2,1)}|\left(|\alpha_{n+1}-\beta_n|+2\sum_{l=j}^{n-1}|\beta_{l+1}-\beta_l|\right)
        \right). 
\end{align*}
Note that, by \eqref{des}, 
\begin{equation}\label{P30.6.3}
    \sum_{l=j}^{\infty} N_l^{-\varepsilon + \mu} \leq C N_j^{-\varepsilon + \mu} \qquad \text{and} \qquad  \sum_{l=0}^j N_l^{\mu} \leq C N_j^{\mu}
\end{equation}
(see also Remark \ref{Remark7}) and then we can make use of the assumptions stated in \eqref{cotascoeficientes} and \eqref{betaanterior}, together with \eqref{P30.6.1}, in order  to get 
\begin{align*}
        |(\alpha_{n+1}-\beta_n)'| &\leq C\left(N_n^{3\varepsilon-8\frac{\varepsilon+\delta}{1+\gamma+\delta}+2\sigma}(\log N_n)^2+N_n^{\varepsilon-4\frac{\varepsilon+\delta}{1+\gamma+\delta}+\sigma}+N_n^{2\varepsilon-6\frac{\varepsilon+\delta}{1+\gamma+\delta}+2\sigma} \log N_n\right)\\
        & \hspace{.5cm}+CN_n^{\varepsilon-4\frac{\varepsilon+\delta}{1+\gamma+\delta}+\sigma}\log N_n \sum_{j=0}^{n-1} N_j^{\varepsilon} \left(N_j^{ - 2 \frac{\varepsilon + \delta}{1 + \gamma + \delta} + \sigma} + N_n^{\varepsilon - 4 \frac{\varepsilon + \delta}{1 + \gamma + \delta} + \sigma} \log N_n + N_j^{-\varepsilon + \mu}  \right) \\
        & \leq C N_n^{\varepsilon-4\frac{\varepsilon+\delta}{1+\gamma+\delta} + \sigma} \bigg(1 +\log N_n \sum_{j=0}^{n-1} N_j^{\mu} \bigg) \\
        &\leq CN_n^{\varepsilon-4\frac{\varepsilon+\delta}{1+\gamma+\delta}+\sigma}N_{n-1}^{\mu}\log N_n. 
         \end{align*}
Now, integrating the last estimate over all $t \in [t_{n+1}, t_{n+1}+t_\ast]$, using the definition of $t_{n+1}$ given in  \eqref{tiempos} and the relations  \eqref{P30.6.3}, we achieve
    \begin{align*}
        |(\alpha_{n+1}-\beta_n)(t)|&\leq CN_n^{\varepsilon-4\frac{\varepsilon+\delta}{1+\gamma+\delta}+\sigma}N_{n-1}^{\mu}\log N_n(t-t_{n+1})\\
        &\leq CN_n^{\varepsilon-4\frac{\varepsilon+\delta}{1+\gamma+\delta}+\sigma}N_{n-1}^{\mu}\log N_n(1-t_{n+1})\\
        &\leq CN_n^{\varepsilon-4\frac{\varepsilon+\delta}{1+\gamma+\delta}+\sigma}\left(\frac{N_{n-1}}{N_n}\right)^{\mu}\log N_n \\
        &\leq (CN_{n-1}^{-\frac{\gamma \mu}{2}+3\sigma}\log N_{n-1}) N_n^{\varepsilon-4\frac{\varepsilon+\delta}{1+\gamma+\delta}}
        \\
        &\leq (CN_{0}^{-\frac{\gamma \mu}{2}+3\sigma}\log N_{0})N_n^{\varepsilon-4\frac{\varepsilon+\delta}{1+\gamma+\delta}}. 
    \end{align*}
  Hence, taking $N_0$ sufficiently large (depending only on the parameters $\gamma, \delta, \varepsilon, \mu$ and $\sigma$), we can conclude that
    $$
        |(\alpha_{n+1}-\beta_n)(t)| \leq N_n^{\varepsilon-4\frac{\varepsilon+\delta}{1+\gamma+\delta}}, \qquad \forall t \in [t_{n+1}, t_{n+1}+t_\ast]. 
    $$
    Note that the latter consists of a  self-improvement of the a priori estimate \eqref{P30.6.1}. As a byproduct, it is possible to implement a standard bootstrapping argument so that 
    \begin{align*}
        |(\alpha_{n+1}-\beta_n)(t)|\leq N_n^{\varepsilon-4\frac{\varepsilon+\delta}{1+\gamma+\delta}}, \qquad  \forall t\in [t_{n+1},1],
    \end{align*}
    that is, \eqref{estimaalpha2} holds. 
    
    \item \textbf{Control of $\beta_{n+1}$.} Following the same approach as for (\ref{edoalpha}), but now replacing the role played by $\alpha_{n+1}$ in (\ref{edoalpha}) by $\beta_{n+1}$, one can derive the ODE associated to  $\beta_{n+1}-\beta_n$. Namely
         \begin{align*}
         \begin{split}
         &(\beta_{n+1}-\beta_n)'=\tilde{a}_n^{(2,1)}\sin^2(\beta_{n+1}- \beta_n)-\tilde{a}_n^{(1,2)}\cos^2 (\beta_{n+1}-\beta_n)+\tilde{a}_n^{(1,1)}\sin 2(\beta_{n+1}-\beta_n)\\
           &+\sin( \beta_{n+1}-\beta_n) \sum_{j=0}^{n-1}  \left(2\tilde{a}_j^{(1,1)} \cos (\beta_{n+1}+\beta_n-2\beta_j)+(\tilde{a}_j^{(1,2)}+\tilde{a}_j^{(2,1)})\sin (\beta_{n+1}+\beta_n-2\beta_j)
         \right), 
         \end{split}
    \end{align*}
    or equivalently, 
    \begin{align}\label{edobeta}
    (\beta_{n+1}-\beta_n)' &= \sin^2(\beta_{n+1}- \beta_n)\sum_{j=0}^n (\tilde{a}_j^{(2,1)}+\tilde{a}_j^{(1,2)})+\left(\tilde{a}_n^{(1,1)}\sin 2(\beta_{n+1}-\beta_n)-\tilde{a}_n^{(1,2)}\right) \nonumber \\
         &  +\sin( \beta_{n+1}-\beta_n) \sum_{j=0}^{n-1}  2\tilde{a}_j^{(1,1)} \cos (\beta_{n+1}+\beta_n-2\beta_j)
          \nonumber
         \\
         &+\sin(\beta_{n+1}-\beta_n)\sum_{j=0}^{n-1}
            (\tilde{a}_j^{(1,2)}+\tilde{a}_j^{(2,1)})\left(\sin (\beta_{n+1}-\beta_n+2(\beta_n-\beta_j))-\sin (\beta_{n+1}-\beta_n)
         \right)
    \end{align}
    together with the initial condition $(\beta_{n+1}-\beta_n)(t_{n+1})=-\frac{\pi}{2}$ (see \eqref{IC}).

    Next we define  some terms that will be useful during the proof,
    \begin{align}
    \label{defaux}
    \begin{split}
        &g_1(t,\beta_{n+1}):=\sin (\beta_{n+1}-\beta_n)\tilde{g}_1(t,\beta_{n+1}):=\sin (\beta_{n+1}-\beta_n)\left[ \sum_{j=0}^{n-1}  2\tilde{a}_j^{(1,1)} \cos (\beta_{n+1}+\beta_n-2\beta_j)
         \right.
         \\
         &+\left.2\tilde{a}_n^{(1,1)}\cos(\beta_{n+1}-\beta_n)+\sum_{j=0}^{n-1}
            (\tilde{a}_j^{(1,2)}+\tilde{a}_j^{(2,1)})\left(\sin (\beta_{n+1}-\beta_n+2(\beta_n-\beta_j))-\sin (\beta_{n+1}-\beta_n)
         \right)\right],\\
         &g_2(t,\beta_{n+1}):=-\tilde{a}_n^{(1,2)},\\
        &\mathcal{B}(t):=-\arctan \left(\frac{1}{\int_{t_{n+1}}^t\mathcal{A}_n(\tau)\, d\tau}\right), 
            \end{split}
    \end{align}
    where $\mathcal{A}_n$ is given by (\ref{acal}). For convenience, we rewrite the solution as
    \begin{align}\label{P7.7.1}
        (\beta_{n+1}-\beta_n)(t)=\mathcal{B}(t)+\mathcal{E}(t),
    \end{align}
    where $\mathcal{B}$ is defined in (\ref{defaux}) and $\mathcal{E}$ will be determined by (\ref{edobeta}). Noticing that $\mathcal{B}$ satisfies the ODE (apply the formula $\sin^2 (-\arctan x) = \frac{x^2}{1+x^2}$)
    \begin{align}
        \mathcal{B}'(t)&= \mathcal{A}_n(t)\sin^2 \mathcal{B}(t), \nonumber\\
        \mathcal{B}(t_{n+1})&= -\frac{\pi}{2}, \label{CIB}
    \end{align}
    plugging $\beta_{n+1}-\beta_n=\mathcal{B}+\mathcal{E}$ in (\ref{edobeta}), we obtain 
    \begin{align*}
      &\mathcal{A}_n \sin^2\mathcal{B}+\mathcal{E}'=\mathcal{A}_n\sin^2(\mathcal{B}+\mathcal{E})+g_{1}+g_2\\
      &=\mathcal{A}_n \sin^2\mathcal{B}+\mathcal{A}_n\sin (2\mathcal{B})\mathcal{E}+\mathcal{A}_n\cos(2\eta_1)\mathcal{E}^2
      +\sin(\mathcal{B}+\mathcal{E})\tilde{g}_{1}+g_2\\
      &=\mathcal{A}_n \sin^2\mathcal{B}+2\mathcal{A}_n \mathcal{E} \sin \mathcal{B}\cos \mathcal{B}+\mathcal{A}_n\cos(2\eta_1)\mathcal{E}^2
      +\sin\mathcal{B}\tilde{g}_{1}+\cos (\eta_2)\mathcal{E}\tilde{g}_{1}+g_2,
    \end{align*}
    where we did Taylor expansions (so $|\eta_i(t)-\mathcal{B}(t)|\leq |\mathcal{E}(t)|$ for $i=1, 2$). Then
    \begin{align}
    \label{edoE}
    \begin{split}
       & \mathcal{E}'=(2\mathcal{A}_n\mathcal{E}\cos \mathcal{B}+\tilde{g}_1)\sin \mathcal{B}+
        \mathcal{A}_n\cos(2\eta_1)\mathcal{E}^2+\cos (\eta_2)\mathcal{E}\tilde{g}_{1}+g_2,\\
       & \mathcal{E}(t_{n+1})=0.
        \end{split}
    \end{align}
    Next we prove that $\mathcal{E}$ will be negligible. To establish this, we first estimate $\tilde{g}_1$ and $g_2$, as defined in (\ref{defaux}). From (\ref{cotascoeficientes}), (\ref{betaanterior}),  the basic trigonometric identity $\sin A - \sin B = 2 \cos \frac{A+B}{2} \sin \frac{A-B}{2}$ and a standard telescopic argument for sums, we derive
    \begin{align}
    \label{cotasg}
    \begin{split}
        |\tilde{g}_1|=&\left|2\tilde{a}_n^{(1,1)}\cos(\beta_{n+1}-\beta_n)+ \sum_{j=0}^{n-1} \left[2\tilde{a}_j^{(1,1)} \cos (\beta_{n+1}+\beta_n-2\beta_j)\right.\right.\\
        &\left.\left.
         +
            (\tilde{a}_j^{(1,2)}+\tilde{a}_j^{(2,1)})\left(\sin (\beta_{n+1}-\beta_n+2(\beta_n-\beta_j))-\sin (\beta_{n+1}-\beta_n)
         \right)\right]\right|\\
         &\leq 2N_n^{\varepsilon-2\frac{\varepsilon+\delta}{1+\gamma+\delta}+\sigma}+C+C\sum_{j=0}^{n-1}N_j^\varepsilon |\beta_n-\beta_j|\leq C+C\sum_{j=0}^{n-1}N_j^\varepsilon \sum_{l=j}^{n-1}|\beta_{l+1}-\beta_l|\\
        & \leq C+C\sum_{j=0}^{n-1}N_j^\varepsilon \sum_{l=j}^{n-1} N_l^{-\varepsilon+\mu} \leq C + C \sum_{j=0}^{n-1}  N_j^{ \mu}\leq C N_{n-1}^{\mu},\\
         |g_2| &\leq  N_n^{\varepsilon-4\frac{\varepsilon+\delta}{1+\gamma+\delta}+\sigma}. 
    \end{split}
         \end{align}
          
          On the other hand, we claim that 
         \begin{align}
             \label{cotaA}
         N_{n}^{\varepsilon}-N_{n}^{\varepsilon-\frac{\sigma}{2}}\leq \mathcal{A}_n\leq N_{n}^{\varepsilon}+N_{n}^{\varepsilon-\frac{\sigma}{2}}.
    \end{align}
    Indeed, by \eqref{acal}, (\ref{cotascoeficientes}), and \eqref{des}, 
    \begin{align*}
        |\mathcal{A}_n - N_n^{\varepsilon}| & = \bigg| \sum_{j=0}^n (\tilde{a}_j^{(2, 1)} + \tilde{a}_j^{(1, 2)}) - N_n^{\varepsilon} \bigg| \\
        & \leq |\tilde{a}_n^{(2, 1)}-N_n^{\varepsilon}| + \sum_{j=0}^{n-1} |\tilde{a}_j^{(2, 1)}| + \sum_{j=0}^n |\tilde{a}_j^{(1, 2)}| \\
        & \leq N_n^{\varepsilon-\sigma} + C \sum_{j=0}^{n-1} N_j^{\varepsilon} + \sum_{j=0}^n N_j^{\varepsilon - 4 \frac{\varepsilon + \delta}{1+\gamma + \delta} + \sigma} \\
        & \leq N_n^{\varepsilon-\sigma} + C N_{n-1}^{\varepsilon} \leq  N_n^{\varepsilon-\sigma} + C N_n^{\frac{\varepsilon}{1+\frac{\gamma}{2}}} \\
        & \leq C  N_n^{\varepsilon-\sigma} \leq N_n^{\varepsilon - \frac{\sigma}{2}}. 
    \end{align*}
    As a consequence, the desired estimates \eqref{cotaA} hold.

    Since $\mathcal{E}(t_{n+1})=0$ (see \eqref{edoE}), there exists $t_\ast>0$ such that, 
    \begin{align}
    \label{cotaerror}
        |\mathcal{E}(t)|\leq N_{n}^{-\varepsilon+\mu}N_{n-1}^{-\frac{\gamma}{10}\mu}, \hspace{3mm}\forall t\in[t_{n+1},t_{n+1}+t_\ast].
    \end{align}
   In fact, take $t_\ast>0$ such that $t_{n+1}+t_\ast$ is the supremum in $(t_{n+1},1]$ verifying (\ref{cotaerror}). 
    

    In light  of the ODE associated to $\mathcal{E}$ given by (\ref{edoE}) and applying  (\ref{cotasg}), \eqref{cotaerror}, \eqref{CIB} (in particular, $\sin (2 \mathcal{B}) < 0$ provided that $t > t_{n+1}$ is sufficiently close to $t_{n+1}$) and \eqref{cotaA} (in particular, $\mathcal{A}_n > 0$), we infer that
    \begin{align*}
        |\mathcal{E}'|& \leq -\mathcal{A}_n\sin(2\mathcal{B})|\mathcal{E}|+CN_{n-1}^{\mu}+CN_n^\varepsilon(N_{n}^{-\varepsilon+\mu}N_{n-1}^{-\frac{\gamma}{10}\mu})^2\\
        &\hspace{1cm} +CN_{n-1}^{\mu}N_{n}^{-\varepsilon+\mu}N_{n-1}^{-\frac{\gamma}{10}\mu}+N_n^{\varepsilon-4\frac{\varepsilon+\delta}{1+\gamma+\delta}+\sigma} \\
        & \leq -\mathcal{A}_n\sin(2\mathcal{B})|\mathcal{E}|+CN_{n-1}^{\mu}. 
    \end{align*}
    We are now in a position to invoke the integral form of Gronwall's  inequality and then
    \begin{align}\label{P1.7.1}
        |\mathcal{E}(t)|\leq CN_{n-1}^{\mu} \int_{t_{n+1}}^{t}  \exp \left(-\int_{\tau}^{t} \mathcal{A}_n(\overline{\tau})\sin(2\mathcal{B}(\overline{\tau})) \, d\overline{\tau}\right)\, d\tau.
    \end{align}
    Let
    \begin{equation}\label{P2.7.4}
            \mathcal{I} (\tau) := \int_{t_{n+1}}^\tau \mathcal{A}_n(\xi) \, d\xi. 
    \end{equation}
    According to the definition of $\mathcal{B}$ given in \eqref{defaux} and applying the basic trigonometric formula 
    $
        \sin (\arctan x) \cos(\arctan x) = \frac{x}{1+x^2}
    $
    with $x = 1/\mathcal{I}$, we can write 
    \begin{align*}
       - \int_{\tau}^{t} \mathcal{A}_n(\overline{\tau})\sin(2\mathcal{B}(\overline{\tau})) \, d\overline{\tau} & =   \int_{\tau}^{t} \mathcal{A}_n(\overline{\tau})\sin\bigg(2 \arctan \frac{1}{\mathcal{I}(\overline{\tau})} \bigg) \, d\overline{\tau} \\
       & \hspace{-3cm}= 2 \int_{\tau}^{t} \mathcal{A}_n(\overline{\tau})\sin\bigg( \arctan \frac{1}{\mathcal{I}(\overline{\tau})} \bigg) \cos\bigg( \arctan \frac{1}{\mathcal{I}(\overline{\tau})} \bigg) \, d\overline{\tau} \\
       & \hspace{-3cm}= 2 \int_{\tau}^{t} \mathcal{A}_n(\overline{\tau}) \,  \frac{\mathcal{I}(\overline{\tau})}{1 + \mathcal{I}(\overline{\tau})^2} \, d\overline{\tau}.
    \end{align*}
    Then, by a simple change of variables (more precisely, $u = \mathcal{I}(\overline{\tau})$ and so $du  = \mathcal{A}_n(\overline{\tau}) d \overline{\tau}$), 
    \begin{align}\label{P7.7.6}
 - \int_{\tau}^{t} \mathcal{A}_n(\overline{\tau})\sin(2\mathcal{B}(\overline{\tau})) \, d\overline{\tau} & =  \int_{\mathcal{I}(\tau)}^{\mathcal{I}(t)}  \frac{2 u}{1+u^2} \, du = \log \bigg( \frac{1+\mathcal{I}(t)^2}{1+\mathcal{I}(\tau)^2} \bigg)
    \end{align}
    which leads to 
    \begin{align*}
         \exp \left(-\int_{\tau}^{t} \mathcal{A}_n(\overline{\tau})\sin(2\mathcal{B}(\overline{\tau})) \, d\overline{\tau}\right) = \frac{1+\mathcal{I}(t)^2}{1+\mathcal{I}(\tau)^2}. 
    \end{align*}
    Inserting now this into \eqref{P1.7.1} and using \eqref{cotaA}, we obtain 
    \begin{align}\label{P4.7.2}
        |\mathcal{E}(t)| \leq CN_{n-1}^{\mu} \int_{t_{n+1}}^{t} \frac{1+\mathcal{I}(t)^2}{1+\mathcal{I}(\tau)^2}  \, d\tau\leq
        CN_{n-1}^{\mu} \left(1+ \mathcal{I}(t)^2
        \right)(t-t_{n+1}) 
    \end{align}
    for all $t \in [t_{n+1}, t_{n+1}+t_\ast]$ and, in particular, 
    \begin{align} 
    \label{bootstrapineq}
        |\mathcal{E}(t)| &\leq C N_{n-1}^{\mu} (1+ \mathcal{I}(t_{n+1}+t_\ast)^2) t_\ast   \leq C N_{n-1}^{\mu} (1+ (N_n^{\varepsilon} t_\ast)^2 ) t_\ast, 
    \end{align}
    where we have used \eqref{cotaA} in the last estimate.

    Let $\tilde{t}_\ast > 0$ be such that 
    \begin{align}
\label{A1}
\mathcal{I}(t_{n+1}+\tilde{t}_\ast)=1.
    \end{align}
    The existence of $\tilde{t}_\ast$ is guaranteed by the fact that $\mathcal{A}_n (\tau) \geq  N_n^\varepsilon/C$ for all $\tau \geq t_{n+1}$ (see \eqref{cotaA}). In addition, by \eqref{cotaA}, it is easy to see that
    \begin{align}\label{P2.7.2}
            \frac{N_n^{-\varepsilon}}{C}\leq \tilde{t}_\ast\leq CN_n^{-\varepsilon}.
    \end{align}
   Next we shall prove that $\tilde{t}_\ast\leq t_\ast$. Assume the opposite (that is, $\tilde{t}_\ast> t_\ast$) and proceed by contradiction. According to (\ref{P2.7.2}), 
   \begin{align*}
       t_\ast<\tilde{t}_\ast\leq CN_{n}^{-\varepsilon}, 
   \end{align*}
   so, by (\ref{bootstrapineq}) and (\ref{des}), we have that
   \begin{align}\label{P4.7.1}
       |\mathcal{E}(t_{n+1}+t_\ast)|\leq CN_{n-1}^{\mu}N_n^{-\varepsilon}\leq N_n^{-\varepsilon+\mu} N_{n-1}^{-\frac{\gamma}{5}\mu},
   \end{align}
   where we have chosen $N_0$ big enough so that the last inequality holds. However,  notice that the bound for $|\mathcal{E}(t_{n+1}+t_\ast)|$ exhibited in \eqref{P4.7.1} is strictly smaller than the corresponding bound given by (\ref{cotaerror}), so this contradicts the condition of supremum involved in the definition of $t_\ast$  and hence we conclude by contradiction that $\tilde{t}_\ast\leq t_\ast$. In particular, $[t_{n+1}, t_{n+1}+\tilde{t}_\ast] \subset [t_{n+1}, t_{n+1}+t_\ast]$ and invoking \eqref{P4.7.2}, \eqref{A1} and \eqref{P2.7.2}, 
    \begin{equation}\label{P2.7.3}
        |\mathcal{E}(t)| \leq C N_{n-1}^{\mu} N_n^{-\varepsilon}   \leq  N_n^{-\varepsilon + \mu} N_{n-1}^{-\frac{\gamma}{5} \mu}, \qquad \forall t \in [t_{n+1}, t_{n+1}+\tilde{t}_\ast]. 
    \end{equation}


    Next we turn our attention to $\mathcal{E}$ on the remaining interval $[t_{n+1}+\tilde{t}_\ast,1]$. More precisely, we wish to prove that
    \begin{equation}\label{P5.7.1}
        |\mathcal{E}(t)| \leq N_{n}^{-\varepsilon+\mu}N_{n-1}^{-\frac{\gamma}{10}\mu}, \qquad \forall  t\in [t_{n+1}+\tilde{t}_\ast,1].  
    \end{equation}
     Assume that $t \in [t_{n+1}+\tilde{t}_\ast,1]$. Notice that, because of (\ref{A1}) and the fact that $\mathcal{A}_n\geq 0$,
    \begin{align*}
        \int_{t_{n+1}}^t \mathcal{A}_n(\tau) \, d\tau\in [1, \infty) \implies \mathcal{B}(t) \in \Big[-\frac{\pi}{4}, 0 \Big)
    \end{align*}
    and then
    \begin{align}\label{cotacoseno}
    \cos \mathcal{B}(t)\geq \frac{\sqrt{2}}{2}, \qquad \sin \mathcal{B}(t) < 0.
    \end{align}
   Assume that there exists $t^\ast \in [t_{n+1}+\tilde{t}_\ast,1]$ such that 
     \begin{align}\label{P3.7.3}
        |\mathcal{E}(t^\ast)|=N_{n}^{-\varepsilon+\mu}N_{n-1}^{-\frac{\gamma}{10}\mu};
\end{align}
otherwise,  in light of \eqref{P2.7.3}, the desired claim \eqref{P5.7.1} would trivially follow.  Notice that for such a  $t^\ast$, using the estimates  (\ref{cotacoseno}) and \eqref{cotaA}, 
    \begin{align}\label{P3.7.2}
        |2\mathcal{A}_n(t^\ast)\mathcal{E}(t^\ast)\cos \mathcal{B}(t^\ast)|\geq \frac{1}{C}N_n^{\mu}N_{n-1}^{-\frac{\gamma}{10}\mu}
    \end{align}
    and so, by  \eqref{cotasg} and \eqref{des}, 
    \begin{align}\label{P5.7.3}
        |2\mathcal{A}_n(t^\ast)\mathcal{E}(t^\ast)\cos \mathcal{B}(t^\ast)|  \geq \frac{N_{n-1}^{\frac{2}{5}\gamma\mu}}{C}|\tilde{g}_1(t^\ast)|\geq \frac{N_{0}^{\frac{2}{5}\gamma\mu}}{C}|\tilde{g}_1(t^\ast)| \geq 4|\tilde{g}_1(t^\ast)|
    \end{align}
    if $N_0$ is large enough. 
   Also using \eqref{cotaA} and  the fact that (see \eqref{tiempos}) $$1-t_{n+1} = N_n^{-\mu},$$ we derive, for every $t\in [ t_{n+1}+\tilde{t}_\ast, 1]$, 
   \begin{align*}
       |\sin \mathcal{B}(t)| \geq \frac{1}{2\int_{t_{n+1}}^t\mathcal{A}_n(\tau) \, d\tau} \geq \frac{1}{2\int_{t_{n+1}}^1\mathcal{A}_n(\tau) \, d\tau}\geq \frac{1}{C}N_n^{-\varepsilon+\mu}. 
   \end{align*}
   As a consequence (see \eqref{P3.7.2})
    \begin{align}\label{P5.7.2} 
        \left|2\mathcal{A}_n(t^\ast)\mathcal{E}(t^\ast)\cos \mathcal{B}(t^\ast)\sin \mathcal{B}(t^\ast)\right|\geq \frac{1}{C}N_n^{-\varepsilon+2\mu}N_{n-1}^{-\frac{\gamma }{10}\mu}.
    \end{align}
    
    Concerning the rest of the terms of the right hand side of (\ref{edoE}): It follows from \eqref{cotaA}, \eqref{P3.7.3},  \eqref{cotasg} and \eqref{P5.7.2}  that
    \begin{align}
&| \mathcal{A}_n(t^\ast)\cos(2\eta_1(t^\ast))\mathcal{E}(t^\ast)^2+\cos (\eta_2(t^\ast))\mathcal{E}(t^\ast)\tilde{g}_{1}(t^\ast)+g_2(t^\ast)| \nonumber \\ 
&\leq C N_n^\varepsilon \left(N_n^{-\varepsilon+\mu}N_{n-1}^{-\frac{\gamma }{10}\mu}\right)^2+CN_{n}^{-\varepsilon+\mu}N_{n-1}^{-\frac{\gamma}{10}\mu} N_{n-1}^{\mu}+N_n^{-2\frac{\varepsilon+\delta}{1+\gamma+\delta}} 
\leq  C N_{n-1}^{-\frac{\gamma}{10} \mu} N_n^{-\varepsilon+2\mu}N_{n-1}^{-\frac{\gamma }{10}\mu} \nonumber\\
&\leq C N_{n-1}^{-\frac{\gamma}{10} \mu}\left|2\mathcal{A}_n(t^\ast)\mathcal{E}(t^\ast)\cos \mathcal{B}(t^\ast)\sin \mathcal{B}(t^\ast)\right| \nonumber\\
&\leq C N_0^{-\frac{\gamma}{10}\mu}\left|2\mathcal{A}_n(t^\ast)\mathcal{E}(t^\ast)\cos \mathcal{B}(t^\ast)\sin \mathcal{B}(t^\ast)\right|
\leq \frac{1}{4}\left|2\mathcal{A}_n(t^\ast)\mathcal{E}(t^\ast)\cos \mathcal{B}(t^\ast)\sin \mathcal{B}(t^\ast)\right|,\label{P5.7.4}
    \end{align}
    provided that $N_0$ is large enough. Then, the ODE (\ref{edoE}) together with the previous bounds \eqref{P5.7.3} and \eqref{P5.7.4} guarantee the existence of $c\in\{\frac{1}{2},2\}$ (depending on the sign of $\mathcal{E}$) such that
    \begin{align*}
    \frac{1}{c}\mathcal{A}_n(t^\ast)\cos \mathcal{B}(t^\ast)\sin \mathcal{B}(t^\ast)\mathcal{E}(t^\ast)
     \leq
        \mathcal{E}'(t^\ast)
        \leq c\mathcal{A}_n(t^\ast)\cos \mathcal{B}(t^\ast)\sin \mathcal{B}(t^\ast)\mathcal{E}(t^\ast).
    \end{align*}
   Moreover, observe that the coefficient $\mathcal{A}_n(t^\ast)\cos \mathcal{B}(t^\ast)\sin \mathcal{B}(t^\ast)$ has a sign, specifically,  
    \begin{align*}
        \mathcal{A}_n(t^\ast)\cos \mathcal{B}(t^\ast)\sin \mathcal{B}(t^\ast)< 0
    \end{align*}
    (see \eqref{cotacoseno}). 
   Hence, since $\mathcal{E}'$ is comparable to a linear term in $\mathcal{E}$ with negative coefficient, $|\mathcal{E}|$ keeps trapped in the interval $[0,N_{n}^{-\varepsilon+\mu}N_{n-1}^{-\frac{\gamma}{10}\mu}]$ on $[t_{n+1}+\tilde{t}_\ast,1]$. We  conclude that \eqref{P5.7.1} holds. 

   The proof of \eqref{estimabeta2} can be achieved by putting together \eqref{P2.7.3} and \eqref{P5.7.1}.  
   
\item \textbf{Control of $r_{n+1}$.} By procedures analogous to those used in the derivation of  the ODEs for the angles $\alpha_{n+1}$ and $\beta_{n+1}$ (see (\ref{edoalpha}) and (\ref{edobeta})), we get
\begin{align}
  &\left(  \log \frac{r_{n+1}}{s_n}\right)'=\frac{r_{n+1}'}{r_{n+1}}-\frac{s_{n}'}{s_{n}} \nonumber\\
  &=-\overline{a}^{(1,1)}_{n+1}\cos^2\alpha_{n+1}- \overline{a}^{(2,2)}_{n+1}\sin^2\alpha_{n+1}-(\overline{a}^{(1,2)}_{n+1}+\overline{a}^{(2,1)}_{n+1})\cos\alpha_{n+1}\sin\alpha_{n+1} \nonumber\\
  &+\overline{a}^{(1,1)}_{n}\cos^2\beta_{n}+ \overline{a}^{(2,2)}_{n}\sin^2\beta_{n}+(\overline{a}^{(1,2)}_{n}+\overline{a}^{(2,1)}_{n})\cos\beta_{n}\sin\beta_{n} \nonumber\\
&= - (\overline{a}_{n+1}^{(1, 1)} - \overline{a}_{n}^{(1, 1)}) \cos 2 \alpha_{n+1}  - \frac{\overline{a}^{(1,2)}_{n+1}+\overline{a}^{(2,1)}_{n+1}}{2} \sin 2\alpha_{n+1} \nonumber \\  
&+2 \overline{a}_n^{(1,1)} \sin (\alpha_{n+1} + \beta_n) \sin(\alpha_{n+1}-\beta_n)  + (\overline{a}^{(1,2)}_{n}+\overline{a}^{(2,1)}_{n})\cos\beta_{n}\sin\beta_{n}  \nonumber \\
&= - \Big(\tilde{a}_n^{(1, 1)} \cos 2 \beta_n - \frac{\tilde{a}_n^{(1,2)} + \tilde{a}_n^{(2, 1)}}{2} \sin 2 \beta_n\Big) \cos 2 \alpha_{n+1} \nonumber\\
&-   \Big( \tilde{a}_n^{(1, 1)} \sin 2 \beta_n + \frac{\tilde{a}_n^{(1, 2)}+\tilde{a}_n^{(2, 1)}}{2}  \cos 2 \beta_n \Big) \sin 2 \alpha_{n+1} \nonumber \\
&- \sin (\alpha_{n+1}-\beta_n) \sum_{j=0}^{n-1} \Big( 2\tilde{a}_j^{(1, 1)} \sin 2 \beta_j \cos (\alpha_{n+1}+\beta_n) +(\tilde{a}_j^{(1, 2)}+\tilde{a}_j^{(2, 1)})  \cos 2 \beta_j \cos (\alpha_{n+1}+\beta_n)\Big) \nonumber \\
&+   \sin(\alpha_{n+1}-\beta_n) \sum_{j=0}^{n-1} \Big(2 \tilde{a}_j^{(1, 1)} \cos 2 \beta_j \sin(\alpha_{n+1}+\beta_n) - ( \tilde{a}_j^{(1, 2)} + \tilde{a}_j^{(2, 1)}) \sin 2 \beta_j \sin(\alpha_{n+1}+\beta_n) \Big) \nonumber\\
  &=-\tilde{a}_{n}^{(1,1)} \cos 2(\alpha_{n+1}-\beta_n)-\frac{\tilde{a}_n^{(1,2)}+\tilde{a}_n^{(2,1)}}{2}\sin 2(\alpha_{n+1}-\beta_n) \nonumber\\
 & +\sin(\alpha_{n+1}-\beta_n)\sum_{j=0}^{n-1}  \left(
2\tilde{a}_{j}^{(1,1)} \sin(\alpha_{n+1}+\beta_n-2\beta_j)
-(\tilde{a}_j^{(1,2)}+\tilde{a}_j^{(2,1)})\cos (\alpha_{n+1}+\beta_n-2\beta_j)
  \right). \label{P8.7.1}
\end{align}
From the assumptions (\ref{cotascoeficientes}) and (\ref{alphaanterior}), we obtain on the time interval $[t_{n+1}, 1]$: 
\begin{align*}
    \left|
 \left(  \log \frac{r_{n+1}}{s_n}\right)'
    \right|&\leq  N_n^{\varepsilon-2\frac{\varepsilon+\delta}{1+\gamma+\delta}+\sigma} +C N_n^\varepsilon |\alpha_{n+1}-\beta_n|
    +C|\alpha_{n+1}-\beta_n|
    \sum_{j=0}^{n-1} N_j^{\varepsilon} \\
    & \leq  N_n^{\varepsilon-2\frac{\varepsilon+\delta}{1+\gamma+\delta}+\sigma} +C N_n^\varepsilon |\alpha_{n+1}-\beta_n| \leq C N_n^{\varepsilon-2\frac{\varepsilon+\delta}{1+\gamma+\delta}+\sigma},
\end{align*} 
where we have also used \eqref{estimaalpha2} (which was already shown in the first step of this proof) for the last estimate. Then, integrating the last expression over  $[t_{n+1}, t]$ with $t < 1$: 
\begin{align*}
   \left|\log \frac{r_{n+1}(t)}{s_n(t)}- \log \frac{r_{n+1}(t_{n+1})}{s_n(t_{n+1})}\right|&\leq C N_n^{\varepsilon-2\frac{\varepsilon+\delta}{1+\gamma+\delta}+\sigma}(t-t_{n+1}) \\
   &\hspace{-4cm}\leq C N_n^{\varepsilon-2\frac{\varepsilon+\delta}{1+\gamma+\delta}+\sigma}(1-t_{n+1}) \leq C  N_n^{\varepsilon-2\frac{\varepsilon+\delta}{1+\gamma+\delta}+\sigma-\mu},
\end{align*}
where again the definition of $t_{n+1}$ as given by \eqref{tiempos} is applied to achieve the last estimate. As a byproduct, we have 
\begin{align}\label{P5.7.5}
   \exp {(-C N_n^{\varepsilon-2\frac{\varepsilon+\delta}{1+\gamma+\delta}+\sigma-\mu})}\leq \frac{r_{n+1}(t)}{r_{n+1}(t_{n+1})}\frac{s_{n}(t_{n+1})}{s_{n}(t)}\leq \exp {(C N_n^{\varepsilon-2\frac{\varepsilon+\delta}{1+\gamma+\delta}+\sigma-\mu})}.
\end{align}
In addition, we claim that 
\begin{align}\label{P6.7.1}
 1 \leq  \frac{s_n(t)}{s_n(t_{n+1})} \leq 1 + N_n^{-\frac{\gamma \mu}{5}}, \qquad \forall t \geq t_{n+1}. 
\end{align}
Indeed, the left-hand side estimate is a simple consequence of the fact that $s_n$ is increasing (see \ref{sestatico}). On the other hand, the proof of the right-hand side estimate in \eqref{P6.7.1} can be obtained as follows: By (\ref{sestatico}), the definition of $t_{n+1}$ as given in \eqref{tiempos}, \eqref{des} and (\ref{parametros1}), 
\begin{align*}
   \frac{s_n(t)}{s_n(t_{n+1})}\leq \exp{(N_{n-1}^{\mu+\sigma}(t-t_{n+1}))} \leq \exp{(N_{n-1}^{\mu +\sigma} N_n^{-\mu})} \leq 1+CN_{n-1}^{\mu+\sigma}N_n^{-\mu}\leq 1+N_{n}^{-\frac{\gamma \mu}{5}}.
\end{align*}
This concludes the proof of \eqref{P6.7.1}. 

Applying the second inequalities contained in both \eqref{P5.7.5} and \eqref{P6.7.1}, we have
\begin{align*}
    \frac{r_{n+1}(t)}{r_{n+1}(t_{n+1})}-1 &\leq \frac{s_n(t)}{s_n(t_{n+1})} \exp {(C N_n^{\varepsilon-2\frac{\varepsilon+\delta}{1+\gamma+\delta}+\sigma-\mu})} -1  \\
    & \leq (1+N_{n}^{-\frac{\gamma \mu}{5}}) \exp {(C N_n^{\varepsilon-2\frac{\varepsilon+\delta}{1+\gamma+\delta}+\sigma-\mu})}-1 \\
    & \leq (1+N_{n}^{-\frac{\gamma \mu}{5}}) (1+ C N_n^{\varepsilon-2\frac{\varepsilon+\delta}{1+\gamma+\delta}+\sigma-\mu}) - 1 \\
    & \leq  C N_n^{\varepsilon-2\frac{\varepsilon+\delta}{1+\gamma+\delta}+\sigma-\mu} \leq  N_n^{-\frac{\gamma \mu}{10}}. 
\end{align*}
In a similar fashion, but now applying the first inequalities from   \eqref{P5.7.5} and \eqref{P6.7.1}, one can  show that
$$
    1-\frac{r_{n+1}(t)}{r_{n+1}(t_{n+1})} \leq N_n^{-\frac{\gamma \mu}{10}}.
$$
Hence \eqref{estimar2} holds, that is, 
\begin{align*}
   \left| \frac{r_{n+1}(t)}{r_{n+1}(t_{n+1})}-1\right|\leq N_n^{-\frac{\gamma \mu}{10}}.
\end{align*}

\item \textbf{Control of $s_{n+1}$.} Observe that $s_{n+1}$ verifies an analogous ODE to the one that verifies $r_{n+1}$  after replacing the role played by $\alpha_{n+1}$ in \eqref{P8.7.1} by $\beta_{n+1}$. More precisely, 
\begin{align}\label{P7.7.4}
    \left(  \log \frac{s_{n+1}}{s_n}\right)'=g_3+g_4,
\end{align}
with (recall \eqref{acal})
\begin{align*}
g_3:=&-\sin 2(\beta_{n+1}-\beta_n)\sum_{j=0}^{n}\frac{\tilde{a}_j^{(1,2)}+\tilde{a}_j^{(2,1)}}{2}=-\frac{\mathcal{A}_n}{2}\sin 2(\beta_{n+1}-\beta_n),\\
   g_4:=&-\tilde{a}_{n}^{(1,1)} \cos 2(\beta_{n+1}-\beta_n)
  +\sin(\beta_{n+1}-\beta_n)\sum_{j=0}^{n-1} 
2\tilde{a}_{j}^{(1,1)} \sin(\beta_{n+1}+\beta_n-2\beta_j)\\
  &+\sin(\beta_{n+1}-\beta_n)\sum_{j=0}^{n-1} (\tilde{a}_j^{(1,2)}+\tilde{a}_j^{(2,1)}) \left(\cos(\beta_{n+1}-\beta_n)
-\cos (\beta_{n+1}-\beta_n+2(\beta_n-\beta_j))
  \right). 
\end{align*}

Using \eqref{cotascoeficientes} and \eqref{betaanterior}, the term $g_4$ can be estimated on $[t_{n+1}, 1]$ as follows:  
\begin{align}
    |g_4|&\leq N_n^{\varepsilon-2\frac{\varepsilon+\delta}{1+\gamma+\delta}+\sigma}+C|\sin(\beta_{n+1}-\beta_n)|\left(\sum_{j=0}^{n-1}N_j^{\varepsilon-2\frac{\varepsilon+\delta}{1+\gamma+\delta}+\sigma}+\sum_{j=0}^{n-1}N_j^{\varepsilon}\sum_{l=j}^{n-1}|\beta_{l+1}-\beta_l|\right) \nonumber\\
    &\leq N_n^{\varepsilon-2\frac{\varepsilon+\delta}{1+\gamma+\delta}+\sigma}+C
    |\sin(\beta_{n+1}-\beta_n)|\left(1+\sum_{j=0}^{n-1}N_j^{\varepsilon}N_{j}^{-\varepsilon+\mu}\right) \nonumber\\
   & \leq N_n^{\varepsilon-2\frac{\varepsilon+\delta}{1+\gamma+\delta}+\sigma}+C
    |\sin(\beta_{n+1}-\beta_n)|N_{n-1}^{\mu}.\label{cotag4}
\end{align}
Integrating the last expression over $[t_{n+1}, t]$, making use of the definition of $t_{n+1}$ (see \eqref{tiempos}) together with  \eqref{P7.7.1} and \eqref{defaux}, lead to 
\begin{align}
     \int_{t_{n+1}}^1 |g_4(\tau)|\, d\tau & \leq C N_n^{\varepsilon-2\frac{\varepsilon+\delta}{1+\gamma+\delta}+\sigma-\mu} 
     +CN_{n-1}^{\mu}N_{n}^{-\frac{\varepsilon}{2}} \nonumber\\
    & \hspace{1cm}+CN_{n-1}^{\mu}\int_{t_{n+1}+N_n^{-\frac{\varepsilon}{2}}}^1 \left(\frac{1}{\int_{t_{n+1}}^\tau \mathcal{A}_n(\overline{\tau}) \, d\overline{\tau}}+|\mathcal{E}(\tau)|\right) \, d\tau. \label{P7.7.2}
\end{align}
Next we estimate the last integral by using \eqref{cotaA} and  \eqref{estimabeta2}
\begin{align*}
    \int_{t_{n+1}+N_n^{-\frac{\varepsilon}{2}}}^1 \left(\frac{1}{\int_{t_{n+1}}^\tau \mathcal{A}_n(\overline{\tau}) \, d\overline{\tau}}+|\mathcal{E}(\tau)|\right) \, d\tau & \leq \left(\frac{1}{\int_{t_{n+1}}^{t_{n+1}+N_n^{-\frac{\varepsilon}{2}}} \mathcal{A}_n(\overline{\tau}) \, d \overline{\tau}} + N_n^{-\varepsilon+\mu-\frac{\gamma \mu}{20}} \right) (1-t_{n+1}-N_n^{-\frac{\varepsilon}{2}}) \\
    & \hspace{-5cm}\leq  \left(\frac{C}{ N_n^{\frac{\varepsilon}{2}}} + N_n^{-\varepsilon+\mu-\frac{\gamma \mu}{20}} \right) (1-t_{n+1}-N_n^{-\frac{\varepsilon}{2}}) \leq C N_n^{-\frac{\varepsilon}{2}} + C  N_n^{-\varepsilon+\mu-\frac{\gamma \mu}{20}} N_n^{-\mu} \leq C N_n^{-\frac{\varepsilon}{2}}
\end{align*}
and inserting this into \eqref{P7.7.2} we obtain
\begin{align}\label{P7.7.3}
    \int_{t_{n+1}}^1 |g_4(\tau)|\, d\tau \leq CN_n^{\varepsilon-2\frac{\varepsilon+\delta}{1+\gamma+\delta}+\sigma-\mu}+CN_{n-1}^{\mu}N_n^{-\frac{\varepsilon}{2}} \leq C (N_n^{\varepsilon-2\frac{\varepsilon+\delta}{1+\gamma+\delta}} + N_{n-1}^{\mu}N_n^{-\frac{\varepsilon}{2}}).
\end{align}

In view of \eqref{P7.7.4}, for $t\geq t_{n+1}$,
\begin{align*}
    \log \frac{s_{n+1}(t)}{s_n(t)}-\log \frac{s_{n+1}(t_{n+1})}{s_n(t_{n+1})}=\int_{t_{n+1}}^t \left(g_3(\tau)+g_4(\tau)\right) \, d\tau
\end{align*}
and then, by \eqref{P7.7.3}, 
\begin{align}
    \left|\log \left(\frac{s_{n+1}(t)}{s_{n+1}(t_{n+1})}\frac{s_{n}(t_{n+1})}{s_{n}(t)}\right)-\int_{t_{n+1}}^t g_3(\tau) \, d\tau\right| & \leq \int_{t_{n+1}}^t |g_4(\tau)| \, d \tau \nonumber \\
    &\leq C(N_n^{\varepsilon-2\frac{\varepsilon+\delta}{1+\gamma+\delta}}+N_{n-1}^{\mu}N_n^{-\frac{\varepsilon}{2}}).\label{P7.7.5}
\end{align}
It remains to study the integral of $g_3$: Applying successively  \eqref{P7.7.1}, \eqref{defaux}, \eqref{P7.7.6} and \eqref{P2.7.4}, 
\begin{align}
    \int_{t_{n+1}}^t g_3(\tau) \, d\tau &=-\int_{t_{n+1}}^t\frac{\mathcal{A}_n(\tau)}{2}\sin 2(\beta_{n+1}-\beta_n)(\tau) \, d\tau \nonumber \\
    &\hspace{-1.5cm}=-\int_{t_{n+1}}^t\frac{\mathcal{A}_n(\tau)}{2}\sin 2(\mathcal{B}(\tau)+\mathcal{E}(\tau)) \, d\tau \nonumber\\
    &\hspace{-1.5cm}=-\int_{t_{n+1}}^t\frac{\mathcal{A}_n(\tau)}{2}\sin 2\mathcal{B}(\tau) \, d\tau-\int_{t_{n+1}}^t\mathcal{A}_n(\tau)\mathcal{E}(\tau)\cos 2\eta(\tau) \, d\tau \nonumber\\
    &\hspace{-1.5cm} = \log \Big(\sqrt{1+\mathcal{I}(t)^2} \Big)
    -\int_{t_{n+1}}^t\mathcal{A}_n(\tau)\mathcal{E}(\tau)\cos 2\eta(\tau) \, d\tau, \label{P7.7.7}
\end{align}
where $|\eta(t)-\mathcal{B}(t)|\leq |\mathcal{E}(t)|$. Furthermore, using (\ref{cotaA}) and (\ref{estimabeta2}) (which was already established in the second step of this proof),
\begin{align}\label{P7.7.8}
    \left|\int_{t_{n+1}}^t\mathcal{A}_n(\tau)\mathcal{E}(\tau)\cos 2\eta(\tau) \, d\tau \right|\leq C N_n^{\varepsilon} N_n^{-\varepsilon + \mu}N_{n-1}^{-\frac{\gamma \mu}{10 }}N_n^{-\mu} \leq   CN_{n-1}^{-\frac{\gamma}{10}\mu},
\end{align}
where \eqref{des} is also applied in the last step. 
Hence, using \eqref{P7.7.5}, \eqref{P7.7.7} and \eqref{P7.7.8} and the standing assumptions on the parameters, 
\begin{align}\label{P7.7.9}
    \left|\log \left(\frac{s_{n+1}(t)}{s_{n+1}(t_{n+1})}\frac{s_{n}(t_{n+1})}{s_{n}(t)}\right)-\log \Big(\sqrt{1+\mathcal{I}(t)^2}\Big)\right| \leq CN_{n-1}^{-\frac{\gamma}{10}\mu}.  
\end{align} 
In addition, recall that (see \eqref{P6.7.1})
\begin{align}\label{P7.7.10}
    1 \leq \frac{s_n(t)}{s_n(t_{n+1})}\leq 1+N_{n}^{-\frac{\gamma \mu}{5}}, \qquad \forall t \geq t_{n+1}.
\end{align}
Thanks to \eqref{P7.7.9} and \eqref{P7.7.10}, we get
\begin{align*}
     \left|\log \left(\frac{s_{n+1}(t)}{s_{n+1}(t_{n+1}) \sqrt{1+\mathcal{I}(t)^2}}\right)\right| & \leq C N_{n-1}^{-\frac{\gamma}{10} \mu} + \log \big(1+ N_n^{-\frac{\gamma }{5} \mu} \big) \\
     & \leq C N_{n-1}^{-\frac{\gamma}{10} \mu} + C N_n^{-\frac{\gamma \mu}{5}} \leq C  N_{n-1}^{-\frac{\gamma}{10} \mu}.
\end{align*}
Combining this with \eqref{des}, we arrive at the desired estimate (\ref{estimas2}).

Finally, we are going to prove (\ref{sestatico2}). Suppose that $t\in[t_{n+2},1]$. From (\ref{cotaA}), \eqref{P7.7.1}, (\ref{estimabeta2}), \eqref{P2.7.4}, and \eqref{tiempos}, we derive
 \begin{align*}
     |g_3(t)|&\leq |\mathcal{A}_n(t)| |\beta_{n+1}(t)-\beta_n(t)| \leq CN_n^{\varepsilon} (|\mathcal{B}(t)| + |\mathcal{E}(t)|) \\
     & \leq C N_n^{\varepsilon} \left(\arctan \bigg(\frac{1}{\mathcal{I}(t_{n+2})}  \bigg) +   N_n^{-\varepsilon + \mu -\frac{\gamma \mu}{20}} \right) \\
     & \leq C N_n^{\varepsilon} N_n^{-\varepsilon + \mu }  = C N_n^{\mu}
 \end{align*}
 and on the other hand, by (\ref{cotag4}), 
 \begin{align*}
     |g_4(t)|\leq C N_{n-1}^{\mu}. 
 \end{align*}
 Using these bounds for $g_3$ and $g_4$ together with \eqref{P7.7.4} lead to  
 \begin{align*}
     \frac{s'_{n+1}(t)}{s_{n+1}(t)}-\frac{s'_{n}(t)}{s_{n}(t)}=\left(\log \frac{s_{n+1}}{s_{n}}\right)'(t)=g_3(t)+g_4(t)\leq CN_n^{\mu}
\end{align*}
and then, by the assumptions (\ref{sestatico}) and \eqref{des}, 
\begin{align*}
      \frac{s'_{n+1}(t)}{s_{n+1}(t)}\leq CN_n^{\mu}+N_{n-1}^{\mu+\sigma}\leq N_n^{\mu+\sigma}.
 \end{align*}
This gives the upper estimate in \eqref{sestatico2}. Concerning the lower estimate, we can apply \eqref{estimabeta2} and \eqref{cotaA} to establish  that, for $t \geq t_{n+2}$, 
\begin{align*}
    |(\beta_{n+1}-\beta_n)(t)| &\leq N_n^{-\varepsilon+\mu-\frac{\gamma \mu}{20}} + \arctan \bigg( \frac{1}{\mathcal{I}(t)} \bigg) \leq  C N_n^{-\varepsilon+\mu}
\end{align*}
and, in particular, 
$$
    |\cos (\beta_{n+1}-\beta_n)(t)| \geq 1/2. 
$$
Then (see also \eqref{cotag4} and \eqref{cotaA})
\begin{align*}
    |g_4| & \leq N_n^{\varepsilon-2\frac{\varepsilon+\delta}{1+\gamma+\delta}+\sigma}+C
    |\sin(\beta_{n+1}-\beta_n)| |\cos (\beta_{n+1}-\beta_n)| N_{n-1}^{\mu} \\
    & \leq  C
    |\sin2 (\beta_{n+1}-\beta_n)| (N_{n-1}^{\mu} + N_n^{2\varepsilon-2\frac{\varepsilon+\delta} {1+\gamma+\delta}+\sigma-\mu}) \\
    & \leq C |\sin2 (\beta_{n+1}-\beta_n)| N_n^{\varepsilon} (N_{n-1}^{\mu} N_n^{-\varepsilon} + N_n^{-\mu}) \\
    & \leq \frac{1}{2}   |\sin2 (\beta_{n+1}-\beta_n)| N_n^{\varepsilon}  (1-N_{n}^{-\frac{\sigma}{2}}) \\
    & \leq \frac{1}{2} |\sin2 (\beta_{n+1}-\beta_n)|  \mathcal{A}_n = |g_3| = g_3,  
\end{align*}
where we have also used that $\beta_{n+1} < \beta_n$ in the last step (which follows automatically from \eqref{estimabeta2}). As a byproduct, we have $g_3 + g_4 \geq 0$ and together with \eqref{sestatico} we obtain
 \begin{align*}
     \frac{s'_{n+1}(t)}{s_{n+1}(t)} = \frac{s'_{n}(t)}{s_{n}(t)} + g_3(t)+g_4(t)  \geq 0.
 \end{align*}
    \end{enumerate}
\end{proof}

\begin{rmk}\label{Rmk9}
        We claim that Lemma \ref{lemaedos} also holds if $n=0$ modulo obvious  modifications (in particular, the coefficients given in \eqref{defcoeficientes} are not defined in this case, as well as the set of assumptions \eqref{betaanterior}, \eqref{alphaanterior} and \eqref{sestatico} are not needed). The proof can be obtained following line by line the general case $n \in \mathbb{N}$ presented in the method of proof of Lemma \ref{lemaedos}; in fact, the proof is considerably simplified due to the fact that $r_0, \alpha_0, s_0, \beta_0$ do not depend on time (see \eqref{P14.7.2}).  
\end{rmk}

\subsection{Construction of the solution} \label{construccion}

Generally speaking, the strategy for the construction of the solution relies on a  multilayer method, where for each value of time $t \in [0, 1)$ the solution will be defined as a finite sum of layers  that arise successively as $t \nearrow 1$.  The dynamics associated to each layer will be determined by the first order of the Taylor expansion of the velocity generated by all the previous layers. Next we make all these general comments rigorous.  

The first layer of the construction is the stationary function
\begin{align}\label{FirstLayer}
    \theta_0(x,t):=N_0^{-1-\gamma+\varepsilon}\phi(x_2)\phi(N_0x_1), \qquad \forall t\in [0,1],
\end{align}
where the function $\phi$ was already introduced at the beginning of Section \ref{S3.2}. Notice that $\theta_0$ generates a sequence of derivatives in the sense of Definition \ref{DefSD} of amplitude $N_0^{-1-\gamma+\varepsilon}$, principal directions $(\frac{\pi}{2},0)$ and principal frequencies $(1,N_0)$. Namely, if we denote by $f_{0,0}^{(0,0)}=\theta_0$, then 
\begin{align}\label{P13.7.1}
    f_{0,0}^{(j,k)}(x,t)=N_0^{-1-\gamma+\varepsilon}\phi^{(j)}(x_2)\phi^{(k)}(N_0x_1), \qquad \forall t\in [0,1].
\end{align}

The $n$-th layer is defined as follows
\begin{align}
\label{deftheta}
    \theta_n(x,t):=\sum_{i=0}^{I_\gamma}f_{n,i}(x,t), \qquad  \forall n\geq 1,
\end{align}
that is, each layer $\theta_n$ consists of a  finite sum of a principal layer (say $f_{n,0}$) and $I_\gamma$ perturbations (say  $f_{n, i}$ with $1 \leq i \leq I_\gamma$). Here, we stress that  $I_\gamma$ will only depend on $\gamma$. Each layer will be introduced smoothly in time with the help of a function $h_n \in C^\infty ([0,1])$ to be defined such that
\begin{align}
    h_n(t)&=0, \qquad \forall t \in [0,t_n], \nonumber \\
    h_n(t)&=1, \qquad  \forall t \in [t_{n}+N_{n-1}^{-10\varepsilon},1], \label{P2.8.3}\\
   0\leq h'_n(t)& \leq 2N_{n-1}^{10\varepsilon}, \qquad  \forall t \in [0,1], \nonumber
\end{align}
where one can check that $t_n+N_{n-1}^{-10\varepsilon}\leq t_{n+1}$ in light of 
\begin{align*}
t_0=0, \hspace{10mm}
    t_n=1-N_{n-1}^{-\mu}, \hspace{3mm} \forall n\geq 1,
\end{align*}
and \eqref{parametros1}. We also let $h_0 \equiv 1$. 

To illustrate this mechanism, a sum of the type 
\begin{align}\label{AuxF}
  t \in [0, 1) \mapsto  \sum_{j=0}^\infty h_j(t) \theta_j(x,t),
\end{align}
consists in fact of a finite sum of layers at each time $t$. Indeed, since $\{t_n\}\nearrow 1$, there exists a unique  $n\in \mathbb{N}$ such that $t\in[t_n,t_{n+1})$, and then $h_j(t)=0$ for any $j\geq n+1$, or in other words, the layers $\theta_j$ with $j \geq n+1$ are not involved in \eqref{AuxF} on $[t_n, t_{n+1})$.

We are going to construct the principal layer $f_{n,0}$ in \eqref{deftheta} in such a way that it generates a sequence of derivatives of amplitude $N_n^{-1-\gamma+\varepsilon}$, principal directions $(\alpha_n(t),\beta_n(t))$ and principal frequencies $(r_n(t),s_n(t))$. More precisely, as we will see later, $f_{n, 0}$ will be of type
\begin{align}
\label{capaprincipal}
    f_{n,0}(x,t)=N_n^{-1-\gamma+\varepsilon}\phi(r_n(t)\textbf{n}(\alpha_n(t))\cdot x)\phi(s_n(t)\textbf{n}(\beta_n(t))\cdot x),
\end{align}
where the sequence  $\{N_n\}_{n \in \mathbb{N}_0}$ will be chosen adequately. In this case, it is easy to check that $f_{n, 0}$ generates a sequence of derivatives with desired features by putting $f_{n,0}^{(0,0)}=f_{n,0}$ and 
\begin{align}\label{P16.7.2}
    f_{n,0}^{(j,k)}(x,t)=N_n^{-1-\gamma+\varepsilon}\phi^{(j)}(r_n(t)\textbf{n}(\alpha_n(t))\cdot x)\phi^{(k)}(s_n(t)\textbf{n}(\beta_n(t))\cdot x).
\end{align}
According to  Definition \ref{sod} (of sequence of derivatives), the supports of $f_{n,0}^{(j,k)}$ are described by the principal frequencies and the principal directions. To simplify the notation, we define
\begin{align}\label{P10.8.1}
    \mathcal{S}_n(t):= \mathcal{S}(\alpha_n(t),\beta_n (t),r_n(t),s_n(t)), \qquad   \forall t\in [t_n,1].
\end{align}
The evolution of the principal  frequencies $r_n(t), s_n(t)$ and the principal directions $\alpha_n(t), \beta_n(t)$  (and hence the evolution of $f_{n,0}$, see \eqref{capaprincipal}) will be determined by the transport of the linear part of the Taylor expansion of the velocity $v^\gamma$  generated by the previous layers. Moreover, we prescribe the initial conditions (that is, when $t=t_n$) of the principal frequencies and directions by 
\begin{align}
\label{ic}
\begin{split}
    r_n(t_n)&=s_n(t_n)=N_n^{1-\frac{\varepsilon+\delta}{1+\gamma+\delta}},\\
    \alpha_n(t_n)&=\beta_n(t_n)+\frac{\pi}{2}=\beta_{n-1}(t_n),
\end{split}
\end{align}
for any $n\in \mathbb{N}$. Next we make this construction more precise. 

Let
\begin{align}
\label{sumacapas}
    \Theta_{n-1}(x, t):=\sum_{j=0}^{n-1} h_j(t)\theta_j(x, t), \hspace{3mm}\forall n\in \mathbb{N}.
\end{align}
Observe that  $\Theta_{n-1}$  follows the mechanism illustrated in \eqref{AuxF}, that is, the layers $\theta_j$ are subsequently  introduced in the construction of $\Theta_{n-1}$ as the time $t \nearrow 1$. Let $\overline{v}^\gamma(\Theta_{n-1})$ be the first order of the Taylor expansion (in the spatial coordinates)\footnote{Notice that $\Theta_0 = \theta_0$ is even (see \eqref{FirstLayer}) and then $v^\gamma(\Theta_0)$ is odd and its Taylor expansion is linear at first order. In fact, this argument can be inductively extended to $v^\gamma(\Theta_{n-1})$ with $n > 1$.} of $v^\gamma(\Theta_{n-1})$; however the precise definition of $\overline{v}^\gamma (\Theta_{n-1})$ is postponed. Assume momentarily that $\overline{v}^\gamma(\Theta_{n-1})$ is of the form
\begin{equation}\label{P9.7.1}
    \overline{v}^\gamma(\Theta_{n-1})(x, t) = \overline{A}_n(t) x, \hspace{3mm}\forall n\in \mathbb{N},
\end{equation}
where the entries of the matrix $\overline{A}_n$ are smooth functions. Then we define the principal layer $f_{n,0}$ as  the (unique) solution of the transport equation 
\begin{align}
\label{transporteppal}
    \partial_t f_{n,0}+\overline{v}^\gamma(\Theta_{n-1})\cdot \nabla f_{n,0}=0, \hspace{3mm} \forall n\in\mathbb{N},
\end{align}
with initial condition given 
\begin{equation}\label{P12.7.1}
    f_{n, 0}(x, t_n) = N_n^{-1-\gamma+\varepsilon}\phi(r_n(t_n)\textbf{n}(\alpha_n(t_n))\cdot x)\phi(s_n(t_n)\textbf{n}(\beta_n(t_n))\cdot x),
\end{equation}
where $r_n(t_n), s_n(t_n), \alpha_n(t_n)$ and $\beta_n(t_n)$ satisfy \eqref{ic}. Under  assumption \eqref{P9.7.1}, which will be justified later, one can invoke   Lemma \ref{dinamica} so  that $f_{n, 0}$ can be expressed as in \eqref{capaprincipal},  where  $r_n, \alpha_n$ verify (\ref{dinamicaalpha}) and $s_n, \beta_n$ verify (\ref{dinamicabeta}) with the initial conditions \eqref{ic}. Note also that the corresponding amplitude is given by (see \eqref{P12.7.1})
\begin{align}\label{P7.8.1}
\mathcal{C}_{n ,0} (t) = N_{n}^{-1-\gamma+\varepsilon}, \qquad  \forall t \geq t_n.
\end{align} 

On the other hand, we define the perturbation $f_{n, i}$ in \eqref{deftheta} with $i =1, \ldots, I_\gamma$  as the unique solution to the Cauchy problem
\begin{align}
\label{defpert}
\begin{split}
    &\partial_t f_{n,i}+\overline{v}^\gamma \left(\Theta_{n-1}\right) \cdot \nabla f_{n,i}=-h_n\sum_{(j_1,j_2)\in\mathcal{J}_{i-1}}v^\gamma(f_{n,j_1})\cdot \nabla f_{n,j_2}, \\
   & f_{n,i}(x,t_{n})=0, 
    \end{split}
\end{align} 
for any $n\in\mathbb{N}$. Here, we define
\begin{align*}
    \mathcal{J}_i=\{(j_1,j_2)\in \mathbb{N}_0^2: \max \, \{j_1,j_2\}=i\}, \hspace{3mm}\forall i=0,...,I_\gamma.
\end{align*}
In light of  Lemmas \ref{nuevaperturbacion} and  \ref{nuevaperturbacion2}, each $f_{n,i}$ will generate a sequence of derivatives of amplitudes $\mathcal{C}_{n, i}(t)$,  principal directions $(\alpha_n(t),\beta_n(t))$ and principal frequencies $(r_n(t),s_n(t))$. Note that $\mathcal{C}_{n, i}(t)$ are determined by  (\ref{nuevaamplitud}) and (\ref{intamplitud}) (in sharp contrast to the case $i=0$, the amplitudes $\mathcal{C}_{n, i}$ with $i =1, \ldots, I_\gamma$ are time dependent). It is clear (see \eqref{ic}) that Lemma \ref{nuevaperturbacion} can be applied, with $I=i-1$, $f_l=f_{n,l}$ for $l=0,\ldots,i-1$ and $\mathcal{J}=\mathcal{J}_{i-1}$, to check that the right-hand side in the transport equation \eqref{defpert} generates a sequence of derivatives with the desired amplitudes, principal frequencies and principal directions. Now, we can make use of Lemma \ref{nuevaperturbacion2} to conclude that the solution to \eqref{defpert} (that is, $f_{n,i}$) will also generate a sequence of derivatives. Notice that this result can be applied by taking $t_\ast=t_n$ and $(\alpha_n,\beta_n)$, $(r_n,s_n)$ as  principal directions and frequencies,  respectively. 
The hypothesis regarding the initial conditions is verified in light of \eqref{ic}. The hypothesis \eqref{flowmap} is also true. To prove it, one just have to check that the flow map of the transport equation \eqref{transporteppal} is indeed the one in \eqref{flowmap} with $t_\ast=t_n$, $(\alpha_n,\beta_n)$ and $(r_n,s_n)$. Now, since the Cauchy problem
\begin{align}
\label{flowmapaux}
\begin{split}
     \frac{dX_t(x)}{dt}&=\overline{v}^\gamma (\Theta_{n-1})(X_t(x), t),\\
     X_{t_n}(x)&=x,
\end{split}
\end{align}
has a unique solution, it is enough to check that $X_t$ given by \eqref{flowmap} solves it. Indeed, if one computes the time derivative of the expression given by \eqref{flowmap} taking into account that $(\alpha_n,\beta_n)$, $(r_n,s_n)$ verify \eqref{dinamicaalpha}, \eqref{dinamicabeta} from Lemma \ref{dinamica}, doing some trigonometric transformations and checking the initial conditions using \eqref{ic}, one gets that $X_t(x)$ given by \eqref{flowmap} solves 
\begin{align*}
    \frac{d X_t(x)}{dt}&=\overline{A} X_t(x), \\
    X_{t_n}(x)&=x,
\end{align*}
where the matrix $\overline{A}$ comes from the coefficients in the Lemma \ref{dinamica}. But notice that we chose $(\alpha_n,\beta_n)$, $(r_n,s_n)$ to solve the ODEs \eqref{dinamicaalpha}, \eqref{dinamicabeta} in Lemma \ref{dinamica} with the coefficients given by the relation $\overline{A}x=\overline{v}^\gamma (\Theta_{n-1})(x,t)$ (check \eqref{sumacapas}), so certainly $X_t(x)$ given by \eqref{flowmap} solves \eqref{flowmapaux}. Hence, we can apply Lemma \ref{nuevaperturbacion2}.

At this point, we need to make rigorous what is the definition of $\overline{v}^\gamma(\Theta_n)$ applied in the above construction. 
It is clear that the first order of the velocity generated by $\Theta_n$ can be computed with the help of Lemma \ref{lemavelocidad} (at least on the region $\mathcal{S}_n$). Indeed,  let us first focus on the linear velocity generated by $\theta_n$ (see \eqref{deftheta}), that is,  
\begin{align*}
    \overline{v}^\gamma(\theta_n)=\sum_{i=0}^{I_\gamma} \overline{v}^\gamma(f_{n,i}).
\end{align*}
Note that, for a fixed $n \in \mathbb{N}$,  the principal frequencies and the principal directions of the sequences of derivatives generated by each $f_{n,i}$ with $i = 0, \ldots, I_\gamma$ are the same. Then we define
       \begin{align*}
            \overline{v}^\gamma(f_{n,i})=&s_n^{1+\gamma}  \mathcal{C}_{n,i}\left[ a_{n,i}^{(2,1)} \left(\begin{matrix}
              -\sin \beta_n \cos \beta_n&-\sin^2\beta_n\\
              \cos^2 \beta_n&\sin \beta_n \cos \beta_n 
          \end{matrix}\right) + a_{n,i}^{(1,2)} \left(\begin{matrix}
              -\sin \beta_n \cos \beta_n&\cos^2\beta_n\\
              -\sin^2 \beta_n&\sin \beta_n \cos \beta_n 
          \end{matrix}\right)\right. \\
          &\left. + a_{n,i}^{(1,1)} \left(\begin{matrix}
              \cos 2\beta_n& \sin 2\beta_n \\
              \sin 2\beta_n  &-\cos 2 \beta_n
          \end{matrix}\right)  \right]x, \hspace{4mm}\forall x\in \mathbb{R}^2
        \end{align*}
and also for all $x\in \mathbb{R}^2$,
\begin{align*}
    &\overline{v}^\gamma(\theta_n)=s_n^{1+\gamma} \left[\left(\sum_{i=0}^{I_\gamma}\mathcal{C}_{n,i}a^{(2,1)}_{n,i}\right) \left(\begin{matrix}
              -\sin \beta_n \cos \beta_n&-\sin^2\beta_n\\
              \cos^2 \beta_n&\sin \beta_n \cos \beta_n 
          \end{matrix}\right) \right. \\
         & \left.
          +\left(\sum_{i=0}^{I_\gamma}\mathcal{C}_{n,i}a^{(1,2)}_{n,i}\right) \left(\begin{matrix}
              -\sin \beta_n \cos \beta_n&\cos^2\beta_n\\
              -\sin^2 \beta_n&\sin \beta_n \cos \beta_n 
          \end{matrix}\right)
          +\left(\sum_{i=0}^{I_\gamma}\mathcal{C}_{n,i}a^{(1,1)}_{n,i}\right) \left(\begin{matrix}
              \cos 2\beta_n& \sin 2\beta_n \\
              \sin 2\beta_n  &-\cos 2 \beta_n
          \end{matrix}\right)
          \right]x,
\end{align*}
where $a_{n,i}^{(j,k)}$ are the coefficients associated to  $f_{n,i}$ given by \eqref{coeficientesvelocidad}. Setting 
    \begin{align}
        \label{defatilde}
            \tilde{a}_n^{(j,k)}:=s_n^{1+\gamma}\sum_{i=0}^{I_\gamma} \mathcal{C}_{n,i} a_{n,i}^{(j,k)} \qquad \text{if} \qquad n \in \mathbb{N} 
        \end{align}
        and
        \begin{align}\label{defatildeVar}
\tilde{a}_0^{(j,k)}:=s_0^{1+\gamma}  \mathcal{C}_{0,0} a_{0,0}^{(j,k)} = N_0^{1+\gamma} N_0^{-1-\gamma+\varepsilon} a^{(j, k)}_{0, 0} = N_0^{\varepsilon} a^{(j, k)}_{0, 0},
        \end{align}
one can rewrite $\overline{v}^\gamma(\theta_n)$ in a more compact fashion  as 
         \begin{align}
         \label{vlin}
         \begin{split}
             \overline{v}^\gamma(\theta_{n})=&\left[ \tilde{a}_{n}^{(2,1)} \left(\begin{matrix}
              -\sin \beta_n \cos \beta_n&-\sin^2\beta_n\\
              \cos^2 \beta_n&\sin \beta_n \cos \beta_n 
          \end{matrix}\right) +\tilde{a}_{n}^{(1,2)} \left(\begin{matrix}
              -\sin \beta_n \cos \beta_n&\cos^2\beta_n\\
              -\sin^2 \beta_n&\sin \beta_n \cos \beta_n 
          \end{matrix}\right)\right.\\
          &\left.
          + \tilde{a}_{n}^{(1,1)} \left(\begin{matrix}
              \cos 2\beta_n& \sin 2\beta_n \\
              \sin 2\beta_n  &-\cos 2 \beta_n
          \end{matrix}\right)
          \right]x, \hspace{4mm} \forall x\in \mathbb{R}^2. 
         \end{split}
        \end{align}
    We now introduce the coefficients $\overline{a}_n^{(j, k)}$ as follows
        \begin{align}
        \label{abarra}
        \begin{split}             \overline{a}^{(1,1)}_n
            &= \sum_{j=0}^{n-1}  \left(\tilde{a}_j^{(1,1)}\cos 2\beta_j-\frac{\tilde{a}_j^{(1,2)}+\tilde{a}_j^{(2,1)}}{2}\sin 2\beta_j\right),\\
            \overline{a}^{(1,2)}_n&=\sum_{j=0}^{n-1}  \left(\tilde{a}_j^{(1,1)}\sin 2\beta_j+\tilde{a}_j^{(1,2)}\cos^2 \beta_j-\tilde{a}_j^{(2,1)}\sin^2\beta_j \right),\\
            \overline{a}^{(2,1)}_n&=\sum_{j=0}^{n-1}  \left(\tilde{a}_j^{(1,1)}\sin 2\beta_j-\tilde{a}_j^{(1,2)}\sin^2 \beta_j+\tilde{a}_j^{(2,1)}\cos^2\beta_j \right),\\
            \overline{a}^{(2,2)}_n&=- \overline{a}^{(1,1)}_n.
                    \end{split}
        \end{align}
          Armed with these coefficients,  if 
        \begin{align*}
       \overline{A}_n=\left(\begin{matrix}
                \overline{a}_n^{(1,1)}& \overline{a}_n^{(1,2)}\\
                \overline{a}_n^{(2,1)}&\overline{a}_n^{(2,2)}
            \end{matrix}\right),
        \end{align*}
        by \eqref{sumacapas}, (\ref{vlin}) and using that $h_j \equiv 1$ on $[t_{j+1}, 1]$, one has that 
        \begin{align}\label{P13.7.6}
        \overline{v}^\gamma(\Theta_{n-1})=\sum_{j=0}^{n-1} h_j \overline{v}^\gamma(\theta_j) = \sum_{j=0}^{n-1} \overline{v}^\gamma(\theta_j) =\overline{A}_n x, \qquad  \text{on} \quad  (x,t)\in \mathbb{R}^2\times [t_n,1]. 
        \end{align}
        This proves the desired claim \eqref{P9.7.1} and, as a byproduct, $\alpha_n$, $r_n$ solve (\ref{dinamicaalpha}) and $\beta_n$, $s_n$ solve (\ref{dinamicabeta}) with the  coefficients $\overline{a}_n^{(j,k)}$ given by \eqref{abarra}.

        Next we deal with the coefficients $b_{n, i}^{(j,k)}$ and $\tilde{b}_{n, i}^{(j,k)}$ related to the cubic Taylor remainder of the velocity $v^\gamma(f_{n, i})$, see \eqref{formulavelocidad}. Specifically, 
        \begin{align*}
            v^\gamma(f_{n,i})-\overline{v}^\gamma(f_{n,i})&=-\textbf{n}(\alpha_n)^\perp \frac{r_n}{2s_n^{1-\gamma}} \sum_{l=0}^3b_{n,i}^{(4-l,l)}\left(\begin{matrix}
                    3\\ l
                \end{matrix}\right)
                \left(r_n \textbf{n}(\alpha_n) \cdot x\right)^{3-l} (s_n\textbf{n}(\beta_n)\cdot x)^l \\
&\hspace{-15mm}- \textbf{n}(\beta_n)^\perp  \frac{s_n^\gamma}{2}\sum_{l=0}^3b_{n,i}^{(3-l,l+1)}\left(\begin{matrix}
                    3\\ l
                \end{matrix}\right)
                \left(r_n \textbf{n}(\alpha_n) \cdot x\right)^{3-l} (s_n\textbf{n}(\beta_n)\cdot x)^l, \hspace{4mm}  \forall (x,t)\in \mathcal{S}_n(t)\times [t_n,1]. 
        \end{align*}
        Here, recall that $b_{n, i}^{(j,k)}$ has both spatial and time dependence, see \eqref{Defbjk}. Moreover, in a similar fashion as for the linear coefficients $\tilde{a}_n^{(j, k)}$ (see \eqref{defatilde}), we  define 
        \begin{align}\label{P6.8.2}
            \tilde{b}_{n}^{(j,k)}:=\sum_{i=0}^{I_\gamma} b_{n,i}^{(j,k)} \qquad \text{if} \qquad n \in \mathbb{N}
        \end{align}
        and
        \begin{align*}
        \tilde{b}^{(j, k)}_0 := b_{0, 0}^{(j, k)}, 
        \end{align*}
        in such a way that 
        \begin{align}
            v^\gamma(\theta_n)-&\overline{v}^\gamma(\theta_n)=-\textbf{n}(\alpha_n)^\perp \frac{r_n}{2 s_n^{1-\gamma}} \sum_{l=0}^3\tilde{b}_{n}^{(4-l,l)}\left(\begin{matrix}
                    3\\ l
                \end{matrix}\right)
                \left(r_n \textbf{n}(\alpha_n) \cdot x\right)^{3-l} (s_n\textbf{n}(\beta_n)\cdot x)^l \nonumber \\
- &\textbf{n}(\beta_n)^\perp \frac{s_n^\gamma }{2}\sum_{l=0}^3\tilde{b}_{n}^{(3-l,l+1)}\left(\begin{matrix}
                    3\\ l
                \end{matrix}\right)
                \left(r_n \textbf{n}(\alpha_n) \cdot x\right)^{3-l} (s_n\textbf{n}(\beta_n)\cdot x)^l, \hspace{4mm}  \forall (x,t)\in \mathcal{S}_n(t)\times [t_n,1]. \label{P2.8.1}
        \end{align}


To define the sequence $\{N_n\}_{n \in \mathbb{N}_0}$ in \eqref{capaprincipal}, we choose a set of positive numbers verifying (\ref{des}). To be more precise, we let
 \begin{align}
 \label{defN}
N_{n}^{\varepsilon}=s_n(t_{n+1})^{1+\gamma}\mathcal{C}_{n,0}a^{(2,1)}_{n,0}(t_{n+1}),\qquad  \forall n\in \mathbb{N},
 \end{align}
 and then we will prove that this definition is indeed compatible with (\ref{des}). The careful reader would notice that definitions of the parameters $\mathcal{C}_{n,0}$, $a^{(2, 1)}_{n,0}$ and $s_n$ involve $N_n$ itself. Accordingly, in those expressions, $N_n$ may be initially taken as a free  parameter verifying \eqref{des} that will be eventually fixed in a constructive way (in terms of $N_{n-1}$) via the expression \eqref{defN}. In particular, the following lemma shows that such a sequence $\{N_n\}_{n \in \mathbb{N}_0}$ always exists.

 \begin{lma}\label{Lemma6}
There exists $N_n$ solving (\ref{defN}) such that, if  $\Theta_n$ is defined as in (\ref{sumacapas}) for $n \in \mathbb{N}_0$, then for every   $t\in [t_{n+1},1]$ and $x \in \mathcal{S}_n(t)$, 
     \begin{align}
         \label{velocidadTheta}
         \begin{split}
v^\gamma(\Theta_n)=&\sum_{j=0}^n 
\tilde{a}_j^{(2,1)}\left(\begin{matrix}
              -\frac{1}{2}\sin 2\beta_j&-\sin^2\beta_j\\
              \cos^2 \beta_j&\frac{1}{2}\sin 2\beta_j 
          \end{matrix}\right)x\\
          &+\sum_{j=0}^n \left[   \tilde{a}^{(1,1)}_j \left(\begin{matrix}
              \cos 2\beta_j& \sin 2\beta_j \\
              \sin 2\beta_j  &-\cos 2 \beta_j
          \end{matrix}\right)
          +\tilde{a}_j^{(1,2)} \left(\begin{matrix}
              -\frac{1}{2}\sin 2\beta_j &\cos^2\beta_j\\
              -\sin^2 \beta_j&\frac{1}{2}\sin 2\beta_j 
          \end{matrix}\right)\right]x
           \\&-\sum_{j=0}^n\textbf{n}(\alpha_j)^\perp \frac{r_j}{2 s_j^{1-\gamma}} \sum_{l=0}^3 \tilde{b}_j^{(4-l,l)}\left(\begin{matrix}
                    3\\ l
                \end{matrix}\right)
                \left(r_j \textbf{n}(\alpha_j) \cdot x\right)^{3-l} (s_j\textbf{n}(\beta_j)\cdot x)^l \\
&- \sum_{j=0}^n\textbf{n}(\beta_j)^\perp  \frac{s_j^\gamma}{2}\sum_{l=0}^3 \tilde{b}_{j}^{(3-l,l+1)}\left(\begin{matrix}
                    3\\ l
                \end{matrix}\right)
                \left(r_j \textbf{n}(\alpha_j) \cdot x\right)^{3-l} (s_j\textbf{n}(\beta_j)\cdot x)^l,  
         \end{split}
     \end{align}
     with 
     \begin{align}
     \label{cotascoeficientestilde}
         |\tilde{a}_j^{(1,1)}(t)|\leq N_j^{\varepsilon-2\frac{\varepsilon+\delta}{1+\gamma+\delta}+\sigma}, \qquad  |\tilde{a}_j^{(1,2)}(t)|\leq N_j^{\varepsilon-4\frac{\varepsilon+\delta}{1+\gamma+\delta}+\sigma}, 
     \end{align}
     \begin{align}
     \label{coeficienteppal}
         |\tilde{a}_j^{(2,1)}(t)-N_j^\varepsilon|\leq N_j^{\varepsilon-3\sigma},
     \end{align}
     and 
     \begin{align}
     \label{cotab}
         |\tilde{b}_j^{(l,k)}(x,t)|\lesssim_{\gamma, l, k} N_j^{-1-\gamma+\varepsilon}, 
     \end{align}
     where \eqref{cotascoeficientestilde}, \eqref{coeficienteppal} and \eqref{cotab} hold for all $t\in[t_{j+1},1], \, j =0, \ldots, n$. 
    Moreover, principal directions and frequencies of the layers verify, for all $t \in [t_{n+1}, 1]$, 
     \begin{align}
     \label{estimaalpha}
        |\alpha_{n+1}(t)-\beta_{n}(t)|&\leq  N_n^{\varepsilon-4\frac{\varepsilon+\delta}{1+\gamma+\delta}}, \\
        \label{estimar}
        \left|
        \frac{r_{n+1}(t)}{r_{n+1}(t_{n+1})}-1\right|&\leq N_n^{-\frac{\gamma\mu}{10}},\\
        \label{estimabeta}
       \left| \beta_{n+1}(t)-\beta_{n}(t)+\arctan\left(\frac{1}{N_n^\varepsilon(t-t_{n+1})}\right)\right|&\leq \frac{N_n^{-2\sigma}}{1+N_n^\varepsilon(t-t_{n+1})},\\
       \label{estimas}
       \left|\frac{s_{n+1}(t)}{s_{n+1}(t_{n+1})}-\sqrt{1+N^{2\varepsilon}_n(t-t_{n+1})^2}\right|&\leq N_n^{-2
\sigma}\sqrt{1+N^{2\varepsilon}_n(t-t_{n+1})^2},
    \end{align} 
    and if in addition $t \in [t_{n+2}, 1]$ then
    \begin{align}
    \label{estimabetatlargo}
        \left| \beta_{n+1}(t)-\beta_{n}(t)+N_n^{-\varepsilon+\mu}\right|&\leq  N_n^{-\varepsilon+\mu-\sigma}, \\
         \label{estimastlargo}\left|\frac{s_{n+1}(t)}{s_{n+1}(t_{n+1})}-N^{\varepsilon-\mu}_n\right|&\leq N_n^{\varepsilon-\mu-\sigma},\qquad  0\leq  \frac{s'_{n+1}(t)}{s_{n+1}(t)}\leq N_n^{\mu+\sigma}.
    \end{align}
     Furthermore, $\{N_n\}_{n \in \mathbb{N}_0}$ verifies 
      \begin{align}
    \label{crecimientoN}
         \frac{1}{2}  N_n^{(1+\gamma+\delta)\frac{\varepsilon-\mu}{\varepsilon+\delta}} 
        \leq 
        N_{n+1}
        \leq 2 N_n^{(1+\gamma+\delta)\frac{\varepsilon-\mu}{\varepsilon+\delta}}, \qquad \forall n\in \mathbb{N}_0.
    \end{align}
    \end{lma}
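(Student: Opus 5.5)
The proof goes by induction on $n\in\mathbb{N}_0$. The inductive hypothesis at stage $n$ is the full list of conclusions of Lemma \ref{Lemma6} for all indices $j\le n$: the expansion \eqref{velocidadTheta}, the coefficient bounds \eqref{cotascoeficientestilde}--\eqref{cotab}, the geometric estimates \eqref{estimaalpha}--\eqref{estimastlargo}, and the growth relation \eqref{crecimientoN}.

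\textbf{Base case $n=0$.} Here $\theta_0$ is explicit, so I apply Lemma \ref{lemavelocidad} with $(\alpha,\beta)=(\frac\pi2,0)$, $(r,s)=(1,N_0)$ and amplitude $N_0^{-1-\gamma+\varepsilon}$. Since $\sin(\alpha-\beta)=1$ and $\cos(\alpha-\beta)=0$, the formulas \eqref{coeficientesvelocidad} reduce as follows.
\begin{itemize}
\item $a^{(1,1)}=\frac{r}{s}\,d^{(1,1)}/\mathcal{C}$. I expect $d^{(1,1)}$ to vanish by parity, and in any case $a^{(1,1)}$ carries a factor $N_0^{-1}$.
\item $a^{(1,2)}$ carries a factor $N_0^{-2}$.
\item $a^{(2,1)}=-d^{(0,2)}/\mathcal{C}$. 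The normalization \eqref{P13.7.2} together with $N_0\gg1$ makes this equal to $1+O(N_0^{-c})$.
\end{itemize}
Consequently $\tilde a_0^{(2,1)}=N_0^{\varepsilon}(1+o(1))$, and the other two coefficients are tiny. The bound \eqref{cotab} is \eqref{cotadb}.

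\textbf{Inductive step, geometry of the new layer.} Assume the statement up to $n$, and treat $N_{n+1}$ as a free parameter in the window \eqref{des}.
\begin{itemize}
\item The hypotheses of Lemma \ref{lemaedos} are exactly the inductive bounds (note that \eqref{coeficienteppal} is stronger than the third bound in \eqref{cotascoeficientes}). So \eqref{estimaalpha2}--\eqref{sestatico2} hold for layer $n+1$.
\item By \eqref{cotaA}, $\int_{t_{n+1}}^t\mathcal{A}_n=N_n^\varepsilon(t-t_{n+1})(1+O(N_n^{-\sigma/2}))$. This converts \eqref{estimabeta2} and \eqref{estimas2} into \eqref{estimabeta} and \eqref{estimas}, after checking elementary arctan and square-root perturbation bounds; the choice of $\sigma$ versus $\gamma\mu$ in \eqref{parametros1} absorbs the losses.
\item On $[t_{n+2},1]$ we have $t-t_{n+1}\approx N_n^{-\mu}$ to leading order, because $1-t_{n+2}=N_{n+1}^{-\mu}\ll N_n^{-\mu}$. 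This gives \eqref{estimabetatlargo} and the first part of \eqref{estimastlargo}; the second part is \eqref{sestatico2}.
\end{itemize}

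\textbf{Inductive step, coefficients and choice of $N_{n+1}$.} Next I compute the velocity coefficients of $\theta_{n+1}$ at times $t\ge t_{n+2}$ via Lemma \ref{lemavelocidad}. There $r/s\approx N_n^{-\varepsilon+\mu}$, and $\sin(\alpha-\beta)\approx\pm1$ up to $N_n^{-\varepsilon+\mu}$, since $\alpha_{n+1}\approx\beta_n$ and $\beta_{n+1}$ is close to $\beta_n$.
\begin{itemize}
\item For the principal part, $a^{(2,1)}_{n+1,0}\approx -d^{(0,2)}/\mathcal{C}$, which is close to $1$ up to $(r/s)$-corrections. The factors of $r/s$ give \eqref{cotascoeficientestilde}: in \eqref{coeficientesvelocidad}, $a^{(1,1)}$ carries one factor of $r/s$ and $a^{(1,2)}$ carries two.
\item For the corrections $f_{n+1,i}$, Lemmas \ref{nuevaperturbacion} and \ref{nuevaperturbacion2} give amplitudes $\mathcal{C}_{n+1,i}\lesssim \mathcal{C}_{n+1,0}\,N_{n+1}^{-ic}$ with $c=2\frac{\varepsilon+\delta}{1+\gamma+\delta}-\varepsilon-O(\sigma)>0$, so their contributions are relatively negligible.
\item With this, \eqref{defN} reads
\[
N_{n+1}^{\varepsilon}\approx N_{n+1}^{-1-\gamma+\varepsilon}\Big(N_{n+1}^{1-\frac{\varepsilon+\delta}{1+\gamma+\delta}}N_n^{\varepsilon-\mu}\Big)^{1+\gamma}.
\]
Solving gives $N_{n+1}\approx N_n^{(1+\gamma+\delta)\frac{\varepsilon-\mu}{\varepsilon+\delta}}$. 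A continuity and intermediate value argument in the free parameter $N_{n+1}$ then yields existence of a solution together with \eqref{crecimientoN}. Since $\gamma\varepsilon>5\delta$, the exponent lies in $[1+\frac\gamma2,1+\frac{3\gamma}2]$, which is consistent with \eqref{des}.
\item \eqref{coeficienteppal} on $[t_{n+2},1]$ then follows from the $s'/s$ bound: $s_{n+1}$ changes by a factor $1+O(N_{n+1}^{-\mu}N_n^{\mu+\sigma})$ after $t_{n+2}$, and the remaining relative errors are also $O(N^{-3\sigma})$.
\end{itemize}

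\textbf{Velocity formula on the support.} Finally, \eqref{velocidadTheta} on $\mathcal{S}_n(t)$ follows by summing Lemma \ref{lemavelocidad} over $j\le n$. There is a subtlety: the formula of Lemma \ref{lemavelocidad} for layer $j$ is only valid on $\mathcal{S}_j$. I would argue that $\mathcal{S}_n(t)\subset\mathcal{S}_j(t)$ for $j<n$, using the nested geometry that follows from \eqref{estimaalpha}, \eqref{estimabeta} and the frequency growth. This inclusion is also what lets $h_j=1$ be used.

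\textbf{Main obstacle.} The delicate step is making the fixed-point definition \eqref{defN} rigorous while keeping the error $N_j^{\varepsilon-3\sigma}$. The difficulty is that $a^{(2,1)}_{n+1,0}(t_{n+2})$ depends on $N_{n+1}$ through $t_{n+2}$ itself. I would first establish continuity of the right side of \eqref{defN} in $N_{n+1}$, then bracket its values at the two ends of the window \eqref{des}.
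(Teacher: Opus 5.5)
Your architecture is the paper's: induction, Lemma \ref{lemavelocidad} for the coefficients, Lemma \ref{lemaedos} for the next layer, a continuity/bracketing argument in $N_{n+1}$, and nested supports for \eqref{velocidadTheta}. However, the step meant to give \eqref{cotascoeficientestilde} rests on a false geometric claim.

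\textbf{The main gap.} For $t\ge t_{n+2}$ you assert $\sin(\alpha_{n+1}-\beta_{n+1})\approx\pm1$, but the reason you give proves the opposite. By \eqref{estimaalpha}, $\alpha_{n+1}\approx\beta_n$, and by \eqref{estimabetatlargo}, $\beta_{n+1}\approx\beta_n-N_n^{-\varepsilon+\mu}$. So the two principal directions have nearly merged: $|\alpha_{n+1}-\beta_{n+1}|\approx N_n^{-\varepsilon+\mu}$, and it is the cosine that is close to $1$. Equivalently, $rs\sin(\alpha-\beta)$ is conserved by the trace-free linear flow (see \eqref{P12.8.2}), while $s/r$ grows to $N_n^{\varepsilon-\mu}$.

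You then attribute the smallness of $a^{(1,1)}$ and $a^{(1,2)}$ to powers of $r/s$ alone. By \eqref{crecimientoN}, $r/s\approx N_n^{-\varepsilon+\mu}\approx N_{n+1}^{-\frac{\varepsilon+\delta}{1+\gamma+\delta}}$, and $s^{1+\gamma}\mathcal{C}_{n+1,0}\approx N_{n+1}^{\varepsilon}$. So your counting gives only $N_{n+1}^{\varepsilon-\frac{\varepsilon+\delta}{1+\gamma+\delta}}$ and $N_{n+1}^{\varepsilon-2\frac{\varepsilon+\delta}{1+\gamma+\delta}}$, half of the exponents $2$ and $4$ required in \eqref{cotascoeficientestilde}. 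The missing half comes from the factors $\sin(\alpha-\beta)$ and $\sin^2(\alpha-\beta)$ in \eqref{coeficientesvelocidad}; the paper uses \eqref{P5.8.2} and \eqref{P5.8.3} together.

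This is not cosmetic. With your bound, the available estimate for $\int_{t_{n+1}}^1|\tilde a_n^{(1,1)}|\,dt$ is $N_n^{\varepsilon-\frac{\varepsilon+\delta}{1+\gamma+\delta}-\mu+\sigma}$, a positive power of $N_n$. Then the control of $r_{n+1}$ in Lemma \ref{lemaedos} cannot be closed at the next level.

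The same smallness of the sine is what produces your gain $c$ for the corrections. By \eqref{nuevaamplitud}, \eqref{intamplitud} and \eqref{P12.8.2},
$\mathcal{C}_{n+1,1}(t)\le\mathcal{C}_{n+1,0}^2N_{n+1}^{2(1-\frac{\varepsilon+\delta}{1+\gamma+\delta})}\int_{t_{n+1}}^t s_{n+1}^{\gamma-1}\,d\tau$.
This is small only because the sine turns $rs^{\gamma}$ into a multiple of $s^{\gamma-1}$, which decays since $\gamma<1$. With $|\sin|\approx1$ the same integral gives no gain at all.

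\textbf{A smaller slip.} The relative precision $N_n^{-\sigma/2}$ from \eqref{cotaA} is too weak to pass from \eqref{estimabeta2}, \eqref{estimas2} to \eqref{estimabeta}, \eqref{estimas}. The arctan and square-root perturbations reproduce that relative error, giving $N_n^{-\sigma/2}/(1+N_n^{\varepsilon}(t-t_{n+1}))$ rather than $N_n^{-2\sigma}/(1+N_n^{\varepsilon}(t-t_{n+1}))$. Instead, use \eqref{coeficienteppal}, which is in your hypothesis and is stronger than what Lemma \ref{lemaedos} assumes, together with \eqref{des}. This gives $|\mathcal{A}_n-N_n^{\varepsilon}|\lesssim N_n^{\varepsilon-3\sigma}$, as the paper does in its base case. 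The remaining errors from \eqref{estimabeta2} and \eqref{estimas2} are then absorbed using $100\sigma<\gamma\mu$ and $1-t_{n+1}=N_n^{-\mu}$.

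With these two fixes, the rest of your plan coincides with the paper's proof: the base case, the intermediate-value choice of $N_{n+1}$, \eqref{coeficienteppal} via the $s'/s$ bound, and the nested supports.
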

    \begin{rmk}\label{Remark25}
        It is easy to see that  (\ref{crecimientoN}) implies (\ref{des}). Indeed, the fact that 
        \begin{align*}
            \frac{\varepsilon-\mu}{\varepsilon+\delta}\leq 1-\frac{\mu}{\varepsilon+\delta} \qquad \text{and} \qquad  \delta \leq \frac{\gamma}{2}
        \end{align*}
        yields that
        \begin{align*}
            (1+\gamma+\delta) \frac{\varepsilon-\mu}{\varepsilon+\delta}\leq 1+\frac{3\gamma}{2}-\frac{\mu}{2\varepsilon}, 
        \end{align*}
        so  
        \begin{align*}
            N_{n+1}\leq 2N_0^{-\frac{\mu}{2\varepsilon}}N_n^{1+\frac{3\gamma}{2}}\leq N_n^{1+\frac{3\gamma}{2}}
        \end{align*}
        choosing $N_0$ large enough. On the  other hand, from
        \begin{align*}
            \frac{\varepsilon-\mu}{\varepsilon+\delta}\geq \frac{\varepsilon-\frac{\gamma\varepsilon}{50}}{\varepsilon+\frac{\gamma \varepsilon}{5}}=1-\frac{\frac{1}{5}+\frac{1}{50}}{1+\frac{\gamma}{5}}\gamma\geq 1-\frac{11}{50}\gamma \qquad \text{and} \qquad  \delta<1-\gamma,
        \end{align*}
        we derive
        \begin{align*}
            (1+\gamma+\delta)\frac{\varepsilon-\mu}{\varepsilon+\delta}\geq 1+\gamma-\frac{22}{50}\gamma,
        \end{align*}
        and then
        \begin{align*}
            N_{n+1}\geq \frac{N_0^{\frac{3 \gamma}{50}}}{2}N_n^{1+\frac{\gamma}{2}}\geq N_n^{1+\frac{\gamma}{2}},
        \end{align*}
        if  $N_0$ is sufficiently large. 
    \end{rmk}
     \begin{proof}[Proof of Lemma \ref{Lemma6}]
     We will proceed by induction on $n$. For each $n$, the proof will be divided into three  steps: Firstly, we  estimate the velocity of a layer; secondly, we describe the dynamics of the next layer; thirdly, we  prove that there exists $N_{n+1}$ satisfying  \eqref{defN} (and in particular \eqref{crecimientoN}, see Remark \ref{Remark25}).  
     
     We start with the base case $n=0$.
      \begin{enumerate}
         \item \textbf{Computing the velocity generated by $\Theta_0$.} Recall that (see \eqref{FirstLayer} and  \eqref{sumacapas})
        \begin{align*}
    \Theta_0(x,t)=\theta_0(x,t)=N_0^{-1-\gamma+\varepsilon}\phi(x_2)\phi(N_0x_1), \qquad \forall t\in [0,1],
        \end{align*}
        generates the sequence of derivatives $\{f_{0,0}^{(j,k)}\}$ given by \eqref{P13.7.1} with  amplitude $N_0^{-1-\gamma+\varepsilon}$, principal directions $(\frac{\pi}{2},0)$ and principal frequencies $(1,N_0)$. Then, using Lemma \ref{lemavelocidad} we obtain that, for every $x\in \mathcal{S}_0 =  \left[-\frac{1}{N_0},\frac{1}{N_0}\right]\times[-1,1]$,  
        \begin{align*}
            v^\gamma(\Theta_0)(x,t)=&N_0^{1+\gamma}N_0^{-1-\gamma+\varepsilon}\left[ a^{(1,1)}_{0, 0} \left(\begin{matrix}
              1& 0 \\
              0  &-1
          \end{matrix}\right)+a^{(1,2)}_{0, 0} \left(\begin{matrix}
             0&1\\
              0&0 
          \end{matrix}\right)
          +a^{(2,1)}_{0, 0} \left(\begin{matrix}
              0&0\\
              1&0 
          \end{matrix}\right)
          \right]x
           \\&-\textbf{n}\left(\frac{\pi}{2}\right)^\perp \frac{1}{2 N_0^{1-\gamma}} \sum_{l=0}^3 b_{0, 0}^{(4-l,l)}\left(\begin{matrix}
                    3\\ l
                \end{matrix}\right)
                x_2^{3-l} (N_0 x_1)^l \\
&- \textbf{n}(0)^{\perp} \frac{N_0^\gamma}{2}\sum_{l=0}^3b_{0, 0}^{(3-l,l+1)}\left(\begin{matrix}
                    3\\ l
                \end{matrix}\right)
                x_2^{3-l} (N_0x_1)^l, 
        \end{align*}
    where it is easy to compute, using (\ref{coeficientesvelocidad}) and \eqref{cotadb}, that
    \begin{align}\label{P13.7.3}
        |a^{(1,1)}_{0,0}|\leq \frac{C}{N_0},\qquad  |a_{0,0}^{(1,2)}|\leq \frac{C}{N_0^2},
    \end{align}
    and, by \eqref{P13.7.2},  
   \begin{align}
        a_{0,0}^{(2,1)}&=-\frac{1}{N_0^{-1-\gamma+\varepsilon}} \frac{2^{\gamma}\Gamma(\frac{1+\gamma}{2})}{\pi \Gamma(\frac{1-\gamma}{2})}\int_{\mathbb{R}_+\times\mathbb{R}}\frac{N_0^{-1-\gamma+\varepsilon}\phi(\frac{y_2}{N_0})\phi''(y_1)}{|y|^{1+\gamma}} \, dy \nonumber \\
        &=-\frac{2^{\gamma}\Gamma(\frac{1+\gamma}{2})}{\pi \Gamma(\frac{1-\gamma}{2})}\left(\int_{\mathbb{R}_+\times\mathbb{R}}\frac{\phi(0)\phi''(y_1)}{|y|^{1+\gamma}} \, dy
        +\int_{\mathbb{R}_+\times\mathbb{R}}\frac{(\phi(\frac{y_2}{N_0})
        -\phi(0))\phi''(y_1)}{|y|^{1+\gamma}} \, dy\right) \nonumber\\
        &=1-\frac{2^{\gamma}\Gamma(\frac{1+\gamma}{2})}{\pi \Gamma(\frac{1-\gamma}{2})}
       \int_{\mathbb{R}_+\times\mathbb{R}}\frac{(\phi(\frac{y_2}{N_0})
        -\phi(0))\phi''(y_1)}{|y|^{1+\gamma}} \, dy. \label{P13.7.4}
\end{align}
Furthermore, the last integral can be estimated as follows: Applying the second order Taylor expansion of $\phi$ centered at the origin (in particular,  taking into account that $\phi'(0)=0$), 
\begin{align*}
&\left|\int_{\mathbb{R}_+\times\mathbb{R}}\frac{(\phi(\frac{y_2}{N_0})
        -\phi(0))\phi''(y_1)}{|y|^{1+\gamma}} \, dy
        \right|\\
        &=
        \left|\int_{\mathbb{R}_+ \times \{|y_2|\leq \sqrt{N_0}\}}\frac{\phi(\frac{y_2}{N_0})
        -\phi(0)}{|y|^{1+\gamma}} \, \phi''(y_1) \, dy
        +\int_{\mathbb{R}_+ \times \{|y_2|\geq \sqrt{N_0} \}}\frac{\phi(\frac{y_2}{N_0})
        -\phi(0)}{|y|^{1+\gamma}} \, \phi''(y_1) \, dy
        \right|
        \\
&\leq   \frac{1}{2}   \int_{|y_2|\leq \sqrt{N_0}}\frac{y_2^2}{N_0^2}\frac{\|\phi''
        \|_{L^\infty}}{|y|^{1+\gamma}} \, |\phi''(y_1)| \, dy
        +\frac{1}{N_0^{\frac{\gamma}{4}}}\int_{|y_2|\geq \sqrt{N_0}}\frac{|\phi(\frac{y_2}{N_0})
        -\phi(0)|}{|y|^{1+\frac{\gamma}{2}}} \, |\phi''(y_1)| \, dy
        \\ &\leq
        \frac{C}{N_0}\int_{|y_1|\leq 1}\frac{1}{|y|^{1+\gamma}} \, dy+\frac{C}{N_0^{\frac{\gamma}{4}}} \int_{|y_1|\leq 1}\frac{1}{|y|^{1+\frac{\gamma}{2}}}dy \leq CN_0^{-\frac{\gamma}{4}}.
\end{align*}
Inserting this into \eqref{P13.7.4}, we arrive at
\begin{align}\label{P13.7.5}
    |a_{0, 0}^{(2, 1)}-1| \leq C N_0^{-\frac{\gamma}{4}}. 
\end{align}

According to  (\ref{defatildeVar}) and \eqref{P13.7.3}, we have
    \begin{align}
|\tilde{a}_0^{(1,1)}|=|s_0^{1+\gamma}\mathcal{C}_{0,0}a_{0,0}^{(1,1)}|\leq \frac{CN_0^\varepsilon}{N_0}\leq N_0^{\varepsilon-2\frac{\varepsilon+\delta}{1+\gamma+\delta}}, \label{P14.7.3}\\
|\tilde{a}_0^{(1,2)}|=|s_0^{1+\gamma}\mathcal{C}_{0,0}a_{0,0}^{(1,2)}|\leq  \frac{CN_0^\varepsilon}{N_0^2}\leq N_0^{\varepsilon-4\frac{\varepsilon+\delta}{1+\gamma+\delta}},\label{P14.7.4}
    \end{align}
where we have also assumed that $N_0$ is large enough. On the other hand, in view of \eqref{P13.7.5}, 
\begin{align}\label{P14.7.5}
|\tilde{a}_0^{(2,1)}-N_0^\varepsilon|\leq CN_0^{\varepsilon-\frac{\gamma}{4}}\leq N_0^{\varepsilon-\frac{\gamma}{8}},
\end{align}
provided that $N_0$ is sufficiently large. Then we have shown the validity of (\ref{velocidadTheta}), (\ref{cotascoeficientestilde}) and (\ref{coeficienteppal}). Finally, the bound (\ref{cotab}) follows immediately from  \eqref{cotadb}.
\item \textbf{Dynamics of $\theta_1$.} Recall that, by (\ref{deftheta}), \eqref{transporteppal}, \eqref{P12.7.1} and \eqref{ic},  
\begin{align*}
    \theta_1=\sum_{i=0}^{I_\gamma} f_{1,i},
\end{align*}
where the corresponding  principal layer $f_{1, 0}$ solves the following Cauchy problem: 
\begin{align*}
    &\partial_t f_{1,0}+\overline{v}^\gamma(\Theta_0)\cdot \nabla f_{1,0}=0,\\
   & f_{1,0}(x,t_1)=N_1^{-1-\gamma+\varepsilon} \phi(N_1^{1-\frac{\varepsilon+\delta}{1+\gamma+\delta}}x_1)\phi(N_1^{1-\frac{\varepsilon+\delta}{1+\gamma+\delta}}x_2).
\end{align*}
Here, we have also used that $\phi$ is an even function. As already discussed above (see \eqref{capaprincipal} and especially the text just after \eqref{transporteppal}-\eqref{P12.7.1}), it turns out that $f_{1, 0}$ is given by 
\begin{align*}
    f_{1,0}(x,t)=N_1^{-1-\gamma+\varepsilon} \phi(r_1(t) \textbf{n}(\alpha_1(t))\cdot x)\phi(s_1(t) \textbf{n}(\beta_1(t))\cdot x), \qquad  \forall t\in [t_1,1], 
\end{align*}
where $(r_1,\alpha_1)$ verify (\ref{dinamicaalpha}) and $(s_1, \beta_1)$ verify (\ref{dinamicabeta}) for $t\in[t_1,1]$, with constant coefficients (see \eqref{abarra}) 
\begin{align}\label{P14.7.1}
    \overline{a}_1^{(1,1)}(t)=
    \tilde{a}_0^{(1,1)},
    \qquad 
    \overline{a}_1^{(1,2)}(t)=
    \tilde{a}_0^{(1,2)},
     \qquad 
    \overline{a}_1^{(2,1)}(t)=
    \tilde{a}_0^{(2,1)},
    \qquad 
     \overline{a}_1^{(2,2)}(t)=-
    \tilde{a}_0^{(1,1)}. 
\end{align}
Here, we used that $r_1, s_1, \alpha_1, \beta_1$ may be extended to the endpoint $t=1$, which easily follows from the fact that  the coefficients \eqref{P14.7.1} involved in the ODEs (\ref{dinamicaalpha}) and (\ref{dinamicabeta})  are constant, and so the solution $f_{1, 0}$ can also be continued until $t=1$ as an application of Lemma \ref{dinamica}. 

Next we turn our attention to (\ref{estimaalpha}), (\ref{estimar}), (\ref{estimabeta}) and (\ref{estimas}). For this purpose, we aim to  apply Lemma \ref{lemaedos} with $n=0$ (more precisely, see Remark \ref{Rmk9}). Indeed, by \eqref{P14.7.3}--\eqref{P14.7.5},  we conclude that, for all $t\in [t_1,1]$, 
 \begin{align*}
        |\alpha_{1}(t)-\beta_{0}|\leq  N_0^{\varepsilon-4\frac{\varepsilon+\delta}{1+\gamma+\delta}}, \qquad  \left|
        \frac{r_{1}(t)}{r_{1}(t_{1})}-1\right|\leq N_0^{-\frac{\gamma\mu}{10}},
    \end{align*}
    \begin{align}\label{P15.7.1}
        \left|\beta_1(t)-\beta_0+\arctan \left(
        \frac{1}{(\tilde{a}_0^{(2, 1)} + \tilde{a}_0^{(1, 2)}) (t-t_1)}
        \right)
        \right|\leq N_0^{-\varepsilon+\mu-\frac{\gamma}{20}\mu}, 
    \end{align} 
    and
    \begin{align}\label{P16.7.1}   
        \left|\frac{s_1(t)}{s_1(t_1)} - \sqrt{1+ (\tilde{a}_0^{(2, 1)} + \tilde{a}_0^{(1, 2)})^2 (t-t_1)^2 }\right| \leq N_0^{-\frac{\gamma \mu}{40}} \sqrt{1+ (\tilde{a}_0^{(2, 1)} + \tilde{a}_0^{(1, 2)})^2(t-t_1)^2  }. 
    \end{align}
    In particular,  (\ref{estimaalpha}) and (\ref{estimar}) hold. Concerning \eqref{estimabeta}, from Mean Value Theorem and \eqref{P14.7.4}-\eqref{P14.7.5},
    \begin{align}
        \left|
\arctan \left(
        \frac{1}{(\tilde{a}_0^{(2,1)} +\tilde{a}_0^{(1,2)})(t-t_1)}
        \right)-\arctan\left(\frac{1}{N_0^\varepsilon (t-t_{1})}\right)
        \right| & \nonumber\\
       & \hspace{-8cm}\leq \frac{C}{1+(N_0^{\varepsilon}(t-t_1))^2} \, (|\tilde{a}_0^{(2,1)}-N_0^\varepsilon| + |\tilde{a}_0^{(1, 2)}|) (t-t_1)  \nonumber 
        \\ &\hspace{-8cm}\leq \frac{C N_0^{\varepsilon-3\sigma}(t-t_1)}{1+(N_0^{\varepsilon}(t-t_1))^2}\leq  \frac{N_0^{-2\sigma} }{2(1+N_0^{\varepsilon}(t-t_1))}, \label{P15.7.2}
    \end{align}
    where we may choose $N_0$ to be large enough so that the last step holds. Putting together \eqref{P15.7.1} and \eqref{P15.7.2}, we obtain 
    \begin{align*}
         \left|\beta_1(t)-\beta_0+\arctan \left(\frac{1}{N_0^\varepsilon (t-t_{1})}\right)
        \right| &\leq \frac{N_0^{-2\sigma} }{2(1+N_0^{\varepsilon}(t-t_1))} + N_0^{-\varepsilon+\mu-\frac{\gamma}{20}\mu} \\
        & \hspace{-5cm} \leq \frac{N_0^{-2\sigma} }{2(1+N_0^{\varepsilon}(t-t_1))} + \frac{N_0^{-2\sigma} }{2(1+N_0^{\varepsilon}(t-t_1))} = \frac{N_0^{-2\sigma} }{1+N_0^{\varepsilon}(t-t_1)},
    \end{align*}
    where we have also used that $t-t_1 \leq 1-t_1 \leq C N_{0}^{-\mu}$. This proves \eqref{estimabeta}.

    Assume now that $t \geq t_2$. Throughout this step, $N_1$ denotes a free parameter ranging over the interval
\begin{equation}\label{P18.7.1}
    N_1\in\big[N_0^{1+\frac{\gamma}{2}},\,N_0^{1+\frac{3\gamma}{2}}\big],
\end{equation}
which is the range prescribed by \eqref{des}, and accordingly
$t_2:=1-N_1^{-\mu}$. All the estimates below hold uniformly in that range;
the particular value of $N_1$ is fixed at the end of this step.
    \begin{align*}
        \left|\arctan \bigg(\frac{1}{N_0^\varepsilon (t-t_1)}  \bigg) - N_0^{-\varepsilon+ \mu} \right| &  \\
        &\hspace{-4.5cm}\leq  \left|\arctan \bigg(\frac{1}{N_0^\varepsilon (t-t_1)}  \bigg) - \frac{1}{N_0^\varepsilon (t-t_1)} \right| + \left|\frac{1}{N_0^\varepsilon (t-t_1)}  - N_0^{-\varepsilon + \mu} \right| \\
        &\hspace{-4.5cm}\leq   C N_0^{-3 \varepsilon} (t-t_1)^{-3} + \bigg|\frac{1}{N_0^\varepsilon (t-t_1)}- N_0^{-\varepsilon+\mu} \bigg| \\
        &\hspace{-4.5cm} \leq C N_0^{-3 \varepsilon} (t_2-t_1)^{-3} +  N_0^{-\varepsilon} \Big|N_0^{\mu}-\frac{1}{t_2-t_1} \Big| \\
        & \hspace{-4.5cm} = C N_0^{-3 (\varepsilon - \mu)} +  N_0^{-\varepsilon} \bigg(\frac{1}{N_0^{-\mu} - N_1^{-\mu}} -\frac{1}{N_0^{-\mu}}\bigg) \\
        & \hspace{-4.5cm}  \leq C N_0^{-3 (\varepsilon - \mu)}  + C N_0^{-\varepsilon + 2 \mu}  N_1^{-\mu} \leq  C N_0^{-3 (\varepsilon - \mu)}  + C N_0^{-\varepsilon +  \mu - \frac{\mu \gamma}{2}}.  
    \end{align*}
    Combining this with \eqref{estimabeta}, we obtain 
    \begin{align*}
          \left| \beta_{1}(t)-\beta_{0}+N_0^{-\varepsilon+\mu}\right| & \leq   \frac{N_0^{-2\sigma} }{1+N_0^{\varepsilon-\mu}}  +  C N_0^{-3 (\varepsilon - \mu)}  + C N_0^{-\varepsilon +  \mu - \frac{\mu \gamma}{2}}  \\
          & \hspace{-2cm} \leq N_0^{-2\sigma -\varepsilon + \mu} +  C N_0^{-3 (\varepsilon - \mu)}  + C N_0^{-\varepsilon +  \mu - \frac{\mu \gamma}{2}} \leq N_0^{-\varepsilon + \mu - \sigma},
    \end{align*}
    which gives \eqref{estimabetatlargo}.

    The proof of \eqref{estimas} can be carried out similarly as in \eqref{estimabeta}  but now relying on \eqref{P16.7.1}.  On the other hand, the first statement in \eqref{estimastlargo} (that is, $0 \leq s_1'(t)/s_1(t) \leq N_0^{\mu+\sigma}$ if $t \geq t_2$) follows immediately from \eqref{sestatico2}, while the second statement in  \eqref{estimastlargo} (that is, $|s_{1}(t)/s_{1}(t_{1})-N^{\varepsilon-\mu}_0|\leq N_0^{\varepsilon-\mu-\sigma}$ if $t \geq t_2$) can be obtained from \eqref{estimas} in a similar fashion as \eqref{estimabetatlargo}. Further details are left to the interested reader.
    

      \item \label{existenciaN1}\textbf{Existence of $N_1$ defined by \eqref{defN} and relation between $N_0$ and $N_1$.} 
    According to (\ref{capaprincipal}), 
    \begin{align}\label{P25.7.1}
        \mathcal{C}_{1,0}=N_1^{-1-\gamma+\varepsilon}.
        \end{align}
        For the coefficient $a_{1,0}^{(2,1)}$, using (\ref{coeficientesvelocidad}) and the fact that (see \eqref{DefinitionAlignedRescaled}, \eqref{P16.7.2}) 
        \begin{align*}
            \tilde{f}_{1,0}^{(0,2)}(x,t)=R_{-\beta_1}f^{(0,2)}_{1,0}\left(\frac{x}{s_1},t\right)= N_1^{-1-\gamma+\varepsilon}\phi \left(\frac{r_1}{s_1}\textbf{n}(\alpha_1-\beta_1)\cdot x\right)\phi''(x_1),
        \end{align*}
        we have (see also \eqref{P13.7.2})
        \begin{align}
        a_{1,0}^{(2,1)}=& -\frac{1}{\mathcal{C}_{1,0}}\left[d_{1,0}^{(0,2)}+2\frac{r_1}{s_1}d_{1,0}^{(1,1)}\cos (\alpha_1-\beta_1)+\left(\frac{r_1}{s_1}\right)^2d_{1,0}^{(2,0)}\cos^2(\alpha_1-\beta_1)\right] \nonumber \\
        =&1+\frac{2^{\gamma}\Gamma(\frac{1+\gamma}{2})}{\pi \Gamma(\frac{1-\gamma}{2})}\int_{\mathbb{R}_+\times \mathbb{R}} \left(\phi(0)-\phi \left(\frac{r_1}{s_1}\textbf{n}(\alpha_1-\beta_1)\cdot y\right)\right)\frac{\phi''(y_1)}{|y|^{1+\gamma}} \, dy \nonumber 
       \\
        &-\frac{1}{\mathcal{C}_{1,0}}\left[2\frac{r_1}{s_1}d_{1,0}^{(1,1)}\cos (\alpha_1-\beta_1)+\left(\frac{r_1}{s_1}\right)^2d_{1,0}^{(2,0)}\cos^2(\alpha_1-\beta_1)\right]. \label{P16.7.3}
        \end{align}
    Next we deal with the first term of the previous expression. Applying basic properties of $\phi$, we have  
    \begin{align}
       &\hspace{-10mm} \left|
\int_{\mathbb{R}_+\times \mathbb{R}} \left(\phi(0)-\phi \left(\frac{r_1}{s_1}\textbf{n}(\alpha_1-\beta_1)\cdot y\right)\right)\frac{\phi''(y_1)}{|y|^{1+\gamma}} \, dy
        \right| \nonumber \\
        \leq& \frac{\|\phi''\|_{L^\infty}}{2} \int_{|\textbf{n}(\alpha_1-\beta_1)\cdot y|\leq \sqrt{s_1/r_1}} \left|
        \frac{r_1}{s_1}\textbf{n}(\alpha_1-\beta_1)\cdot y
        \right|^2\frac{|\phi''(y_1)|}{|y|^{1+\gamma}} \, dy \nonumber\\
        &+2 \|\phi\|_{L^\infty} \left(\frac{r_1}{s_1}\right)^{\frac{\gamma}{4}}\int_{|\textbf{n}(\alpha_1-\beta_1)\cdot y|\geq \sqrt{s_1/r_1}}  \frac{|\phi''(y_1)|}{|y|^{1+\frac{\gamma}{2}}} \, dy \nonumber \\
        \leq &C \frac{r_1}{s_1} \int_{(-1, 1) \times \mathbb{R}} |y|^{-1-\gamma} \, dy + C \left(\frac{r_1}{s_1}\right)^{\frac{\gamma}{4}} \int_{(-1, 1) \times \mathbb{R}} |y|^{-1-\frac{\gamma}{2}} \, dy  \leq C\left(\frac{r_1}{s_1}\right)^{\frac{\gamma}{4}}, \label{P16.7.4}
    \end{align}
    where we have also used in the last estimate that $r_1/s_1$ is  bounded on $[t_{1}, 1]$. To justify the latter statement, one may use  (\ref{estimar}) and \eqref{estimas} (recall also that $r_1(t_1) = s_1(t_1)$, see \eqref{ic}) so that
    \begin{align*}
        r_1(t) &\leq (1+N_0^{-\frac{\gamma \mu}{10}}) r_1(t_1) = (1+N_0^{-\frac{\gamma \mu}{10}}) s_1(t_1) \\
        & \leq (1+N_0^{-\frac{\gamma \mu}{10}}) \, \frac{s_1(t)}{(1-N_0^{-\sigma}) \sqrt{1+N_0^{2 \varepsilon} (t-t_1)^2}} \\
        & \leq \frac{1+N_0^{-\frac{\gamma \mu}{10}}}{1-N_0^{-\sigma}}  \, s_1(t) \leq C s_1(t). 
    \end{align*}
    In fact, these computations can be improved provided that $t \geq t_2$  via (\ref{estimastlargo}). To be more precise,  one can show that, at least for $N_0$ sufficiently large, the following holds 
    \begin{align}\label{P16.7.5}
        \frac{r_1(t)}{s_1(t)}\leq N_0^{-\varepsilon+\mu+\frac{\sigma}{10}}, \qquad \forall t \geq t_2. 
    \end{align}
    Putting together \eqref{P16.7.3}, \eqref{P16.7.4} and \eqref{P16.7.5} (see also \eqref{cotadb}), we derive
    \begin{align}\label{P5.8.4}
        |a_{1, 0}^{(2, 1)}(t)-1| \leq  N_0^{-\frac{(\varepsilon-\mu)}{5}\gamma+\frac{\sigma\gamma}{40}}, \qquad \forall t \geq t_2. 
    \end{align}
    In particular,  taking $N_0$ sufficiently large,  
    \begin{align}\label{P24.7.1}
\left(\frac{9}{10}\right)^{(\varepsilon+\delta)\frac{1+\gamma}{1+\gamma+\delta}}\leq a_{1,0}^{(2,1)}(t_2)\leq \left(\frac{11}{10}\right)^{(\varepsilon+\delta)\frac{1+\gamma}{1+\gamma+\delta}}.
    \end{align}

  On the other hand, by \eqref{P18.7.1} and \eqref{des},
   \begin{align*}
       N_0^{-\mu}\big(1-N_0^{-\frac{\gamma\mu}{2}}\big)\leq t_2-t_1=N_0^{-\mu}-N_1^{-\mu}\leq N_0^{-\mu},
   \end{align*}
   so \eqref{estimas} with $n=0$ and $t=t_2$, together with
   $0\leq\sqrt{1+x^2}-x\leq\frac{1}{2x}$, gives
   \begin{align*}
    \left|\frac{s_1(t_2)}{s_1(t_1)} N_0^{-\varepsilon +\mu}- 1\right|
    \leq C N_0^{-2\sigma}+CN_0^{-2(\varepsilon-\mu)}+N_0^{-\frac{\gamma\mu}{2}}
    \leq CN_0^{-\sigma},
   \end{align*}
   where we used $100\sigma<\gamma\mu$ (see \eqref{parametros1}). Hence, for $N_0$ large enough,
   \begin{equation}\label{P24.7.2}
       \left(\frac{9}{10}\right)^{\frac{\varepsilon+\delta}{1+\gamma+\delta}} \, N_0^{\varepsilon-\mu}\leq \frac{s_1(t_2)}{s_1(t_1)}\leq \left(\frac{11}{10}\right)^{\frac{\varepsilon+\delta}{1+\gamma+\delta}} \,  N_0^{\varepsilon-\mu}.
   \end{equation}
    As a byproduct of \eqref{P24.7.1} and \eqref{P24.7.2}, we obtain that
    \begin{equation}\label{P24.7.3}
         \left(\frac{1}{2}\right)^{(\varepsilon+\delta)\frac{1+\gamma}{1+\gamma+\delta}} N_0^{(1+\gamma)(\varepsilon-\mu)}\leq
        \left(\frac{s_1(t_2)}{s_1(t_1)}\right)^{1+\gamma}a_{1,0}^{(2,1)}(t_2)\leq 2^{(\varepsilon+\delta)\frac{1+\gamma}{1+\gamma+\delta}} N_0^{(1+\gamma)(\varepsilon-\mu)}.
    \end{equation}
    In particular, although the number $(s_1(t_2)/s_1(t_1))^{1+\gamma}a_{1,0}^{(2,1)}(t_2)$ depends on $N_1$ trough $t_2$, the uniform bound displayed in \eqref{P24.7.3} shows that such a number can be estimated in terms of $N_0$. Accordingly, one can ensure the existence\footnote{Uniqueness is not guaranteed. However, it is enough to pick one of the possible solutions.} (by a standard Bolzano-type argument) of $N_1$           such that 
    \begin{equation}\label{P7.8.4}
      N_1^{\frac{1+\gamma}{1+\gamma+\delta}(\varepsilon+\delta)}   =\left(\frac{s_1(t_2)}{s_1(t_1)}\right)^{1+\gamma}a_{1,0}^{(2,1)}(t_2). 
    \end{equation}  
    To rigorously  formalize this argument one can consider the map 
    \begin{align}\label{P7.8.3}
        N_{1}\mapsto  N_1^{\frac{1+\gamma}{1+\gamma+\delta}(\varepsilon+\delta)}   -\left(\frac{s_1(t_2)}{s_1(t_1)}\right)^{1+\gamma}a_{1,0}^{(2,1)}(t_2),
    \end{align}
    that is continuous  due to the continuity of the involved ODEs  with respect to time and initial conditions. By  \eqref{P24.7.3}, the map given in \eqref{P7.8.3} is negative at $N_{1}=N_0^{1+\frac{\gamma}{2}}$ and positive when $N_1=N_0^{1+\frac{3\gamma}{2}}$. Hence existence of $N_1$ with \eqref{P7.8.4} is shown and it is in the desired interval according to \eqref{des}.
    
   Notice that for such a choice of $N_1$ (recall \eqref{ic} and \eqref{P25.7.1}):  
    \begin{align*}
 \frac{N_1^\varepsilon}{s_1(t_1)^{1+\gamma}\mathcal{C}_{1,0}}=\left(\frac{s_1(t_2)}{s_1(t_1)}\right)^{1+\gamma}a_{1,0}^{(2,1)}(t_2),
    \end{align*}
    or equivalently 
    $$
          N_1^\varepsilon= s_1(t_2)^{1+\gamma}\mathcal{C}_{1,0}a_{1,0}^{(2,1)}(t_2). 
    $$
     This proves   existence of $N_1$ satisfying \eqref{defN}. 

Next, we check the right-hand side estimate in \eqref{crecimientoN} via \eqref{P24.7.3}:
\begin{align*}
    N_1^\varepsilon &=s_1(t_1)^{1+\gamma} \mathcal{C}_{1,0}\left(\frac{s_1(t_2)}{s_1(t_1)}\right)^{1+\gamma}a_{1,0}^{(2,1)}(t_2)  \\
    & \leq N_1^{(1+\gamma)(1-\frac{\varepsilon+\delta}{1+\gamma+\delta})} N_1^{-1-\gamma+\varepsilon} 2^{(\varepsilon+\delta)\frac{1+\gamma}{1+\gamma+\delta}} N_0^{(1+\gamma)(\varepsilon-\mu)} 
\end{align*}
and so 
$
    N_1 \leq 2 N_0^{(1+\gamma+\delta)\frac{\varepsilon-\mu}{\varepsilon+\delta}}. 
$
Furthermore, the  left-hand side in \eqref{crecimientoN} can be computed in an analogous way.

In view of the three steps above,  the proof of the desired result with $n=0$ is finished. Next, we assume the validity of the result  for any positive integer that is less than or equal to $n-1$ and so we will prove that it also remains true for the following step $n$. 

      \item \textbf{Computing the velocity generated by  $\Theta_n$.} Recall that, by  \eqref{sumacapas}, 
      \begin{align*}
        \Theta_n = h_n\theta_n +\Theta_{n-1}
      \end{align*}
      and so
      \begin{align}\label{P2.8.4}
          v^\gamma(\Theta_n) =h_n v^\gamma(\theta_n) +v^\gamma(\Theta_{n-1}). 
      \end{align}
      On the one hand, the term $v^\gamma(\Theta_{n-1})$  evaluated at $t \geq t_{n+1}$ (and in particular, $t > t_n$) and $x \in \mathcal{S}_n(t)$ is given by the inductive hypothesis. In this step, we make use of the fact that the family $\{\mathcal{S}_n(t)\}$ is nested. In fact we will prove
      \begin{align}
      \label{soportesencajados}
            \mathcal{S}_n(t)\subset B_{0}(CN_n^{-1+2\frac{\varepsilon+\delta}{1+\gamma+\delta}}) \subset B_0((C' N_{n-1})^{-1})\subset\mathcal{S}_{n-1}(t), \qquad  \forall t\in[t_n,1], 
      \end{align}
      for every $n \in \mathbb{N}$, where $B_{0}(CN_n^{-1+2\frac{\varepsilon+\delta}{1+\gamma+\delta}})$ is the ball of radius  $CN_n^{-1+2\frac{\varepsilon+\delta}{1+\gamma+\delta}}$ centered at the origin.
      Indeed, given any $x \in \mathcal{S}_n(t)$, we can write $$x = a \mathbf{n}(\alpha_n(t)) + b \mathbf{n}(\alpha_n(t))^\perp$$ for certain  $a, b \in \mathbb{R}$. In particular, we have
      $$
            \mathbf{n}(\alpha_n(t)) \cdot x = a 
      $$
    and
    $$
        \mathbf{n}(\beta_n(t)) = \cos (\beta_n(t)-\alpha_n(t)) \mathbf{n}(\alpha_n(t)) + \sin (\beta_n(t)-\alpha_n(t)) \mathbf{n}(\alpha_n(t))^\perp. 
    $$
    From the assumption  $x \in \mathcal{S}_n(t)$, we derive that
    $$
        |a| \leq \frac{1}{r_n(t)} \qquad \text{and} \qquad |a \cos (\beta_n(t)-\alpha_n(t)) + b \sin (\beta_n(t)-\alpha_n(t)) | \leq \frac{1}{s_n(t)}
    $$
    and then
    \begin{align}\label{P4.8.1}
        |x|^2 &= a^2 + b^2   \leq 2 \bigg[ \frac{1}{\sin (\alpha_n(t)-\beta_n(t))} \bigg(\frac{1}{r_n(t)} + \frac{1}{s_n(t)} \bigg) \bigg]^2.
    \end{align}
    Moreover,  it follows from   \eqref{estimar} and \eqref{estimas} in the case $n-1$ that $r_n(t) \leq Cs_n(t)$ if $t \geq t_{n}$ (this assertion is guaranteed if $N_0$ is large enough). Putting this together with \eqref{P4.8.1}, we obtain 
      \begin{align*}
\sup_{x\in\mathcal{S}_{n}(t)}|x|
\le\frac{2\sqrt{2}}{r_{n}(t)\,|\sin(\alpha_{n}(t)-\beta_{n}(t))|},
\end{align*}
where $\sin(\alpha_n(t)-\beta_n(t)) \neq 0$ for all $t \geq t_n$, in fact, basic conservation laws yield that (recall also the initial conditions \eqref{ic})
\begin{equation}\label{P12.8.2}
r_{n}(t)s_{n}(t)\sin (\alpha_{n}(t)-\beta_{n}(t))
=r_{n}(t_{n})s_{n}(t_{n})
=N_{n}^{2 (1-\frac{\varepsilon+\delta}{1+\gamma+\delta})}. 
\end{equation}
As a consequence, if $t \geq t_{n}$ then  
\begin{align*}
\sup_{x\in\mathcal{S}_{n}(t)}|x| \leq 2 \sqrt{2} s_{n}(t) N_{n}^{-2 (1-\frac{\varepsilon+\delta}{1+\gamma+\delta})}\leq CN_{n}^{-1+2\frac{\varepsilon+\delta}{1+\gamma+\delta}},
\end{align*}
where we have also used that 
\begin{align*}
    s_{n}(t)\leq CN_{n},
\end{align*}
    which follows  from  the inductive hypothesis \eqref{estimas}  and \eqref{crecimientoN}. This proves the first inclusion in \eqref{soportesencajados}. 

      Regarding the second and third inclusions in \eqref{soportesencajados}, it is easy to see that if $|x|\le 1/s_{n-1}(t)$ then $x \in \mathcal{S}_{n-1}(t)$. Then, using again $s_{n-1}(t)\leq C'N_{n-1}$, it is enough to check that 
\begin{align}\label{P4.8.3}
     CN_n^{-1+2\frac{\varepsilon+\delta}{1+\gamma+\delta}}\leq (C' N_{n-1})^{-1}
\end{align}
in order to conclude that $B_{0}(CN_n^{-1+2\frac{\varepsilon+\delta}{1+\gamma+\delta}}) \subset B_0((C' N_{n-1})^{-1}) \subset  \mathcal{S}_{n-1}(t)$ (see \eqref{soportesencajados}). It remains to show \eqref{P4.8.3}: From \eqref{des} (see also Remark \ref{Remark25})
\begin{align*}
    N_{n}^{1-2\frac{\varepsilon+\delta}{1+\gamma+\delta}}\geq N_{n-1}^{(1+\frac{\gamma}{2}) (1-2\frac{\varepsilon+\delta}{1+\gamma+\delta})}\geq N_{n-1}^{1+\frac{\gamma}{8}}
\end{align*}
for the special choice of $\varepsilon$ given by  $\varepsilon=\frac{\gamma(1-\gamma)}{4(4+\gamma)}$. As an immediate consequence, \eqref{P4.8.3} is satisfied provided that $N_0$ is large enough.

      Next we turn our attention to the term  $v^\gamma(\theta_n)$ in \eqref{P2.8.4}.  
      It follows from \eqref{vlin} and \eqref{P2.8.1} that 
      \begin{align*}
          v^\gamma(\theta_n)=& \tilde{a}_n^{(2, 1)}  \left(\begin{matrix}
              -\frac{1}{2}\sin 2\beta_n&-\sin^2\beta_n\\
              \cos^2 \beta_n&\frac{1}{2}\sin 2\beta_n 
          \end{matrix}\right)x\\
          &+\left[\tilde{a}_n^{(1, 1)} \left(\begin{matrix}
              \cos 2\beta_n& \sin 2\beta_n \\
              \sin 2\beta_n  &-\cos 2 \beta_n
          \end{matrix}\right)
          +\tilde{a}_n^{(1, 2)} \left(\begin{matrix}
              -\frac{1}{2}\sin 2\beta_n &\cos^2\beta_n\\
              -\sin^2 \beta_n&\frac{1}{2}\sin 2\beta_n 
          \end{matrix}\right)\right]x
           \\&-\textbf{n}(\alpha_n)^\perp \frac{r_n}{2 s_n^{1-\gamma}}\sum_{l=0}^3  \tilde{b}_n^{(4-l, l)}    \left(\begin{matrix}
                    3\\ l
                \end{matrix}\right)
                \left(r_n \textbf{n}(\alpha_n) \cdot x\right)^{3-l} (s_n\textbf{n}(\beta_n)\cdot x)^l \\
&-\textbf{n}(\beta_n)^\perp \frac{s_n^\gamma}{2}\sum_{l=0}^3  \tilde{b}_n^{(3-l,l+1)}  \left(\begin{matrix}
                    3\\ l
                \end{matrix}\right)
                \left(r_n \textbf{n}(\alpha_n) \cdot x\right)^{3-l} (s_n\textbf{n}(\beta_n)\cdot x)^l.
      \end{align*}
    Inserting this into \eqref{P2.8.4}, and recalling that $h_n(t) = 1$ if $t \geq t_{n+1}$ (see \eqref{P2.8.3}), the desired formula \eqref{velocidadTheta} is achieved.    
    
    The involved coefficients $\tilde{a}_{j}^{(l,k)}$ and $\tilde{b}_{j}^{(l,k)}$ with $j < n$ can be estimated by using the induction hypothesis. Next we focus on the case $j=n$.  Assume that $t \geq t_{n+1}$. 
 Applying (\ref{estimaalpha}) and \eqref{estimabetatlargo} in the case $n-1$, it is easy to see that 
    \begin{align}\label{P5.8.3}
        |\alpha_n(t)-\beta_n(t)|\leq N_{n-1}^{-\varepsilon+\mu+\frac{\sigma}{10}}
    \end{align}
    
    Here, we are assuming that $N_0$ is large enough.

Suppose now that $t \in [t_n, 1]$. As a combination of (\ref{nuevaamplitud}) and (\ref{intamplitud}), 
$$
    \mathcal{C}_{n,1}(t)= \int_{t_{n}}^t \mathcal{C}_{n,0}^2s_n(\tau)^\gamma r_n(\tau)|\sin(\alpha_n-\beta_n)(\tau)| \, d\tau.    
$$
From \eqref{estimar}, \eqref{ic},     \eqref{estimaalpha}, \eqref{estimabeta}, \eqref{estimas} and \eqref{crecimientoN} for the case $n-1$, we have
   \begin{align*}
            \mathcal{C}_{n,1}(t) 
            &\leq C\mathcal{C}_{n,0}^2 N_n^{(1+\gamma)(1-\frac{\varepsilon+\delta}{1+\gamma+\delta})}\int_{t_{n}}^1 \left(\frac{s_n(\tau)}{s_n(t_n)}\right)^\gamma 
            \arctan \left(\frac{1}{N_{n-1}^\varepsilon (\tau-t_n)} \right)
             \, d \tau\\
           &\leq C\mathcal{C}_{n,0}^2 N_n^{(1+\gamma)(1-\frac{\varepsilon+\delta}{1+\gamma+\delta})} \left(N_{n-1}^{-\varepsilon}+
            \int_{t_{n}+N_{n-1}^{-\varepsilon}}^1 \left(\frac{s_n(\tau)}{s_n(t_n)}\right)^\gamma 
            \arctan \left(\frac{1}{N_{n-1}^\varepsilon (\tau-t_n)} \right) \, d\tau
            \right)\\
            &\leq C\mathcal{C}_{n,0}^2 N_n^{(1+\gamma)(1-\frac{\varepsilon+\delta}{1+\gamma+\delta})}
            \left(N_{n-1}^{-\varepsilon}+
            \int_{t_{n}+N_{n-1}^{-\varepsilon}}^1 \left(N_{n-1}^\varepsilon(\tau-t_n)\right)^{\gamma-1} \,  d\tau
            \right)\\
           &\leq C\mathcal{C}_{n,0}^2 N_n^{(1+\gamma) (1-\frac{\varepsilon+\delta}{1+\gamma+\delta})} N_{n-1}^{(\gamma-1)\varepsilon}
           =C\mathcal{C}_{n,0}N_n^{\varepsilon-\frac{1+\gamma}{1+\gamma+\delta}(\varepsilon+\delta)} N_{n-1}^{-(1-\gamma)(\varepsilon-\mu)}N_{n-1}^{-(1-\gamma)\mu}
           \\&\leq C \mathcal{C}_{n,0}N_{n}^{\varepsilon-(1+\gamma)\frac{\varepsilon+\delta}{1+\gamma+\delta}-(1-\gamma)\frac{\varepsilon+\delta}{1+\gamma+\delta}} N_{n-1}^{-(1-\gamma)\mu}
           \leq C\mathcal{C}_{n,0}N_{n}^{\varepsilon-2\frac{\varepsilon+\delta}{1+\gamma+\delta}}. 
        \end{align*}
        More generally, an analogous reasoning gives that
    \begin{align}\label{P5.8.1}
        \mathcal{C}_{n,i}(t)\leq C\mathcal{C}_{n,0}N_{n}^{i(\varepsilon-2\frac{\varepsilon+\delta}{1+\gamma+\delta})} \qquad  \text{if} \qquad  i \in \{ 0, \ldots, I_\gamma \}.
    \end{align}

    From   \eqref{estimar} and \eqref{estimastlargo} in the case $n-1$, we obtain
    \begin{align}\label{P5.8.2}
     \frac{r_n(t)}{s_n(t)}\leq  N_{n-1}^{-\varepsilon+\mu+\frac{\sigma}{10}} \qquad \text{if} \qquad t \geq t_{n+1}.
    \end{align}
    
    Using the definitions \eqref{coeficientesvelocidad}, \eqref{defatilde} and the estimates  (\ref{cotadb}), (\ref{estimastlargo}) (specifically, $s_n \leq C N_n^{1-\frac{\varepsilon + \delta}{1+\gamma +\delta}} N_{n-1}^{\varepsilon-\mu}$), \eqref{P5.8.3},  \eqref{P5.8.1} and \eqref{P5.8.2},  we get for every $t \in [t_{n+1},1]$  (to simplify the notation, we shall omit the variable $t$ in the forthcoming computations)
    \begin{align*}
|\tilde{a}_{n}^{(1,1)}|&=\left|s_n^{1+\gamma}\sum_{i=0}^{I_\gamma}
        \mathcal{C}_{n,i}\sin (\alpha_{n}-\beta_n) \left[\left(\frac{r_n}{s_n}\right)^2\frac{d_{n,i}^{(2,0)}}{\mathcal{C}_{n,i}}\cos (\alpha_n-\beta_n)+\frac{r_n}{s_n}\frac{d_{n,i}^{(1,1)}}{\mathcal{C}_{n,i}}\right]
        \right|\\
       & \leq C N_n^{1+\gamma-\frac{1+\gamma}{1+\gamma+\delta}(\varepsilon+\delta)}N_{n-1}^{(1+\gamma)(\varepsilon-\mu)} N_n^{-1-\gamma+\varepsilon} \left(N_{n-1}^{-\varepsilon+\mu+\frac{\sigma}{10}}\right)^2 =  C N_n^{\varepsilon-\frac{1+\gamma}{1+\gamma+\delta} (\varepsilon+\delta)}N_{n-1}^{-(1-\gamma)(\varepsilon-\mu)+\frac{\sigma}{5}}\\
        &\leq C N_n^{\varepsilon-\frac{1+\gamma}{1+\gamma+\delta} (\varepsilon+\delta)-\frac{1-\gamma}{1+\gamma+\delta}(\varepsilon+\delta)}N_n^{\frac{2\sigma}{5}}\leq N_n^{\varepsilon-\frac{2(\varepsilon+\delta)}{1+\gamma+\delta}+\sigma},
    \end{align*} 
    where the relation $N_n \leq C N_{n-1}^{(1+\gamma + \delta) \frac{\varepsilon-\mu}{\varepsilon + \delta}}$ (see \eqref{crecimientoN}) is also applied in the penultimate step. This gives the first estimate in \eqref{cotascoeficientestilde}. 
    
    In a similar fashion to the argument above for the coefficient $\tilde{a}_n^{(1, 1)}$, one can show that  
    \begin{align*}
        |\tilde{a}_n^{(1,2)}|=\left|
        s_n^{1+\gamma}\sum_{i=0}^{I_\gamma} \mathcal{C}_{n,i} \sin^2 (\alpha_n-\beta_n)\left(\frac{r_n}{s_n}\right)^2\frac{d^{(2,0)}_{n,i}}{\mathcal{C}_{n,i}}
        \right|\leq N_n^{\varepsilon-\frac{4(\varepsilon+\delta)}{1+\gamma+\delta}+\sigma}, \qquad  \forall t \in [t_{n+1},1],
    \end{align*}
    or in other words, the second estimate in \eqref{cotascoeficientestilde} is satisfied.

 Next we deal with  (\ref{coeficienteppal}). For the coefficient $a_{n,0}^{(2,1)}$, proceeding similarly as for  $a_{1,0}^{(2,1)}$ in \eqref{P5.8.4} and using \eqref{des}, we get that  
    \begin{align}
    \label{cotaan21}
       |a_{n,0}^{(2,1)}(t)-1| \leq N_{n-1}^{-\frac{(\varepsilon-\mu)}{5}\gamma+\frac{\sigma\gamma}{40}}\leq N_{n}^{-\frac{(\varepsilon-\mu)}{20}\gamma+\sigma}, \qquad  \forall t\in [t_{n+1},1].
    \end{align}
    In the case  $i\in \{1,\ldots, I_\gamma\}$, one can make  use of \eqref{cotadb} and \eqref{P5.8.2} to get
    \begin{align}\label{5.8.6}
        |a^{(2,1)}_{n,i}|=\left|
        \frac{d_{n,i}^{(0,2)}}{\mathcal{C}_{n,i}}+2\frac{r_n}{s_n}\frac{d_{n,i}^{(1,1)}}{\mathcal{C}_{n,i}}\cos (\alpha_n-\beta_n)+\left(\frac{r_n}{s_n}\right)^2\frac{d_{n,i}^{(2,0)}}{\mathcal{C}_{n,i}}\cos^2(\alpha_n-\beta_n)\right|\leq C.
    \end{align}
 We also apply \eqref{estimastlargo} in the $n-1$ case and the relation \eqref{des} that lead to
    \begin{align}\label{5.8.5}
        1 \leq \left(\frac{s_n(t)}{s_n(t_{n+1})}\right)^{1+\gamma}\leq \left(e^{C N_{n-1}^{\mu+\sigma} N_n^{-\mu}}\right)^{1+\gamma}\leq e^{(1+\gamma)N_n^{-\frac{\gamma\mu}{10}}}\leq 1+N_n^{-\frac{\gamma \mu}{20}}
        ,\qquad  \forall t\geq t_{n+1}.
    \end{align}
    Assume that $t\geq t_{n+1}$. According to \eqref{defatilde}, the monotonicity properties of $s_n$ on $[t_{n+1}, 1]$ given by the inductive hypothesis (see \eqref{estimastlargo}), \eqref{5.8.5}, \eqref{5.8.6},  \eqref{P5.8.1} and \eqref{cotaan21}: 
    \begin{align*}
    |\tilde{a}_n^{(2,1)}(t)-&s_n(t_{n+1})^{1+\gamma}\mathcal{C}_{n,0}a_{n,0}^{(2,1)}(t)|=
    \left|
s_n(t)^{1+\gamma}\sum_{i=0}^{I_\gamma} \mathcal{C}_{n,i}(t) a_{n,i}^{(2,1)}(t)-s_n(t_{n+1})^{1+\gamma}\mathcal{C}_{n,0}a_{n,0}^{(2,1)}(t)
    \right|
    \\
&\leq \left|\mathcal{C}_{n,0}a_{n,0}^{(2,1)}(t)s_n(t_{n+1})^{1+\gamma}\left(1-\left(\frac{s_n(t)}{s_{n}(t_{n+1})}\right)^{1+\gamma}\right)\right|+\left|
s_n(t)^{1+\gamma}\sum_{i=1}^{I_\gamma} \mathcal{C}_{n,i}(t) a_{n,i}^{(2,1)}(t)\right|\\
    & \leq N_n^{-\frac{\gamma\mu}{20}}\mathcal{C}_{n,0}a_{n,0}^{(2,1)}(t)s_n(t_{n+1})^{1+\gamma} + CN_n^{\varepsilon-2\frac{\varepsilon+\delta}{1+\gamma+\delta}} \mathcal{C}_{n, 0} s_n(t)^{1+\gamma} \\
    & \leq C N_n^{-\frac{\gamma\mu}{20}}\mathcal{C}_{n,0} s_n(t_{n+1})^{1+\gamma} + CN_n^{\varepsilon-2\frac{\varepsilon+\delta}{1+\gamma+\delta}} \mathcal{C}_{n, 0} s_n(t)^{1+\gamma}.
    \end{align*}
    Recalling the formula involving $N_n^{\varepsilon}$ (validated by the inductive hypothesis) given by \eqref{defN}, using the previous computation, \eqref{cotaan21} and the estimate $s_n(t)\leq CN_n$,
    \begin{align*}
     |\tilde{a}_n^{(2,1)}(t)-N_n^{\varepsilon}|&=
    |\tilde{a}_n^{(2,1)}(t)-s_n(t_{n+1})^{1+\gamma}\mathcal{C}_{n,0}a_{n,0}^{(2,1)}(t_{n+1})| \\
   & \leq  |\tilde{a}_n^{(2,1)}(t)-s_n(t_{n+1})^{1+\gamma}\mathcal{C}_{n,0}a_{n,0}^{(2,1)}(t)|
    +  \mathcal{C}_{n,0} s_n(t_{n+1})^{1+\gamma}|a_{n,0}^{(2,1)}(t)-a_{n,0}^{(2,1)}(t_{n+1})|
    \\& \leq  |\tilde{a}_n^{(2,1)}(t)-s_n(t_{n+1})^{1+\gamma}\mathcal{C}_{n,0}a_{n,0}^{(2,1)}(t)|\\
    &\hspace{5mm}+ \mathcal{C}_{n,0} s_n(t_{n+1})^{1+\gamma}\left(|a_{n,0}^{(2,1)}(t)-1|+|a_{n,0}^{(2,1)}(t_{n+1})-1|
    \right) \\
    & \leq C N_n^{-\frac{\gamma\mu}{20}}\mathcal{C}_{n,0} s_n(t_{n+1})^{1+\gamma} + CN_n^{\varepsilon-2\frac{\varepsilon+\delta}{1+\gamma+\delta}} \mathcal{C}_{n, 0} s_n(t)^{1+\gamma} + C \mathcal{C}_{n,0} s_n(t_{n+1})^{1+\gamma} N_{n}^{-\frac{(\varepsilon-\mu)}{20}\gamma+\sigma} \\
    & \leq C \mathcal{C}_{n, 0} N_n^{1+\gamma} (N_n^{-\frac{\gamma\mu}{20}} + N_n^{\varepsilon-2\frac{\varepsilon+\delta}{1+\gamma+\delta}} + N_{n}^{-\frac{(\varepsilon-\mu)}{20}\gamma+\sigma})  \leq N_n^{\varepsilon-3 \sigma}. 
    \end{align*}
    This provides the desired result \eqref{coeficienteppal}. 
    
    Observe that  \eqref{cotab} is a simple consequence of \eqref{P6.8.2}, \eqref{cotadb} and \eqref{P5.8.1}.

    \item \textbf{Dynamics of $\theta_{n+1}$.} The goal of this step is to show (\ref{estimaalpha})--\eqref{estimastlargo}. By \eqref{P2.8.3}, $h_j(t)=1$ for $t\geq t_{n+1}$ and $j\leq n$, so according to (\ref{velocidadTheta}) the linear part of the velocity generated by $\Theta_n$ is given by
    \begin{align*}
        \overline{v}^\gamma(\Theta_n)=&\sum_{j=0}^n 
\tilde{a}_j^{(2,1)}\left(\begin{matrix}
              -\frac{1}{2}\sin 2\beta_j&-\sin^2\beta_j\\
              \cos^2 \beta_j&\frac{1}{2}\sin 2\beta_j 
          \end{matrix}\right)x\\
          &+\sum_{j=0}^n \left[   \tilde{a}^{(1,1)}_j \left(\begin{matrix}
              \cos 2\beta_j& \sin 2\beta_j \\
              \sin 2\beta_j  &-\cos 2 \beta_j
          \end{matrix}\right)
          +\tilde{a}_j^{(1,2)} \left(\begin{matrix}
              -\frac{1}{2}\sin 2\beta_j &\cos^2\beta_j\\
              -\sin^2 \beta_j&\frac{1}{2}\sin 2\beta_j 
          \end{matrix}\right)\right]x,
    \end{align*}
    for any $t\in[t_{n+1},1]$; see also \eqref{P13.7.6}. Recall that the principal layer  $f_{n+1,0}$ is transported by $\overline{v}^\gamma(\Theta_n)$ (more precisely, see \eqref{transporteppal}). Then according to Lemma \ref{dinamica}, $\alpha_{n+1}$, $r_{n+1}$ verify \eqref{dinamicaalpha} and $\beta_{n+1}$, $s_{n+1}$ verify \eqref{dinamicabeta} with coefficients $\overline{a}_n^{(j,k)}$ given by \eqref{abarra} and initial conditions \eqref{ic}. Note that assumptions of Lemma \ref{lemaedos} are verified \footnote{Note that all terms in the sequence $\{N_n\}$ should satisfy \eqref{des}. However, at this point, we have only constructed $N_0, \ldots, N_n$ satisfying the stronger relations \eqref{crecimientoN} (see also Remark \ref{Remark25}), while the terms $N_j$ with $j > n$ remain unknown. This is merely a technical obstruction that can be easily overcome. Indeed,  we may apriori work  with any $\{N_n\}$ verifying \eqref{des} and then we will construct the sequence $\{N_n\}$ given by \eqref{defN} for which \eqref{des} holds.}: \eqref{cotascoeficientes} follows automatically from \eqref{cotascoeficientestilde} and \eqref{coeficienteppal}; while \eqref{betaanterior},  \eqref{alphaanterior} and \eqref{sestatico} are consequences of the inductive hypothesis applied to \eqref{estimabetatlargo}, \eqref{estimaalpha} and \eqref{estimastlargo}, respectively. Hence Lemma \ref{lemaedos} yields  \eqref{estimaalpha}, \eqref{estimar} and the second part of \eqref{estimastlargo}. To get \eqref{estimabeta} and \eqref{estimabetatlargo} with the help of \eqref{estimabeta2},  one may follow line by line the arguments carried out in the second step of this proof for the case $\beta_1 -\beta_0$.   We can proceed analogously as in the second step of this proof to achieve (\ref{estimas}), namely, it follows from \eqref{estimas2} and an application of the Mean Value Theorem. The behavior after $t_{n+2}$ of $s_{n+1}$ expressed in the first part of (\ref{estimastlargo}) is a consequence of (\ref{sestatico2}) and (\ref{estimas}). We leave further details to the reader.


    \item \textbf{Existence of $N_{n+1}$ defined by \eqref{defN} and relation between $N_{n}$ and $N_{n+1}$.} We shall follow the line of reasoning proposed in the third step of this proof related to $N_0$ and $N_1$. For convenience of the reader, we next sketch the proof.

    The coefficient $a_{n+1,0}^{(2,1)}$ is given by Lemma \ref{lemavelocidad}: 
    \begin{align*}
         a^{(2,1)}_{n+1,0}=&-\frac{1}{\mathcal{C}_{n+1,0}}\left[d^{(0,2)}_{n+1,0}+2\frac{r_{n+1}}{s_{n+1}}d^{(1,1)}_{n+1,0}\cos (\alpha_{n+1}-\beta_{n+1})\right. \\
         &\left. \hspace{1cm}+\left(\frac{r_{n+1}}{s_{n+1}}\right)^2d^{(2,0)}_{n+1,0}\cos^2(\alpha_{n+1}-\beta_{n+1})\right]. 
    \end{align*}
    Proceeding similarly as in \eqref{cotaan21} for $a_{n,0}^{(2,1)}$, we obtain 
    \begin{align*}
       |a_{n+1,0}^{(2,1)}(t)-1| \leq N_{n}^{-\frac{\varepsilon-\mu}{5}\gamma+\frac{\sigma\gamma}{40}}\leq N_{n+1}^{-\frac{\varepsilon-\mu}{20}\gamma+\sigma}, \qquad  \forall t\in [t_{n+2},1],
    \end{align*}
    where $N_{n+1}$ is any number satisfying \eqref{des}, and in particular 
    \begin{align}\label{P6.8.3}
        \left(\frac{9}{10}\right)^{(\varepsilon+\delta)\frac{1+\gamma}{1+\gamma+\delta}}\leq a_{n+1,0}^{(2,1)}(t_{n+2})\leq \left(\frac{11}{10}\right)^{(\varepsilon+\delta)\frac{1+\gamma}{1+\gamma+\delta}}
    \end{align}
    under the assumption that $N_0$ is large enough. 
    
    Notice that the behavior of  $s_{n+1}$ was already obtained in  (\ref{estimas}) and (\ref{estimastlargo}). Using that $N_0$ is large  enough, one can obtain (see \eqref{P24.7.2}) 
    \begin{align}\label{P6.8.4}
         \left(\frac{9}{10}\right)^{\frac{\varepsilon+\delta}{1+\gamma+\delta}} \, N_n^{\varepsilon-\mu}\leq \frac{s_{n+1}(t_{n+2})}{s_{n+1}(t_{n+1})}\leq \left(\frac{11}{10}\right)^{\frac{\varepsilon+\delta}{1+\gamma+\delta}} \,  N_n^{\varepsilon-\mu}.
    \end{align}
    
  As a byproduct of \eqref{P6.8.3} and \eqref{P6.8.4}, we get that (see \eqref{P24.7.3})
\begin{align}\label{P7.8.2}
\left(\frac{1}{2}\right)^{(\varepsilon+\delta)\frac{1+\gamma}{1+\gamma+\delta}}N_n^{(1+\gamma)(\varepsilon-\mu)}\leq 
    \left(\frac{s_{n+1}(t_{n+2})}{s_{n+1}(t_{n+1})}\right)^{1+\gamma}a_{n+1,0}^{(2,1)}(t_{n+2})\leq 2^{(\varepsilon+\delta)\frac{1+\gamma}{1+\gamma+\delta}}N_n^{(1+\gamma)(\varepsilon-\mu)}.
\end{align}
 Now, proceeding by the same Bolzano argument that we implemented in the initial case of the induction (see Step \ref{existenciaN1}), we can guarantee the existence of $N_{n+1}$ such that
 $$
        N_{n+1}^{\frac{1+\gamma}{1+\gamma+\delta} (\varepsilon+\delta)} = \left(\frac{s_{n+1}(t_{n+2})}{s_{n+1}(t_{n+1})}\right)^{1+\gamma}a_{n+1,0}^{(2,1)}(t_{n+2})
 $$
 or equivalently (see \eqref{ic} and \eqref{P7.8.1})
 $$
    \frac{N_{n+1}^\varepsilon}{s_{n+1}(t_{n+1})^{1+\gamma} \mathcal{C}_{n+1, 0}} = \left(\frac{s_{n+1}(t_{n+2})}{s_{n+1}(t_{n+1})}\right)^{1+\gamma}a_{n+1,0}^{(2,1)}(t_{n+2}).
 $$
 Accordingly, there exists $N_{n+1}$ such that  \eqref{defN} holds. In addition, using \eqref{P7.8.2} it is easy to check the validity of \eqref{crecimientoN}.
     \end{enumerate}
     \end{proof}

 \section{Control of the forces}\label{SF}

The goal of this section is to estimate the Sobolev norms in the well-posedness regime of the forces associated to the solution constructed in Section \ref{construccion}. Specifically, we wish to study the following forces 
 \begin{align}\label{P14.8.10}
     F_n:=h_n\left[
(v^\gamma-\overline{v}^\gamma)(\Theta_{n-1})\cdot \nabla \theta_n+v^\gamma(\theta_n)\cdot \nabla\Theta_{n-1}+h_n\sum_{(j_1,j_2)\in \mathcal{J}_{I_\gamma}} v^\gamma(f_{n,j_1})\cdot \nabla f_{n,j_2}
     \right]+h'_n \theta_n
 \end{align}
for  $n\in \mathbb{N}$. The zero term is  
 \begin{align}\label{P14.8.11}
     F_0:=v^\gamma(\theta_0)\cdot\nabla\theta_0,
 \end{align}
 which is clearly smooth.
 
 \begin{lma}
 \label{fuerzaanteriores}
 Let $\kappa \geq 0$. Then
     \begin{align}\label{P11.8.4}
         \|v^\gamma(\theta_{n})\cdot \nabla \Theta_{n-1}\|_{H^\kappa}\leq CN_{n-1}^{1+\varepsilon} N_n^{-3-\gamma+2\frac{\varepsilon+\delta}{1+\gamma+\delta}+\varepsilon+\kappa},         \qquad \forall t\in [t_{n},1].
     \end{align}    
 \end{lma}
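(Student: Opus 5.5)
We follow the heuristic \eqref{P17.8.1} of Section \ref{Sketch}, splitting $\mathbb{R}^2$ into a neighbourhood of $\operatorname{supp}\theta_n$ and its complement, and working first with $\kappa\in\mathbb{N}$ (non-integer $\kappa$ then follows by interpolation between $\lfloor\kappa\rfloor$ and $\lceil\kappa\rceil$, which is compatible with a bound that is a power of $N_n$ times a $\kappa$-independent factor). Write $\Theta_{n-1}=\sum_{j<n}h_j\theta_j$ and treat each $\theta_j$ separately; since $N_j$ grows superexponentially, the contributions are dominated by $j=n-1$, which produces the factor $N_{n-1}^{1+\varepsilon}$.

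\textbf{Near region.} By \eqref{soportesencajados}, $\mathcal{S}_n(t)\subset B_0((C'N_{n-1})^{-1})$, and a slightly larger ball $B_0(c N_{n-1}^{-1})$ still lies inside the region of $\mathcal{S}_{n-1}$ where the rescaled profile $\phi(r_{n-1}\mathbf n(\alpha_{n-1})\cdot x)\phi(s_{n-1}\mathbf n(\beta_{n-1})\cdot x)$ equals $\phi(0)^2$, because $\phi_0\equiv1$ on $(-1/10,1/10)$. For the principal part $f_{n-1,0}$ (and inductively each $f_{j,0}$ and, through \eqref{inttiempo}, the corrections $f_{j,i}$, which are time integrals of products of derivatives that vanish where the profile is locally constant) the gradient therefore vanishes identically on that ball. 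So $\nabla\Theta_{n-1}=0$ on a neighbourhood $U$ of $\operatorname{supp}\theta_n$ of size $\sim N_{n-1}^{-1}$, and the product vanishes there. I expect checking this flatness for the corrections $f_{j,i}$, $i\ge1$, to be the delicate point; if it fails, one instead bounds $|\nabla\theta_j(x)|\lesssim |x|\,\|D^2\theta_j\|_{L^\infty}$ on $U$ using evenness.

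\textbf{Far region.} On $\mathbb{R}^2\setminus U$ we have $\operatorname{dist}(x,\operatorname{supp}\theta_n)\gtrsim |x|$, so the kernel \eqref{velocidad} is smooth there and $|D^m v^\gamma(\theta_n)(x)|\lesssim_m \|\theta_n\|_{L^1}|x|^{-2-\gamma-m}$ (using the vanishing mean of $\nabla^\perp$, or directly differentiating the kernel). Here $\|\theta_n\|_{L^1}\lesssim \mathcal{C}_{n,0}|\mathcal{S}_n|\lesssim N_n^{-1-\gamma+\varepsilon}N_n^{-2+2\frac{\varepsilon+\delta}{1+\gamma+\delta}}$ by \eqref{P12.8.2} and \eqref{P5.8.1}. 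For $\nabla\Theta_{n-1}$, Property \ref{derivadas} together with \ref{amplitud} gives $\|D^m\nabla\theta_j\|_{L^\infty}\lesssim N_j^{-1-\gamma+\varepsilon}s_j^{m+1}\lesssim N_j^{\varepsilon+m}$, supported in $|x|\lesssim1$; moreover $|\nabla\theta_j(x)|\lesssim |x|\,\|D^2\theta_j\|_\infty$. By Leibniz, the $H^\kappa$ norm on the far region is bounded by sums of $\|D^{m}v\,D^{\kappa-m}\nabla\Theta_{n-1}\|_{L^2}$; integrating $|x|^{-2(2+\gamma+m)}$ over $|x|\gtrsim N_{n-1}^{-1}$ gives powers $N_{n-1}^{1+\gamma+m}$, and the worst term comes out as $\|\theta_n\|_{L^1}N_{n-1}^{1+\varepsilon}\cdot(\text{powers of }N_{n-1})$. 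Since $N_{n-1}\le N_n$, every power of $N_{n-1}$ beyond $N_{n-1}^{1+\varepsilon}$ can be traded for $N_n^{\kappa}$ (the displayed bound contains $N_n^{\kappa}$ precisely to absorb all derivative losses), giving $CN_{n-1}^{1+\varepsilon}N_n^{-3-\gamma+2\frac{\varepsilon+\delta}{1+\gamma+\delta}+\varepsilon+\kappa}$. Summing over $j<n$ uses \eqref{P30.6.3}.

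The main obstacle is the near-region cancellation, i.e.\ certifying that $\nabla\Theta_{n-1}$ vanishes (or is $O(|x|)$-small) on a neighbourhood of $\mathcal{S}_n(t)$ uniformly for $t\in[t_n,1]$, for all layers $j<n$ including corrections; the far-region estimate is routine bookkeeping.
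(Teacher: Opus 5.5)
Your argument is essentially the paper's proof. Its ingredients are: $\nabla\Theta_{n-1}\equiv 0$ on $B_0((10C'N_{n-1})^{-1})$, which contains $\text{supp}\,\theta_n$ with room to spare; far-field decay $|D^m v^\gamma(\theta_n)(x)|\lesssim \|\theta_n\|_{L^1}|x|^{-2-\gamma-m}$; the factor $|x|$ from oddness of $\nabla\theta_j$; integration over $|x|\gtrsim N_{n-1}^{-1}$; and interpolation in $\kappa$. The flatness of the corrections holds exactly as you indicate. This is \eqref{P11.8.1}: the data are zero, the forcing contains the factor $\nabla f_{k,j_2}$ (which vanishes on $\mathcal{S}_k(\tau)/10$ by induction), and the linear flow $X_\tau\circ X_t^{-1}$ maps $\mathcal{S}_k(t)/10$ onto $\mathcal{S}_k(\tau)/10$. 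So you do not need the fallback. It would not reach the stated exponent anyway, because near $\text{supp}\,\theta_n$ there is no decay of $v^\gamma(\theta_n)$ to exploit.

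One bookkeeping step must be repaired for the factor $N_{n-1}^{1+\varepsilon}$ to come out. Property \ref{amplitud} gives $\|D^m\nabla\theta_j\|_{L^\infty}\lesssim N_j^{-1-\gamma+\varepsilon}N_j^{m+1}=N_j^{m-\gamma+\varepsilon}$, not $N_j^{m+\varepsilon}$. With your version, every far-region term is $\|\theta_n\|_{L^1}N_{n-1}^{1+\varepsilon}\cdot N_{n-1}^{\gamma+\kappa}$. The surplus $N_{n-1}^{\gamma}$ is not a derivative loss, so it cannot be traded for $N_n^{\kappa}$; at $\kappa=0$ it simply survives.

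With the correct amplitude, the undifferentiated factor satisfies $\|\nabla\theta_j\|_{L^\infty}\lesssim N_j^{\varepsilon-\gamma}$, a negative power. So in the terms where all $\kappa$ derivatives hit $v^\gamma(\theta_n)$, you must use $|\nabla\theta_j(x)|\lesssim |x|N_j^{1-\gamma+\varepsilon}$. Alternatively, use that $\nabla\theta_j$ vanishes on $B_0(cN_j^{-1})\subset\mathcal{S}_j(t)/10$ and integrate only over $|x|\gtrsim N_j^{-1}$. Otherwise the sum over $j$ is dominated by $j=0$ and produces $N_0^{\varepsilon-\gamma}N_{n-1}^{1+\gamma+\kappa}$.

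With that fix:
\begin{itemize}
\item the terms with no derivative on $\nabla\theta_j$ give $\|\theta_n\|_{L^1}N_j^{1-\gamma+\varepsilon}N_{n-1}^{\gamma+\kappa}$;
\item the terms with $\ell\geq 1$ derivatives on $\nabla\theta_j$ give $\|\theta_n\|_{L^1}N_j^{\ell-\gamma+\varepsilon}N_{n-1}^{1+\gamma+\kappa-\ell}$, with a positive power of $N_j$.
\end{itemize}
Summing over $j\le n-1$ yields $\|\theta_n\|_{L^1}N_{n-1}^{1+\varepsilon+\kappa}\le \|\theta_n\|_{L^1}N_{n-1}^{1+\varepsilon}N_n^{\kappa}$.

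You differentiate the kernel, paying $|x|^{-1}\lesssim N_{n-1}$ per derivative. The paper instead writes $D^m v^\gamma(f_{n,i})=v^\gamma(D^m f_{n,i})$ and pays $N_n$ per derivative. So your bound is in fact slightly sharper than the stated one.
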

 \begin{proof}
     First, let us prove that 
     \begin{align}
     \label{centroplano}
         \nabla \Theta_{n-1}(x,t)=0, \qquad  \forall x \in B_0((10C'N_{n-1})^{-1}),
     \end{align}
     for any $t\in[t_n,1]$, where $C'>0$ is the same constant as in  \eqref{soportesencajados}. Let $k\in\{1,\ldots ,n-1\}$. Since $\phi$ is constant on the interval $(-1/10,1/10)$ and $f_{k,0}$ solves \eqref{transporteppal}, then $f_{k,0}$ is also constant on 
     \begin{align*}
        \mathcal{S}_{k}(t)/10:= \{x\in\mathbb{R}^2 :  10x\in\mathcal{S}_{k}(t)\};
     \end{align*}
     see also \eqref{P10.8.1}. 
     Clearly, 
     \begin{align*}
         \nabla f_{k,0}(x,t)=0, \qquad  \forall x\in\mathcal{S}_{k}(t)/10
     \end{align*}
     and, in addition, this also holds with $k=0$ (see \eqref{P13.7.1}). 
   As a consequence, since $f_{k,1}$ solves \eqref{defpert} with zero initial condition and a force that vanishes on $\mathcal{S}_k(t)/10$ then 
   \begin{align*}
         f_{k,1}(x,t)=\nabla f_{k,1}(x,t)=0, \qquad  \forall x\in\mathcal{S}_{k}(t)/10.
     \end{align*}
     In addition, the same reasoning can be done with the rest of corrections $f_{k, j}$ and, by an inductive argument, one has that
     \begin{align}\label{P11.8.1}
         f_{k,j}(x,t)=\nabla f_{k,j}(x,t)=0, \qquad  \forall x\in\mathcal{S}_{k}(t)/10,
     \end{align}
     for $j=1,\ldots ,I_\gamma$. With this, using that the supports are nested in the sense of \eqref{soportesencajados}, one concludes \eqref{centroplano} (see also \eqref{deftheta} and \eqref{sumacapas}).

     On the other hand, it is clear that 
     $$
        \nabla \Theta_{n-1}(x, t) =0, \qquad \forall x \not \in \mathcal{S}_0, 
     $$
     for any $t \in [t_n, 1]$.  

    It remains to study $v^\gamma(\theta_{n})\cdot \nabla \Theta_{n-1}$ in $\mathcal{S}_{0}\setminus B_0((10C'N_{n-1})^{-1})$. Recall that $f_{k,j}$ generates a sequence of derivatives, so we put $f_{k,j}^{(0,0)} = f_{k,j}$, and $\text{supp } f_{k, j}^{(0, 0)} (\cdot, t) \subset \mathcal{S}_k(t) \subset B_0(C N_k^{-1+2 \frac{\varepsilon + \delta}{1+\gamma + \delta}})$ (see \eqref{soportesencajados}).  Let $x\in \mathbb{R}^2\setminus B_0((10C'N_{n-1})^{-1})$. In particular, $|y| \leq |x|/2$ if $y \in \text{supp } f_{n, i}^{(0, 0)} (\cdot, t)$. Then
	\begin{align}\label{P11.8.2}
		| v^\gamma(f_{n,i}^{(0,0)})(x,t)| &\leq C\int_{\text{supp} f_{n, i}^{(0, 0)} (\cdot, t)}\frac{|f_{n,i}^{(0,0)}(y,t)|}{|x-y|^{2+\gamma}} \, dy 
        \leq C \, \frac{\mathcal{C}_{n,i}(t) N_{n}^{-2(1-\frac{\varepsilon+\delta}{1+\gamma+\delta})}}{|x|^{2+\gamma}}.
	\end{align}
    	
        Using that $r_k(t) \leq C s_k(t) \leq C N_k$ if $t \geq t_k$,  
        there exist $\eta_1, \eta_2\in \mathbb{R}^2$ such that
	\begin{align}
		|\nabla f_{k,j}^{(0,0)}(x,t)|
		&\leq r_k(t)|f_{k,j}^{(1,0)}(x, t)|+s_k(t)|f_{k,j}^{(0,1)}(x, t)| \nonumber \\
        &\leq C N_k(|\nabla f_{k,j}^{(1,0)}(\eta_1,t)|+|\nabla f_{k,j}^{(0,1)}(\eta_2,t)|)|x| \nonumber
		\\
        &\leq C\mathcal{C}_{k,j}(t) N_k^{2}|x|\ind_{\mathbb{R}^2\setminus(\mathcal{S}_k(t)/10)}, \qquad  \forall x\in \mathbb{R}^2, \label{P11.8.3}
	\end{align}
	where we have applied that $f^{(l,m)}_{k,j}(0,t)=0$ for $l+m>0$ and $t\in[t_k,1]$ (see \ref{derivadas} in Definition \ref{sod}) in the penultimate step and \eqref{P11.8.1} and \ref{amplitud} from Definition \ref{sod} in the ultimate step.

    As a combination of \eqref{P11.8.2} and \eqref{P11.8.3}, together with \eqref{P7.8.1} and \eqref{P5.8.1}, we get
	\begin{align*}
		\|v^\gamma(f_{n,i}^{(0,0)})(\cdot,t)\cdot \nabla f_{k,j}^{(0,0)}(\cdot,t)\|_{L^2} 
		& \leq C \mathcal{C}_{k,j}(t) \mathcal{C}_{n,i}(t) N_k^2 N_n^{-2(1-\frac{\varepsilon+\delta}{1+\gamma+\delta})}\left(\int_{|x|\geq (10C'N_{n-1})^{-1}}\frac{dx}{|x|^{2+2\gamma}}\right)^{\frac{1}{2}}
		\\&\leq C N_{k}^{1-\gamma+\varepsilon} N_n^{-3-\gamma+\varepsilon+2\frac{\varepsilon+\delta}{1+\gamma+\delta}}N_{n-1}^\gamma.
	\end{align*} 
    This leads to 
    \begin{align*}
         \|v^\gamma(\theta_{n})\cdot \nabla \Theta_{n-1}\|_{L^2}\leq C N_{n-1}^\gamma N_n^{-3-\gamma+\varepsilon+2\frac{\varepsilon+\delta}{1+\gamma+\delta}} \sum_{k=0}^{n-1} N_k^{1-\gamma+\varepsilon}  \leq C N_{n-1}^{1+\varepsilon}  N_n^{-3-\gamma+\varepsilon+2\frac{\varepsilon+\delta}{1+\gamma+\delta}},   
    \end{align*}
    that is, \eqref{P11.8.4} is satisfied with $\kappa =0$.

 Next, we focus on the $\dot{H}^\kappa$ semi-norm with $\kappa \in \mathbb{N}$ of the functions $v^\gamma(f_{n, i}^{(0, 0)}) \cdot \nabla f_{k, j}^{(0, 0)}$. Keeping  in mind that $f_{k,j}^{(0,0)}$ generates a sequence of derivatives, we observe that  any partial derivative of order $\kappa$ of $\nabla f_{k,j}^{(0,0)}$ consists of a finite sum of functions vanishing on $\mathcal{S}_k(t)/10$ (see \eqref{P11.8.1}), which have the same principal frequencies $r_k, s_k$ and amplitudes  $N_k^\kappa \mathcal{C}_{k, j}$. Furthermore, given any multi-index $(\kappa_1, \kappa_2) \in \mathbb{N}_0^2$,  we have that $D^{(\kappa_1, \kappa_2)} v^\gamma(f_{n, i}^{(0, 0)}) = v^\gamma (D^{(\kappa_1, \kappa_2)} f_{n, i}^{(0, 0)})$, where the amplitudes of  $D^{(\kappa_1, \kappa_2)} f_{n, i}^{(0, 0)}$ are $N_n^{\kappa_1 + \kappa_2}\mathcal{C}_{n, i}$. Hence, for each  $(\kappa_1, \kappa_2) \in \mathbb{N}_0^2$ with $\kappa_1 + \kappa_2 = \kappa$,  by the same reasoning as applied above for the case $\kappa=0$ it is easy to see that 
        \begin{align*}
            \big|D^{(\kappa_1,\kappa_2)} \big(v^\gamma(f_{n,i}^{(0,0)})\cdot \nabla f_{k,j}^{(0,0)} \big)(x, t) \big|\leq C \mathcal{C}_{k,0} \mathcal{C}_{n,0} N_k^2 N_n^{-2(1-\frac{\varepsilon+\delta}{1+\gamma+\delta})}\frac{1}{|x|^{1+\gamma}}N_n^\kappa \ind_{\mathbb{R}^2\setminus(\mathcal{S}_k(t)/10)}.
        \end{align*}
As an immediate consequence, we derive 
\begin{align*}
    \|v^\gamma(\theta_{n})\cdot \nabla \Theta_{n-1}\|_{\dot{H}^{\kappa}} \leq  C N_{n-1}^{1+\varepsilon} N_n^{-3-\gamma+\varepsilon+2\frac{\varepsilon+\delta}{1+\gamma+\delta} + \kappa}
\end{align*}
and so
\begin{align*}
    \|v^\gamma(\theta_{n})\cdot \nabla \Theta_{n-1}\|_{H^{\kappa}} \leq C ( \|v^\gamma(\theta_{n})\cdot \nabla \Theta_{n-1}\|_{L^2} + \|v^\gamma(\theta_{n})\cdot \nabla \Theta_{n-1}\|_{\dot{H}^{\kappa}}) \leq  C N_{n-1}^{1+\varepsilon} N_n^{-3-\gamma+\varepsilon+2\frac{\varepsilon+\delta}{1+\gamma+\delta} + \kappa}.
\end{align*}
This proves the desired result \eqref{P11.8.4} in the case $\kappa \in \mathbb{N}$. 

    The non-integer case  $\kappa>0$ can easily be obtained from the integer case via classical interpolation. Further details are left to the interested reader. 
 \end{proof}
 
To estimate the Sobolev norms of the term $(v^\gamma-\overline{v}^\gamma)(\Theta_{n-1}) \cdot \nabla \theta_n$ in $F_n$, we will make use of the following auxiliary result. 
 
 \begin{lma}
 \label{corolariocubico}
      Let $\{f^{(j,k)}\}$ be a sequence of derivatives with amplitude $\mathcal{C}(t)$, principal directions $(\alpha(t),\beta(t))$ and principal frequencies $(r(t),s(t))$ such that $r(t)\leq s(t)$. Let $(\kappa_1, \kappa_2) \in \mathbb{N}_0^2$ with $\kappa_1 +\kappa_2 \leq 3$. Then
    \begin{align}
    \label{derivadasvelocidad}
               | D^{(\kappa_1, \kappa_2) } (v^\gamma-\overline{v}^\gamma)(f^{(0,0)})(x,t)|\leq C\mathcal{C}(t)s(t)^{3+\gamma}\left(s(t)^{\kappa_1+\kappa_2}|x|^3+|x|^{3-\kappa_1-\kappa_2} \right)
    \end{align}
    for all $x\in \mathcal{S}(\alpha(t),\beta(t),r(t),s(t)).$
     \end{lma}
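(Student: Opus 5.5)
Start from the explicit cubic remainder in Lemma \ref{lemavelocidad}: on $\mathcal{S}(\alpha,\beta,r,s)$,
\[
(v^\gamma-\overline{v}^\gamma)(f^{(0,0)})(x,t)=-\textbf{n}(\alpha)^\perp\frac{r}{2s^{1-\gamma}}\sum_{l=0}^3 b^{(4-l,l)}(x,t)\,P_l(x)-\textbf{n}(\beta)^\perp\frac{s^\gamma}{2}\sum_{l=0}^3 b^{(3-l,l+1)}(x,t)\,P_l(x),
\]
where $P_l(x)=\binom{3}{l}(r\textbf{n}(\alpha)\cdot x)^{3-l}(s\textbf{n}(\beta)\cdot x)^l$. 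Each term is a product of a coefficient $b^{(j,k)}(x,t)$ and a homogeneous cubic polynomial $P_l$. We apply $D^{(\kappa_1,\kappa_2)}$ by the Leibniz rule, splitting the derivatives as $m$ on $b$ and $\kappa-m$ on $P_l$, where $\kappa=\kappa_1+\kappa_2\le 3$.

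The derivatives of the polynomial are elementary. Since $r\le s$, each coefficient of $P_l$ is bounded by $Cs^3$, so $|D^{\nu}P_l(x)|\le Cs^3|x|^{3-|\nu|}$. The prefactors satisfy $r s^{\gamma-1}\le s^\gamma$ and $s^\gamma$, so every term carries at most $s^{3+\gamma}$ in total. When all derivatives fall on $P_l$, the bound \eqref{cotadb} gives $|b|\lesssim\mathcal{C}$, and we obtain $C\mathcal{C}s^{3+\gamma}|x|^{3-\kappa}$, which is the second term in \eqref{derivadasvelocidad}.

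The main step is bounding the spatial derivatives of $b^{(j,k)}(x,t)$. From \eqref{Defbjk}, $x$ enters only through $\tilde f^{(j,k)}(\eta sR_{-\beta}x\pm y)$. Differentiating $m$ times in $x$ produces factors $\eta sR_{-\beta}$ together with derivatives of $\tilde f^{(j,k)}$. By the aligned relation \eqref{derivadasaligned}, these derivatives are combinations of $\tilde f^{(j',k')}$ with coefficients bounded by $\max(r/s,1)\le1$. This yields $|D^m b^{(j,k)}(x,t)|\lesssim s^m\,\mathcal{C}\int_{0\le y_1\le2}|y|^{-1-\gamma}\,dy\lesssim \mathcal{C}s^m$. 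Here we reuse the estimate \eqref{P11.8.5}, since the shifted support of $\tilde f^{(j',k')}$ still lies in the strip $|y_1|\le 2$ for $x\in\mathcal{S}$.

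Combining, the mixed terms are bounded by $C\mathcal{C}s^{3+\gamma}s^m|x|^{3-\kappa+m}$ for $0\le m\le\kappa$. To place these between the two extreme terms, note that on $\mathcal{S}$ we have $|x|\lesssim 1/(r|\sin(\alpha-\beta)|)$, which is not necessarily $\le 1/s$. So the interpolation must run differently: the quantity $(s|x|)^m|x|^{3-\kappa}$ is a geometric interpolation between $|x|^{3-\kappa}$ (at $m=0$) and $s^{\kappa}|x|^3$ (at $m=\kappa$). Hence it is bounded by their sum, namely $\max\{1,(s|x|)^\kappa\}|x|^{3-\kappa}$, which is exactly the right-hand side of \eqref{derivadasvelocidad}.

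The only delicate point is the bookkeeping of the derivative of $b$, in particular checking that differentiating under the $\eta,y$ integrals is legitimate. This follows from the smoothness and compact support of $\tilde f^{(j,k)}$ together with the local integrability of $|y|^{-1-\gamma}$.
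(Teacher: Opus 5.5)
Your proposal is correct and follows the same route as the paper. You write the remainder using the cubic formula of Lemma \ref{lemavelocidad}, bound $|D^m b^{(j,k)}|\lesssim \mathcal{C}s^m$ by extending \eqref{P11.8.5} (each derivative costs a factor $s$ via \eqref{derivadasaligned}, using $r\le s$), and conclude with the Leibniz rule. Your bound of the mixed terms $\mathcal{C}s^{3+\gamma}(s|x|)^m|x|^{3-\kappa_1-\kappa_2}$, for $0\le m\le \kappa_1+\kappa_2$, by $\mathcal{C}s^{3+\gamma}\bigl(|x|^{3-\kappa_1-\kappa_2}+s^{\kappa_1+\kappa_2}|x|^3\bigr)$ is exactly the step the paper leaves implicit.
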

    \begin{proof}
        In view of  Lemma \ref{lemavelocidad}, we can express 
            \begin{align*}
                (v^\gamma-\overline{v}^\gamma)(f^{(0,0)})(x,t) &=        -\textbf{n}(\alpha)^\perp \frac{r}{2 s^{1-\gamma}} \sum_{l=0}^3b^{(4-l,l)}\left(\begin{matrix}
                    3\\ l
                \end{matrix}\right)
                \left(r \textbf{n}(\alpha) \cdot x\right)^{3-l} (s\textbf{n}(\beta)\cdot x)^l \\
& \hspace{1cm}- \textbf{n}(\beta)^\perp  \frac{s^\gamma}{2}\sum_{l=0}^3b^{(3-l,l+1)}\left(\begin{matrix}
                    3\\ l
                \end{matrix}\right)
                \left(r \textbf{n}(\alpha) \cdot x\right)^{3-l} (s\textbf{n}(\beta)\cdot x)^l, 
            \end{align*}
            where $|D^{(\kappa_1, \kappa_2)}b^{(j, k)}| \lesssim_{\gamma, j, k} \mathcal{C} s^{\kappa_1 + \kappa_2}$ (which is a simple extension of the computation exhibited in \eqref{P11.8.5} for the case $\kappa_1 + \kappa_2 =0$). Then a simple application of the Leibniz rule gives \eqref{derivadasvelocidad}.  
    \end{proof}

 \begin{lma} \label{fuerzacubico}
 Let $0\leq \kappa\leq 3$. Then 
     \begin{align}\label{P13.8.5}
         \|(v^\gamma-\overline{v}^\gamma)(\Theta_{n-1})\cdot \nabla \theta_n\|_{H^\kappa}\leq CN_{n-1}^{2+\varepsilon}N_n^{-4-\gamma+\kappa+7\frac{\varepsilon+\delta}{1+\gamma+\delta}+\varepsilon}, 
         \qquad  \forall t\in [t_n,1]
     \end{align}
     \end{lma}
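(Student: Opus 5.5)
The plan is to reduce the estimate to a pointwise bound on the support of $\theta_n$, where everything is explicit, and then integrate over that support. By \eqref{deftheta} and Definition \ref{sod}, $\nabla\theta_n$ and all its derivatives are supported in $\mathcal{S}_n(t)$. By \eqref{soportesencajados}, $\mathcal{S}_n(t)\subset B_0(CN_n^{-1+2\frac{\varepsilon+\delta}{1+\gamma+\delta}})\subset \mathcal{S}_k(t)$ for every $k\le n-1$. So on the relevant region the cubic remainder of each earlier layer is described by Lemma \ref{lemavelocidad}.

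Since $h_j\equiv 1$ on $[t_{j+1},1]$, we have $(v^\gamma-\overline{v}^\gamma)(\Theta_{n-1})=\sum_{k=0}^{n-1}\sum_i h_k(v^\gamma-\overline{v}^\gamma)(f_{k,i})$ for $t\ge t_n$. Here $h_k=1$ for $k\le n-2$, and $h_{n-1}\le 1$ near $t_n$ (if needed, one also observes that for $t<t_n$ nothing is required). For each $k$ we apply Lemma \ref{corolariocubico} with $r_k\le Cs_k$; if this fails by a constant, the same proof works. Together with $s_k\le CN_k$ and the amplitude bounds \eqref{P7.8.1} and \eqref{P5.8.1}, this gives, for $x\in\mathcal{S}_n(t)$ and $|\kappa'|\le 3$,
\[
|D^{\kappa'}(v^\gamma-\overline{v}^\gamma)(\Theta_{n-1})(x,t)|\le C\sum_{k\le n-1}N_k^{-1-\gamma+\varepsilon}N_k^{3+\gamma}\bigl(N_k^{|\kappa'|}|x|^3+|x|^{3-|\kappa'|}\bigr).
\]
Since $|x|\le CN_n^{-1+2\frac{\varepsilon+\delta}{1+\gamma+\delta}}\le N_k^{-1}$, the first term dominates the second up to constants. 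Summing the superexponential sequence as in \eqref{P30.6.3} yields the bound $CN_{n-1}^{2+\varepsilon+|\kappa'|}|x|^3\le CN_{n-1}^{2+\varepsilon+|\kappa'|}N_n^{-3+6\frac{\varepsilon+\delta}{1+\gamma+\delta}}$.

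Next, the derivatives of $\nabla\theta_n$. Using \ref{derivadas} repeatedly, $D^{\kappa''}\nabla f_{n,i}$ is a finite sum of $f_{n,i}^{(l,m)}$ times products of $r_n$ and $s_n$. Its $L^\infty$ norm is therefore at most $C\mathcal{C}_{n,0}s_n^{1+|\kappa''|}\le CN_n^{-1-\gamma+\varepsilon}N_n^{1+|\kappa''|}$. By \eqref{P12.8.2}, the support area is $|\mathcal{S}_n|\simeq (r_ns_n|\sin(\alpha_n-\beta_n)|)^{-1}=N_n^{-2+2\frac{\varepsilon+\delta}{1+\gamma+\delta}}$, which contributes $N_n^{-1+\frac{\varepsilon+\delta}{1+\gamma+\delta}}$ after taking the square root.

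For integer $\kappa\le 3$, we apply the Leibniz rule to $D^{(\kappa_1,\kappa_2)}$ of the product and bound each term in $L^2(\mathcal{S}_n)$ by $L^\infty$ times $|\mathcal{S}_n|^{1/2}$. The worst case is when all derivatives fall on $\theta_n$, because $N_n\gg N_{n-1}$; there $N_{n-1}^{|\kappa'|}\le N_n^{|\kappa'|}$. The product is then
\[
CN_{n-1}^{2+\varepsilon}N_n^{-3+6\frac{\varepsilon+\delta}{1+\gamma+\delta}}\cdot N_n^{-\gamma+\varepsilon+\kappa}\cdot N_n^{-1+\frac{\varepsilon+\delta}{1+\gamma+\delta}},
\]
which is exactly \eqref{P13.8.5}. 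The $L^2$ part is the case $\kappa=0$. Non-integer $\kappa\in(0,3)$ follows by interpolation between consecutive integers, as in Lemma \ref{fuerzaanteriores}.

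The main obstacle is the pointwise control of the remainder for $k=n-1$. There the ratio $r_{n-1}/s_{n-1}$ and the geometry are only controlled after $t_n$, and Lemma \ref{corolariocubico} requires $r\le s$. I would handle this by using \eqref{estimar} and \eqref{estimas} to get $r_k\le Cs_k$ on $[t_k,1]$, and by checking that Lemma \ref{corolariocubico}, through the bound $|D b^{(j,k)}|\lesssim \mathcal{C}s^{|\kappa'|}$, tolerates a constant factor in that inequality.
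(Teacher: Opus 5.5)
\textbf{Verdict: same approach as the paper, with one wrong intermediate step that is easily repaired; the lemma's bound still follows.}

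Your strategy matches the paper's: Leibniz rule on $\mathcal{S}_n(t)$, Lemma \ref{corolariocubico} applied layer by layer, $L^\infty$ bounds times $|\mathcal{S}_n|^{1/2}$, then interpolation.

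\textbf{The error.} The pointwise bound for the derivatives of the remainder is wrong as written. On $\mathcal{S}_n(t)$ you have $N_k|x|\le 1$, hence
\[
N_k^{|\kappa'|}|x|^3=(N_k|x|)^{|\kappa'|}\,|x|^{3-|\kappa'|}\le |x|^{3-|\kappa'|}.
\]
So it is the \emph{second} term in \eqref{derivadasvelocidad} that dominates, not the first. Your bound $CN_{n-1}^{2+\varepsilon+|\kappa'|}|x|^3$ therefore does not follow, and it is false in general. For $|\kappa'|=3$ it would force the third derivatives of the cubic remainder to vanish at $x=0\in\mathcal{S}_n(t)$. But there they are multiples of the cubic Taylor coefficients, which are generically nonzero. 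The correct bound is
\[
|D^{\kappa'}(v^\gamma-\overline{v}^\gamma)(\Theta_{n-1})(x,t)|\le CN_{n-1}^{2+\varepsilon}|x|^{3-|\kappa'|}\le CN_{n-1}^{2+\varepsilon}N_n^{(3-|\kappa'|)\left(-1+2\frac{\varepsilon+\delta}{1+\gamma+\delta}\right)}.
\]
Thus each derivative placed on the remainder costs $N_n^{1-2\frac{\varepsilon+\delta}{1+\gamma+\delta}}$, not $N_{n-1}$.

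\textbf{Why the conclusion survives.} The worst Leibniz term is still the one with all derivatives on $\nabla\theta_n$, but for a different reason than you gave. The bound for the term with $m$ derivatives on the remainder is the bound for the $m=0$ term times
\[
N_n^{m(1-2\frac{\varepsilon+\delta}{1+\gamma+\delta})}\,N_n^{-m}=N_n^{-2m\frac{\varepsilon+\delta}{1+\gamma+\delta}}\le 1.
\]
This is exactly the paper's bookkeeping. It bounds $s_k^m|x|^3+|x|^{3-m}$ on $\mathcal{S}_n$ by $N_n^{3(-1+2\frac{\varepsilon+\delta}{1+\gamma+\delta})}\bigl(N_k^m+N_n^{m(1-2\frac{\varepsilon+\delta}{1+\gamma+\delta})}\bigr)$, and uses \eqref{P4.8.3} to absorb $N_k^m$ into the second summand. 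With this repair your final display, and hence \eqref{P13.8.5}, is correct.

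\textbf{The obstacle you flag is not one.} For $t\ge t_n\ge t_{k+1}$ and $k\ge 1$, \eqref{estimar} and \eqref{estimastlargo} give $s_k/r_k\gtrsim N_{k-1}^{\varepsilon-\mu}$; for $k=0$, $r_0=1\le N_0=s_0$. So $r_k\le s_k$ holds outright and Lemma \ref{corolariocubico} applies exactly as stated. Also $h_{n-1}\equiv 1$ on $[t_n,1]$ by \eqref{P2.8.3}, so no cutoff issue arises.
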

     \begin{proof} 
Recall from \eqref{soportesencajados} that 
$$\text{supp } f_{n,i} (\cdot, t) \subset   \mathcal{S}_n(t)\subset B_{0}(CN_n^{-1+2\frac{\varepsilon+\delta}{1+\gamma+\delta}})$$
and
$$
    \mathcal{S}_n(t) \subset \mathcal{S}_k(t)
$$
for all $t\in[t_n,1]$ and $k\in \{0, \ldots ,n-1\}$. Once again, recall that the functions $f_{n,i}$ generate a sequence of derivatives and let  $f_{n,i}^{(0,0)}=f_{n,i}$. Let $(\kappa_1,\kappa_2)\in  \mathbb{N}_0^2$ be such that $\kappa_1+\kappa_2\leq 3$. Invoking Lemma \ref{corolariocubico}, together with the facts  $\mathcal{C}_{k, i} \lesssim N_k^{-1-\gamma + \varepsilon}, \, s_k \lesssim N_k$ and \eqref{P4.8.3},  we infer that  
\begin{align*}
    \|D^{(\kappa_1,\kappa_2)}&\left((v^\gamma-\overline{v}^\gamma)(f_{k,i}^{(0,0)})\cdot \nabla f_{n,j}^{(0,0)}\right)\|_{L^2}\leq |\mathcal{S}_n|^{\frac{1}{2}}\sum_{m=0}^{\kappa_1+\kappa_2} \|(v^\gamma-\overline{v}^\gamma)(f_{k,i}^{(0,0)})\|_{C^m(\mathcal{S}_n)} \|f_{n,j}^{(0,0)}\|_{C^{1+\kappa_1+\kappa_2-m}(\mathcal{S}_n)}\\
    &\leq C (r_n s_n \sin (\alpha_n-\beta_n))^{-\frac{1}{2}} \sum_{m=0}^{\kappa_1+\kappa_2} \mathcal{C}_{k,i}s_k^{3+\gamma}\sup_{x\in \mathcal{S}_n}\left(s_k^{m}|x|^3+|x|^{3-m} \right) \mathcal{C}_{n,j} s_n^{1+\kappa_1+\kappa_2-m}\\
     &\leq CN_{n}^{-1+\frac{\varepsilon+\delta}{1+\gamma+\delta}} \sum_{m=0}^{\kappa_1+\kappa_2} N_k^{-1-\gamma+\varepsilon} N_k^{3+\gamma}  N_n^{3(-1+2 \frac{\varepsilon + \delta}{1+\gamma+\delta})} \Big(N_k^m  + N_n^{-m(-1+2 \frac{\varepsilon + \delta}{1+\gamma+\delta})} \Big) N_n^{-1-\gamma+\varepsilon}N_n^{1+\kappa_1+\kappa_2-m}
    \\
    & \leq C N_n^{-4-\gamma+\kappa_1+\kappa_2+7\frac{\varepsilon+\delta}{1+\gamma+\delta}+\varepsilon}   N_k^{2+\varepsilon}  \sum_{m=0}^{\kappa_1+\kappa_2}  N_n^{-m(-1+2 \frac{\varepsilon + \delta}{1+\gamma+\delta})} N_n^{-m} \leq CN_n^{-4-\gamma+\kappa_1+\kappa_2+7\frac{\varepsilon+\delta}{1+\gamma+\delta}+\varepsilon}N_k^{2+\varepsilon}.
\end{align*}
Summing the last expression over all indices  $0 \leq k\leq n-1$ and $i, j =0, \ldots, I_\gamma$,  we arrive at
$$
    \|(v^\gamma-\overline{v}^\gamma)(\Theta_{n-1}) \cdot \nabla \theta_n\|_{H^\kappa} \leq C N_{n-1}^{2+\varepsilon} N_n^{-4-\gamma+\kappa+7\frac{\varepsilon+\delta}{1+\gamma+\delta}+\varepsilon}.
$$
Finally, interpolating one gets the result for non-integers $0\leq \kappa\leq 3$.
     \end{proof}

In the next step we estimate the Sobolev norm of the self-interaction term in $F_n$.

\begin{lma}\label{fuerzapert} Let $0\leq \kappa\leq 3$. There exists $I_\gamma \in \mathbb{N}$ (depending only on $\gamma$) such that  
    \begin{align}\label{P13.8.2}
        \left\|
\sum_{(j_1,j_2)\in \mathcal{J}_{I_\gamma}} v^\gamma(f_{n,j_1})\cdot \nabla f_{n,j_2}
        \right\|_{H^\kappa}\leq N_n^{-2-\gamma+\kappa-\delta},\qquad  \forall t\in[t_n,1].
    \end{align}
    \end{lma}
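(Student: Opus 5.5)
The plan is to treat the sum $g_n:=\sum_{(j_1,j_2)\in\mathcal{J}_{I_\gamma}}v^\gamma(f_{n,j_1})\cdot\nabla f_{n,j_2}$ as a single sequence of derivatives via Lemma \ref{nuevaperturbacion}. We apply that lemma with $I=I_\gamma$, $\mathcal{J}=\mathcal{J}_{I_\gamma}$, $f_l=f_{n,l}$, and the common principal directions $(\alpha_n,\beta_n)$ and frequencies $(r_n,s_n)$. This gives $g_n=g^{(0,0)}$ for a sequence of derivatives $\{g^{(j,k)}\}$ with
\[
\mathcal{C}_g=\max_{\mathcal{J}_{I_\gamma}}\mathcal{C}_{n,j_1}\mathcal{C}_{n,j_2}\,r_n s_n^\gamma|\sin(\alpha_n-\beta_n)|,
\]
supported in $\mathcal{S}_n(t)$.

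The Sobolev norm is then bounded in three ingredients, first for integer $\kappa\le3$:
\begin{itemize}
\item \emph{Support.} By \eqref{P12.8.2}, $|\mathcal{S}_n(t)|\lesssim(r_ns_n|\sin(\alpha_n-\beta_n)|)^{-1}=N_n^{-2(1-\frac{\varepsilon+\delta}{1+\gamma+\delta})}$.
\item \emph{Derivatives.} Each derivative costs at most $\max(r_n,s_n)\le CN_n$, using property \ref{derivadas}, $r_n\le Cs_n$ and $s_n\le CN_n$.
\item \emph{Amplitude.} Property \ref{amplitud} bounds the sup norm by $\mathcal{C}_g$.
\end{itemize}
Together these give
\[
\|g_n\|_{H^\kappa}\lesssim N_n^{\kappa}\,\mathcal{C}_g\,N_n^{-1+\frac{\varepsilon+\delta}{1+\gamma+\delta}}.
\]
The non-integer case follows by interpolation, as in Lemmas \ref{fuerzaanteriores} and \ref{fuerzacubico}.

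The core step is estimating $\mathcal{C}_g$, and the main obstacle is getting a \emph{time-uniform} amplitude gain. By \eqref{P5.8.1}, $\mathcal{C}_{n,i}\le C\mathcal{C}_{n,0}N_n^{i\lambda}$ with $\lambda:=\varepsilon-2\frac{\varepsilon+\delta}{1+\gamma+\delta}<0$. Pairs in $\mathcal{J}_{I_\gamma}$ have $\max(j_1,j_2)=I_\gamma$, so every product carries the gain $N_n^{I_\gamma\lambda}$. For the remaining factor $r_ns_n^\gamma|\sin(\alpha_n-\beta_n)|$, I would rewrite it by \eqref{P12.8.2} as $N_n^{2(1-\frac{\varepsilon+\delta}{1+\gamma+\delta})}s_n^{\gamma-1}$. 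Since $s_n$ is nondecreasing and $\gamma<1$, this is at most $N_n^{(1+\gamma)(1-\frac{\varepsilon+\delta}{1+\gamma+\delta})}$ uniformly on $[t_n,1]$. This avoids integrating in time, which \eqref{P5.8.1} already absorbed. Hence
\[
\mathcal{C}_g\le C\,N_n^{2(-1-\gamma+\varepsilon)}N_n^{(1+\gamma)(1-\frac{\varepsilon+\delta}{1+\gamma+\delta})}N_n^{I_\gamma\lambda},
\]
and altogether
\[
\|g_n\|_{H^\kappa}\le C N_n^{\kappa-2-\gamma+2\varepsilon-\frac{\varepsilon+\delta}{1+\gamma+\delta}\gamma+I_\gamma\lambda}.
\]

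It remains to choose $I_\gamma$ so that $2\varepsilon-\gamma\frac{\varepsilon+\delta}{1+\gamma+\delta}+I_\gamma\lambda\le-2\delta$; the extra $-\delta$ absorbs $C$ via Remark \ref{Remark7}. Under \eqref{parametros1}, $|\lambda|\ge\frac{2\delta}{1+\gamma+\delta}+\varepsilon\frac{1-\gamma-\delta}{1+\gamma+\delta}$, which is bounded below by a positive quantity. For the choice $\varepsilon=\frac{\gamma(1-\gamma)}{4(4+\gamma)}$, both this lower bound and the ratio to $\varepsilon$ depend only on $\gamma$. Therefore $I_\gamma\approx\lceil 3\varepsilon/|\lambda|\rceil$ suffices; it is finite because $1+\gamma+\delta<2$, and it depends only on $\gamma$.

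The delicate points are two. First, confirming that the constant in \eqref{P5.8.1} is uniform for $i\le I_\gamma$; this is a finite induction using Lemmas \ref{nuevaperturbacion} and \ref{nuevaperturbacion2} together with the cardinality of $\mathcal{J}_i$. Second, checking that the time-uniform bound on $r_ns_n^\gamma|\sin|$ is compatible with those amplitude bounds near $t_n$.
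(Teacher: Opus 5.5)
Your argument is correct and is essentially the paper's proof. The paper writes out $v^\gamma(f_{n,j_1})\cdot\nabla f_{n,j_2}=r_ns_n\sin(\alpha_n-\beta_n)\big(f^{(0,1)}_{n,j_2}(-\Delta)^{-\frac{1-\gamma}{2}}f^{(1,0)}_{n,j_1}-f^{(1,0)}_{n,j_2}(-\Delta)^{-\frac{1-\gamma}{2}}f^{(0,1)}_{n,j_1}\big)$ explicitly and bounds each $(-\Delta)^{-\frac{1-\gamma}{2}}f^{(l,m)}_{n,j}$ by $Cs_n^{\gamma-1}\mathcal{C}_{n,j}$; this is the content of Lemma \ref{nuevaperturbacion}, which you invoke instead. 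Both routes reach the same exponent $-2-\gamma+\kappa+2\varepsilon-\gamma\frac{\varepsilon+\delta}{1+\gamma+\delta}-I_\gamma\big(2\frac{\varepsilon+\delta}{1+\gamma+\delta}-\varepsilon\big)$ and the same choice \eqref{P12.8.4} of $I_\gamma$.

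One small correction: the paper establishes monotonicity of $s_n$ only on $[t_{n+1},1]$, via \eqref{estimastlargo}. On $[t_n,t_{n+1}]$ you should instead use $s_n(t)\ge\frac12 s_n(t_n)$, which follows from \eqref{estimas} and \eqref{ic}, as the paper does. This changes only constants.
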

    \begin{proof}
        The perturbations $f_{n,j}$ with $j=1, \ldots, I_\gamma$ are defined by (\ref{defpert}). Following  Lemmas \ref{nuevaperturbacion} and \ref{nuevaperturbacion2}, each $f_{n,j}$ generates a sequence of derivatives with principal frequencies $(r_n,s_n)$, principal directions $(\alpha_n,\beta_n)$ and amplitude $\mathcal{C}_{n,j}$. As usual, we let $f_{n,j}^{(0,0)} = f_{n,j}$.  Furthermore, elementary computations show that 
        \begin{align*}
            v^\gamma(f_{n,j_1}^{(0,0)})\cdot \nabla f_{n,j_2}^{(0,0)}=r_n s_n \sin(\alpha_n-\beta_n)
             \left(f_{n,j_2}^{(0,1)} (-\Delta)^{-\frac{1-\gamma}{2}}f_{n,j_1}^{(1,0)}
             -f_{n,j_2}^{(1,0)} (-\Delta)^{-\frac{1-\gamma}{2}}f_{n,j_1}^{(0,1)}
             \right).
        \end{align*}
        
        Recall that, by \eqref{P12.8.1}, \eqref{P5.8.1} and \eqref{P7.8.1},  for each $j=0,\ldots, I_\gamma$, 
          \begin{align*}
            |(-\Delta)^{-\frac{1-\gamma}{2}}f_{n,j}^{(l,m)}|\leq C s^{-1+\gamma}_n \mathcal{C}_{n,j},
        \end{align*}
        \begin{align*}
            \mathcal{C}_{n,j}\leq C \mathcal{C}_{n,0}N_{n}^{-j (2\frac{\varepsilon+\delta}{1+\gamma+\delta}-\varepsilon)} = C N_n^{-1-\gamma+\varepsilon} N_{n}^{-j (2\frac{\varepsilon+\delta}{1+\gamma+\delta}-\varepsilon)},   
        \end{align*}
        and that the support of $f_{n,j}$ is a subset of $\mathcal{S}_n$ with (see \eqref{P12.8.2})
        \begin{align}\label{P12.8.5}
        |\mathcal{S}_n| =4 (r_n s_n |\sin(\alpha_n-\beta_n)|)^{-1} = N_n^{-2(1-\frac{\varepsilon+\delta}{1+\gamma+\delta})}.
        \end{align}
        Then
        \begin{align*}
            \| v^\gamma(f_{n,j_1}^{(0,0)})\cdot \nabla f_{n,j_2}^{(0,0)}\|_{L^2} &\leq  r_ns_n\sin(\alpha_n-\beta_n)
            \left(
        \|f_{n,j_2}^{(0,1)} (-\Delta)^{-\frac{1-\gamma}{2}}f_{n,j_1}^{(1,0)}\|_{L^2} + \|f_{n,j_2}^{(1,0)} (-\Delta)^{-\frac{1-\gamma}{2}}f_{n,j_1}^{(0,1)}\|_{L^2}
            \right) \\
           & \leq C  s_n^{\gamma-1} (r_n s_n \sin(\alpha_n-\beta_n)) |\mathcal{S}_n|^{\frac{1}{2}}\mathcal{C}_{n,j_1} \mathcal{C}_{n,j_2} \\
           &  \leq C N_n^{(\gamma-1)(1-\frac{\varepsilon+\delta}{1+\gamma+\delta})} N_n^{1-\frac{\varepsilon+\delta}{1+\gamma+\delta}}  \mathcal{C}_{n,j_1} \mathcal{C}_{n,j_2}\\
           & \leq CN_n^{-2-\gamma +2\varepsilon-\gamma \frac{\varepsilon+\delta}{1+\gamma+\delta}-(j_1+j_2)(2\frac{\varepsilon+\delta}{1+\gamma+\delta}-\varepsilon)},
        \end{align*}
        where we have also used that $s_n(t) \geq C s_n(t_n) = C N_n^{1-\frac{\varepsilon+\delta}{1+\gamma+\delta}}$ (see \eqref{estimas} and \eqref{ic}).

Repeating the reasoning employed in the proof of Lemma \ref{fuerzaanteriores} (or in other words, for each derivative we pay an extra factor of order $N_n$) and noting  that $j_1+j_2\geq I_\gamma$ (since $(j_1, j_2) \in \mathcal{J}_{I_\gamma}$), the previous computations can be extended to obtain  
\begin{align}\label{P12.8.3}
    \| v^\gamma(f_{n,j_1}^{(0,0)})\cdot \nabla f_{n,j_2}^{(0,0)}\|_{H^\kappa}\leq CN_n^{-2-\gamma+\kappa +2\varepsilon-\gamma \frac{\varepsilon+\delta}{1+\gamma+\delta}-I_\gamma (2\frac{\varepsilon+\delta}{1+\gamma+\delta}-\varepsilon )}.
\end{align}
Let us choose $I_\gamma \in \mathbb{N}$ large enough such that the following inequality obeys
\begin{align}\label{P12.8.4}
    2\varepsilon-\gamma\frac{\varepsilon+\delta}{1+\gamma+\delta}-I_\gamma\left(2\frac{\varepsilon+\delta}{1+\gamma+\delta}-\varepsilon\right)<-2\delta. 
\end{align}
Hence, by  \eqref{P12.8.3} and \eqref{P12.8.4}, 
\begin{align*}
      \| v^\gamma(f_{n,j_1}^{(0,0)})\cdot \nabla f_{n,j_2}^{(0,0)}\|_{H^\kappa}\leq CN_n^{-2-\gamma+\kappa -2 \delta}
\end{align*}
and so
\begin{align*}
     \left\|
\sum_{(j_1,j_2)\in \mathcal{J}_{I_\gamma}} v^\gamma(f_{n,j_1})\cdot \nabla f_{n,j_2}
        \right\|_{H^\kappa} \leq C  N_n^{-2-\gamma + \kappa -2 \delta}  \leq  N_n^{-2-\gamma + \kappa - \delta},
\end{align*}
where the last step is justified provided that $N_0$ is sufficiently large (depending on $\gamma$). 
    \end{proof}

\begin{rmk}
    In view of \eqref{P12.8.4}, the constant $I_\gamma$ depends on $\gamma, \varepsilon$ and $\delta$. However, as we will see in Section \ref{SectionResult} below,  the parameters $\varepsilon$ and $\delta$ will only depend on $\gamma$ in an explicit way.    
\end{rmk}

\begin{lma}\label{fuerzaactivacion}
Let $0\leq \kappa \leq 3$
Then
\begin{align}\label{P13.8.1}
    \int_{t_n}^1h_n'(t)\|\theta_n(\cdot,t)\|_{H^\kappa} \, dt \leq CN_n^{-2-\gamma+\kappa+\varepsilon+(1-\kappa)\frac{\varepsilon+\delta}{1+\gamma+\delta}},
\end{align}
where $h_n$ satisfies \eqref{P2.8.3}. 
\end{lma}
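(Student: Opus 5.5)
The plan is to use that $h_n'$ is supported in $[t_n,t_n+N_{n-1}^{-10\varepsilon}]$ with $\int h_n'=1$, so the left side of \eqref{P13.8.1} is at most $\sup_{t\in[t_n,t_n+N_{n-1}^{-10\varepsilon}]}\|\theta_n(\cdot,t)\|_{H^\kappa}$. The proof therefore reduces to a uniform bound for $\|\theta_n\|_{H^\kappa}$ on this short activation window, during which the layer has barely been sheared.

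\textbf{Geometry on the window.} For such $t$ we have $N_{n-1}^{\varepsilon}(t-t_n)\le N_{n-1}^{-9\varepsilon}\le 1$. By \eqref{estimas} and \eqref{estimar} (case $n-1$ of Lemma \ref{Lemma6}), both $r_n(t)$ and $s_n(t)$ are comparable to $N_n^{1-\frac{\varepsilon+\delta}{1+\gamma+\delta}}$. By \eqref{estimabeta} and \eqref{estimaalpha}, $|\alpha_n-\beta_n|$ stays within $O(N_{n-1}^{-\varepsilon})$ of $\pi/2$, so $|\sin(\alpha_n-\beta_n)|\ge 1/2$. Hence, by \eqref{P12.8.5}, $|\mathcal{S}_n|\simeq N_n^{-2(1-\frac{\varepsilon+\delta}{1+\gamma+\delta})}$.

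\textbf{Bound for integer $\kappa\in\{0,1,2,3\}$.} Each $f_{n,i}$ generates a sequence of derivatives with amplitude $\mathcal{C}_{n,i}\le C N_n^{-1-\gamma+\varepsilon}$, by \eqref{P5.8.1} and \eqref{P7.8.1}. Iterating Property \ref{derivadas}, any derivative $D^{(\kappa_1,\kappa_2)}f_{n,i}$ with $\kappa_1+\kappa_2=\kappa$ is a finite combination of terms $f_{n,i}^{(l,m)}$ times products of $r_n\textbf{n}(\alpha_n)$ and $s_n\textbf{n}(\beta_n)$. Each such term is bounded in $L^\infty$ by $C\mathcal{C}_{n,i}\,(N_n^{1-\frac{\varepsilon+\delta}{1+\gamma+\delta}})^{\kappa}$ and is supported in $\mathcal{S}_n$. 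Therefore
\[
\|\theta_n(\cdot,t)\|_{\dot H^\kappa}\le C N_n^{-1-\gamma+\varepsilon}N_n^{\kappa(1-\frac{\varepsilon+\delta}{1+\gamma+\delta})}N_n^{-(1-\frac{\varepsilon+\delta}{1+\gamma+\delta})}=CN_n^{-2-\gamma+\kappa+\varepsilon+(1-\kappa)\frac{\varepsilon+\delta}{1+\gamma+\delta}}.
\]
This exponent is nondecreasing in $\kappa$ because $1-\frac{\varepsilon+\delta}{1+\gamma+\delta}>0$. Consequently the $L^2$ part of the norm is dominated by the $\dot H^\kappa$ part, and the full $H^\kappa$ bound holds.

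\textbf{Non-integer $\kappa\in[0,3]$.} Interpolate between the integer cases, using $\|f\|_{H^\kappa}\le\|f\|_{H^{\lfloor\kappa\rfloor}}^{1-\vartheta}\|f\|_{H^{\lceil\kappa\rceil}}^{\vartheta}$ with $\vartheta=\kappa-\lfloor\kappa\rfloor$. The exponent in the bound above is affine in $\kappa$, so the interpolated estimate reproduces it exactly.

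The only point requiring care is checking that the window $[t_n,t_n+N_{n-1}^{-10\varepsilon}]$ lies before any significant growth of $s_n$ or rotation of $\beta_n$. This follows directly from \eqref{estimas} and \eqref{estimabeta}, since $N_{n-1}^{\varepsilon}(t-t_n)\ll1$ there.
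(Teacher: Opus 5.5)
Your proposal is correct and follows essentially the same route as the paper: on the activation window $[t_n,t_n+N_{n-1}^{-10\varepsilon}]$ you bound $\|\theta_n\|_{H^\kappa}$ by $C|\mathcal{S}_n|^{1/2}\mathcal{C}_{n,0}s_n^\kappa$, using $s_n\lesssim N_n^{1-\frac{\varepsilon+\delta}{1+\gamma+\delta}}$ from \eqref{estimas}; you then interpolate for non-integer $\kappa$ and integrate against $h_n'$. One harmless imprecision: \eqref{estimabeta} only places $\alpha_n-\beta_n$ within $O(N_{n-1}^{-2\sigma})$ of $\pi/2$, not within $O(N_{n-1}^{-\varepsilon})$, but you do not need the angle at all, since \eqref{P12.8.2} makes $|\mathcal{S}_n|$ constant in time.
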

    \begin{proof}
         In light of \eqref{estimas} and \eqref{ic}, we can estimate $s_n$ for $t \in [t_n,t_n+N_{n-1}^{-10\varepsilon}]$ as follows
        $$
            s_n (t) \leq C N_n^{1-\frac{\varepsilon+\delta}{1+\gamma+\delta}}(1+ N_{n-1}^\varepsilon (t-t_n)) \leq C N_n^{1-\frac{\varepsilon+\delta}{1+\gamma+\delta}}. 
        $$
        Assume first that $\kappa \in \mathbb{N}_0$. 
        Then, by \eqref{P12.8.5} and \eqref{P7.8.1}, we have
        \begin{align*}
            \|\theta_n(\cdot,t)\|_{H^\kappa}&\leq C |\text{supp } \theta_n|^{\frac{1}{2}} \|\theta_n\|_{C^\kappa}\leq CN_n^{-1+\frac{\varepsilon+\delta}{1+\gamma+\delta}} \mathcal{C}_{n, 0} s_n^\kappa \leq  CN_n^{-1+\frac{\varepsilon+\delta}{1+\gamma+\delta}}N_n^{-1-\gamma+\varepsilon}N_n^{(1-\frac{\varepsilon+\delta}{1+\gamma+\delta})\kappa} \\
            &=CN_n^{-2-\gamma+\kappa+\varepsilon+(1-\kappa)\frac{\varepsilon+\delta}{1+\gamma+\delta}}, \qquad  \forall t\in[t_n,t_n+N_{n-1}^{-10\varepsilon}]. 
        \end{align*}
        Moreover, this estimate can be extended to arbitrary $\kappa \geq 0$ by a simple interpolation argument. 
        Now, using the properties of $h_n$ stated in \eqref{P2.8.3}, we get
        \begin{align*}
\int_{t_n}^{1} h_n'(t)\|\theta_n(\cdot,t)\|_{H^\kappa} \, dt\leq CN_n^{-2-\gamma+\kappa+\varepsilon+(1-\kappa)\frac{\varepsilon+\delta}{1+\gamma+\delta}}\int_{t_n}^{t_n+N_{n-1}^{-10\varepsilon}}h_n'(t) \, dt\leq CN_n^{-2-\gamma+\kappa+\varepsilon+(1-\kappa)\frac{\varepsilon+\delta}{1+\gamma+\delta}}.
        \end{align*}
    \end{proof}

\section{The result}\label{SectionResult}
We are ready to prove the main result of this paper, namely, Theorem \ref{teorema}. For the sake of clarity,  we repeat its precise   statement below.

\begin{teorema*}
Let $\gamma\in(0,1)$. There exist
\begin{equation}\label{P13.8.4}
\kappa_0>2+\gamma+\frac{\gamma^2(1-\gamma)}{25(4+\gamma)},
\end{equation}
a scalar
\[
\theta:\mathbb{R}^2\times[0,1)\longrightarrow\mathbb{R},
\]
and a force
\[
F:\mathbb{R}^2\times[0,1)\longrightarrow\mathbb{R},
\]
with the following properties:
\begin{enumerate}
\item For every $T\in(0,1)$,
\[
\theta,\,F\in C^{\infty}\!\big([0,T];C^{\infty}_c(\mathbb{R}^2)\big),
\]
and $\theta$ is a classical solution of the forced $\gamma$-gSQG equation on
$[0,1)$: 
\[
\partial_t\theta(x,t)+v^{\gamma}(\theta)(x,t)\cdot\nabla\theta(x,t)=F(x,t)
\]
for every $(x,t)\in\mathbb{R}^2\times[0,1)$.

\item The force $F$ remains in the Sobolev well-posedness class up to and including
the singular time:
\[
F\in L^1\!\big([0,1];H^{\kappa}(\mathbb{R}^2)\big)
\quad\text{for every} \quad \kappa\in[2+\gamma,\kappa_0].
\]

\item For every $\kappa\in[2+\gamma,\kappa_0]$,
\[
\lim_{T\nearrow 1}\int_{0}^{T}\|\theta(\cdot,t)\|_{H^{\kappa}}\,dt=\infty.
\]
\item For every $\kappa\in\big[0,2+\gamma-\frac{\gamma(1-\gamma)}{2(4+\gamma)}\big]$,
\begin{align*}
    \sup_{0\leq t< 1} \|\theta(\cdot,t)\|_{H^\kappa}<\infty.
\end{align*}
\end{enumerate}
\end{teorema*}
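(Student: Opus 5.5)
We fix the parameters explicitly in terms of $\gamma$: $\varepsilon=\frac{\gamma(1-\gamma)}{4(4+\gamma)}$ (the value already used after \eqref{P4.8.3}), then $\delta$, $\mu$, $\sigma$ satisfying \eqref{parametros1}, with $\delta$ as large as \eqref{parametros1} allows, e.g.\ $\delta$ slightly below $\gamma\varepsilon/5$. We take $I_\gamma$ from Lemma \ref{fuerzapert} and $N_0$ large. Lemma \ref{Lemma6} then gives the sequence $\{N_n\}$ with growth \eqref{crecimientoN}. We define $\theta=\sum_n h_n\theta_n$ and $F=\sum_n F_n$ with $F_n$ as in \eqref{P14.8.10}--\eqref{P14.8.11}.

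\textbf{Item (1).} On $[0,T]$ with $T<1$ only finitely many $h_n$ are nonzero, because $t_n\nearrow1$, so every sum is finite. The $f_{n,i}$ are smooth and compactly supported, since they are built by Lemmas \ref{dinamica}, \ref{nuevaperturbacion} and \ref{nuevaperturbacion2}. The identity $\partial_t\Theta_n+v^\gamma(\Theta_n)\cdot\nabla\Theta_n=\sum_{j\le n}F_j$ on $[0,t_{n+1})$ follows by induction. The inductive step expands $v^\gamma(\Theta_{n-1}+h_n\theta_n)\cdot\nabla(\Theta_{n-1}+h_n\theta_n)$ and uses \eqref{transporteppal} and \eqref{defpert}. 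In that expansion the self-interaction sums over $\mathcal{J}_0,\dots,\mathcal{J}_{I_\gamma-1}$ telescope against the forcing of the corrections, leaving only $\mathcal{J}_{I_\gamma}$. We also need $h_n=1$ wherever $h_n^2\neq h_n$ matters; this requires $h_n\in\{0,1\}$ off the activation window, and otherwise the factor $h_n^2-h_n$ must be absorbed into the force. Smoothness in time of $\theta$ and $F$ is inherited from the ODEs.

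\textbf{Item (2).} We sum the four force bounds: Lemma \ref{fuerzaanteriores}, Lemma \ref{fuerzacubico}, Lemma \ref{fuerzapert} (integrated over a time interval of length at most one) and Lemma \ref{fuerzaactivacion}. Using $N_{n-1}\le N_n^{1/(1+\gamma/2)}$ from \eqref{des}, each bound becomes $N_n^{-c}$ for $\kappa\le\kappa_0$, and the series is then summable because $N_n$ grows superexponentially. The exponent conditions are as follows.
\begin{itemize}
\item Activation term: $\kappa<2+\gamma+\delta$, up to $O(\varepsilon^2)$ corrections.
\item Self-interaction term: $\kappa<2+\gamma+\delta$.
\item Transport of previous layers: $-3-\gamma+\kappa+\frac{1+\varepsilon}{1+\gamma/2}+O(\varepsilon)<0$, which holds near $\kappa=2+\gamma$ because $\frac{1}{1+\gamma/2}<1$ leaves a margin of about $\gamma/(2+\gamma)$.
\item Cubic transport term: $-4-\gamma+\kappa+\frac{2+\varepsilon}{1+\gamma/2}+7(\varepsilon+\delta)+\varepsilon<0$. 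At $\kappa=2+\gamma$ this reads $\frac{2}{1+\gamma/2}-2+O(\varepsilon)=-\frac{\gamma}{1+\gamma/2}+O(\varepsilon)<0$.
\end{itemize}
This bookkeeping with the explicit $\varepsilon,\delta$ is what produces a $\kappa_0$ exceeding $2+\gamma+\frac{\gamma^2(1-\gamma)}{25(4+\gamma)}$: roughly $\kappa_0=2+\gamma+\delta-$(small), with $\delta\approx\gamma\varepsilon/5=\frac{\gamma^2(1-\gamma)}{20(4+\gamma)}$. Checking that all four exponents are negative with these specific numbers is the main obstacle, and we would first verify the binding constraint, which appears to be the activation/self-interaction one.

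\textbf{Items (3) and (4).} For the lower bound we use that on $[t_{n+1},1]$ the layer $f_{n,0}$ has $s_n\approx N_n^{1-\frac{\varepsilon+\delta}{1+\gamma+\delta}}N_{n-1}^{\varepsilon-\mu}\approx N_n$ by \eqref{estimastlargo} and \eqref{crecimientoN}. Its $H^\kappa$ norm is then about $N_n^{-2-\gamma+\varepsilon+\frac{\varepsilon+\delta}{1+\gamma+\delta}+\kappa}$. This comes from the amplitude, the support area \eqref{P12.8.5}, and a lower bound on $\|\partial_{\mathbf n(\beta_n)}^\kappa\phi\|$, via Fourier support or Plancherel for the anisotropic bump. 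The corrections are smaller by \eqref{P5.8.1}, and the other layers have separated frequency scales, giving almost-orthogonality. For $\kappa\ge2+\gamma$ the exponent exceeds $\mu$, so multiplying by the interval length $1-t_{n+1}=N_n^{-\mu}$ still gives a divergent series. For item (4), the same bound with $\kappa\le2+\gamma-\frac{\gamma(1-\gamma)}{2(4+\gamma)}=2+\gamma-2\varepsilon$ gives a negative exponent, since $\varepsilon+\frac{\varepsilon+\delta}{1+\gamma+\delta}<2\varepsilon$ using $\delta<\gamma\varepsilon/5$. Summing over $n$ by the triangle inequality yields the uniform bound.
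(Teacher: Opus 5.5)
Items (1), (2) and (4) follow the paper's proof. In (1), your concern about $h_n^2$ versus $h_n$ does not arise. The forcing in \eqref{defpert} carries the factor $h_n$, so $h_n\partial_t\theta_n+h_n\overline{v}^\gamma(\Theta_{n-1})\cdot\nabla\theta_n$ produces $-h_n^2$ times the sums over $\mathcal{J}_0,\dots,\mathcal{J}_{I_\gamma-1}$. These cancel exactly against $h_n^2v^\gamma(\theta_n)\cdot\nabla\theta_n$, leaving only the $\mathcal{J}_{I_\gamma}$ term kept in $F_n$, so nothing needs to be absorbed. In (2), the paper uses the sharper relation \eqref{crecimientoN} rather than \eqref{des}, with $\delta=\frac{9\gamma\varepsilon}{50}$ and $\kappa_0=2+\gamma+\frac{4\gamma\varepsilon}{25}+\kappa_{\text{pert}}$, $0<\kappa_{\text{pert}}<\frac{\gamma\varepsilon}{50}$. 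Your cruder bound $N_{n-1}\le N_n^{1/(1+\gamma/2)}$ still leaves enough room in the transport and cubic exponents for these values. The binding constraint is indeed $\kappa<2+\gamma+\delta$, and it is exact for the activation term too, with no $O(\varepsilon^2)$ correction.

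Item (3) is where you depart from the paper, and there the decisive step is asserted for a reason that does not hold. You work on $[t_{n+1},1]$, after layer $n$ has saturated. On that interval the finer layers $h_m\theta_m$, $m>n$, are switched on, and certainly for $t\ge t_{n+2}$ their $H^\kappa$ norms are far larger than that of $\theta_n$. So the triangle inequality gives nothing, and you need an actual bound on cross terms. ``Separated frequency scales'' does not supply it. Since $\theta_m$ is compactly supported, $\widehat{\theta_m}$ does not vanish on the region $|\xi|\lesssim N_n$ where $\widehat{\theta_n}$ is concentrated; it has size about $\|\theta_m\|_{L^1}$ there.

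The step can be repaired by pairing in physical space:
\[
|\langle\theta_n,\theta_m\rangle_{\dot H^\kappa}|\le\|\Lambda^{2\kappa}\theta_n\|_{L^\infty}\|\theta_m\|_{L^1}\lesssim N_n^{-1-\gamma+2\kappa+O(\varepsilon)}\,N_m^{-3-\gamma+O(\varepsilon)}.
\]
Because $N_m\ge N_n^{1+\gamma/2}$, this is a negative power of $N_n$ times $\|\theta_n\|_{\dot H^\kappa}^2\simeq N_n^{-4-2\gamma+2\kappa+O(\varepsilon)}$. Then, with $B=\sum_{m>n}h_m\theta_m$, the inequality $\|\theta_n+B\|_{\dot H^\kappa}^2\ge\|\theta_n\|_{\dot H^\kappa}^2-2|\langle\theta_n,B\rangle_{\dot H^\kappa}|$ gives your lower bound.

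The paper avoids cross terms altogether. It integrates over $[t_n+N_{n-1}^{-2\mu},t_{n+1})$, where $h_m\equiv0$ for every $m>n$. Only the coarser layers compete there, and they are much smaller, so the triangle inequality \eqref{cotainf} suffices. The price is that $f_{n,0}$ is not yet saturated on that interval (only $s_n-r_n\gtrsim N_n^{1-\mu}$), which costs a harmless factor $N_n^{-\kappa\mu}$.

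For the lower bound on $\|f_{n,0}\|_{\dot H^\kappa}$ itself, the paper interpolates $\dot H^1$ between $L^2$ and $\dot H^\kappa$, using an explicit $\dot H^1$ lower bound. Your Plancherel computation for the anisotropic product, using $r_n\ll s_n$, works equally well.
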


\begin{rmk}
At this point, we can choose some specific values for the involved  parameters in the construction of the solution described in previous sections. Namely, 
\begin{equation}\label{eq:eleccion}
\varepsilon=\frac{\gamma(1-\gamma)}{4(4+\gamma)},\qquad
\delta=\frac{9\gamma\varepsilon}{50},\qquad
\mu=\frac{\delta}{20},\qquad
\sigma=\frac{\gamma^{2}\varepsilon}{20000}. 
\end{equation}
In particular, the required assumptions stated from \eqref{parametros1}  are satisfied for this choice.  
    We did not make an attempt to optimize this choice of the parameters and hence we did not optimize the Sobolev exponent $\kappa_0$ where the proof of the above result works. Nevertheless, we believe that such an optimal  range verifies 
   $        \kappa_0-2-\gamma\simeq \gamma^2(1-\gamma).
$
\end{rmk}

   \begin{proof}[Proof of Theorem]
       We choose $\theta$ and $F$ to be the functions defined in  Sections \ref{construccion} and \ref{SF}, respectively. We next review some of the most important features of the construction. Recall from (\ref{tiempos}) that $t_0=0$ and $t_n\nearrow 1$, specifically,  
       \begin{align*}
    t_n=1-N_{n-1}^{-\mu}, \qquad  \forall n\in \mathbb{N}.
\end{align*}
Let
\begin{align*}
    \theta(x,t)=\sum_{j=0}^\infty h_j(t)\theta_j(x,t), \qquad  \forall t\in [0,1), \qquad \forall x \in \mathbb{R}^2. 
\end{align*}
\begin{enumerate}
    \item \textbf{Regularity of the solution and the force for $t\in[0,1)$.} Notice that although $\theta$ is a priori defined  as an infinite sum, for any $t\in [0,1)$, there exists a unique $n\in\mathbb{N}_0$ such that $t\in[t_n,t_{n+1})$ and so
\begin{align}\label{P12.8.6}
    h_j(t)=0, \qquad  \forall j\geq n+1,
\end{align}
and we conclude that for any $t \in [0,1)$, $\theta$ is in fact a finite sum of $C^\infty$ functions. Indeed, $\theta_n$ is $C^\infty$ by construction, since (see \eqref{deftheta})
\begin{align*}
     \theta_n(x,t)=\sum_{i=0}^{I_\gamma} f_{n,i}(x,t),
\end{align*}
where $f_{n,0}$ is defined by (\ref{capaprincipal}) and it is obtained as the solution of a Cauchy problem with a $C^\infty$ initial condition and a transport PDE with a linear velocity (in space) with $C^\infty_t$ coefficients (in time). Namely, $f_{n,0}$ solves (see \eqref{transporteppal}, \eqref{P12.7.1})
\begin{align*}
   & \partial_t f_{n,0}+\overline{v}^\gamma(\Theta_{n-1})\cdot \nabla f_{n,0}=0,\\
    &f_{n,0}(x,t_n)=N_n^{-1-\gamma+\varepsilon}\phi(r_n(t_n)\textbf{n}(\alpha_n(t_n))\cdot x)\phi(s_n(t_n)\textbf{n}(\beta_n(t_n))\cdot x),
\end{align*} 
where $\Theta_{n-1}$ is defined by (\ref{sumacapas}) and $\overline{v}^\gamma$ corresponds to the linear part of the spatial Taylor expansion of $v^\gamma(\Theta_{n-1})$. The coefficients of $\overline{v}^\gamma(\Theta_0)$ are constant in time (and hence $C^\infty_t$). Then $f_{1,0}$ is $C_t^\infty C_x^\infty$ since it is the solution of a transport PDE with smooth initial conditions and linear velocity with constant coefficients. Also $f_{1,i}$ with $i=1, \ldots, I_\gamma$ are smooth because they solve (\ref{defpert}), that are transport PDEs with smooth force and   linear velocity. As a consequence,  $\theta_1$ and $h_1$ are smooth in space and time and so $\Theta_1$ and $\overline{v}^\gamma(\Theta_1)$ as well. With this recursive reasoning one can conclude that $v^\gamma(\Theta_{n-1})$ has smooth coefficients in time, and hence $f_{n,0}$ and its perturbations $f_{n,i}$ are smooth in space and time. Then, one can conclude that, any finite sum of  type 
\begin{align*}
    \sum_{j=0}^n h_j(t)\theta_j(x,t)
\end{align*}
is smooth in space and time and hence $ \theta \in C^\infty([0,T],C^\infty_c(\mathbb{R}^2))$  for any fixed  $T\in(0,1)$. Moreover, by virtue of  \eqref{soportesencajados}, one concludes that $\theta$ is compactly supported.

The force is defined by 
\begin{align*}
    F(x,t)=\sum_{j=0}^\infty F_j(x,t), \qquad \forall t \in [0, 1), \qquad \forall x \in \mathbb{R}^2, 
\end{align*}
where $F_j$ was introduced in \eqref{P14.8.10} and \eqref{P14.8.11}. For the same reasoning as before, for each fixed $t\in[0,1)$, $F(\cdot, t)$ becomes a finite sum. Notice that $F_j$ is a sum of terms that depend only on $f_{j,i}$ in such a way that everything is smooth with compact support. Then, it is clear that $ F \in C^\infty([0,T],C^\infty_c(\mathbb{R}^2))$  for any fixed  $T\in(0,1)$.

\item \textbf{Checking that $\theta$ solves gSQG with force $F$ for $t\in[0,1)$.} Given any fixed value $t\in [0,1)$, let $n \in \mathbb{N}_0$ be the unique value for which \eqref{P12.8.6} holds. We adopt the convention that 
\begin{align*}
  \Theta_{-1} \equiv 0 \qquad \text{and} \qquad   f_{0,i} \equiv 0 \qquad \forall i\in \mathbb{N}.  
\end{align*}
Then 
\begin{align*}
    \partial_t \theta(x,t)&+v^\gamma (\theta)(x,t)\cdot \nabla \theta(x,t) 
    =\sum_{j=0}^nh_j \left(
         \partial_t \theta_j+v^\gamma (\theta)\cdot \nabla \theta_j
    \right)
    +\sum_{j=0}^n h_j'\theta_j\\
    =&\sum_{j=0}^nh_j \left(
         \partial_t \theta_j+\overline{v}^\gamma (\Theta_{j-1})\cdot \nabla \theta_j
    \right)
    +\sum_{j=0}^nh_j (v^\gamma-\overline{v}^\gamma )(\Theta_{j-1})\cdot \nabla \theta_j\\
   & +\sum_{j=0}^nh_j v^\gamma (\theta-\Theta_{j-1})\cdot \nabla \theta_j
    +\sum_{j=0}^n h_j'\theta_j\\
    =&\sum_{j=0}^nh_j \sum_{i=0}^{I_\gamma}\left(
         \partial_t f_{j,i}+\overline{v}^\gamma (\Theta_{j-1})\cdot \nabla f_{j,i}
    \right)
    +\sum_{j=0}^nh_j (v^\gamma-\overline{v}^\gamma )(\Theta_{j-1})\cdot \nabla \theta_j\\
   & +\sum_{j=0}^nh_j v^\gamma (h_j\theta_j)\cdot \nabla \theta_j
    +\sum_{j=0}^nh_j v^\gamma \left(\sum_{l=j+1}^{n}h_l\theta_l\right)\cdot \nabla \theta_j
    +\sum_{j=0}^n h_j'\theta_j\\
    =&-\sum_{j=0}^nh_j^2 \sum_{i=1}^{I_\gamma}\sum_{(i_1,i_2)\in\mathcal{J}_{i-1}}v^\gamma(f_{j,i_1})\cdot \nabla f_{j,i_2}
    +\sum_{j=0}^nh_j (v^\gamma-\overline{v}^\gamma )(\Theta_{j-1})\cdot \nabla \theta_j\\
   & +\sum_{j=0}^nh_j^2 \sum_{i_1,i_2=0}^{I_\gamma} v^\gamma (f_{j,i_1})\cdot \nabla f_{j,i_2}
    +\sum_{j=0}^nh_j v^\gamma \left(\sum_{l=j+1}^{n}h_l\theta_l\right)\cdot \nabla \theta_j
    +\sum_{j=0}^n h_j'\theta_j\\
    =& \, F_0+\sum_{j=1}^n h_j^2 \sum_{(i_1,i_2)\in\mathcal{J}_{I_\gamma}} v^\gamma(f_{j,i_1})\cdot \nabla f_{j,i_2}+\sum_{j=1}^nh_j (v^\gamma-\overline{v}^\gamma )(\Theta_{j-1})\cdot \nabla \theta_j\\
    &+\sum_{j=1}^n h_j v^\gamma (\theta_j)\cdot \nabla \left( \sum_{l=0}^{j-1} h_l\theta_l\right)
    +\sum_{j=1}^n h_j'\theta_j=\sum_{j=0}^n F_j=F(x,t).
\end{align*}

\item \textbf{Sobolev regularity of the force.} Next we are going to check that the estimates  of each term $F_n$ of the force $F$ given by Lemmas \ref{fuerzaanteriores}, \ref{fuerzacubico}, \ref{fuerzapert} and \ref{fuerzaactivacion} are small enough to make them summable. In particular, we are interested in working with $\|F_n\|_{H^\kappa}$. Notice that the term given by Lemma \ref{fuerzaanteriores} controls the one given by Lemma \ref{fuerzacubico}. Indeed, since 
\begin{align*}
N_{n-1}^{\,2+\varepsilon}\,N_{n}^{\,-4-\gamma+\kappa+7\frac{\varepsilon+\delta}{1+\gamma+\delta}+\varepsilon}
 \le
N_{n-1}^{1+\varepsilon} N_n^{-3-\gamma+2\frac{\varepsilon+\delta}{1+\gamma+\delta}+\varepsilon+\kappa} 
\iff   N_{n-1} \le N_{n}^{1-5\frac{\varepsilon+\delta}{1+\gamma+\delta}}.
\end{align*}
Then, in light of \eqref{crecimientoN} it suffices to prove that 
\begin{align*}
    \frac{\varepsilon+\delta}{(1+\gamma+\delta)(\varepsilon-\mu)} &<  1-5\frac{\varepsilon+\delta}{1+\gamma+\delta}=\frac{1+\gamma-5\varepsilon-4\delta}{1+\gamma+\delta}\\
    & \hspace{-2cm}\iff 
    1+\frac{\delta+\mu}{\varepsilon-\mu} <  1+\gamma-5\varepsilon-4\delta\\
    &\hspace{-2cm} \iff 
   5\varepsilon + 4\delta + \frac{\delta+\mu}{\varepsilon-\mu} < \gamma
\end{align*}
and the latter holds in view of \eqref{eq:eleccion}:  
\begin{align*}
    5\varepsilon+4\delta+\frac{\delta+\mu}{\varepsilon-\mu}
    < 5\varepsilon+\frac{4\gamma}{5}\varepsilon+\frac{\gamma}{4} <  \frac{13}{15}\gamma < \gamma. 
\end{align*}

On the other hand, if we wish that the right-hand side of the inequality  \eqref{P13.8.1} decays as $n \to \infty$, we should choose $\kappa$ such that
\begin{align}\label{P13.8.3}
   -2-\gamma+\kappa+\varepsilon+(1-\kappa)\frac{\varepsilon+\delta}{1+\gamma+\delta} < 0 \iff  \kappa<2+\gamma+\delta.
\end{align}
The same comment also applies to the inequality \eqref{P13.8.2} under the same restriction on $\kappa$ given by \eqref{P13.8.3}. Assume that $\kappa_0$ satisfies  
\begin{align}
\label{kappadelta}
    \kappa_0 <2 + \gamma + \delta,
\end{align}
then the self-interaction and activation terms considered in Lemmas \ref{fuerzapert} and \ref{fuerzaactivacion} become relevant in the forcing term. Note that \eqref{kappadelta} is compatible with \eqref{P13.8.4}, that is, 
$$
  2 + \gamma + \frac{\gamma^2 (1-\gamma)}{25(4+\gamma)} < 2 + \gamma + \delta \iff \frac{\gamma^2 (1-\gamma)}{25(4+\gamma)}  < \frac{9 \gamma^2 (1-\gamma)}{200 (4+\gamma)}. 
$$

Note that the right-hand side of the inequality \eqref{P11.8.4} (and in particular, the right-hand side of \eqref{P13.8.5}) is dominated by (see \eqref{crecimientoN})
$$
    N_{n-1}^{1+\varepsilon} N_n^{-3-\gamma+2\frac{\varepsilon+\delta}{1+\gamma+\delta}+\varepsilon+\kappa} \leq C N_n^{(1+\varepsilon)\frac{\varepsilon+\delta}{(1+\gamma+\delta)(\varepsilon-\mu)} -3-\gamma+2\frac{\varepsilon+\delta}{1+\gamma+\delta}+\varepsilon+\kappa }
$$
and hence its contribution  to the forcing term is subject to the fact that desired parameter  $\kappa_0$  fulfills  
\begin{align}\label{P13.8.6}
    \kappa_0 < 3+\gamma - (1+\varepsilon)\frac{\varepsilon+\delta}{(1+\gamma+\delta)(\varepsilon-\mu)} -2\frac{\varepsilon+\delta}{1+\gamma+\delta}-\varepsilon.
\end{align}
After some basic computations, which rely on the choice of parameters given in  \eqref{eq:eleccion}, one may check that \eqref{P13.8.6} is  compatible with \eqref{P13.8.4}, that is, 
$$
   2 + \gamma + \frac{\gamma^2 (1-\gamma)}{25(4+\gamma)} < 3+\gamma  -(1+\varepsilon)\frac{\varepsilon+\delta}{(1+\gamma+\delta)(\varepsilon-\mu)} -2\frac{\varepsilon+\delta}{1+\gamma+\delta}-\varepsilon. 
$$


If we write 
\begin{align}\label{P13.8.11}
    \kappa_{0}= 2+\gamma +\frac{4\gamma\varepsilon}{25}+\kappa_{\text{pert}},
\end{align}
for some $\kappa_{\text{pert}}> 0$ to be chosen, we have 
\begin{align}
     (1+\varepsilon)\frac{\varepsilon+\delta}{(1+\gamma+\delta)(\varepsilon-\mu)} -3-\gamma+2\frac{\varepsilon+\delta}{1+\gamma+\delta}+\varepsilon+\kappa_0 & \nonumber \\
     &\hspace{-8.7cm}=-1+\frac{\varepsilon+\delta}{(1+\gamma+\delta)(\varepsilon-\mu)} +\varepsilon\frac{\varepsilon+\delta}{(1+\gamma+\delta)(\varepsilon-\mu)} +2\frac{\varepsilon+\delta}{1+\gamma+\delta}+\varepsilon+\frac{4\gamma\varepsilon}{25}+\kappa_{{\text{pert}}}. \label{P13.8.7}
\end{align}
From \eqref{eq:eleccion}, it is easy to see that 
\begin{align}\label{cotacociente}
    \frac{\varepsilon+\delta}{(1+\gamma+\delta)(\varepsilon-\mu)}=\frac{1+\frac{9\gamma}{50}}
{(1+\gamma+\delta)\big(1-\frac{9\gamma}{1000}\big)}\leq \frac{1+\frac{\gamma}{5}}{1+\gamma}
\end{align}
and so the first two terms in \eqref{P13.8.7} can be estimated as follows
\begin{align}\label{P13.8.8}
    -\left(1-\frac{\varepsilon+\delta}{(1+\gamma+\delta)(\varepsilon-\mu)} \right)\leq -\frac{4\gamma}{5(1+\gamma)}\leq-\frac{16(4+\gamma)}{5(1+\gamma)}\varepsilon\leq -6\varepsilon,
\end{align}
while the remaining term in \eqref{P13.8.7}, using that the quantity in the right-hand side of \eqref{cotacociente} is dominated by 1 together with the definitions given in \eqref{eq:eleccion},
\begin{align}
    \varepsilon\frac{\varepsilon+\delta}{(1+\gamma+\delta)(\varepsilon-\mu)} +2\frac{\varepsilon+\delta}{1+\gamma+\delta}+\varepsilon+\frac{4\gamma\varepsilon}{25}+\kappa_{\text{pert}} \nonumber \\
    \leq \varepsilon+2\varepsilon+\frac{2}{5}\varepsilon+\varepsilon+\frac{4}{25}\varepsilon+\kappa_{\text{pert}} \leq 5\varepsilon+\kappa_{\text{pert}}. \label{P13.8.9}
\end{align}
As a combination of \eqref{P13.8.7}, \eqref{P13.8.8} and \eqref{P13.8.9}, we establish 
\begin{align}\label{P13.8.10}
     (1+\varepsilon)\frac{\varepsilon+\delta}{(1+\gamma+\delta)(\varepsilon-\mu)} -3-\gamma+2\frac{\varepsilon+\delta}{1+\gamma+\delta}+\varepsilon+\kappa_0  \leq -\varepsilon +  \kappa_{\text{pert}}. 
\end{align}

In view of \eqref{P13.8.10}, if we choose $\kappa_0$ as in \eqref{P13.8.11} with
$$0<\kappa_{\text{pert}}<\frac{\gamma\varepsilon}{50}$$ then the required assumptions \eqref{kappadelta} and \eqref{P13.8.6} hold, as well as  \eqref{P13.8.4}. Hence for any $\kappa \leq \kappa_0$ (in particular, $\kappa < 3$ by \eqref{kappadelta}) we are in a position to apply Lemmas \ref{fuerzaanteriores}, \ref{fuerzacubico}, \ref{fuerzapert} and \ref{fuerzaactivacion} to get that  
 \begin{align*}
       \int_0^1\|F\|_{H^\kappa} \, dt & \leq \sum_{n=0}^\infty \int_{t_n}^1 \|F_n\|_{H^\kappa} \, dt  \\
       & \hspace{-1.8cm} \leq \sum_{n=1}^\infty \sup_{t \in [t_n, 1]} \left[ \|v^\gamma(\theta_{n})\cdot \nabla \Theta_{n-1}\|_{H^\kappa} + \|(v^\gamma-\overline{v}^\gamma)(\Theta_{n-1})\cdot \nabla \theta_n\|_{H^\kappa} +   \left\|
\sum_{(j_1,j_2)\in \mathcal{J}_{I_\gamma}} v^\gamma(f_{n,j_1})\cdot \nabla f_{n,j_2}
        \right\|_{H^\kappa} \right]   \\
        & + \sum_{n=1}^\infty \int_{t_n}^1h_n'(t)\|\theta_n(\cdot,t)\|_{H^\kappa} \, dt +\int_0^1 \|F_0\|_{H^\kappa}\,dt\leq C < \infty,
   \end{align*}
   where we have also used the properties of $h_n$ given in \eqref{P2.8.3}. 


\item \textbf{Blow-up of the solution.} Let $T\in (t_3,1)$. Then there exists a unique $n\in \mathbb{N}$ such that
   \begin{align}\label{P14.8.8}
    t_{n+1} < T \leq  t_{n+2}.
   \end{align}
   By construction, we have $h_j(t)=0$ for any $t<t_{n+1}$ $j\geq n+1$. Then, noting that $t_n + N_{n-1}^{-2 \mu} < t_{n+1}$ and $h_n(t) = 1$ for $t \geq t_n + N_{n-1}^{-2 \mu}$ since $N_{n-1}^{-2 \mu} > N_{n-1}^{-10 \varepsilon}$, we obtain  
   \begin{align}
   \label{cotainf}
   \begin{split}
       \int_0^T &\|\theta(\cdot,t)\|_{H^\kappa} \, dt
       \geq \int_{t_n+N_{n-1}^{-2\mu}}^{t_{n+1}}\|\theta(\cdot,t)\|_{H^\kappa} \, dt \\
       &\geq \int_{t_{n}+N_{n-1}^{-2\mu}}^{t_{n+1}}\left(\|h_n(t)\theta_n(\cdot,t)\|_{H^\kappa}-\sum_{j=0}^{n-1}\|\theta_j(\cdot,t)\|_{H^\kappa}
       \right) \, dt\\
        & = \int_{t_n+N_{n-1}^{-2\mu}}^{t_{n+1}}\left(\|\theta_n(\cdot,t)\|_{H^\kappa}-\sum_{j=0}^{n-1}\|\theta_j(\cdot,t)\|_{H^\kappa}
       \right) \, dt\\
       &\geq \int_{t_n+N_{n-1}^{-2\mu}}^{t_{n+1}}\left(\|f_{n,0}(\cdot,t)\|_{H^\kappa}
       -\sum_{i=1}^{I_\gamma}\|f_{n,i}(\cdot,t)\|_{H^\kappa}
       -\sum_{j=0}^{n-1}\|\theta_j(\cdot,t)\|_{H^\kappa}
       \right) \, dt.
       \end{split}
   \end{align}
   
   Using that $f_{n,i}$ with $i=1, \ldots, I_\gamma$ generates a sequence of derivatives with amplitude $\mathcal{C}_{n, i}$, principal directions $(\alpha_n, \beta_n)$ and principal frequencies $(r_n, s_n)$ and interpolation in Sobolev spaces, it is easy to prove that 
   $$
     \|f_{n,i}(\cdot,t)\|_{H^\kappa} \leq C \mathcal{C}_{n,i}(t) s_n(t)^{\kappa}|\mathcal{S}_n|^{\frac{1}{2}}, \qquad \forall t \in [t_n, 1].  
   $$
   From this and \eqref{P5.8.1}, \eqref{P12.8.5} and \eqref{P7.8.1}, 
   \begin{align}\label{P14.8.6}
       \|f_{n,i}(\cdot,t)\|_{H^\kappa}& \leq C\mathcal{C}_{n,0}N_{n}^{-i\left(2\frac{\varepsilon+\delta}{1+\gamma+\delta}-\varepsilon\right)} N_n^\kappa N_n^{-1+\frac{\varepsilon+\delta}{1+\gamma+\delta}} \leq CN_n^{-2-\gamma+\varepsilon+\kappa+\frac{\varepsilon+\delta}{1+\gamma+\delta}-i\left(2\frac{\varepsilon+\delta}{1+\gamma+\delta}-\varepsilon\right)}.
   \end{align}
   Similarly, one can also obtain that, for $t \geq t_j$, 
   \begin{align}
   \label{cotasobolevcapa}
       \|\theta_j\|_{H^\kappa}\leq \sum_{i=0}^{I_\gamma} \|f_{j, i}(\cdot, t)\|_{H^\kappa} \leq  C\mathcal{C}_{j,0}s_j(t)^{\kappa}|\mathcal{S}_j|^{\frac{1}{2}}\leq  CN_j^{-2-\gamma+\varepsilon+\kappa+\frac{\varepsilon+\delta}{1+\gamma+\delta}}.
   \end{align}
   Next, we wish to establish a lower bound of the $H^\kappa$ norm of $f_{n,0}$. Applying classical interpolation inequalities for Sobolev semi-norms (noting that, in particular,  $\kappa > 1$)  
   \begin{align*}
       \|f_{n,0}\|_{\dot{H}^1}\leq C\|f_{n,0}\|_{L^2}^{\frac{\kappa-1}{\kappa}}\|f_{n,0}\|_{\dot{H}^\kappa}^{\frac{1}{\kappa}}  
   \end{align*}
   leads to 
   \begin{align}\label{P14.8.3}
    \|f_{n,0}\|_{\dot{H}^\kappa}\geq \frac{\|f_{n,0}\|_{\dot{H}^1}^\kappa}{C^\kappa\|f_{n,0}\|_{L^2}^{\kappa-1}}.
   \end{align}
   The gradient of $f_{n,0}$ (see \eqref{capaprincipal}) is 
   \begin{align*}
       \nabla f_{n,0}(x,t)&= \mathcal{C}_{n, 0} \phi'(r_n(t)\textbf{n}(\alpha_n(t))\cdot 
 x)\phi(s_n(t)\textbf{n}(\beta_n(t))\cdot x)r_n(t)\textbf{n}(\alpha_n(t))\\
  &\hspace{1cm}+\mathcal{C}_{n, 0}  \phi(r_n(t)\textbf{n}(\alpha_n(t))\cdot 
 x)\phi'(s_n(t)\textbf{n}(\beta_n(t))\cdot x)s_n(t)\textbf{n}(\beta_n(t)),
   \end{align*}
   and
   \begin{align}
       \|\nabla f_{n,0} (\cdot, t)\|_{L^2}& \geq \mathcal{C}_{n,0}s_n(t)\|\phi(r_n(t)\textbf{n}(\alpha_n(t))\cdot 
 x)\phi'(s_n(t)\textbf{n}(\beta_n(t))\cdot x)\|_{L^2}\nonumber \\
&\hspace{1cm} -\mathcal{C}_{n,0}r_n(t)\|\phi'(r_n(t)\textbf{n}(\alpha_n(t))\cdot 
 x)\phi(s_n(t)\textbf{n}(\beta_n(t))\cdot x)\|_{L^2}.\label{normaH1}
   \end{align}
   Now, a simple change of variables gives 
   \begin{align*}
       \|\phi(r_n\textbf{n}(\alpha_n)\cdot 
 x)\phi'(s_n\textbf{n}(\beta_n)\cdot x)\|_{L^2}^2
 =&\int_{\mathbb{R}^2} \phi(r_n\textbf{n}(\alpha_n)\cdot 
 x)^2\phi'(s_n\textbf{n}(\beta_n)\cdot 
 x)^2 \, dx\\
 =&\frac{1}{r_ns_n |\sin(\alpha_n-\beta_n)|} \, \|\phi\|_{L^2(\mathbb{R})}^2 \|\phi'\|_{L^2(\mathbb{R})}^2
   \end{align*}
   and analogously 
   \begin{align*}
  \|\phi'(r_n\textbf{n}(\alpha_n)\cdot 
 x)\phi(s_n\textbf{n}(\beta_n)\cdot x)\|_{L^2}^2
 =\frac{1}{r_ns_n |\sin(\alpha_n-\beta_n)|} \, \|\phi\|_{L^2(\mathbb{R})}^2 \|\phi'\|_{L^2(\mathbb{R})}^2. 
   \end{align*}
Using this and (\ref{normaH1}), we have 
\begin{align}\label{P14.8.1}
    \|\nabla f_{n,0}(\cdot,t)\|_{L^2} \geq \mathcal{C}_{n,0}\frac{s_n-r_n}{C\sqrt{r_ns_n| \sin(\alpha_n-\beta_n)|}}.
\end{align}
In addition, it follows from \eqref{estimar} and \eqref{estimas} under  $t\geq t_n+N_{n-1}^{-2\mu}$, \eqref{ic} and \eqref{crecimientoN} that 
\begin{align*}
    s_n(t) - r_n(t) \gtrsim s_n(t) \gtrsim s_n(t_n) N_{n-1}^\varepsilon (t-t_n)  \gtrsim N_n^{1-\frac{\varepsilon + \delta}{1+\gamma+\delta}} N_{n-1}^{\varepsilon-2 \mu} \gtrsim N_n^{1-\frac{\varepsilon + \delta}{1+\gamma+\delta}} N_n^{(\varepsilon - 2 \mu) \frac{\varepsilon + \delta}{(1+\gamma+\delta)(\varepsilon-\mu)} }.
\end{align*}
It is a routine to check that 
$$
    (\varepsilon - 2 \mu) \frac{\varepsilon + \delta}{(1+\gamma+\delta)(\varepsilon-\mu)} \geq \frac{\varepsilon + \delta}{1+\gamma+\delta}-\mu  \iff (\varepsilon -\mu) (1+\gamma+\delta) \geq \varepsilon+\delta
$$
and the validity of the right-hand side inequality (see \eqref{eq:eleccion}). As a byproduct, we derive that
\begin{align}\label{P14.8.2}
    s_n(t)-r_n(t) \gtrsim N_n^{1-\mu}, \qquad t\geq t_n+N_{n-1}^{-2\mu}. 
\end{align}
Inserting \eqref{P14.8.2} and \eqref{P12.8.5} into \eqref{P14.8.1} (recall also the value of $\mathcal{C}_{n, 0}$ given by \eqref{P7.8.1}), we achieve the following 
\begin{align}\label{P14.8.4}
      \|\nabla f_{n,0}(\cdot,t)\|_{L^2}  \geq \frac{1}{C} \, N_n^{-1-\gamma+\varepsilon+\frac{\varepsilon+\delta}{1+\gamma+\delta}-\mu}, \qquad t\geq t_n+N_{n-1}^{-2\mu}.
\end{align}
Similarly, we have 
\begin{align}\label{P14.8.5}
\|f_{n,0}\|_{L^2}\leq  C N_n^{-2-\gamma+\varepsilon + \frac{\varepsilon+\delta}{1+\gamma+\delta}}, \qquad  t \geq t_n.
\end{align}
Putting now together \eqref{P14.8.3}, \eqref{P14.8.4} and \eqref{P14.8.5}, we get
\begin{align}\label{P14.8.7}
    \|f_{n,0}\|_{\dot{H}^\kappa}\geq \frac{1}{C}  N_n^{-2-\gamma+\kappa+\varepsilon+\frac{\varepsilon+\delta}{1+\gamma+\delta}-\kappa\mu}, \qquad t\geq t_n+N_{n-1}^{-2\mu}.
\end{align}

Plugging \eqref{P14.8.6}, \eqref{cotasobolevcapa} and \eqref{P14.8.7} into \eqref{cotainf} (and taking into account that $t_{n+1}-t_n-N_{n-1}^{-2 \mu} \gtrsim N_{n-1}^{-\mu}$ and $2 + \gamma \leq \kappa  \leq 3$) we obtain 
\begin{align}
     \int_0^T \|\theta(\cdot,t)\|_{H^\kappa} \, dt
       & \nonumber \\
       & \hspace{-2cm}\geq \frac{1}{C}N_n^{-2-\gamma+\kappa+\varepsilon+\frac{\varepsilon+\delta}{1+\gamma+\delta}-\kappa \mu} N_{n-1}^{-\mu}  
       -CN_n^{-2-\gamma+\kappa+\varepsilon+\frac{\varepsilon+\delta}{1+\gamma+\delta}-\left(2\frac{\varepsilon+\delta}{1+\gamma+\delta}-\varepsilon\right)} N_{n-1}^{-\mu} -CN_{n-1}^{-2-\gamma+\kappa +\varepsilon+\frac{\varepsilon+\delta}{1+\gamma+\delta}-\mu} \nonumber \\
       & \hspace{-2cm}\geq \frac{1}{C}N_n^{-2-\gamma+\kappa+\varepsilon+\frac{\varepsilon+\delta}{1+\gamma+\delta}-4 \mu} \geq N_n^{\varepsilon},\label{P14.8.9}
\end{align}
where the second inequality may be justified as follows: Since $(\varepsilon+\delta)\big(\frac{2}{1+\gamma+\delta}-1\big)>0$  (see \eqref{eq:eleccion}), we have
\begin{align*}  
2\frac{\varepsilon+\delta}{1+\gamma+\delta}-\varepsilon -\kappa \mu \geq  2\frac{\varepsilon+\delta}{1+\gamma+\delta}-\varepsilon - \frac{3 \delta}{20}> \frac{17 \delta}{20} > 0
\end{align*} 
and then 
\begin{align*}
       \frac{N_n^{-2-\gamma+ \kappa + \varepsilon+\frac{\varepsilon+\delta}{1+\gamma+\delta}-\left(2\frac{\varepsilon+\delta}{1+\gamma+\delta}-\varepsilon\right)}}{N_n^{-2-\gamma+\kappa+\varepsilon+\frac{\varepsilon+\delta}{1+\gamma+\delta}-\kappa \mu}} \to 0 \qquad \text{as} \qquad n \to \infty. 
\end{align*}
On the other hand, using successively $\kappa  \geq 2+\gamma$ and $\frac{1}{1+x}\leq 1-\frac{x}{2}$ for $x = \frac{\gamma}{2} \in[0,1]$,
\begin{align*}
-2-\gamma+\kappa+\varepsilon+\frac{\varepsilon+\delta}{1+\gamma+\delta}-4 \mu &>\left(-2-\gamma+\kappa +\varepsilon+\frac{\varepsilon+\delta}{1+\gamma+\delta}-\mu\right) \left(1-\frac{\gamma}{4} \right)\\
&\geq
    \frac{-2-\gamma+\kappa+\varepsilon+\frac{\varepsilon+\delta}{1+\gamma+\delta}-\mu}{1+\frac{\gamma}{2}}
\end{align*}
and then, by \eqref{des} (see also Remark \ref{Remark25}), 
\begin{align*}
    \frac{N_{n-1}^{-2-\gamma+\kappa+\varepsilon+\frac{\varepsilon+\delta}{1+\gamma+\delta}-\mu}}{N_n^{-2-\gamma+\kappa+\varepsilon+\frac{\varepsilon+\delta}{1+\gamma+\delta}-\kappa \mu} N_{n-1}^{-\mu}} \leq N_n^{-(4-\kappa)\mu} N_{n-1}^\mu \leq N_n^{-(3-\kappa)\mu} \to 0 \qquad \text{as} \qquad n \to \infty. 
\end{align*}


In light of \eqref{P14.8.8},  $n \to \infty$ as $T \nearrow 1$ and then \eqref{P14.8.9} yields  
$$
    \lim_{T \nearrow 1}  \int_0^T \|\theta(\cdot,t)\|_{H^\kappa} \, dt \geq \lim_{n \to \infty} N_n^\varepsilon = \infty. 
$$

\item \textbf{Sobolev regularity of the solution up to $t=1$ in the supercritical setting.} For any $t \in [0, 1)$, choose the unique value $n \in \mathbb{N}_0$ such that $t \in [t_n, t_{n+1})$.  In particular, $h_j(t) = 0$ if $j \geq n+1$ and $\theta(\cdot, t) = \sum_{j=0}^n h_j(t) \theta_j (\cdot, t)$. Assume that $\kappa\in \big[0,2+\gamma-\frac{\gamma(1-\gamma)}{2(4+\gamma)}\big]$. From  \eqref{cotasobolevcapa},  
\begin{align*}
      \|\theta_j(\cdot, t)\|_{H^\kappa}\leq  CN_j^{-2-\gamma+\varepsilon+\kappa+\frac{\varepsilon+\delta}{1+\gamma+\delta}} \leq C N_j^{-\varepsilon+\frac{\varepsilon+\delta}{1+\gamma+\delta}}
\end{align*}
for $j \in \{0, \ldots, n\}$. 
Hence, taking into account that  $-\varepsilon + \frac{\varepsilon+\delta}{1+\gamma+\delta} < 0$ (see  \eqref{eq:eleccion}), 
\begin{align*}
    \|\theta(\cdot, t)\|_{H^\kappa} \leq \sum_{j=0}^n \|\theta_j(\cdot, t)\|_{H^\kappa} \leq C \sum_{j=0}^n N_j^{-\varepsilon+\frac{\varepsilon+\delta}{1+\gamma+\delta}} \leq C \sum_{j=0}^\infty N_j^{-\varepsilon+\frac{\varepsilon+\delta}{1+\gamma+\delta}} < \infty.
\end{align*}
\end{enumerate}
          \end{proof}

\section*{Acknowledgements}

This work is supported in part by the Spanish Ministry of Science
and Innovation, through the “Severo Ochoa Programme for Centres of
Excellence in R\&D (CEX2023-001347-S)” and PID2023-152878NB-I00. O.D.  is also partially supported by the grants RYC2022-037402-I 
and CNS2025-166944 funded by the Spanish Ministry of Science and Innovation.

\end{document}